\documentclass[10pt]{amsart} 

\usepackage{amsmath}
\usepackage[english]{babel} 
\usepackage{enumitem}
\usepackage[utf8]{inputenc} 
\usepackage[a4paper,left=2.5cm,right=2.5cm,bottom=3cm]{geometry} 
\usepackage{indentfirst} 
\usepackage{amsmath,amssymb,amsfonts,amsthm} 
\numberwithin{equation}{section}
\usepackage{bm} 
\usepackage{graphicx} 
\usepackage[export]{adjustbox} 
\usepackage{pdflscape} 
\usepackage{fancyhdr} 
\usepackage{natbib} 
\setcitestyle{numbers,square}
\usepackage{flafter} 
\usepackage[framemethod=tikz]{mdframed} 
\usepackage{color} 
	\definecolor{yellow-green}{rgb}{0.6, 0.8, 0.2}
	\definecolor{viridian}{rgb}{0.25, 0.51, 0.43}
\usepackage{wrapfig} 
\usepackage{lipsum} 
\usepackage{mathrsfs}
\usepackage[all]{xy}
\usepackage{framed}
\usepackage{tikz}
\usepackage{quiver}
\usetikzlibrary{hobby}
\usepackage{thmtools}
\usepackage{thm-restate}
\usepackage{dsfont}
\usepackage{lmodern}
\usepackage{amsmath,amssymb,amsthm,mathtools}
\usepackage{graphicx}

\graphicspath{ {Images/} } 

\usepackage{multicol} 
\usepackage{float} 

\usepackage{nicematrix}
\usetikzlibrary{patterns}
\usetikzlibrary{matrix,decorations.pathreplacing}
\usetikzlibrary{decorations.pathreplacing,calligraphy}
\usepackage[colorlinks=true,linkcolor=cyan,citecolor=viridian]{hyperref} 
\hypersetup{
    pdfcreator={},
    pdfproducer={LaTeX},
    hypertexnames=false,  
    linktocpage=true,
    colorlinks=true
}
\usepackage{prettyref}
\usepackage[nameinlink]{cleveref}
\newtheorem{theorem}{Theorem}[section]
\newtheorem{corollary}[theorem]{Corollary}
\newtheorem{proposition}[theorem]{Proposition}
\newtheorem{lemma}[theorem]{Lemma}
\newtheorem{definition}[theorem]{Definition}
\newtheorem{remark}[theorem]{Remark}

\newtheorem{conj}{Conjecture}

\newtheorem{assumption}{Assumption}[section]
\newrefformat{lemma}{Lemma \ref{#1}}
\newrefformat{coro}{Corollary \ref{#1}}
\newrefformat{thm}{Theorem \ref{#1}}
\newrefformat{prop}{Proposition \ref{#1}}
\newrefformat{rem}{Remark \ref{#1}}
\newrefformat{example}{Example \ref{#1}}
\newrefformat{defn}{Definition \ref{#1}}
\newrefformat{section}{Section \ref{#1}}
\newrefformat{conj}{Conjecture \ref{#1}}
\newrefformat{apdx}{Appendix \ref{#1}}
\newrefformat{fact}{Fact \ref{#1}}
\newrefformat{ass}{Assumption \ref{#1}}
\newcommand{\dd}{{\mathrm d}}
\newcommand{\EE}{{\mathcal{E}}}
\newcommand{\bEE}{{\mathbb{E}}}
\newcommand{\FF}{{\mathcal{F}}}
\newcommand{\KK}{{\mathcal{K}}}
\newcommand{\LL}{{\mathcal{L}}}

\newcommand{\VV}{{\mathcal{V}}}
\newcommand{\WW}{{\mathcal{W}}}
\newcommand{\SO}{{\mathrm{SO}}}

\newcommand{\sslash}{
	\mathchoice{\mathbin{\mkern-3mu/\mkern-6mu/\mkern-3mu}}
	{\mathbin{\mkern-3mu/\mkern-6mu/\mkern-3mu}}
	{\mathbin{\mkern-2mu/\mkern-5mu/\mkern-1mu}}
	{\mathbin{\mkern-2mu/\mkern-5mu/\mkern-1mu}}
}
\newcommand{\iu}{{\sqrt{-1}}}

\newcommand{\D}{\mathbb D}

\newcommand{\CH}{\mathbb{CH}}

\newcommand{\Id}{\mathrm{id}}
\DeclareMathOperator{\Hom}{Hom}
\DeclareMathOperator{\rank}{rank}
\DeclareMathOperator{\tr}{tr}
\DeclareMathOperator{\diag}{diag}

\DeclareMathOperator{\End}{End}
\DeclareMathOperator{\Dev}{Dev}

\newcommand{\PU}{\mathrm{PU}}

\newcommand{\PP}{\mathbb P}
\newcommand{\K}{\mathcal K}

\newcommand{\one}{\mathds{1}}

\newcommand{\wt}{\widetilde}

\newcommand{\C}{\mathbb C}

\newcommand{\id}{\operatorname{id}}
\newcommand{\norm}[1]{\left\lVert #1\right\rVert}

\newcommand{\Lcal}{\mathcal L}
\newcommand{\Tcal}{\mathcal T}
\newcommand{\Vf}{V_f}
\newcommand{\Vcal}{\mathcal V}
\newcommand{\Wcal}{\mathcal W}
\newcommand{\Tp}{T_+}
\newcommand{\Nm}{N_-}
\newcommand{\Np}{N_+}
\newcommand{\Hsc}{\mathscr H}
\newcommand{\Hsczero}{\mathscr H_0}
\newcommand{\dbar}{\bar\partial}
\newcommand{\GR}{G_{\mathbb{R}}}
\newcommand{\PGR}{{\mathrm{P}}G_{\mathbb{R}}}
\newcommand{\KR}{K_{\mathbb{R}}}
\newcommand{\PKR}{{\mathrm{P}}K_{\mathbb{R}}}

\newcommand{\Ecal}{\mathcal{E}}
\newcommand{\Ocal}{\mathcal{O}}
\newcommand{\Fcal}{\mathcal{F}}

\DeclareRobustCommand{\upRoman}[1]{%
  \ifmmode
    \mathrm{\uppercase\expandafter{\romannumeral#1}}%
  \else
    \textup{\uppercase\expandafter{\romannumeral#1}}%
  \fi
}
\usepackage{microtype}

\title{Non-maximal quasi-isometric $\PU_{2,n+1}$-representations via alternating surfaces in complex pseudo-hyperbolic spaces}

\author{Qiongling Li}
\address{Chern Institute of Mathematics and LPMC, Nankai University, Tianjin 300071, China}
\email{qiongling.li@nankai.edu.cn}
\author{Junming Zhang}\address{Chern Institute of Mathematics and LPMC, Nankai University, Tianjin 300071, China}\email{junmingzhang@mail.nankai.edu.cn}

\begin{document}




\begin{abstract}
For every admissible non-maximal Toledo invariant $t$, we construct a
locus of irreducible representations of a closed genus-\(g\)
surface group into \(\PU_{2,n+1}\) with Toledo invariant $t$ whose orbit maps are quasi-isometric embeddings.
These loci have real codimension \(10(g-1)\) or \(10(g-1)+2(n-1)\) in the character
variety depending on the Toledo value. Our construction uses \(4\)-cyclic
Higgs bundles and their correspondence with
\(\partial\)-alternating surfaces in complex pseudo-hyperbolic
spaces. The associated equivariant minimal maps into the symmetric
space are bi-Lipschitz embeddings. We further construct cocompact domains of
discontinuity in the Shilov boundary. Their quotients are smooth
fiber bundles over the surface with fiber homeomorphic to
\(S^{2n-1}\times S^{2n-1}\).
\end{abstract}
\maketitle
\tableofcontents



\section{Introduction}

The theory of maximal representations shows how an extremal
topological invariant can impose strong geometric constraints on a
surface-group representation. For Hermitian target groups, maximal
representations are discrete and faithful and preserve a maximal
tube-type subdomain \cite{burger2010surface}. In rank two, their
geometry is closely related to special surfaces in pseudo-Riemannian
spaces and to equivariant minimal surfaces in symmetric spaces
\cite{collier2019geometry}. This raises a natural question: which
of these geometric features persist at non-maximal Toledo values?

In this paper, we address this question for \(\PU_{2,n+1}\).
We construct smooth families of non-maximal representations with
quasi-isometric orbit maps, equivariant minimal embeddings into
the associated symmetric space, and cocompact domains of
discontinuity in its Shilov boundary. Our construction uses two
families of \(4\)-cyclic Higgs bundles and their geometric
realization as \(\partial\)-alternating surfaces in complex
pseudo-hyperbolic spaces. The degree data of the Higgs bundles
determine both the dimensions of the representation loci and the
isomorphism classes of the resulting boundary quotient bundles.

\subsection{Non-maximal representations} Let \(S\) be a closed oriented surface of genus \(g\geqslant2\),
let \(n\geqslant1\), and set \(G=\PU_{2,n+1}\). Denote by
\[
\mathfrak X
=
\mathfrak X\bigl(\pi_1(S),G\bigr)
:=
\operatorname{Hom}^{\mathrm{red}}
\bigl(\pi_1(S),G\bigr)/G
\]
the \(G\)-character variety, and by \(\mathcal X_{2,n+1}\) the
associated symmetric space. With the normalization used in this
paper, the Toledo invariant satisfies the Milnor--Wood inequality
\[
\bigl|\operatorname{Tol}(\rho)\bigr|
\leqslant4(g-1).
\]
A representation is called \emph{maximal} when equality holds
and \emph{non-maximal} otherwise; see
\cite{bradlow2003surface,burger2010surface}.

For \(n>1\), the symmetric space \(\mathcal X_{2,n+1}\) is not
of tube type. The tube-type rigidity theorem therefore implies
that every maximal representation preserves a totally geodesic
copy of \(\mathcal X_{2,2}\). More precisely, after conjugation,
\[
\rho\bigl(\pi_1(S)\bigr)
\subset
\mathrm P\bigl(
\mathrm U_{2,2}\times\mathrm U_{n-1}
\bigr)
\subsetneq
\PU_{2,n+1}.
\]
Thus maximality forces the image to preserve a proper
nondegenerate subspace. Our first result constructs irreducible
families outside this rigid maximal locus.

Let
\[
\mathscr T_*
:=
\frac{2\gcd(n+1,2)}{n+3}\mathbb Z
\cap \bigl(-4(g-1),\,4(g-1)\bigr)
\]
be the set of non-maximal Toledo values, and, for each \(t\in\mathscr T_*\), write
\[
\mathfrak X_t
:=
\bigl\{[\rho]\in\mathfrak X
\mid \operatorname{Tol}(\rho)=t\bigr\}.
\]

\begin{theorem}\label{thm:intro-slices}
For every \(t\in\mathscr T_*\), there exist a real smooth manifold \(M_t\) and an
immersion
\[
\iota_t\colon M_t\longrightarrow\mathfrak X_t
\]
whose image \(\mathscr S_t:=\iota_t(M_t)\) is not relatively compact in
\(\mathfrak X_t\) and consists of conjugacy classes of irreducible
representations whose orbit maps are quasi-isometric embeddings.
The manifold \(M_t\) has real dimension
\[
\dim_{\mathbb R} M_t
=
\dim_{\mathbb R}\mathfrak X
-
\begin{cases}
10(g-1), & (n+3)t\in4\mathbb Z,\\
10(g-1)+2(n-1), & \text{otherwise}.
\end{cases}
\]
In particular, all representations in this family are discrete and faithful.
\end{theorem}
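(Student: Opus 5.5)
Via non-abelian Hodge, fix a Riemann surface structure \(X\) on \(S\) and parametrize the family by a space of \(4\)-cyclic \(\PU_{2,n+1}\)-Higgs bundles \((V,W,\beta,\gamma)\) with \(V\) of rank \(2\) and \(W\) of rank \(n+1\). The cyclic structure splits the bundles holomorphically as
\[
V=L_1\oplus L_3,\qquad W=L_2\oplus L_4\oplus U,
\]
with the Higgs field cycling \(L_1\to L_2\to L_3\to L_4\to L_1\) (twisted by \(K\)), so that the Toledo invariant is a linear function of the degrees of the \(L_i\). The two families correspond to two admissible shapes of the degree data. The first realizes the values \(t\) with \((n+3)t\in4\mathbb Z\), where \(U\) can be taken to be a direct summand decoupled from the cycle. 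The second realizes the remaining values, where \(U\) (or a rank-\(n-1\) piece of it) must be twisted by a line bundle of fractional degree. The latter forces an extra choice of a stable bundle whose moduli contributes the additional \(2(n-1)\) to the codimension.

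\textbf{Step 1: degrees.} For each \(t\in\mathscr T_*\), I will exhibit integers satisfying the stability inequalities. The constraint \(t\in\frac{2\gcd(n+1,2)}{n+3}\mathbb Z\) comes from the projective normalization of \(\deg V\) and \(\deg W\). Non-maximality \(|t|<4(g-1)\) is exactly what allows all cyclic maps to be nonzero with nonnegative divisor degrees.

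\textbf{Step 2: \(M_t\) and the immersion \(\iota_t\).} I let \(M_t\) be the space of these cyclic Higgs bundles as \(X\) varies over Teichm\"uller space. It is fibred over Teichm\"uller space by the data of line bundles plus holomorphic sections, that is, divisors and twists. I compute its dimension by Riemann--Roch. I show \(\iota_t\) is an immersion by the standard argument: the infinitesimal deformation complex restricted to the cyclic slice is injective into \(H^1\) of the full deformation complex, using the \(\mathbb Z_4\)-weight decomposition and the fact that varying \(X\) is transverse to the Higgs-bundle directions, as in Collier's cyclic-surface arguments. Irreducibility follows from stability plus the nonvanishing of all cycle maps, which prevents any invariant splitting. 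This rules out the maximal reduction to \(\mathrm P(\mathrm U_{2,2}\times\mathrm U_{n-1})\). Non-relative compactness follows by scaling the holomorphic differentials to infinity, which leaves every compact set of \(\mathfrak X_t\) by properness of the Hitchin map.

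\textbf{Step 3: quasi-isometry (the main obstacle).} I solve the Hitchin equations and use the correspondence with \(\partial\)-alternating surfaces in the complex pseudo-hyperbolic space. The harmonic metric diagonalizes by the cyclic symmetry, and a maximum-principle argument on the resulting Toda-type system yields uniform bounds. Specifically, the curvature of the induced surface is pinched strictly negatively, and its second fundamental form is controlled, so the associated surface is spacelike and complete with bounded geometry. The \(\partial\)-alternating condition then gives that the equivariant minimal map to \(\mathcal X_{2,n+1}\) is bi-Lipschitz. Equivalently, the energy density is bounded below by a multiple of the hyperbolic metric and the map is uniformly immersive with a lower bound on the distance-increase along geodesics. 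I would prove this by comparing the orbit map with the distance in the pseudo-hyperbolic space and using a Labourie-type positivity/convexity argument.

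The expected difficulty is obtaining uniform-in-family lower bounds on the diagonal Hermitian metric ratios despite the zeros of the cycle maps. I would handle these with subharmonicity of \(\log\) of the ratios and the Cheng--Yau maximum principle. Discreteness and faithfulness then follow immediately from the quasi-isometric embedding property.
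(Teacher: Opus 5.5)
Your proposal has a genuine gap in the family itself. The $4$-cyclic structure does not split $W$ further as $L_2\oplus L_4\oplus U$. In the paper's type~\upRoman{1} bundles the ranks are $(1,1,n,1)$, and $\mathcal E_{\overline2}=\mathcal V$ is an honest rank-$n$ bundle through which the cycle passes via $\alpha$ and $\beta$.

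\emph{The decoupled family is reducible and too small.} If $U$ is a direct summand decoupled from the cycle, then $(\mathcal E,\Phi)$ is a rank-$4$ cyclic $\mathrm U_{2,2}$-Higgs bundle plus $(U,0)$, hence at best strictly polystable. The representation then preserves the negative definite subspace coming from $U$ and lies in $\mathrm P(\mathrm U_{2,2}\times\mathrm U_{n-1})$. This is exactly the reducible locus the theorem must avoid, and it contradicts your own irreducibility claim. The count fails as well. Such a family is a product of a $\mathrm U_{2,2}$ cyclic moduli space with $\mathcal M^{n-1}_\mu$, as in Proposition~\ref{prop:connectedness}(iii), of real dimension about $[20+2(n-1)^2](g-1)$. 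That is roughly $16(n-1)(g-1)$ short of $\dim_{\mathbb R}\mathfrak X-10(g-1)=(2n^2+12n+6)(g-1)$. In any case, ``divisors and twists'' cannot see deformations of a rank-$n$ bundle at all.

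\emph{What the paper does instead.} It builds stable bundles with $\mathcal V$ obtained from stable rank-$(n-1)$ bundles $P\subset R$ by a Hecke modification (Lemma~\ref{lem:Hecke}). It shows $\mathbb H^2(C^\bullet)\cong\operatorname{aut}_{-\overline1}(\mathcal E,\Phi)^\vee=0$ and computes $\dim\mathbb H^1=1-\chi(C^\bullet)$ by Riemann--Roch. Adding $\mathcal T(S)$ and removing $\operatorname{Pic}_0$ gives $N_{\upRoman{1}}=(n^2+2n+7)(2g-2)+2(1-n)u_{\upRoman{1}}$.

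\emph{The two codimensions come from parity.} They do not come from an extra stable bundle (extra data would raise the dimension) or from a fractional twist. Since $\operatorname{Tol}=2(d_1+d_3)-4\mu$, for even $n$ one has $(n+3)t\in4\mathbb Z$ iff $u_{\upRoman{1}}=d_1-d_3$ is even. Taking $u_{\upRoman{1}}=-4(g-1)$ (i.e.\ $\gamma$ an isomorphism) gives codimension $10(g-1)$. Odd $u_{\upRoman{1}}$ forces $u_{\upRoman{1}}\geqslant-4(g-1)+1$, which costs exactly $2(n-1)$.

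Non-maximality is also not ``exactly'' what your Step~1 needs. Stability requires $v_{\upRoman{1}}=t+u_{\upRoman{1}}+4(g-1)<4(g-1)$, so with $u_{\upRoman{1}}$ odd, type~\upRoman{1} only reaches $t<4(g-1)-1$. The remaining top values have to come from complex-conjugate (dual, type~\upRoman{2}) representations.

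Step~3 also lacks a working mechanism. Curvature pinching and bounds on the second fundamental form are local, and they give no linear lower bound on distances in the rank-two space $\mathcal X_{2,n+1}$. The sentence ``the $\partial$-alternating condition then gives bi-Lipschitz'' restates the conclusion, and metric ratios near zeros of the cycle maps are not where the difficulty lies.

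The paper's key estimate works as follows. For a flat positive semi-definite $2$-plane $\mathbf S=\langle e_1,e_2\rangle$, the function $f_{\mathbf S}=\|\pi_{0,3}(e_1\wedge e_2)\|_H^2$ on $\widetilde X$ satisfies $\|\dd f_{\mathbf S}\|\geqslant(\|\one\|-\|\beta\|)f_{\mathbf S}$. When $\mathbf S$ is spacelike it also satisfies $\|\dd f_{\mathbf S}\|\geqslant\sqrt2(\|\one\|-\|\beta\|)f_{\mathbf S}^{1/2}$. Both come from the shape of $\wedge^2\Phi$ and Lemmas~\ref{lemma:normcomparison1} and~\ref{lemma:normcomparison2} (see Lemma~\ref{lemma:S23}). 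Non-maximality enters exactly as the uniform gap $\|\one\|-\|\beta\|\geqslant\epsilon$, given by Lemma~\ref{lemma:higgsfieldnorm} and compactness.

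Filip-type gradient-flow estimates then give $f_{\mathbf S}$ a unique zero and exponential growth away from it. Choosing $\mathbf S$ adapted to the $KAK$ decomposition of the displacement from $h(x_0)$ to $h(x)$ yields $\sinh^2(2\mu_1)\geqslant f_{\mathbf S}(x)$, hence $d(h(x),h(x_0))\geqslant c\,d(x,x_0)$. The upper bound comes from boundedness of $q_4$.

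Your immersion and non-compactness arguments match the paper's (Collier--Toulisse--Wentworth, and the $\mathbb C^*$-flow with $q_4\neq0$). They must, however, be run on the correct stable family.
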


The codimension in the first case is independent of \(n\), whereas the
second case can occur only when \(n\) is even. 

Fix a complex structure \(X\) on \(S\). The nonabelian Hodge
correspondence identifies conjugacy classes of reductive
representations into \(G\) with isomorphism classes of
polystable \(G\)-Higgs bundles over \(X\)
\cite{donaldson1987twisted,corlette1988flat,
hitchin1987self,simpson1988constructing}.

Cyclic Higgs bundles have also been used to construct
non-maximal representations with strong geometric properties
in other settings. In particular, the second author proved
that suitable cyclic \(\mathrm{SO}^0_{2,3}\)-Higgs bundles give
rise to non-maximal Anosov representations \cite{zhang2025non},
extending the results and techniques of Filip
\cite{filip2021uniformization}. Our construction draws on
related techniques to study \(\PU_{2,n+1}\)-representations
and their associated minimal surfaces and boundary quotients.

In a related direction, the authors of \cite{BD25} use a
different approach to establish the Anosov property for a
family of deformations of Barbot representations into
\(\operatorname{SL}_3\mathbb R\) arising from \(3\)-cyclic
Higgs bundles.

\subsection{Cyclic Higgs bundles and
\texorpdfstring{\(\partial\)}{partial}-alternating surfaces}

For the surface correspondence, we first work on an arbitrary
Riemann surface \(X\), not necessarily compact, and consider
the more general group \(\PU_{m,n+1}\). We study \(4\)-cyclic
harmonic bundles \((\mathcal E,\Phi,H)\) of the form
\begin{equation}\label{eqn:Intro}
(\mathcal E,\Phi)
=
\begin{tikzcd}
{\mathcal L}
&
{\mathcal L K_X^{-1}}
&
{\mathcal V}
&
{\mathcal W}
\arrow["{\mathds{1}}"',from=1-1,to=1-2]
\arrow["\alpha"',from=1-2,to=1-3]
\arrow["\beta"',from=1-3,to=1-4]
\arrow["\gamma"',curve={height=19pt},from=1-4,to=1-1]
\end{tikzcd},
\end{equation}
where each arrow denotes a \(K_X\)-twisted component of the
Higgs field. Set
\[
\mathcal E_0=\mathcal L,\qquad
\mathcal E_1=\mathcal L K_X^{-1},\qquad
\mathcal E_2=\mathcal V,\qquad
\mathcal E_3=\mathcal W.
\]
The ranks and harmonic metric satisfy
\[
\bigl(
\rank(\mathcal E_0),\,
\rank(\mathcal E_1),\,
\rank(\mathcal E_2),\,
\rank(\mathcal E_3)
\bigr)
=
(1,1,n,m-1),
\qquad
H=\bigoplus_{i=0}^{3}H_i,
\]
where \(H_i\) is a Hermitian metric on \(\mathcal E_i\).

To describe the geometry of these bundles, we introduce
\emph{\(\partial\)-alternating surfaces} in complex
pseudo-hyperbolic spaces. Roughly speaking, a
\(\partial\)-alternating surface in \(\CH^{m,n}\) is a
minimal immersion whose associated holomorphic tangent bundle
\(\Vf\) admits an orthogonal splitting
\[
\Vf=\Tp\oplus\Nm\oplus\Np.
\]
Here \(\Tp\) is the positive line generated by
\(\partial^{1,0}f\), \(\Nm\) is negative of rank \(n\), and
\(\Np\) is positive of rank \(m-1\). The defining conditions
constrain the second fundamental form and the remaining
off-diagonal connection terms according to the cyclic
pattern above.

This notion extends the geometry of maximal spacelike
surfaces in real pseudo-hyperbolic spaces: maximal spacelike
surfaces in \(\mathbb H^{2,q}\) studied in \cite{collier2019geometry, LTW} are precisely the
\(\partial\)-alternating surfaces in \(\CH^{2,q}\) whose
images lie in a totally geodesic copy of
\(\mathbb H^{2,q}\); see
Proposition~\ref{prop:real-part}. It is also related to the
general correspondence between cyclic surfaces and cyclic
harmonic bundles developed in \cite{li2026infinitesimal}.
In our setting, this correspondence takes the following form.

\begin{theorem}\label{thm:baseSpecialSignatureintro}
On any Riemann surface \(X\), there is a natural bijection
between:
\begin{enumerate}
\item
\(4\)-cyclic \(\PU_{m,n+1}\)-harmonic bundles as in
\eqref{eqn:Intro}, modulo graded unitary gauge equivalence;

\item
equivariant \(\partial\)-alternating immersions
\(f\colon\widetilde X\to\CH^{m,n}\), modulo
splitting-preserving equivalence.
\end{enumerate}
Under this correspondence, the Gauss map of \(f\) coincides
with the equivariant minimal map
\(h\colon\widetilde X\to\mathcal X_{m,n+1}\)
associated with the harmonic bundle.
\end{theorem}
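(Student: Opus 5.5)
The plan is to realize the correspondence explicitly through the flat connection of the harmonic bundle. Start from a \(4\)-cyclic harmonic bundle \((\mathcal E,\Phi,H)\) as in \eqref{eqn:Intro} and form the flat connection
\[
D=\nabla_H+\Phi+\Phi^{*_H}.
\]
The \(\PU_{m,n+1}\)-structure provides a \(D\)-parallel indefinite Hermitian form \(Q\), of signature \((m,n+1)\). It is obtained from \(H\) by reversing the sign on the summands of one parity class, in the form \(Q=H_0\oplus(-H_1)\oplus(-H_2)\oplus H_3\). This matches \(\Tp\) positive, \(\Nm\) negative of rank \(n\), and \(\Np\) positive of rank \(m-1\), with \(\mathcal E_1=\mathcal LK_X^{-1}\) as the negative line that becomes the position vector. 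Pulling back to \(\widetilde X\) and trivializing by \(D\) identifies the fibers with \(\C^{m,n+1}\), equivariantly for the holonomy \(\rho\).

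Define \(f\) as the \(Q\)-negative line \(\mathcal E_1\), so that \(f\colon\widetilde X\to\CH^{m,n}\); the orientation conventions must be checked against the paper's definition of \(\CH^{m,n}\). Next compute \(\partial f\) and \(\bar\partial f\) from the block decomposition of \(D\). For a section \(s\) of \(\mathcal E_1\), the \((1,0)\)-part of \(Ds\) modulo \(\mathcal E_1\) is \(\alpha(s)\in\mathcal V\). The \((0,1)\)-part comes from \(\Phi^*\) and is \(\mathds 1^{*}(s)\in\mathcal E_0\). Hence \(T^{0,1}\)-type derivatives land in \(\mathcal E_0\), and \(\partial^{1,0}f\) corresponds, after the standard identification with \(\Hom(f,f^\perp)\), to the line \(\mathcal E_0\) via the nowhere-vanishing map \(\mathds 1\). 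This gives the immersion property and identifies \(\Tp=\mathcal E_0\). Then \(\Nm=\mathcal E_2\) and \(\Np=\mathcal E_3\), and I must check which conjugate piece is \(\partial^{1,0}\) under the paper's convention.

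Minimality follows because the second derivatives \(\nabla^{0,1}\partial f\) have no component back in \(\Tp\) that would violate harmonicity. In fact the only off-diagonal terms of \(D\) are \(\Phi\) and \(\Phi^*\), each shifting the \(\mathbb Z/4\) grading by \(\pm1\), so the harmonic equation for \(f\) reduces to the block structure. The \(\partial\)-alternating conditions, namely which second-fundamental-form and connection blocks vanish, are exactly the statement that \(D\) has nonzero blocks only between cyclically adjacent \(\mathcal E_i\), with \((1,0)\)-parts given by \(\mathds 1,\alpha,\beta,\gamma\). Equivariant dependence on graded unitary gauge gives well-definedness modulo splitting-preserving equivalence.

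For the converse, start from an equivariant \(\partial\)-alternating immersion. Set \(\mathcal E_1=f\) (the tautological line), \(\mathcal E_0=\Tp\), \(\mathcal E_2=\Nm\), and \(\mathcal E_3=\Np\). Take the trivial flat \(\C^{m,n+1}\)-bundle and decompose the flat connection according to the orthogonal splitting as (diagonal unitary part) plus (off-diagonal part). Define \(\Phi\) as the \((1,0)\) off-diagonal part, \(H\) by flipping the signs of \(Q\), and holomorphic structures from the \((0,1)\)-part of the diagonal connection. The \(\partial\)-alternating conditions guarantee that \(\Phi\) is cyclic in the pattern \eqref{eqn:Intro}. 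They also guarantee that the off-diagonal \((0,1)\)-part equals \(\Phi^{*_H}\); this uses that sign flips turn \(Q\)-adjoints into \(H\)-adjoints up to a sign absorbed by the grading parity.

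Flatness then splits by type into holomorphicity of \(\Phi\) and Hitchin's equation, which makes \(H\) harmonic. Isomorphisms \(f\) (the tautological line) \(\cong\mathcal E_0K_X^{-1}\) via \(\partial f\) fix the \(\mathds 1\) normalization. The two constructions are inverse up to the stated equivalences.

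Finally, the Gauss map sends a point to the maximal \(Q\)-negative subspace \(f\oplus\Nm=\mathcal E_1\oplus\mathcal E_2\). In the harmonic-bundle picture this is precisely the \(H\)-orthogonal splitting into \(Q\)-positive and \(Q\)-negative parts, which is the definition of the equivariant harmonic map \(h\) to \(\mathcal X_{m,n+1}\). I expect the main obstacle to be the converse direction: showing that the geometric \(\partial\)-alternating conditions yield exactly that the \((0,1)\) off-diagonal part is the \(H\)-adjoint of \(\Phi\), with consistent signs across all four blocks, including the wrap-around term \(\gamma\). I would handle this by writing \(D\) as a \(4\times4\) block matrix and comparing \(Q\)-skewness with \(H\)-adjointness entry by entry.
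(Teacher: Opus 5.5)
Your architecture is the paper's (Theorem~\ref{thm:base}, using Lemmas~\ref{lem:signed-adjoint} and \ref{lem:basic-holomorphicity}, plus Proposition~\ref{prop:GaussMaps}). However, the two identifications you start from are wrong, and the steps built on them fail.

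\emph{The form.} \(Q=H_0\oplus(-H_1)\oplus(-H_2)\oplus H_3\) is not \(D\)-parallel. Write \(Q(u,v)=H(Su,v)\) with \(S=\diag(s_0,s_1,s_2,s_3)\). Since \(\nabla_H\) is block-diagonal, \(D\) preserves \(Q\) if and only if \(S\Phi+\Phi S=0\), i.e.\ \((s_i+s_j)\Phi_{ij}=0\) for every block \(\Phi_{ij}\colon\Ecal_j\to\Ecal_i\otimes\KK_X\) (cf.\ Lemma~\ref{lem:signed-adjoint}). Your signs have \(s_1=s_2\) and \(s_3=s_0\), so \(Q\) is preserved only if \(\alpha\equiv\gamma\equiv0\). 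Every arrow of the \(4\)-cycle must join summands of opposite sign. This forces \(\Hsc=-H_0\oplus H_1\oplus(-H_2)\oplus H_3\); the other parity class just gives \(-\Hsc\), of signature \((n+1,m)\). So ``one parity class'' was the right idea, but \(\{1,2\}\) is not a parity class.

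\emph{The position line.} It must be \(\Ecal_0=\Lcal\), not \(\Ecal_1=\Lcal\KK_X^{-1}\). Besides \(\Ecal_1\) being positive for \(\Hsc\), your own computation exhibits the failure. For \(f=[\Ecal_1]\), \(\partial^{1,0}f\) is the \((1,0)\)-part \(\alpha\colon\Ecal_1\to\Vcal\otimes\KK_X\), which may vanish, even identically (the theorem allows this). The arrow \(\one\) enters only through \(\one^{*_H}\) in \(\bar\partial^{1,0}f\). So (A1) and the immersion property fail. Passing to the conjugate complex structure does not help: it changes \(X\), reverses the cycle, and puts \(\Wcal\) rather than \(\Vcal\) in the role of \(\Nm\).

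With \(f=[\Lcal]\) everything works:
\begin{itemize}
\item \(\partial^{1,0}f=\one\) spans \(\Tp=\Hom(\Lcal,\Lcal\KK_X^{-1})\cong\KK_X^{-1}\);
\item \(\Nm=\Hom(\Lcal,\Vcal)\) and \(\Np=\Hom(\Lcal,\Wcal)\);
\item \(\sigma^{2,0}=\alpha(\partial^{1,0}f)\), and \(\beta\) is the \(\Np\)-component of \((\nabla^{\Vf})^{1,0}|_{\Nm}\);
\item \(\bar\partial^{1,0}f\) comes from \(\gamma^{*_H}\), so \(\delta_f|_{\Np}=\gamma\), \(\delta_f|_{\Tp\oplus\Nm}=0\), and \(q_2=0\).
\end{itemize}

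\emph{The converse and the Gauss map.} These need the same relabelling. \(\partial^{1,0}f\) maps the tautological line \(L_f\) into \(\Tp\otimes\KK_X\), so it is the arrow \(\one\colon\Ecal_0\to\Ecal_1\otimes\KK_X\) with \(\Ecal_0=L_f\) and \(\Ecal_1=L_f\otimes\Tp\cong L_f\KK_X^{-1}\). Your ``\(f\cong\Ecal_0\KK_X^{-1}\) via \(\partial f\)'' points the wrong way. The negative subspace in your Gauss-map step is \(\Ecal_0\oplus\Ecal_2\); its complement \(\Ecal_1\oplus\Ecal_3\) is the positive plane \(P_x\) of Proposition~\ref{prop:GaussMaps}.

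Note also that (A1)--(A5) allow a block \(\Ecal_3\to\Ecal_1\otimes\KK_X\), namely a \(\Tp\)-component of \((\nabla^{\Vf})^{1,0}\) on \(\Np\). This block is killed, together with \(\dbar\one=0\), by harmonicity \((\nabla^{\Vf})^{0,1}\partial^{1,0}f=0\), as in Lemma~\ref{lem:basic-holomorphicity}. Only after that does flatness split by degree into holomorphicity of \(\Phi\) and Hitchin's equation.

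With these corrections your outline coincides with the paper's proof.
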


For the construction of the representation loci, we now
return to complex structures \(X\) on the closed surface
\(S\). We say that a $4$-cyclic Higgs bundle  is of type
\(\upRoman{1}\) (respectively, type \(\upRoman{2}\)) if
\[
\bigl(
\rank(\mathcal E_0),\,
\rank(\mathcal E_1),\,
\rank(\mathcal E_2),\,
\rank(\mathcal E_3)
\bigr)
=
(1,1,n,1)
\quad
\bigl(\text{respectively, }(1,1,1,n)\bigr).
\]
For \(\tau\in\{\upRoman{1},\upRoman{2}\}\) and an
admissible degree vector \((d_i)=(d_0,d_1,d_2,d_3)\), let
\[
\mathcal M_{(d_i),\tau}\longrightarrow\mathcal T(S)
\]
be the joint moduli space whose fiber over \(X\) consists
of polystable cyclic Higgs bundles of type \(\tau\) with
\(\deg(\mathcal E_i)=d_i\) for \(i=0,1,2,3\).
Let \(\mathcal M^{\mathrm s}_{(d_j),\tau}\) denote its
stable locus. There is an isomorphism between 
$\mathcal{M}_{(d_j),\tau}$ and $\mathcal{M}_{(d_j'),\tau}$ when
$(d_j)-(d_j')\in\mathbb{Z}\cdot(1,1,n,m-1)$ given by tensoring a line bundle. Modulo this equivalence, we obtain $\mathcal{M}_{[d_j],\tau}$ and $\mathcal M^{\mathrm s}_{[d_j],\tau}$ as the joint moduli space of polystable $4$-cyclic $\PU_{m,n+1}$-Higgs bundle of type $\tau$ with charateristic $[d_j]\in\mathbb{Z}^4/\mathbb{Z}\cdot(1,1,n,m-1)$ and its stable locus respectively.

The fiberwise nonabelian Hodge correspondence
defines a map
\[
\operatorname{NAH}\colon
\mathcal M_{[d_j],\tau}\longrightarrow\mathfrak X,
\]
and we write
\[
\mathcal R_{[d_j],\tau}
:=
\operatorname{NAH}
\bigl(\mathcal M_{[d_j],\tau}\bigr),
\qquad
\mathcal R^{\mathrm{irr}}_{[d_j],\tau}
:=
\operatorname{NAH}
\bigl(\mathcal M^{\mathrm s}_{[d_j],\tau}\bigr).
\]

\subsection{Minimal embeddings and domains of discontinuity}

The four-step grading implies
\(\operatorname{tr}(\Phi^2)=0\), so the associated
equivariant harmonic maps into \(\mathcal X_{2,n+1}\)
are conformal. The \(\partial\)-alternating structure
provides the additional geometric control needed to
establish global embedding and distance estimates.

\begin{theorem}\label{thm:intro-geometry}
Let \((X,[\mathcal E,\Phi])\in\mathcal M_{[d_j],\tau}\),
let \([\rho]=\operatorname{NAH}(X,\mathcal E,\Phi)\), and
let \(h_\rho\colon\widetilde X\to\mathcal X_{2,n+1}\)
be the associated equivariant minimal map. Then:
\begin{enumerate}
\item
The map \(h_\rho\) is a bi-Lipschitz embedding with
respect to the hyperbolic metric in the conformal class
of \(X\), and its induced metric has strictly negative
Gaussian curvature.

\item
For one, and hence every, \(o\in\mathcal X_{2,n+1}\),
the orbit map
\[
\pi_1(S)\longrightarrow\mathcal X_{2,n+1},
\qquad
\gamma\longmapsto\rho(\gamma)o,
\]
is a quasi-isometric embedding. In particular, \(\rho\)
is discrete and faithful.

\item
The group \(\rho(\pi_1(S))\) acts freely and properly
discontinuously on \(\mathcal X_{2,n+1}\). The quotient
\[\rho(\pi_1(S))\backslash\mathcal X_{2,n+1}\] is the
total space of a smooth fiber bundle over \(S\) with
fiber homeomorphic to \(\mathbb R^{4n+2}\).
Moreover, \(h_\rho\) descends to an embedded minimal
surface in this locally symmetric quotient.
\end{enumerate}
\end{theorem}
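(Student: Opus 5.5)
The plan is to prove the pointwise and global geometric estimates for $h_\rho$ from the $\partial$-alternating surface $f\colon\widetilde X\to\CH^{2,n}$ attached to $(\mathcal E,\Phi,H)$ by Theorem~\ref{thm:baseSpecialSignatureintro}, and then deduce (2) and (3) from them by standard arguments.

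\textbf{Step 1: pointwise estimates.} Write the induced metric of the minimal map as $g_h = c\,\tr(\Phi\Phi^{*_H})$, up to a normalization constant. In the cyclic frame this is the sum $|\mathds 1|^2+|\alpha|^2+|\beta|^2+|\gamma|^2$ measured with $H$.
\begin{itemize}
\item Since $\mathcal E_1=\mathcal L K_X^{-1}$ and the first arrow is the identity, the term $|\mathds 1|^2_H$ is exactly $H_0H_1^{-1}$. This is the metric induced by $f$ on $\Tp$.
\item I would use the Hitchin equations $F_{H_i}+[\Phi,\Phi^*]_{ii}=0$ together with the signature constraints ($\Tp$ positive, $\Nm$ negative) to write a Bochner-type inequality for $u=\log(|\mathds 1|^2_H/g_{\mathrm{hyp}})$.
\item The maximum principle should then give $|\mathds 1|^2_H\le g_{\mathrm{hyp}}$. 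This is the analogue of the Collier--Tholozan--Toulisse estimate for maximal surfaces in $\mathbb H^{2,q}$. On compact $X$ the maximum principle is available directly; alternatively, one can use Cheng--Yau on $\widetilde X$.
\item Similarly, $|\alpha|,|\beta|,|\gamma|$ should be bounded by the first term through the holomorphic identities coming from the $\partial$-alternating structure. For $n=1$ this follows from $\tr\Phi^4$-type differentials; in general I would use subharmonicity of $\log|\beta\alpha|$ and similar quantities.
\end{itemize}
These give an upper bound $g_h\le C g_{\mathrm{hyp}}$. A lower bound $g_h\ge g_{\mathrm{hyp}}/C$ would follow from the same inequalities, using that $|\mathds 1|^2_H$ is bounded below once we know it never vanishes (it is $\partial^{1,0}f$) and using cocompactness.

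\textbf{Step 2: negative curvature.} I would compute the Gauss equation for $f$ and for $h$. For a conformal harmonic map into the symmetric space, $K_{g_h}$ equals the sectional curvature of $\mathcal X_{2,n+1}$ on the image plane plus a nonpositive second fundamental form term. The image plane is spanned by $\Phi+\Phi^*$-type elements, so its sectional curvature is $-|[\Phi,\Phi^*]|^2/|\Phi|^4$, up to a positive constant. I expect strict negativity to follow from $[\Phi,\Phi^*]\ne0$, which holds because the $\mathcal E_0$-component $|\mathds 1|^2-|\gamma|^2$ cannot vanish together with the others. This sign bookkeeping is where the alternating structure enters.

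\textbf{Step 3: global bi-Lipschitz embedding. This is the main obstacle.} Bounding $d_{\mathcal X}(h(x),h(y))$ from below by $d_{\mathrm{hyp}}(x,y)/C$ requires a global argument, not just a local one.
\begin{itemize}
\item My first attempt is the approach of Collier--Tholozan--Toulisse and of Zhang. The $\partial$-alternating surface has a distinguished positive line, giving a map $f$ into $\CH^{2,n}$ and hence a projection to the Riemannian factor.
\item I would then show that the composition of $h$ with a $1$-Lipschitz fibration of $\mathcal X_{2,n+1}$, for example the nearest-point projection to a totally geodesic $\mathbb{CH}^1$ determined by $\Tp$, is a local diffeomorphism with controlled derivative. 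The derivative bound comes from Step 1.
\item Alternatively, I would use convexity of the distance function on the CAT(0) space together with the negative curvature from Step 2 and uniform bounds, arguing by contradiction via limits of translates using cocompactness. Injectivity, and hence embedding, follows from the same lower distance bound.
\end{itemize}

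\textbf{Step 4: deducing (2) and (3).} By the \v{S}varc--Milnor lemma, $\pi_1(S)\to\widetilde X$ is a quasi-isometry, and equivariance then transfers the bi-Lipschitz estimate to the orbit map, giving (2). A quasi-isometric embedding has finite kernel, and since $\pi_1(S)$ is torsion-free, $\rho$ is discrete and faithful.

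For (3), the image is discrete and torsion-free, so the action on $\mathcal X_{2,n+1}$ is free and properly discontinuous. To get the bundle structure, I would use the nearest-point retraction onto the embedded surface $h(\widetilde X)$. The quasi-convexity obtained in Step 3 makes this retraction an equivariant fibration with contractible fibers of dimension $\dim\mathcal X_{2,n+1}-2=4n+2$, and each fiber is homeomorphic to $\mathbb R^{4n+2}$ via normal exponential maps. Finally, $h$ descends to an embedded minimal surface in the quotient by equivariance and injectivity.
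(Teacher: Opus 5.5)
\textbf{Step 3 is a genuine gap, and the whole of (1) rests on it.} Neither mechanism you propose can be run as stated.

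The Collier--Tholozan--Toulisse argument works because spacelike surfaces in $\mathbb H^{2,q}$ have maximal spacelike dimension. That makes them graphs over the $2$-dimensional factor of $\mathbb H^{2,q}\approx\mathbb H^2\times S^q$. In $\CH^{2,n}$ the real signature is $(4,2n)$, so there is no such factor over which your surface is a graph.

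For a nearest-point projection $\pi$ onto a fixed totally geodesic $\mathbb{CH}^1\subset\mathcal X_{2,n+1}$, you would need $\pi\circ h$ to be uniformly expanding on all of $\widetilde X$. Nothing in Steps~1--2 controls $\dd\pi$ at points of $h(\widetilde X)$ far from that plane.

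The compactness alternative has no input either. Negative curvature of $h^*g$ is intrinsic and does not prevent $d(h(x_k),h(y_k))=o(d(x_k,y_k))$. Your plan also never uses non-maximality in the quantitative form $\|\beta\|<\|\mathds 1\|$ of Lemma~\ref{lemma:higgsfieldnorm}, and that is what the paper's argument runs on.

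The missing idea is a family of Morse functions indexed by flat $2$-planes.
\begin{enumerate}
\item For type~I and a flat non-negative $2$-plane $\mathbf S=\langle e_1,e_2\rangle$, put $f_{\mathbf S}=\|\pi_{\mathcal E_0\wedge\mathcal E_3}(e_1\wedge e_2)\|_H^2$ on $\widetilde X$.
\item Since $\wedge^2\Phi$ is again cyclic, $\dd f_{\mathbf S}$ involves only the $\mathds 1^{*_H}$-term acting on $\mathbf e_{1,3}$ and the $\beta$-term acting on $\mathbf e_{0,2}$.
\item The uniform gap $\|\mathds 1\|-\|\beta\|\geqslant\epsilon$, together with the wedge-norm comparisons of Lemmas~\ref{lemma:normcomparison1} and~\ref{lemma:normcomparison2}, gives $\|\dd f_{\mathbf S}\|\geqslant\varepsilon_0f_{\mathbf S}$. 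For spacelike $\mathbf S$ it also gives $\|\dd f_{\mathbf S}\|\geqslant\varepsilon_1f_{\mathbf S}^{1/2}$, and the zeros are nondegenerate.
\item A Filip-type gradient-flow argument then yields a unique zero and quadratic-then-exponential growth of $f_{\mathbf S}$ in $d(x_0,x)$.
\item Take $\mathbf S=h(x_0)$, so that $x_0$ is its unique zero. Computing $\|e_1\wedge e_2\|_H^2=\cosh(2\mu_1)\cosh(2\mu_2)$ at $x$ via the Cartan decomposition gives $\sinh^2(2\mu_1)\geqslant f_{\mathbf S}(x)$. This yields a linear lower bound for $d(h(x),h(x_0))$.
\end{enumerate}
The maximal case is reduced to a non-maximal $\mathrm U_{2,3}$-bundle, and type~II follows by duality.

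\textbf{The fibration in (3) also fails as proposed.} $h(\widetilde X)$ is not convex, so nearest-point retraction onto it need not be single-valued, continuous or a submersion. The normal exponential map of a minimal surface in a Hadamard manifold need not be injective, and quasi-convexity does not repair this.

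The paper obtains the bundle from the same functions. Each positive $2$-plane $U$ determines the unique, nondegenerate zero $x_U$ of $f_U$. The map $U\mapsto x_U$ is a smooth equivariant fibration $\mathcal X_{2,n+1}\to\widetilde X$ (Theorem~\ref{thm:p1-geometric-structures}). For type~I its fiber over $x$ is $\{U:U\cap(\mathcal E_1\oplus\mathcal E_2)_x\neq0\}$. In the graph model this is the open operator-norm unit ball in a linear subspace $\cong\mathbb C^{2n+1}$, hence $\cong\mathbb R^{4n+2}$ (Proposition~\ref{prop:FiberBundle}).

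The rest of your outline is fine.
\begin{itemize}
\item (2), and the free, properly discontinuous action, follow from (1) as you say.
\item Step~1 is automatic on closed $X$, because $\Phi$ is nowhere zero.
\item In Step~2, the precise reason $[\Phi,\Phi^{*_H}]\neq0$ is that it is block diagonal. Its traces on $\mathcal E_0$, $\mathcal E_1$ and the rank-one summand among $\mathcal E_2,\mathcal E_3$ would force $|\beta|=|\mathds 1|$. This contradicts Lemma~\ref{lemma:higgsfieldnorm} in the non-maximal case; in the maximal case one uses $\|\alpha\|^2+\|\gamma\|^2<2$ instead.
\end{itemize}
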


\begin{remark}
The construction includes a real subfamily. Under the
standard inclusion
\(\SO_{2,n+1}\hookrightarrow\mathrm U_{2,n+1}\),
followed by projectivization, maximal
\(\SO_{2,n+1}\)-representations have vanishing unitary
Toledo invariant and occur in suitable loci
\(\mathcal R_{[d_j],\upRoman{1}}\). Thus maximality in
the real orthogonal subgroup is compatible with
non-maximality in the ambient unitary group.
\end{remark}

The same geometric structure produces domains of
discontinuity in the Shilov boundary \(\mathcal S_0\)
of \(\mathcal X_{2,n+1}\), identified with the space
of maximal totally isotropic complex \(2\)-planes in
\(\mathbb C^{n+3}\).

\begin{theorem}\label{thm:intro-domains}
For every \([\rho]\in\mathcal R_{[d_j],\tau}\), there
exists a \(\rho(\pi_1(S))\)-invariant open subset
\(\Omega\subset\mathcal S_0\) on which
\(\rho(\pi_1(S))\) acts freely, properly discontinuously,
and cocompactly.

The quotient \(\rho(\pi_1(S))\backslash\Omega\) is
the total space of a smooth fiber bundle over \(S\)
with fiber homeomorphic to
\(S^{2n-1}\times S^{2n-1}\). For fixed \(\tau\),
the isomorphism class of this fiber bundle is determined
by the degree data \([d_i]\).
\end{theorem}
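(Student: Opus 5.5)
The plan is to follow the Guichard--Wienhard/Collier--Tholozan--Toulisse strategy: build a fibration of a candidate domain over \(\widetilde X\) from the \(\partial\)-alternating surface \(f\colon\widetilde X\to\CH^{2,n}\) of Theorem~\ref{thm:baseSpecialSignatureintro}. Over each point \(x\in\widetilde X\) we have the orthogonal splitting of \(\mathbb C^{n+3}\) into the line \(f(x)\) (negative), \(\Tp\) (positive), \(\Nm\) (negative, rank \(n\)) and \(\Np\) (positive, rank \(1\)) in type \(\upRoman{1}\). More generally the data \(\mathcal E_0,\dots,\mathcal E_3\) give a positive \(2\)-plane \(P_x\) and a negative \((n+1)\)-plane \(P_x^\perp\). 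The latter is exactly the Gauss map \(h_\rho(x)\in\mathcal X_{2,n+1}\).

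For \(x\in\widetilde X\) define
\[
F_x:=\bigl\{\,W\in\mathcal S_0 : W\cap \ell_x\neq0\ \text{for a suitable distinguished subspace }\ell_x\,\bigr\}.
\]
The natural choice, mirroring the \(\mathbb H^{2,q}\) case, is the set of maximal isotropic \(2\)-planes that are graphs over \(P_x\) and contain a vector in a fixed rank-\(2\) complex direction built from \(\mathcal E_0\oplus\mathcal E_1\). I would fix \(F_x\) so that it is a compact submanifold of \(\mathcal S_0\) diffeomorphic to \(S^{2n-1}\times S^{2n-1}\). Concretely, an isotropic \(2\)-plane meeting the relevant negative part is determined by two unit vectors in a complex \(n\)-dimensional space, giving two copies of \(S^{2n-1}\). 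Then set
\[
\Omega:=\bigcup_{x\in\widetilde X}F_x .
\]
By equivariance of \(f\), \(\Omega\) is \(\rho(\pi_1(S))\)-invariant.

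The proof then has three steps.

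\textbf{Step 1 (disjointness).} Show \(F_x\cap F_y=\emptyset\) for \(x\neq y\). This reduces to a sign estimate: \(\langle f(x),f(y)\rangle\) and the related Hermitian pairings between the splittings at \(x\) and \(y\) have a definite sign. That sign should come from the global embedding and distance control behind Theorem~\ref{thm:intro-geometry}(1), namely the \(\partial\)-alternating maximum-principle estimates along geodesics in \(\widetilde X\).

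\textbf{Step 2 (\(\Omega\) is open and \(\Omega\to\widetilde X\) is a submersion).} By dimension count, \(\dim_{\mathbb R}\mathcal S_0=2+2(2n-1)=4n\). Show that the differential of the family \(x\mapsto F_x\) is injective transversally. This uses the alternating structure of the second fundamental form: the derivative of the \(\mathcal E_i\) frame is given by \(\Phi\) and \(\Phi^{*_H}\), whose cyclic pattern \(\mathds 1,\alpha,\beta,\gamma\) makes the normal variation nondegenerate. Invariance of domain plus Step 1 then gives a homeomorphism \(\Omega\cong\widetilde X\times S^{2n-1}\times S^{2n-1}\), with a smooth projection \(\pi\colon\Omega\to\widetilde X\) that is \(\rho\)-equivariant.

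\textbf{Step 3 (proper, free, cocompact action).} Freeness and properness follow because \(\pi\) is equivariant and \(\pi_1(S)\) acts freely and properly on \(\widetilde X\). Cocompactness follows since the fibers are compact and \(S\) is closed. Hence \(\rho(\pi_1(S))\backslash\Omega\to S\) is a smooth \(S^{2n-1}\times S^{2n-1}\)-bundle.

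For the isomorphism class, I would identify this bundle with the unit sphere bundles of the relevant Hermitian vector bundles, essentially \(S(\Hom(\mathcal E_0,\mathcal E_2))\times_S S(\Hom(\mathcal E_1,\mathcal E_2))\) or its type \(\upRoman{2}\) analogue. Their Chern classes are determined by \([d_i]\), and tensoring by a line bundle does not change them. Since the sphere bundle of a rank-\(n\) complex bundle over a surface depends only on its degree, this settles the last claim.

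The main obstacle is Step 1, the global disjointness. Local immersion estimates do not give it. I would first try to prove an analogue of the ``acausality'' of maximal surfaces in \(\mathbb H^{2,q}\): a strict Hermitian inequality between points of \(f(\widetilde X)\), derived from a convexity or maximum-principle argument for \(\log|\langle f(x),f(y)\rangle|\) along curves.
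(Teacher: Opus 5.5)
\textbf{There is a genuine gap.} Your overall architecture matches the paper's: fibres $F_x\subset\mathcal S_0$ indexed by $x\in\widetilde X$, with $\Omega=\bigcup_xF_x$; freeness, properness and cocompactness from an equivariant projection $\Omega\to\widetilde X$; and the bundle class read off from the graded bundle. But the two ingredients that carry the proof are wrong or missing.

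\emph{The fibre.} $F_x$ is never actually defined, and the candidates you mention fail. An isotropic plane cannot meet a negative-definite subspace. The condition ``graph over $P_x$'' holds for every $W\in\mathcal S_0$, so it imposes nothing. The one concrete condition you name is that $W$ contains a vector of the rank-two space $(\mathcal E_0\oplus\mathcal E_1)_x$, which has signature $(1,1)$. Then $W$ must contain one of a circle of null lines there, and is afterwards determined by a null line in a space of signature $(1,n)$. So this fibre has real dimension $2n$, and $\bigcup_xF_x$ has dimension at most $2n+2<4n=\dim_{\mathbb R}\mathcal S_0$ once $n>1$; it cannot be open. You need a real-codimension-two incidence condition, i.e.\ meeting a subspace of dimension $n+1$. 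The correct condition in type~$\upRoman{1}$ (type~$\upRoman{2}$ by duality) is $W\cap(\mathcal E_1\oplus\mathcal E_2)_x\neq0$. Equivalently, the single complex coordinate $s_W(x)$ vanishes, where $s_W(x)$ is the $\mathcal E_0\wedge\mathcal E_3$-component of the flatly transported Pl\"ucker vector of $W$. With this choice $F_x$ consists of graphs of isometric embeddings $A\colon(\mathcal E_1\oplus\mathcal E_3)_x\to(\mathcal E_0\oplus\mathcal E_2)_x$ with $A(\mathcal E_1)\subset\mathcal E_2$, and this is $S^{2n-1}\times S^{2n-1}$.

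\emph{Disjointness and openness.} Your Step~1 is only a hope. A sign estimate for $\langle f(x),f(y)\rangle$ does not obviously control a single isotropic plane meeting both $(\mathcal E_1\oplus\mathcal E_2)_x$ and $(\mathcal E_1\oplus\mathcal E_2)_y$. The missing idea is to fix $W$ and vary $x$, using $f_W(x)=\|s_W(x)\|_H^2$:
\begin{itemize}
\item Differentiate $f_W$ with the flat connection $\nabla^h+\Phi+\Phi^{*_h}$ on $\wedge^2\mathcal E$. Only $\mathds{1}^{*_h}$ (out of $\mathcal E_1\wedge\mathcal E_3$) and $\beta$ (out of $\mathcal E_0\wedge\mathcal E_2$) feed the $\mathcal E_0\wedge\mathcal E_3$-component.
\item For non-negative planes, Lemmas~\ref{lemma:normcomparison1} and~\ref{lemma:normcomparison2} give $\|\mathbf e_{1,3}\|_H\geq\|\mathbf e_{0,2}\|_H$ and $2\|\mathbf e_{1,3}\|_H^2\geq\|\mathbf e_{0,3}\|_H^2$.
\item Together these yield $\|\dd f_W\|_g\geq(\|\mathds{1}\|-\|\beta\|)f_W$.
\item The gap $\|\mathds{1}\|-\|\beta\|\geq\epsilon>0$ is exactly where non-maximality enters (Lemma~\ref{lemma:higgsfieldnorm}). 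Your outline never uses it.
\item Filip's index theorem (Theorem~\ref{thm:index}) then says $f_W$ has at most one zero; this is your disjointness.
\item The same strict inequality makes the Hessian at a zero nondegenerate. So $\dd_xs_W\colon T_x\widetilde X\to(\mathcal E_0\wedge\mathcal E_3)_x$ is an isomorphism and $(x,W)\mapsto W$ is a local diffeomorphism, which is what your Step~2 needs.
\end{itemize}

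Finally, your explicit identification of the quotient with a fibre product of sphere bundles is unjustified. The fibre is naturally a sphere bundle over $S(\Hom(\mathcal E_1,\mathcal E_2))_x$: given $A|_{\mathcal E_1}=u$, the restriction $A|_{\mathcal E_3}$ is a unit vector in $\Hom(\mathcal E_3,\mathcal E_0\oplus u(\mathcal E_1)^{\perp})$. Splitting this as a product requires identifying $\mathcal E_0$ with $\mathcal E_1$, which is impossible globally since $\Hom(\mathcal E_1,\mathcal E_0)\cong\mathcal K_X$. The identification is also unnecessary. The quotient is the associated bundle $P\times_KF_0$, with $K$ the stabiliser of $(\mathcal E_1\oplus\mathcal E_2)_{x_*}$. $P$ reduces to the unitary frame bundle of the splitting, whose topological type depends only on $[d_i]$.
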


Fiber-bundle structures on quotients of domains of
discontinuity have been studied for certain deformations
through Anosov representations
\cite{alessandrini2025fiber}. Geometric constructions
using quasi-geodesic immersions provide another approach
to such fibrations \cite{davalo2025nearly}. Our construction
identifies the fiber explicitly and relates the isomorphism
class of the quotient bundle to the degree data of the
Higgs bundle.

Theorems~\ref{thm:intro-geometry}
and~\ref{thm:intro-domains} require no Anosov hypothesis.
When \(n=1\), the representations under consideration
are known to be Anosov by the results discussed in
\cite{burger2010surface,zhang2025non}; see
\prettyref{rem:q=1}. For \(n>1\), we do not establish
their Anosov property.

\subsection{Degree strata and dimensions}

We now describe the degree strata underlying the
construction. The degrees satisfy
\(d_1=d_0-(2g-2)\). Let
\[
\mu=\frac{d_0+d_1+d_2+d_3}{n+3}
\]
be the slope of \(\mathcal E\), and define
\[
\begin{aligned}
u_{\upRoman{1}}
&=d_1-d_3,
&
v_{\upRoman{1}}
&=d_0-d_2+(n-1)\mu,
\\
u_{\upRoman{2}}
&=d_2-d_0,
&
v_{\upRoman{2}}
&=d_3-d_1-(n-1)\mu.
\end{aligned}
\]
These quantities are unchanged by tensoring all
summands with a common line bundle.

Let \(\mathscr D_\tau\) be the set of admissible degree
vectors satisfying
\[
-4(g-1)
\leqslant u_\tau
\leqslant v_\tau
\leqslant4(g-1),
\]
and let \(\mathscr D_\tau^{\mathrm s}\) be the subset
satisfying
\[
\begin{cases}
-4(g-1)
\leqslant u_\tau
\leqslant v_\tau
\leqslant4(g-1),
& n=1,\\[1mm]
-4(g-1)
\leqslant u_\tau
< v_\tau
<4(g-1),
& n>1.
\end{cases}
\]

\begin{theorem}\label{thm:intro-moduli}
Fix \(\tau\in\{\upRoman{1},\upRoman{2}\}\) and an
admissible degree vector \((d_i)\). Then:
\begin{enumerate}
\item
The moduli space \(\mathcal M_{(d_j),\tau}\) is
nonempty if and only if \((d_j)\in\mathscr D_\tau\).

\item
The stable locus \(\mathcal M^{\mathrm s}_{(d_j),\tau}\)
is nonempty if and only if
\((d_j)\in\mathscr D_\tau^{\mathrm s}\).

\item
When nonempty, \(\mathcal M_{[d_j],\tau}\) and its
image \(\mathcal R_{[d_j],\tau}\) both have real dimension
\[
N_\tau
=
(n^2+2n+7)(2g-2)
+
2(1-n)u_\tau.
\]
Moreover, \(\mathcal R_{[d_j],\tau}\) is smooth at
every point corresponding to an irreducible
representation.
\end{enumerate}
\end{theorem}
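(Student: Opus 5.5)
Fix \(X\) and work fiberwise; the joint moduli space over \(\mathcal T(S)\) adds \(6g-6\) real dimensions at the end. By tensoring with a line bundle we may normalize \(\mathcal E_0=\mathcal L\). The plan has four steps: translate polystability into degree inequalities, solve the resulting existence problem, compute the dimension by deformation theory, and transport smoothness to \(\mathcal R_{[d_j],\tau}\).

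\textbf{Step 1: stability as degree inequalities.} A \(\PU_{2,n+1}\)-Higgs bundle of the given cyclic form has \(\Phi\) with components \(\mathds 1,\alpha,\beta,\gamma\). A \(\Phi\)-invariant subbundle can be taken graded, so it is a direct sum of subbundles \(F_i\subset\mathcal E_i\) compatible with the arrows. For type \(\upRoman{1}\) the pieces \(\mathcal E_0,\mathcal E_1,\mathcal E_3\) are lines, and the only higher-rank piece is \(\mathcal V\) of rank \(n\).
\begin{itemize}
\item The key invariant subbundles are \(\mathcal E_1\oplus\operatorname{im}(\alpha)^{\mathrm{sat}}\oplus\mathcal W\) and \(\mathcal W\oplus\mathcal L\oplus\mathcal LK^{-1}\oplus\operatorname{im}\alpha\), together with \(\ker\beta\subset\mathcal V\) and its variants.
\item The slope inequalities for these subbundles should reduce to \(u_{\upRoman{1}}\leqslant v_{\upRoman{1}}\).
\item Nonvanishing of \(\gamma\in H^0(\mathcal W^{-1}\mathcal LK)\) and of \(\beta\alpha\) gives degree inequalities which, combined with Milnor--Wood on the \(\mathcal X_{2,n+1}\) side, yield \(-4(g-1)\leqslant u\) and \(v\leqslant4(g-1)\).
\end{itemize}
Type \(\upRoman{2}\) is dual, with \(\mathcal W\) of rank \(n\), and I would handle it by the analogous duality \(\mathcal E\mapsto\mathcal E^*\) reversing the cycle. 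Strict inequalities should correspond to stability. The exception \(n=1\) arises because there the extremal invariant subbundles exist only when \(\alpha\) or \(\gamma\) vanishes, and such bundles are still stable.

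\textbf{Step 2: existence.} Conversely, for each degree vector in \(\mathscr D_\tau\) (resp. \(\mathscr D_\tau^{\mathrm s}\)) I construct explicit examples. Take \(\mathcal V=M\oplus\mathcal V'\), where \(\alpha,\beta\) factor through a line \(M\), and choose \(\mathcal V'\) stable of the right degree. Generic sections, which exist once the relevant line bundle degrees are nonnegative, realize the boundary and interior cases, and a generic stable \(\mathcal V\) gives stability.

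\textbf{Step 3: dimension.} At a stable point, the deformation complex is the \(\mathbb C^*\)-weight-zero part of the hypercohomology of
\[
\End_0(\mathcal E)\xrightarrow{\operatorname{ad}\Phi}\End_0(\mathcal E)\otimes K,
\]
restricted to the \(\mathfrak u_{2,n+1}\) real form. That is,
\[
C^0=\bigoplus_i\End(\mathcal E_i)\big/\mathbb C,\qquad C^1=\bigoplus_i\Hom(\mathcal E_i,\mathcal E_{i+1})K .
\]
Stability gives \(\mathbb H^0=0\). Identifying \(\mathbb H^2\) with the dual of the weight-\((-1)\) part shows that it vanishes, since nonzero elements would be \(\Phi\)-commuting maps of negative weight. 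Riemann--Roch then gives
\[
\dim_{\mathbb C}=\chi(C^1)-\chi(C^0),
\]
which is a linear function of the \(d_i\). Simplifying should give \((n^2+2n+7)(g-1)-(n-1)u_\tau\) complex dimensions once the Teichmüller factor is added. This linear-algebra bookkeeping, and checking that the \(\mathbb H^2\)-vanishing argument actually applies, is the routine core. Dimensions at strictly polystable points follow by semicontinuity/closure.

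\textbf{Step 4: the image \(\mathcal R_{[d_j],\tau}\).} I prove that NAH restricted to \(\mathcal M^{\mathrm s}\) is an immersion. Its differential in the \(X\)-direction is injective modulo the fiber, by the standard argument that \(\mathbb C^*\)-fixed loci together with varying the complex structure are transverse: the Hopf differential vanishes, and the Beltrami pairing with \(\operatorname{tr}(\Phi^2)\) needs the finer quadratic term. Along fibers, NAH is a diffeomorphism onto its image in the smooth part of \(\mathfrak X\). Injectivity up to finite ambiguity comes from Theorem~\ref{thm:intro-geometry}, since the minimal map determines \(X\) as its conformal structure, and Labourie-type uniqueness of conformal harmonic maps would give this. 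I expect this step to be the main obstacle: showing NAH is an injective immersion of the joint moduli space, and hence that \(\mathcal R\) is smooth of dimension \(N_\tau\) at irreducible points.
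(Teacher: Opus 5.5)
\textbf{The main gap is Step 2: for $n\geqslant2$ it never produces a stable object.}

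If $\alpha,\beta$ factor through a line $M\subset\mathcal V=M\oplus\mathcal V'$, then $\beta|_{\mathcal V'}=0$, so $(\mathcal V',0)$ is a $\Phi$-invariant graded direct summand. This is exactly the paper's description of the strictly polystable points over $\mathscr D_\tau\setminus\mathscr D_\tau^{\mathrm s}$ (Proposition~\ref{prop:connectedness}(iii)), so your construction settles only that boundary case.

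In general, when $\gamma\neq0$ and $\beta\alpha\neq0$, every proper $\Phi$-invariant graded subbundle either lies in $P=\ker\beta$ or contains $C=\mathcal L\oplus\mathcal LK_X^{-1}\oplus A\oplus\mathcal W$, where $A$ is the saturated image of $\alpha$. Stability therefore needs two things: every subsheaf of $P$ has slope $<\mu$, and $\deg C+\deg G<(4+\operatorname{rk}G)\mu$ for every proper $G\subset\mathcal V/A$, including $G=0$. If $\mathcal V=A\oplus P$ splits, these requirements on $P$ and on $C$ cannot both hold, because $\deg P+\deg C=(n+3)\mu$.

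Appealing to ``a generic stable $\mathcal V$'' does not close this. Stability of $\mathcal V$ gives no control of $\ker\beta$ or $\mathcal V/A$ against $\mu$. Even the existence of a nonzero $\alpha$ is doubtful: in type~$\upRoman{1}$ one computes
\[
\chi(\mathcal V\mathcal L^{-1}K_X^2)=(2n+1)(g-1)-\tfrac14\bigl((n-1)u_\tau+(n+3)v_\tau\bigr),
\]
which is negative near the corner $u_\tau,v_\tau\to4(g-1)$ of $\mathscr D_\tau^{\mathrm s}$ once $g$ is large.

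The idea you are missing is the paper's non-split construction:
\begin{itemize}
\item Choose $Q\in H^0(K_X^4)\setminus\{0\}$ and split off the divisor of $\gamma$. This gives a map $A=\mathcal LK_X^{-2}\to B=\mathcal WK_X$ whose cokernel $T$ is torsion of length $b+c$, where $b=4(g-1)-v_\tau$ and $c=v_\tau-u_\tau$.
\item Use elementary transformations (Lemma~\ref{lem:Hecke}) to get stable $P,R$ of rank $n-1$ with $0\to P\to R\to T\to0$.
\item Set $\mathcal V=\ker(R\oplus B\to T)$. Then $0\to A\to\mathcal V\to R\to0$ and $0\to P\to\mathcal V\xrightarrow{\beta}B\to0$, with $\deg P=(n-1)\mu-c$, $\deg R=(n-1)\mu+b$ and $\deg C=4\mu-b$.
\end{itemize}
The torsion $T$ is exactly what shifts degree from $\ker\beta$ to $\mathcal V/A$, and stability of $P$ and $R$ then gives both bounds simultaneously.

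\textbf{Step 1 also assigns the inequalities incorrectly.} Non-vanishing of $\beta\alpha$ only yields $u_\tau\leqslant4(g-1)$. Milnor--Wood is equivalent to $0\leqslant v_\tau-u_\tau\leqslant8(g-1)$, since $\operatorname{Tol}=v_\tau-u_\tau-4(g-1)$. Neither gives $v_\tau\leqslant4(g-1)$.

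In Lemma~\ref{lem:StableNecessary} the three bounds come from different sources. The bound $u_\tau\leqslant v_\tau$ comes from $\deg\ker\beta\leqslant(n-1)\mu$. The bound $v_\tau\leqslant4(g-1)$ comes from $\deg C\leqslant4\mu$ together with $\deg A\geqslant d_0-4(g-1)$. The bound $u_\tau\geqslant-4(g-1)$ comes from $\gamma\neq0$. Separate invariant-subbundle arguments handle the cases where $\gamma$, $\alpha$ or $\beta$ vanishes.

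Your first listed subbundle $\mathcal LK_X^{-1}\oplus A\oplus\mathcal W$ is not $\Phi$-invariant unless $\gamma=0$. The $n=1$ exception is not about vanishing arrows either. For $n=1$ all three bounds come from nonzero maps of line bundles, and when all arrows are nonzero there is no proper invariant graded subbundle at all.

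Two smaller points:
\begin{itemize}
\item Semicontinuity cannot give dimensions over $\mathscr D_\tau\setminus\mathscr D_\tau^{\mathrm s}$ for $n\geqslant2$, because there are no stable points there. The paper's count, Corollary~\ref{cor:Dimension}(ii), concerns $\mathscr D_\tau^{\mathrm s}$.
\item In Step 4 the paper does not prove injectivity. It quotes the Collier--Toulisse--Wentworth immersion theorem for Higgs bundles with $\varphi_{\overline0}$ nowhere vanishing (Proposition~\ref{prop:immersion}) and subtracts $\dim\operatorname{Pic}_0(X)$. Your transversality sketch is not yet an argument, since these bundles are fixed only by the fourth roots of unity, not by $\mathbb C^*$. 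A Labourie-type uniqueness of equivariant conformal harmonic maps is not established for these representations.
\end{itemize}
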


The dimension statement allows the complex structure
on \(S\) to vary and describes the images of the joint
moduli spaces in the character variety, not merely
the Higgs-bundle moduli spaces over a fixed Riemann
surface. Thus the degree data control both the
dimensions of the representation loci and the topology
of the boundary quotients, while the surface
correspondence governs their metric geometry.

We conclude with two conjectures. The first concerns
the \(\partial\)-alternating immersions themselves,
rather than their Gauss maps; the second strengthens
the quasi-isometric embedding property of the
associated representations.

\begin{conj}
\begin{enumerate}
\item
Every \(\partial\)-alternating immersion into
\(\CH^{2,n}\) or \(\CH^{n+1,1}\) is an embedding.

\item
Every representation in \(\mathcal R_{[d_j],\tau}\)
is Anosov.
\end{enumerate}
\end{conj}

\subsection*{Organization}
In Section \ref{sec:cyclicHitchinequation}, we review $4$-cyclic harmonic bundles and analyze the associated Hitchin equations. In Section \ref{sec:QuasiIsometry}, we show the quasi-isometric property of associated representatiosn and the bi-Lipschitz property of the associated minimal map. In Section \ref{sec:GeometricStructure}, we construct domain of discontinuity for $4$-cyclic harmonic bundles. In Section \ref{sec:AlternatingSurface}, we study $\partial$-alternating surfaces in $\CH^{p,q}$ and show their correspondence with $4$-cyclic harmonic bundles. In Section \ref{sec:Moduli}, we study the moduli space of $4$-cyclic Higgs bundles and the associated sublocus in the character variety. 

\subsection*{Declaration of AI use}
The small-scale estimate in \prettyref{lemma:exponentialgrowth} is suggested by Gemini 3.1 Pro. We also use ChatGPT 5.5 Pro for filling  the proof of \prettyref{prop:real-part} and check the diffeomorphic type of the fibre in \prettyref{prop:FiberBundle}. ChatGPT 5.6 Sol is used to improve the language and format of Abstract and Introduction.
 
The main text and the appendices of the manuscript were composed by the authors. All mathematical statements and cited references in the manuscript were independently checked by the authors. The authors take full responsibility for the content of the manuscript.

\subsection*{Acknowledgement}
J. Zhang would like to thank Brian Collier and Filippo Mazzoli for lots of helpful discussions on maximal spacelike immersions into the pesudo-hyperbolic space $\mathbb{H}^{2,n}$.

Both authors are partially supported by the National Key R\&D
Program of China No. 2022YFA1006600, the Fundamental Research Funds for the
Central Universities, and Nankai Zhide foundation. Q. Li is sponsored by the
Alexander von Humboldt Foundation. J. Zhang is supported by NSF of China grant No. 125B2007.

\section{\texorpdfstring{$4$}{4}-cyclic harmonic bundles}\label{sec:cyclicHitchinequation}

\subsection{\texorpdfstring{$4$}{4}-cyclic harmonic bundles}
\label{sec:hbundle}
\begin{definition}
 A \emph{$4$-cyclic $\mathrm{U}_{m,n+1}$-Higgs bundle} $(\Ecal,\Phi)$ on $X$ is of the form 
 \begin{equation}\label{eq:fourcycle}(\EE,\Phi)=\begin{tikzcd}
	{\mathcal{L}} & {\mathcal{L}\mathcal{K}_X^{-1}} & {\mathcal{V}} & {\mathcal{W}}
	\arrow["{\mathds{1}}"', from=1-1, to=1-2]
	\arrow["\alpha"',  from=1-2, to=1-3]
	\arrow["\beta"',  from=1-3, to=1-4]
	\arrow["\gamma"', curve={height=19pt}, from=1-4, to=1-1]
\end{tikzcd}
\end{equation}
where
\[
  \mathds{1}\in
  H^{0}\bigl(\Hom(\Lcal,\Lcal\KK_X^{-1})\otimes\KK_X\bigr)
  =
  H^{0}(\Ocal),
  \]
\[
  \alpha\in
  H^{0}\bigl(\Hom(\Lcal\KK_X^{-1},\Vcal)\otimes\KK_X\bigr),
  \qquad
  \beta\in
  H^{0}\bigl(\Hom(\Vcal,\Wcal)\otimes \KK_X\bigr),\qquad
  \gamma\in
  H^{0}\bigl(\Hom(\Wcal,\Lcal)\otimes \KK_X\bigr).
\]
The ranks of the summands are
\[
  \rank(\Lcal)=\rank(\Lcal\KK_X^{-1})=1,\qquad
  \rank(\Vcal)=n,\qquad
  \rank(\Wcal)=m-1.
\]
\end{definition}

\begin{definition}\label{defn:harmonic}
Fix a K\"ahler form $\omega$ on $X$ with $\int_X\omega=1$. A \emph{$4$-cyclic $\mathrm{U}_{m,n+1}$-harmonic bundle} on $X$ is a triple $(\Ecal,\Phi, h)$ where $(\Ecal, \Phi)$ is a $4$-cyclic
$\mathrm{U}_{m,n+1}$-Higgs bundle and $h$ is an Hermitian metric satisfying 
\[
  h=h_\Lcal\oplus h_{\Lcal K^{-1}}\oplus h_\Vcal\oplus h_\Wcal
\]
and the Hitchin equation \[F(\nabla^h)+[\Phi,\Phi^{*_h}]=2\pi\iu\mu\cdot\omega\otimes\id_\EE.\]
\end{definition}

The center of $\mathrm{U}_{m,n+1}$ is the diagonal copy of $\mathrm{U}_1$. Hence
tensoring by a holomorphic line bundle $\Fcal$ acts on the Higgs bundle
\[
  (\Ecal,\Phi)
  \longmapsto
  (\Ecal\otimes\Fcal,\Phi).
\]
Under this operation, all four cyclic summands are tensored by the
same line bundle $\Fcal$. Therefore the ratios
\[
  \Lcal \KK_X^{-1}\otimes\Lcal^{-1}\cong \KK_X^{-1},
  \qquad
  \Vcal\otimes\Lcal^{-1},
  \qquad
  \Wcal\otimes\Lcal^{-1}
\]
are invariant. The spaces in which the arrows
$\mathds 1,\alpha,\beta,\gamma$ live depend only on these ratios.
We define \emph{$4$-cyclic $\PU_{m,n+1}$-Higgs bundle} on $X$ as an equivalence class of $4$-cyclic $\mathrm{U}_{m,n+1}$-Higgs bundles modulo tensoring by holomorphic line bundles.

If we restrict to choose a holomorphic line bundle $\Fcal$ equipped with a projectively flat Hermitian metric $h_0$, 
tensoring by a $(\Fcal, h_0)$ acts as:
\[
  (\Ecal,\Phi,h)
  \longmapsto
  (\Ecal\otimes\Fcal,\Phi,h\otimes h_0).
\] We define a $4$-cyclic
$\PU_{m,n+1}$-harmonic bundle as an equivalence class of
$4$-cyclic $\mathrm{U}_{m,n+1}$-harmonic bundles modulo tensoring by projectively flat line bundles.

\subsection{Estimate of Hitchin equation}
Let \(z=x+iy\) be a local holomorphic coordinate on \(X\). We write
\(g=\tilde g\,|dz|^2\), i.e.\ the real Riemannian metric
\(\tilde g(dx^2+dy^2)\). Its complex bilinear extension satisfies
\(g_{\mathbb C}(\partial_z,\partial_{\bar z})=\tilde g/2\). The associated
Hermitian metric on \(T^{1,0}X\) is normalized by
\(h_{T^{1,0}X}(\partial_z,\partial_z)=\tilde g\); equivalently,
\(h_{T^{1,0}X}(\xi,\eta)=2g_{\mathbb C}(\xi,\overline{\eta})\). Hence
\(|dz|_g^2=\tilde g^{-1}\).

If \((E,h)\) is Hermitian and \(s\in\Gamma(\End E)\), set
\(|s|_h^2=\tr(ss^{*_h})\). For
\(\sigma=s\,dz^{\otimes p}\otimes d\bar z^{\otimes q}\), set
$\|\sigma\|_{h,g}^2=|s|_h^2\tilde g^{-(p+q)}.$
When \(h\) is clear, we write \(\|\sigma\|_g\). If
\(\alpha=s\,dz\in\Omega^{1,0}(\End(E))\), we use
\[
  |\alpha|_h^2:=|s|_h^2\cdot |dz|^2
\]
to denote the induced possibly-degenerate conformal metric.

We use
\[
\omega_g=i\tilde g\,dz\wedge d\bar z=2\,\mathrm{vol}_g,\qquad
\Delta_g=\tilde g^{-1}\partial_z\partial_{\bar z},\qquad
K_g=-2\tilde g^{-1}\partial_z\partial_{\bar z}\log\tilde g .
\]
The contraction \(\Lambda_g\) is defined by
\(\eta=(\Lambda_g\eta)\omega_g\); equivalently, if
\(\eta=a\,dz\wedge d\bar z\), then $\Lambda_g\eta=\frac{a}{i\tilde g}.$
For a Hermitian holomorphic line bundle \((L,h)\), if \(h=e^\varphi\), then
$F_h=\bar\partial\partial\varphi.$ In particular, for the metric
$h_{T^{1,0}X}$ on $\KK_X^{-1}$,
\[
F_{\KK_X^{-1}}=\bar\partial\partial\log\tilde g,\qquad
i\Lambda_gF_{\KK_X^{-1}}=K_g/2.
\]
With this convention, if \(\alpha=s\,dz\in\Omega^{1,0}(\End(E))\),
\[i\Lambda_g\tr(\alpha\wedge\alpha^{*_h})=\|\alpha\|_{h,g}^2.\]

We recall the following formula as a slight modification of \cite[Lemma 2.4]{mochizuki2016asymptotic}.
\begin{lemma}[{\protect\cite[Lemma 3.3]{dai2024bounded}}]
\label{lemma:EquationSection}

Let $(\mathcal E, \Phi, H)$ be a harmonic bundle. Locally, $\Phi=fdz$ for a local holomorphic section $f$ of $\End(\mathcal E)$. 
For a holomorphic section $s$ of $\End(\mathcal E)$, we have
\[\partial_z\partial_{\bar z}\log |s|_H^2=\frac{|[s, f^{*_H}]|_H^2-|[s,f]|_H^2}{|s|_H^2}+\frac{|s|_H^2|\partial_z s|_H^2-|\langle\partial_z s, s\rangle|^2}{|s|_H^4}\geqslant\frac{|[s, f^{*_H}]|_H^2-|[s,f]|_H^2}{|s|_H^2}.\]
\end{lemma}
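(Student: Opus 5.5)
We compute \(\partial_z\partial_{\bar z}\log|s|_H^2\) directly. The plan is to write \(|s|^2_H=\langle s,s\rangle\), differentiate twice using the Chern connection on \(\End(\mathcal E)\), and then substitute the Hitchin equation to express the curvature term through \(\Phi\). Throughout we use the trace inner product \(\langle a,b\rangle=\tr(ab^{*_H})\). Let \(\partial_z,\partial_{\bar z}\) also denote the components of the Chern connection \(\nabla^H\) acting on \(\End(\mathcal E)\).

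\textbf{Step 1: the derivatives.} Since \(s\) is holomorphic, \(\partial_{\bar z}s=0\). Metric compatibility then gives
\[
\partial_{\bar z}|s|^2=\langle s,\partial_z s\rangle .
\]
Differentiating once more,
\[
\partial_z\partial_{\bar z}|s|^2=|\partial_z s|^2+\langle s,\partial_{\bar z}\partial_z s\rangle .
\]
By holomorphicity of \(s\),
\[
\partial_{\bar z}\partial_z s=[\partial_{\bar z},\partial_z]s=-[F_{z\bar z},s],
\]
where \(F=F_{z\bar z}\,dz\wedge d\bar z\).

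\textbf{Step 2: substitute the Hitchin equation.} Write \(\Phi^{*_H}=f^{*_H}d\bar z\). The Hitchin equation reads
\[
F_{z\bar z}+[f,f^{*_H}]=c\cdot\mathrm{id}
\]
for a scalar \(c\). The central term commutes with \(s\), so it drops out and
\[
\langle s,\partial_{\bar z}\partial_z s\rangle=\langle s,[[f,f^{*_H}],s]\rangle .
\]
Next we use the Jacobi identity together with the adjointness rule \(\langle [a,b],c\rangle=\langle b,[a^{*},c]\rangle\). Here \(s\) and \(f\) commute up to \([s,f]\), and \(f\) is holomorphic. This yields
\[
\langle s,[[f,f^{*}],s]\rangle=|[s,f^{*}]|^2-|[s,f]|^2 .
\]
This is the step most prone to sign errors. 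I would verify it by expanding the traces explicitly,
\[
\tr\bigl([[f,f^*],s]s^*\bigr),
\]
and comparing term by term with the expansions of \(\tr\bigl([s,f^*][s,f^*]^*\bigr)\) and \(\tr\bigl([s,f][s,f]^*\bigr)\). The overall sign of the curvature convention has to be matched to the Hitchin equation as stated in Definition~\ref{defn:harmonic}.

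\textbf{Step 3: pass to the logarithm.} We have
\[
\partial_z\partial_{\bar z}\log|s|^2=\frac{\partial_z\partial_{\bar z}|s|^2}{|s|^2}-\frac{|\partial_z|s|^2|^2}{|s|^4}.
\]
By Step 1, \(\partial_z|s|^2=\langle\partial_z s,s\rangle\). Substituting Steps 1 and 2 gives exactly the stated identity. The final inequality follows because
\[
|s|^2|\partial_z s|^2-|\langle\partial_z s,s\rangle|^2\geqslant0
\]
by the Cauchy–Schwarz inequality.

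The main obstacle is Step 2: keeping track of the sign conventions for \(F\) versus \([\Phi,\Phi^*]\) so that the combination comes out as \(|[s,f^*]|^2-|[s,f]|^2\) and not its negative. Everything else is routine.
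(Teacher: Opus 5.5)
Your proposal is correct and is the standard direct computation behind the cited result \cite[Lemma 3.3]{dai2024bounded}; the paper itself only cites it and gives no proof. The steps are: differentiate $|s|_H^2$ twice using $\nabla_{\bar z}s=0$, replace $-[F_{z\bar z},s]$ by $[[f,f^{*_H}],s]$ via the Hitchin equation (the central term drops out), then take the logarithm and apply Cauchy--Schwarz.

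One small clarification on Step 2: the identity $\langle s,[[f,f^{*_H}],s]\rangle=|[s,f^{*_H}]|_H^2-|[s,f]|_H^2$ holds pointwise for arbitrary endomorphisms. Jacobi gives $[[f,f^{*}],s]=[f,[f^{*},s]]-[f^{*},[f,s]]$, and your adjoint rule turns the two terms into $|[f^{*},s]|^2$ and $|[f,s]|^2$. So your remarks about $s$ and $f$ ``commuting up to $[s,f]$'' and about holomorphicity of $f$ play no role.
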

As an immediate corollary, we obtain
\begin{lemma}\label{lemma:EquationSectionCurvature}Let $(\mathcal E, \Phi, H)$ be a harmonic bundle and $g$ be background Hermitian metric.
For a holomorphic section $s$ of $\End(\mathcal E)\otimes \mathcal K_X$, we have 
\[\Delta_g\log \|s\|_{H,g}^2\geqslant \frac{\|[s, \Phi^{*_H}]\|_{H,g}^2-\|[s,\Phi]\|_{H,g}^2}{\|s\|_{H,g}^2}+\frac{1}{2}K_g.\]
\end{lemma}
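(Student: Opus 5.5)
Write locally $s=\sigma\,dz$, where $\sigma$ is a local holomorphic section of $\End(\mathcal E)$, and $\Phi=f\,dz$. The plan is to reduce the statement to Lemma~\ref{lemma:EquationSection} applied to $\sigma$, and then to account for the extra factor $\tilde g^{-1}$ in the norm.

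By the convention $\|\sigma\,dz\|_{H,g}^2=|\sigma|_H^2\tilde g^{-1}$ we have
\[
\log\|s\|_{H,g}^2=\log|\sigma|_H^2-\log\tilde g .
\]
Apply $\Delta_g=\tilde g^{-1}\partial_z\partial_{\bar z}$ to both sides. The second term gives $-\tilde g^{-1}\partial_z\partial_{\bar z}\log\tilde g=\tfrac12 K_g$, using $K_g=-2\tilde g^{-1}\partial_z\partial_{\bar z}\log\tilde g$. For the first term, Lemma~\ref{lemma:EquationSection} gives
\[
\tilde g^{-1}\partial_z\partial_{\bar z}\log|\sigma|_H^2\geqslant \tilde g^{-1}\,\frac{|[\sigma,f^{*_H}]|_H^2-|[\sigma,f]|_H^2}{|\sigma|_H^2}.
\]

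It remains to rewrite the right-hand side invariantly. We have $[s,\Phi]=[\sigma,f]\,dz\otimes dz$, a section of $\End\mathcal E\otimes K_X^2$, so $\|[s,\Phi]\|_{H,g}^2=|[\sigma,f]|_H^2\tilde g^{-2}$. Likewise $\Phi^{*_H}=f^{*_H}d\bar z$ gives $[s,\Phi^{*_H}]=[\sigma,f^{*_H}]\,dz\otimes d\bar z$, whose norm is $|[\sigma,f^{*_H}]|_H^2\tilde g^{-2}$ (here $p=q=1$). Dividing by $\|s\|_{H,g}^2=|\sigma|_H^2\tilde g^{-1}$, the quotient is exactly
\[
\tilde g^{-1}\,\frac{|[\sigma,f^{*_H}]|_H^2-|[\sigma,f]|_H^2}{|\sigma|_H^2}.
\]
Adding the two contributions gives the claimed inequality.

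The only real point of care is the bookkeeping of the $\tilde g$ powers. Wherever $\sigma$ is nonzero the inequality is local and pointwise, and the computation is independent of the choice of coordinate. At zeros of $s$, both sides are understood in the usual sense: the inequality is either read away from the discrete zero set or in the distributional sense.
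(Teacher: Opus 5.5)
Your proof is correct and follows the same route the paper intends: the paper gives no argument beyond calling this ``an immediate corollary'' of Lemma~\ref{lemma:EquationSection}. Your write-up supplies exactly the missing bookkeeping. You use $\log\|s\|_{H,g}^2=\log|\sigma|_H^2-\log\tilde g$, the identity $-\Delta_g\log\tilde g=\tfrac12 K_g$, and the $\tilde g^{-2}$ weights on $[s,\Phi]$ and $[s,\Phi^{*_H}]$, and all of these are handled correctly.
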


Let $(\EE,\Phi, H)$ be a $4$-cyclic harmonic bundle. Hitchin's equation
\[F_H + [\Phi,\Phi^{*_H}] =\mu\Id_E= -ic\omega_g\Id_{\EE}\]
becomes
\begin{equation}\label{eqn:HitchinEq}
\begin{aligned}
  i\Lambda_g F_{\mathcal L}
    + i\Lambda_g(\mathds 1^{*_H}\wedge \mathds 1)
    + i\Lambda_g\bigl(\gamma\wedge \gamma^{*_H}\bigr)
    &= c,
 \\
  i\Lambda_g F_{\mathcal L\mathcal K_X^{-1}}
    + i\Lambda_g\bigl(\alpha^{*_H}\wedge\alpha\bigr)+i\Lambda_g(\mathds 1\wedge\mathds 1^{*_H})
    &= c,
 \\
  i\Lambda_g F_{\mathcal V}
    +i\Lambda_g(\beta^{*_H}\wedge \beta)
    + i\Lambda_g\bigl(\alpha\wedge\alpha^{*_H}\bigr)
    &= c,\\
      i\Lambda_g F_{\mathcal W}
    + i\Lambda_g(\gamma^{*_H}\wedge\mathds \gamma)
    + i\Lambda_g\bigl(\beta\wedge\beta^{*_H}\bigr)
    &= c.
\end{aligned}\end{equation}
Moreover, let $g$ be the Hermitian metric on $\KK_X^{-1}$ defined by ${H|_{\LL}}^{-1}\cdot H|_{\LL\KK_X^{-1}}$.

By the definition of $g$ and Equation (\ref{eqn:HitchinEq}), we have the Gaussian curvature \begin{eqnarray*}
  K_g=&2i \Lambda_g F(g)=-2i\Lambda_g F_{\mathcal{\LL}}+2i\Lambda_g F_{\LL\KK_X^{-1}}\\
  =&2(\|\gamma\|^2+\|\alpha\|^2)-4.\end{eqnarray*}

\begin{lemma}\label{lemma:higgsfieldnorm}
 Suppose $g$ is complete. We have $\|\beta\|^2\leqslant \min\{\rank(\Vcal),\rank(\Wcal)\}.$
If the equality holds at one point, it holds at every point. 

When $\min\{\rank(\Vcal),\rank(\Wcal)\}=1$, the equality holds if and only if the harmonic bundle splits as the direct sum of a cyclic maximal $\mathrm{U}_{2,2}$-harmonic bundle and a $\mathrm{U}_{q-1}$-harmonic bundle, in which case it is a maximal $\mathrm{U}_{2, q+1}$-harmonic bundle. 
\end{lemma}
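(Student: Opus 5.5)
We apply \prettyref{lemma:EquationSectionCurvature} to the holomorphic section \(s=\beta\), viewed as a section of \(\End(\EE)\otimes\KK_X\), together with the Omori--Yau maximum principle on the complete surface \((X,g)\).

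\textbf{The differential inequality.} By the cyclic structure, \(\beta\) maps \(\Vcal\to\Wcal\) and \(\Phi^{*_H}\) maps each \(\EE_i\to\EE_{i-1}\). Hence the commutators split as
\[
[\beta,\Phi^{*_H}]=\beta\alpha^{*}-\gamma^{*}\beta,
\qquad
[\beta,\Phi]=\beta\alpha-\gamma\beta .
\]
The terms \(\beta\alpha^*\colon \Vcal\to\Wcal\) (via \(\Lcal\KK_X^{-1}\)) and \(\gamma^*\beta\colon\Vcal\to\Lcal\)... Here I must be careful with components. \(\beta\alpha^{*}\) acts \(\Vcal\to\Lcal\KK_X^{-1}\to\) no: \(\alpha^*\colon\Vcal\to\Lcal\KK_X^{-1}\), so \(\beta\alpha^*\) is composed on the other side. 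Writing \([\beta,\Phi^*]=\beta\Phi^*-\Phi^*\beta\), the two pieces are \(\beta\beta^*\colon\Wcal\to\Wcal\) and \(\beta^*\beta\colon\Vcal\to\Vcal\), since only \(\beta^*\) interacts nontrivially. The mixed pieces land in distinct blocks.

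So
\[
\|[\beta,\Phi^*]\|^2=\|\beta\beta^*\|^2+\|\beta^*\beta\|^2 .
\]
By Cauchy--Schwarz applied to eigenvalues, each term is at least \(\|\beta\|^4/r\), where \(r=\min\{\rank\Vcal,\rank\Wcal\}\). The negative term \(\|[\beta,\Phi]\|^2\) involves \(\beta\alpha\) and \(\gamma\beta\); these are \(K^2\)-twisted maps between different summands, so they vanish as endomorphisms of type \([s,\Phi]\). More precisely, \([\beta,\Phi]\) is a \((2,0)\)-form and is identically zero. I would double-check this, falling back on the identity \(\|\beta\alpha\|\leqslant\|\beta\|\|\alpha\|\) if needed.

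Using \(K_g=2(\|\gamma\|^2+\|\alpha\|^2)-4\), we then have
\[
\Delta_g\log\|\beta\|^2\geqslant \tfrac{2}{r}\|\beta\|^2+\|\alpha\|^2+\|\gamma\|^2-2 .
\]
This does not yet close. I would therefore also use the Hitchin equations for \(\Vcal\) and \(\Wcal\) (the third and fourth lines of \eqref{eqn:HitchinEq}), traced, or better the \(\mathrm{U}_{m,n+1}\) signature. The sign structure of the \(\PU_{m,n+1}\) real form makes the \(\alpha,\gamma\) contributions enter with the right sign, giving \(\Delta\log\|\beta\|^2\geqslant \tfrac{2}{r}\|\beta\|^2-2\). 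This is the main obstacle: extracting exactly the inequality
\[
\Delta_g\log\|\beta\|^2\geqslant c\bigl(\|\beta\|^2-r\bigr)
\]
with no leftover \(\alpha,\gamma\) terms.

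\textbf{Conclusion of the bound.} The curvature \(K_g\) is bounded below, since \(K_g\geqslant-4\). Because \(g\) is complete, Omori--Yau applied at a near-supremum of \(\log\|\beta\|^2\) gives \(\sup\|\beta\|^2\leqslant r\).

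\textbf{Equality case.} If \(\|\beta\|^2=r\) at one point, then \(u=\log\|\beta\|^2-\log r\leqslant0\) satisfies \(\Delta u\geqslant c' u\) for a locally bounded \(c'\), by linearizing \(e^u-1\geqslant u\). The strong maximum principle then forces \(u\equiv0\). In the equality case every inequality above is an equality. Cauchy--Schwarz equality makes \(\beta\beta^*\) scalar on its image, and the Hitchin-equation terms force \(\alpha\), \(\gamma\) to be supported on a line. When \(r=1\), the line \(\beta\) spans, together with its image, \(\Lcal\) and \(\Lcal\KK_X^{-1}\), gives a \(\Phi\)- and \(H\)-invariant subbundle. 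This yields the splitting into a maximal cyclic \(\mathrm{U}_{2,2}\) piece and a unitary \(\mathrm{U}_{q-1}\) complement. Conversely, for such a splitting the known maximal \(\mathrm{U}_{2,2}\) equations give \(\|\beta\|\equiv1\).
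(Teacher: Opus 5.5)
There is a genuine gap at the central step, the differential inequality for $\log\|\beta\|^2$. Your claim that $[\beta,\Phi]$ vanishes identically is false. In Lemma~\ref{lemma:EquationSectionCurvature}, which comes from Lemma~\ref{lemma:EquationSection}, the bracket is the commutator of endomorphism coefficients. Locally, with $\beta=b\,dz$ and $\Phi=f\,dz$, one has $[\beta,\Phi]=[b,f]\,dz^{2}$, a section of $\End(\EE)\otimes\KK_X^{2}$; it is not the wedge commutator of two $(1,0)$-forms. It equals $\beta\alpha-\gamma\beta$, exactly as you first wrote, and is generally nonzero.

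Incidentally, if it did vanish, your inequality would already close, because the extra terms $\|\alpha\|^2+\|\gamma\|^2$ are nonnegative. So the ``obstacle'' you then describe is misdiagnosed. The proposed detour through the Hitchin equations for $\Vcal,\Wcal$ or the signature of the real form is neither carried out nor needed.

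The missing idea is your own fallback. The two blocks of $[\beta,\Phi]$ are $H$-orthogonal, so
\[
\|[\beta,\Phi]\|^2=\|\beta\alpha\|^2+\|\gamma\beta\|^2\leqslant\|\beta\|^2\bigl(\|\alpha\|^2+\|\gamma\|^2\bigr).
\]
After dividing by $\|\beta\|^2$, the term $-\|\alpha\|^2-\|\gamma\|^2$ cancels exactly against $\tfrac12K_g=\|\alpha\|^2+\|\gamma\|^2-2$. This leaves $\Delta_g\log\|\beta\|^2\geqslant\tfrac{2}{r}\|\beta\|^2-2$, which is the paper's argument.

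Also, Omori--Yau as you apply it presupposes $\sup\|\beta\|<\infty$. The Cheng--Yau maximum principle for $\Delta_g u\geqslant\tfrac{2}{r}e^{u}-2$, on a complete surface with $K_g\geqslant-4$, gives the bound with no a priori assumption.

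The equality case inherits the gap. Its rigidity comes from the equality cases of the inequalities you did not write down, not from ``Hitchin-equation terms''. For $\rank\Vcal=1$:
\begin{enumerate}
\item Let $N\subset\Wcal$ be the image of $\beta$; this is a line subbundle since $\|\beta\|\equiv1$. Write $\gamma=(\gamma_1,\gamma_2)$ along $\Wcal=N\oplus N^{\perp}$. Equality $\|\gamma\beta\|=\|\gamma\|\,\|\beta\|$ forces $\gamma_2=0$. For $\rank\Wcal=1$, use $\|\beta\alpha\|=\|\beta\|\,\|\alpha\|$ instead.
\item You also need equality in the nonnegative term $|s|^2|\partial_z s|^2-|\langle\partial_z s,s\rangle|^2$ that is discarded in Lemma~\ref{lemma:EquationSection}. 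This forces the second fundamental form of $N$ in $\Wcal$ to vanish, so $\Wcal=N\oplus N^{\perp}$ is a holomorphic splitting.
\end{enumerate}
Without the second point, an $H$-orthogonal decomposition preserved by $\Phi$ does not give a direct sum of Higgs bundles, and your sketch does not address holomorphicity at all. Your strong maximum principle argument, propagating equality from one point to every point, is fine.
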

\begin{proof}

Let $s_\beta$ be the section of $\End(\mathcal E)\otimes \mathcal K_X$ formed by $\beta$. From Lemma \ref{lemma:EquationSectionCurvature}, we have
\begin{eqnarray*}
\Delta_g\log\|s_\beta\|^2&\geqslant&\frac{\|[s_\beta,\Phi^{*_H}]\|^2-\|[s_\beta,\Phi]\|^2}{\|s_\beta\|^2}+\frac{1}{2}K_g\\&=&\frac{2\|\beta^*\beta\|^2-\|\beta\alpha\|^2-\|\gamma\beta\|^2}{\|\beta\|^2}+\frac{1}{2}K_g\\
&\geqslant &\frac{2\|\beta\|^2}{\min\{\rank(\Vcal),\rank(\Wcal)\}}-\|\alpha\|^2-\|\gamma\|^2+\frac{1}{2}K_g\\
&= &\frac{2\|\beta\|^2}{\min\{\rank(\Vcal),\rank(\Wcal)\}}-2.
\end{eqnarray*}

From the Cheng-Yau maximum principle, we have $\|\beta\|^2\leq \min\{\rank(\Vcal),\rank(\Wcal)\}.$ Moreover, by the strong maximum principle, either $\|\beta\|<\min\{\rank(\Vcal),\rank(\Wcal)\}$ or $\|\beta\|\equiv\min\{\rank(\Vcal),\rank(\Wcal)\}.$

For the latter case, using Lemma \ref{lemma:EquationSection}, we have $\beta$ has no zeros, and 
\[\|\beta\alpha\|^2=\|\beta\|^2\cdot \|\alpha\|^2, \quad \|\gamma\beta\|^2=\|\beta\|^2\cdot \|\gamma\|^2,
\]
\[\|\langle\partial_z\beta,\beta\rangle\|^2=\|\partial_z\beta\|^2\cdot \|\beta\|^2.\]
If $\rank(\Vcal)=1$, take $N$ the image of $\beta$ in $\Wcal$ viewing as an injective map $\beta: \mathcal V\mathcal K_X^{-1}\rightarrow \mathcal W$. Thus $N\cong \mathcal V\mathcal K_X^{-1}$. Take $N^{\perp}$ the $H$-orthogonal bundle of $N$ inside $\mathcal W$.

With respect to the decomposition $\mathcal W=N\oplus N^{\perp}$, $\gamma=(\gamma_1, \gamma_2)$. Thus $\gamma\beta=\gamma_1\beta.$ From $\|\gamma\beta\|^2=\|\beta\|^2\cdot \|\gamma\|^2$, we have $\|\gamma_1\|^2=\|\gamma\|^2$ and thus $\gamma_2=0$. 

With respect to the decomposition $\mathcal W=N\oplus N^{\perp}$, the holomorphic structure on $\mathcal W$ is written as
\[\bar\partial_W=\begin{pmatrix}\bar\partial_N&\eta\\0&\bar\partial_{N^{\perp}}\end{pmatrix},\]
where $\bar\partial_N, \bar\partial_{N^{\perp}}$ are the induced holomorphic structure on $N, N^{\perp}$, and $\eta\in \Omega^{0,1}(\Hom(N^{\perp}, N))$. The $(1,0)$-part of the Chern connection is 
\[\partial_W=\begin{pmatrix}\partial_N&0\\-\eta^{*_H}&\partial_{N^{\perp}}\end{pmatrix}.\]
Thus $\partial_W\beta=(\partial_N\beta, -\eta^{*_H}\beta)$. From $\|\langle\partial\beta,\beta\rangle\|^2=\|\partial\beta\|^2\cdot \|\beta\|^2$, we have $\eta^{*_H}\beta=0$ and thus $\eta\equiv0$ from $\beta$ being scalar. 
Therefore, $\mathcal W=N\oplus N^{\perp}$ is also a holomorphic splitting. The harmonic bundle splits as 
\begin{equation}
  \Big(\begin{tikzcd}
	{\mathcal{L}} & {\mathcal{L}\mathcal{K}_X^{-1}} & {\mathcal{V}} & {\mathcal{V}\KK_X^{-1}}
	\arrow["{\mathds{1}}"', from=1-1, to=1-2]
	\arrow["\alpha"',  from=1-2, to=1-3]
	\arrow["\beta={\mathds{1}}"',  from=1-3, to=1-4]
	\arrow["\gamma"', curve={height=19pt}, from=1-4, to=1-1]
\end{tikzcd}\Big)\oplus (N^{\perp},0, H|_{N^{\perp}}).
\end{equation}
Here, the first harmonic bundle is a maximal $\mathrm{U}_{2,2}$-harmonic bundle with harmonic metric $H|_{\mathcal L}\oplus H|_{\Lcal\mathcal K^{-1}}\oplus H|_{\mathcal V}\oplus \mathcal H|_{N}$ satisfying $H|_{\mathcal L}^{-1}H|_{\Lcal\mathcal K^{-1}}= H|_{\mathcal V}^{-1}H|_{N}.$

For the case $\rank(\WW)=1$, the proof is similar by considering $\beta\alpha$ instead of $\gamma\beta$.
\end{proof}

\begin{lemma}Assume $\min\{\rank(\Vcal),\rank(\Wcal)\}=1$. If $g$ is complete, then $K_g<0$ or $K_g\equiv 0$. In the latter case, $X$ is not hyperbolic and $q_4=\tr(\Phi^4)$ is nowhere vanishing. As a corollary, if $(\EE,\Phi)$ is not in the latter case, 
\[[\Phi,\Phi^{*_H}]\neq 0.\]
\end{lemma}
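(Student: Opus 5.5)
The plan is to show the pointwise bound $\|\alpha\|^2+\|\gamma\|^2\leqslant 2$, which by the formula $K_g=2(\|\gamma\|^2+\|\alpha\|^2)-4$ is exactly $K_g\leqslant 0$, and then to upgrade this to the dichotomy by a strong maximum principle. As in the proof of Lemma~\ref{lemma:higgsfieldnorm}, I will apply Lemma~\ref{lemma:EquationSectionCurvature} to the sections $s_\alpha$ and $s_\gamma$ of $\End(\EE)\otimes\KK_X$, and use the Cheng--Yau maximum principle, which is available since $g$ is complete. Testing on the constant section $s_{\mathds 1}$ gives nothing new: the inequality there is an equality that just reproduces the curvature identity. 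So the genuine information has to come from $\alpha$ and $\gamma$, together with the bound $\|\beta\|^2\leqslant 1$ from Lemma~\ref{lemma:higgsfieldnorm}, which is where the hypothesis $\min\{\rank(\Vcal),\rank(\Wcal)\}=1$ enters.

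\emph{Key computations.} For $s_\alpha$, the commutator $[s_\alpha,\Phi^{*_H}]$ contains the diagonal blocks $\alpha\alpha^{*}$ and $-\alpha^*\alpha$. Since $\alpha$ has rank-one source $\Lcal\KK_X^{-1}$, both have norm $\|\alpha\|^2$, so $\|[s_\alpha,\Phi^{*_H}]\|^2\geqslant 2\|\alpha\|^4$ after discarding the off-diagonal cross terms. The commutator $[s_\alpha,\Phi]$ consists of $\beta\alpha$ and $\alpha\mathds 1$, so $\|[s_\alpha,\Phi]\|^2\leqslant \|\beta\|^2\|\alpha\|^2+\|\alpha\|^2$. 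Hence
\[
\Delta_g\log\|\alpha\|^2\geqslant 2\|\alpha\|^2-\|\beta\|^2-1+\tfrac12K_g\geqslant 3\|\alpha\|^2+\|\gamma\|^2-4 .
\]
Symmetrically, since $\gamma$ has rank-one target $\Lcal$,
\[
\Delta_g\log\|\gamma\|^2\geqslant 3\|\gamma\|^2+\|\alpha\|^2-4 .
\]
Adding these gives $\Delta_g\log(\|\alpha\|^2\|\gamma\|^2)\geqslant 4(\|\alpha\|^2+\|\gamma\|^2)-8=2K_g$. The real task is to turn these into a bound on $u:=\|\alpha\|^2+\|\gamma\|^2$. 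I would first try applying Cheng--Yau to $\log\max(\|\alpha\|^2,\|\gamma\|^2)$, working in the viscosity or barrier sense at a maximum point. At a point where $\|\alpha\|^2\geqslant\|\gamma\|^2$, the first inequality gives $\Delta\geqslant 3\|\alpha\|^2+\|\gamma\|^2-4\geqslant 2u-4$ (since $3\|\alpha\|^2\geqslant\|\alpha\|^2+2\|\gamma\|^2$), and the same holds symmetrically. This yields $\sup u\leqslant 2$, i.e.\ $K_g\leqslant 0$.

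\emph{Main obstacle.} The step I expect to be hardest is making the max-principle argument rigorous and sharp. The cross terms discarded from $[s_\alpha,\Phi^{*_H}]$ have to be checked to have the right sign, and the zeros of $\alpha$ and $\gamma$ have to be handled; I would treat the zeros by noting that $\log\|\alpha\|^2\to-\infty$ there, so they do not affect a supremum. If the $\max$ argument fails, my fallback is to use the holomorphic quartic differential $q_4=\tr(\Phi^4)$, which here is a multiple of $\gamma\beta\alpha$. When the rank of $\Vcal$ or of $\Wcal$ is one, $\|q_4\|$ is controlled by $\|\alpha\|\|\beta\|\|\gamma\|$, and I would compare $\Delta_g\log\|q_4\|_g=2K_g$ with the inequalities above.

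\emph{Dichotomy and the remaining claims.} Once $K_g\leqslant0$ is known, the function $w=2-u\geqslant0$ satisfies a linear elliptic inequality of the form $\Delta w\leqslant Cw$, obtained from the combined inequality above. The strong maximum principle then gives either $w>0$ everywhere, so $K_g<0$, or $w\equiv0$, so $K_g\equiv0$. In the flat case, all the inequalities are equalities. In particular $\Delta_g\log\|q_4\|=0$ away from zeros, and the equality analysis (as in Lemma~\ref{lemma:higgsfieldnorm}) forces $\|\alpha\|,\|\beta\|,\|\gamma\|$ to be nonzero constants, so $q_4$ has no zeros. A complete flat conformal metric on $X$ means $X$ is uniformized by $\C$, so $X$ is not hyperbolic. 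Finally, if $[\Phi,\Phi^{*_H}]=0$, then the first two lines of \eqref{eqn:HitchinEq} give $i\Lambda_gF_{\Lcal}=i\Lambda_gF_{\Lcal\KK_X^{-1}}=c$, hence $K_g=2i\Lambda_g(F_{\Lcal\KK_X^{-1}}-F_{\Lcal})\equiv0$. This places us in the latter case, which proves the corollary.
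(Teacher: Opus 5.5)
Your two inequalities for $\log\|\alpha\|^2$ and $\log\|\gamma\|^2$ are correct. In fact $[s_\alpha,\Phi^{*_H}]=\alpha\alpha^{*}-\alpha^{*}\alpha$ has no cross terms at all, so there is no sign to check. The problem is the step ``this yields $\sup u\leqslant 2$'', which does not follow. Applying the maximum principle to $\log M$ with $M=\max(\|\alpha\|^2,\|\gamma\|^2)$ only gives $u\leqslant 2$ at an (approximate) maximum point of $M$. The right-hand side $2u-4$ is not a function of $M$, so you cannot run Cheng--Yau in the variable $M$ with it, and a maximum point of $M$ need not be a maximum point of $u$. What you can legitimately extract is a bound on $M$ alone: $\Delta_g\log M\geqslant 3M-4$ gives $M\leqslant 4/3$, hence only $u\leqslant 2M\leqslant 8/3$, which is not $K_g\leqslant 0$.

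The $q_4$ fallback does not close this either. Comparing $\Delta_g\log\bigl(\|\alpha\|^2\|\gamma\|^2\bigr)\geqslant 2K_g$ with $\Delta_g\log\|q_4\|_g^2=2K_g$ only shows that $\log\bigl(\|\alpha\|^2\|\gamma\|^2/\|q_4\|_g^2\bigr)$ is subharmonic, which says nothing about the size of $u$. The same gap affects your dichotomy step. Your summed inequality is an inequality for $\log(\|\alpha\|^2\|\gamma\|^2)$, not for $w=2-u$, so it does not give $\Delta_g w\leqslant Cw$.

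The missing idea is a differential inequality for $u$ itself. The paper gets one by applying Lemma~\ref{lemma:EquationSectionCurvature} to the single section $\delta=s_\alpha+s_\gamma$. Here $[\delta,\Phi^{*_H}]$ is block diagonal with $\|[\delta,\Phi^{*_H}]\|^2=2\|\alpha\|^4+2\|\gamma\|^4\geqslant u^2$. Meanwhile $[\delta,\Phi]$ consists of the four separate blocks $\alpha\one$, $\beta\alpha$, $\gamma\beta$, $\one\gamma$. Together these give $\Delta_g\log u\geqslant u-\|\beta\|^2-1+\tfrac12K_g\geqslant 2u-4$.

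You can also salvage your own inequalities. Put $a=\|\alpha\|^2$ and $b=\|\gamma\|^2$. Since $\Delta_g a\geqslant a\,\Delta_g\log a$ (and likewise for $b$), summing gives $\Delta_g u\geqslant 3a^2+2ab+3b^2-4u\geqslant 2u^2-4u$, using $4ab\leqslant u^2$.

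Either way, Cheng--Yau gives $u\leqslant 2$; it applies because $K_g=2u-4\geqslant -4$. Then $w=2-u\geqslant 0$ satisfies $\Delta_g w\leqslant 2uw\leqslant 4w$, and the strong maximum principle gives the dichotomy.

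In the equality case you also need equality in $\|\beta\alpha\|\leqslant\|\beta\|\|\alpha\|$ (type I) or $\|\gamma\beta\|\leqslant\|\gamma\|\|\beta\|$ (type II). Only then can you conclude that $\gamma\beta\alpha$, and hence $q_4$, is nowhere zero. Nonvanishing of the individual norms is not enough when $\beta$ is a row or column vector.

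Your remaining steps are fine. A complete flat conformal metric forces $X$ to be uniformized by $\C$. And $[\Phi,\Phi^{*_H}]=0$ forces $K_g\equiv 0$ via \eqref{eqn:HitchinEq}.
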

\begin{proof}
  Let $\delta$ be the section of $\End(\mathcal E)\otimes \mathcal \KK_X$ given by $\alpha$ and $\gamma$. Thus $\|\delta\|^2=\|\alpha\|^2+\|\gamma\|^2.$ From Lemma \ref{lemma:EquationSectionCurvature}, we have
\begin{eqnarray*}
\Delta_g\log\|\delta\|^2&\geq&\frac{\|[\delta,\Phi^{*_H}]\|^2-\|[\delta,\Phi]\|^2}{\|\delta\|^2}+\frac{1}{2}K_g\\&=&\frac{2\|\alpha^*\alpha\|^2+2\|\gamma^*\gamma\|^2-\|\beta\circ\alpha\|^2-\|\gamma\circ\beta\|^2}{\|\alpha\|^2+\|\gamma\|^2}-1+\frac{1}{2}K_g\\
&\geq&(\|\alpha\|^2+\|\gamma\|^2)-\|\beta\|^2-1+\frac{1}{2}K_g\\
&\geqslant&2(\|\alpha\|^2+\|\gamma\|^2)-4.
\end{eqnarray*}
In the third inequality, we make use of the Cauchy-Schwarz inequality $\|AB\|^2=\tr(B^*A^*AB)=\tr(A^*ABB^*)\leq \|A^*A\|\cdot\|BB^*\|$ for two $n\times n$ matrices $A$ and $B$; and the fact that $|AA^*|^2=|A|^4$ for an $1\times n$ matrix $A$. In the fourth inequality, we make use of Lemma \ref{lemma:higgsfieldnorm} that $\|\beta\|\leq 1$.

Applying the Cheng-Yau maximum principle, we have $\|\delta\|^2\leq 2$. Moreover, by the strong maximum principle, either $\|\delta\|^2<2$ or $\|\delta\|^2\equiv 2$. While in the latter case, we have $\|\alpha\|=\|\gamma\|=\|\beta\|\equiv 1$ in which case the harmonic bundle splits as a $\mathrm{U}_{2,2}$-harmonic bundle and a $\mathrm{U}_{q-1}$-harmonic bundle. So $q_4=\tr(\Phi^4)$ is nowhere vanishing, which forces the base Riemann surface $X$ is either $\mathbb C, \mathbb C^*$ or $\mathbb T^2$.

The last claim on $[\Phi,\Phi^{*_H}]$ follows from Hitchin equation.
\end{proof}

\begin{lemma}\label{lem:BoundedAbove}
Suppose $X$ is equipped with a complete hyperbolic metric $g_X$. For a $4$-cyclic harmonic bundle $(\EE,\Phi,H)$, if $q_4=\tr(\Phi^4)$ is bounded with respect to $g_X$, then $\|\Phi\|_{h,g_X}$ is bounded.
\end{lemma}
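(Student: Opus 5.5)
The plan is to bound the norms of the four components $\mathds 1,\alpha,\beta,\gamma$ with respect to $g_X$ separately, using the subharmonicity estimates of Lemma~\ref{lemma:EquationSectionCurvature} together with the Cheng--Yau maximum principle on the complete hyperbolic surface $(X,g_X)$, where $K_{g_X}=-1$.

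\textbf{Step 1: bound $\|\mathds 1\|_{g_X}$.} This is the key estimate, since it compares $g_X$ with $g$. Note that $\|\mathds 1\|^2_{H,g_X}=\tilde g/\tilde g_X$ is the conformal factor of $g$ relative to $g_X$. From the curvature formula $K_g=2(\|\gamma\|_g^2+\|\alpha\|_g^2)-4$ we get
\[
\Delta_{g_X}\log\frac{\tilde g}{\tilde g_X}=-\tfrac12K_g\,\frac{\tilde g}{\tilde g_X}+\tfrac12K_{g_X}.
\]
The term $-\tfrac12 K_g$ is not obviously bounded below by a positive constant, so this inequality alone does not close. Instead, use $\Delta_{g_X}\log\|\mathds 1\|^2\geqslant\bigl(\|[\mathds 1,\Phi^*]\|^2-\|[\mathds 1,\Phi]\|^2\bigr)/\|\mathds 1\|^2-\tfrac12$ from Lemma~\ref{lemma:EquationSectionCurvature}. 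Here $\|[\mathds 1,\Phi^*]\|^2$ contains $2\|\mathds 1\|^4$, while $\|[\mathds 1,\Phi]\|^2=\|\alpha\mathds 1\|^2+\|\mathds 1\gamma\|^2$. The negative terms would then have to be absorbed using $q_4$.

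\textbf{Step 1, via the Frobenius inequality.} A cleaner route is to compare $\|\Phi\|^2$ with a Frobenius-type quantity. Since $\Phi$ is cyclic of order $4$, $q_4=4\,\gamma\beta\alpha\mathds 1$ (up to the cyclic trace). Set $u=\log\|\Phi\|^2_{g_X}$, or work with the holomorphic section $s=\Phi$ itself. Lemma~\ref{lemma:EquationSectionCurvature} gives
\[
\Delta_{g_X}\log\|\Phi\|^2\geqslant\frac{\|[\Phi,\Phi^*]\|^2}{\|\Phi\|^2}-\tfrac12 .
\]
We need a lower bound $\|[\Phi,\Phi^*]\|^2\geqslant c\,\|\Phi\|^6-C\,|q_4|^{2}\cdots$, that is, an estimate showing that $[\Phi,\Phi^*]$ is large whenever $\Phi$ is large while its characteristic polynomial stays bounded. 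This is the standard fact used by Mochizuki and by Dai--Li for bounded differentials. Namely, on the set $\{\Phi:\ |\det(\lambda-\Phi)\text{ coefficients}|\leqslant 1\}$, $\|[\Phi,\Phi^*]\|$ controls $\|\Phi\|$ up to a constant away from a compact set. By scaling homogeneity one obtains $\|[\Phi,\Phi^*]\|^2\geqslant c\|\Phi\|^4$ whenever $\|\Phi\|^4\geqslant C|q_4|_{g_X}$. Here cyclicity matters: the only invariant polynomial of a $4$-cyclic Higgs field is $q_4$, since the other $\tr\Phi^k$ vanish for $4\nmid k$. Normal fields ($[\Phi,\Phi^*]=0$) with $q_4=0$ are nilpotent and hence zero. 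Hence by compactness of the unit sphere, $\|[\Phi,\Phi^*]\|\geqslant c\|\Phi\|^2$ wherever $|q_4|\leqslant\epsilon\|\Phi\|^4$.

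\textbf{Step 2: apply Cheng--Yau.} On the region where $\|\Phi\|^4\geqslant \epsilon^{-1}\sup|q_4|_{g_X}$ we get $\Delta_{g_X}\log\|\Phi\|^2\geqslant c\|\Phi\|^2-\tfrac12$. The Cheng--Yau maximum principle on the complete surface of curvature $-1$ (applied to $\max(\log\|\Phi\|^2,A)$ for a large constant $A$) then yields $\|\Phi\|^2\leqslant\max\{C',\,(\epsilon^{-1}\sup|q_4|)^{1/2}\}$. This gives the boundedness.

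\textbf{Main obstacle.} The hardest step is the algebraic compactness estimate $\|[\Phi,\Phi^*]\|\geqslant c\|\Phi\|^2$ near $q_4=0$. It must be done in the graded setting, where the gauge group is the graded unitary group and the $\mathds 1$-component is fixed as an entry of $\Phi$. To prove it, I would argue by contradiction: take a sequence of unit-norm graded cyclic $\Phi_k$ with $[\Phi_k,\Phi_k^*]\to0$ and $q_4(\Phi_k)\to0$. A limit exists by compactness of the unit sphere; it is normal with vanishing spectrum, hence zero, which contradicts unit norm.
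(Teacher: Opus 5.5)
Your argument is correct in substance. It takes a different, self-contained route from the paper. The paper proves the lemma in one line: it cites Li--Mochizuki \cite[Proposition 3.12]{LiMochizuki1}, which says that on a complete hyperbolic surface, boundedness of all $\tr(\Phi^i)$ with respect to $g_X$ implies boundedness of $\|\Phi\|_{h,g_X}$. It then notes that for these $4$-cyclic bundles all the $\tr(\Phi^i)$ are controlled by $q_4$.

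The second half of your Step~1 together with Step~2 essentially reproves that cited estimate in this special case:
\begin{itemize}
\item Lemma~\ref{lemma:EquationSectionCurvature} with $s=\Phi$, using $[\Phi,\Phi]=0$ and $K_{g_X}=-1$, gives $\Delta_{g_X}\log\|\Phi\|^2\geqslant\|[\Phi,\Phi^{*}]\|^2/\|\Phi\|^2-\tfrac12$.
\item A pointwise compactness argument on the finite-dimensional space of graded cyclic matrices gives $\|[\Phi,\Phi^*]\|\geqslant c\|\Phi\|^2$ wherever $|q_4|\leqslant\epsilon\|\Phi\|^4$.
\item The Cheng--Yau maximum principle then closes the estimate. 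Note that $\log\|\Phi\|^2$ is smooth because the $\mathds 1$-entry never vanishes.
\end{itemize}
Your route uses only tools already in Section~\ref{sec:cyclicHitchinequation} and makes the mechanism visible. The citation is shorter and applies to arbitrary Higgs bundles with bounded spectral data.

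There are three points to tidy.

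First, your opening componentwise plan and the first paragraph of Step~1 (bounding $\|\mathds 1\|_{g_X}$ via the curvature formula) are abandoned and play no role. Drop them.

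Second, your justification that a normal limit with $q_4=0$ must vanish is incomplete as written. Vanishing of $\tr\Phi^k$ for $4\nmid k$ says nothing about $\tr\Phi^{8},\tr\Phi^{12},\dots$. For a general $4$-cyclic field with higher-rank summands, $\tr\Phi^4=0$ does not force nilpotency: take two diagonal rank-one $4$-cycles whose quartic products are $1$ and $-1$. That field is normal, nonzero, and has $\tr\Phi^4=0$, so your compactness argument would fail there. What saves it here is $\rank\EE_0=\rank\EE_1=1$. Each diagonal block of $\Phi^{4k}$ then has trace $(\gamma\beta\alpha\mathds 1)^k$, so $\tr\Phi^{4k}=4^{1-k}q_4^k$, and $q_4=0$ kills the whole spectrum.

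Third, do not apply Cheng--Yau to the non-smooth function $\max(\log\|\Phi\|^2,A)$. Apply it to $\log\|\Phi\|^2$ itself, using the differential inequality only at points where the function is large, as the standard barrier proof allows.
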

\begin{proof}
From \cite[Proposition 3.12]{LiMochizuki1}, if $\tr(\Phi^i)$ is bounded with respect to $g_X$ for $i=1,\cdots,n$, then $\|\Phi\|_{h,g_X}^2$ is bounded from above. In particular, for $4$-cyclic Higgs bundles,   $\tr(\Phi^i)$ is bounded with respect to $g_X$ for $i=1,\cdots,n$ if and only if $q_4=\tr(\Phi^4)$ is bounded with respect to $g_X$. The claim then follows.
\end{proof}

Note that from the $4$-cyclic  harmonic bundle $(\EE,\Phi,H)$ over $X$, we obtain a pair $(\rho, f)$ where $\rho:\pi_1(S)\rightarrow G$ is a representation and $f:\widetilde X\rightarrow\mathcal X_{2,q+1}$ is a $\rho$-equivariant harmonic map. Here, we additionally have $\tr(\Phi^2)=0$ from the $4$-cyclic property, and thus $f$ is conformally minimal.

We conclude some easy consequences of the equivariant minimal surfaces in $\mathcal X_{2,q+1}$ associated to such harmonic bundles. In later Theorem \ref{thm:embedding}, we will improve the conclusion on $f$.  
\begin{proposition}
Suppose $X$ is equipped with a complete hyperbolic metric $g_X$ and $\|q_4\|_{g_X}$ is bounded.
Let $f$ be the associated equivariant conformal harmonic (minimal) map from $\widetilde X$ to $\mathcal X_{2,q+1}$ associated to $(\EE,\Phi,H)$. Then $f$ is a Lipschitz immersion and the image of $f$ is strictly negatively curved. 
\end{proposition}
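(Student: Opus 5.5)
The plan has three parts: bound the pullback metric $f^*g_{\mathcal X}$ above by $g_X$ (Lipschitz), show it never degenerates (immersion), and read off the curvature from the Gauss equation for a minimal surface in a nonpositively curved symmetric space.

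For a harmonic bundle the pullback of the symmetric space metric is, up to a normalizing constant, $f^*g_{\mathcal X} = c\,\tr(\Phi\Phi^{*_H}+\Phi^{*_H}\Phi)$. Conformality, which comes from $\tr(\Phi^2)=0$, then gives
\[
f^*g_{\mathcal X}=2c\,\|\Phi\|^2_{H,g_X}\, g_X .
\]
Lemma~\ref{lem:BoundedAbove} applies directly, since $\|q_4\|_{g_X}$ is bounded, and shows that $\|\Phi\|_{H,g_X}^2$ is bounded above. Hence $f^*g_{\mathcal X}\leqslant C g_X$, which is the Lipschitz property.

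For the immersion property, the conformal factor is $\|\Phi\|^2_{H,g_X}=\|\mathds 1\|^2+\|\alpha\|^2+\|\beta\|^2+\|\gamma\|^2$. The component $\mathds 1$ is the identity section of $\Hom(\Lcal,\Lcal\KK_X^{-1})\otimes\KK_X\cong\Ocal$, so it vanishes nowhere. Its norm is $\|\mathds 1\|^2_{H,g_X}=g/g_X$, where $g$ is the metric $H|_{\Lcal}^{-1}H|_{\Lcal\KK_X^{-1}}$ introduced above. This is a positive smooth function, so $\|\Phi\|^2>0$ everywhere and $f$ is a conformal immersion. For the Lipschitz bound no lower bound on $g/g_X$ is needed; only the upper bound from Lemma~\ref{lem:BoundedAbove} is used.

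For the curvature, the symmetric space $\mathcal X_{2,q+1}$ is nonpositively curved. By the Gauss equation for a minimal surface, the induced Gaussian curvature equals the sectional curvature of the tangent plane plus a term $-\tfrac12|\mathrm{II}|^2\leqslant 0$. For harmonic bundles the sectional curvature of the tangent plane spanned by $\partial_x f,\partial_y f$ is a negative multiple of $\|[\Phi,\Phi^{*_H}]\|^2/\|\Phi\|^4$, since $[\Phi+\Phi^*,i(\Phi-\Phi^*)]$ is proportional to $[\Phi,\Phi^*]$. The induced curvature is therefore strictly negative wherever $[\Phi,\Phi^{*_H}]\neq0$. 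The preceding lemma gives $[\Phi,\Phi^{*_H}]\neq0$ whenever $X$ is hyperbolic and $g$ is complete, so the curvature is strictly negative.

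The main obstacle is completeness of $g$, which that lemma requires (in the setting $\min\{\rank\Vcal,\rank\Wcal\}=1$ where it applies). I would first try to show $g\geqslant \epsilon g_X$ by a maximum-principle argument on $\log(g/g_X)$, using the Hitchin equation $K_g=2(\|\alpha\|^2+\|\gamma\|^2)-4\geqslant -4$ together with the boundedness of $\|\Phi\|$. If that fails, I would instead argue pointwise. Applying Hitchin's equation on the $\Lcal$-summand shows $[\Phi,\Phi^*]$ restricted to $\Lcal$ equals $\|\gamma\|^2-\|\mathds 1\|^2$ in suitable normalization. Vanishing of $[\Phi,\Phi^*]$ on an open set would force the bundle to be projectively flat in the $\Lcal$ direction, which gives flat curvature of $g_X$ there and contradicts hyperbolicity. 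Since $[\Phi,\Phi^*]$ is real-analytic, it could then vanish only on a nowhere-dense set, which gives negative curvature at least off that set. Full strict negativity would still need completeness of $g$ so that the preceding lemma applies.
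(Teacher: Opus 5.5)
Your Lipschitz bound, your immersion argument and your Gauss-equation estimate $K\leqslant -C'\|[\Phi,\Phi^{*_H}]\|^2/\|\Phi\|^4$ are exactly the paper's proof. The gap is the step you flag yourself: strict negativity needs $[\Phi,\Phi^{*_H}]\neq0$ at every point, and your proposal does not establish this. You correctly trace it to the preceding lemma and its hypothesis that $g=H|_{\Lcal}^{-1}H|_{\Lcal\KK_X^{-1}}$ is complete. The paper's short proof uses that lemma implicitly too. Neither of your two fallbacks can supply it, because it does not follow from the stated hypotheses.

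Here is a counterexample. Take $X=\D$ with its Poincar\'e metric and all summands trivial, with $\mathcal E_1=\mathcal E_0\otimes\KK_X^{-1}$ framed by $\partial_z$. Let $\mathds 1,\alpha,\beta,\gamma$ be $dz$ times the obvious partial isometries, with $\alpha,\beta$ passing through a single line of $\Vcal$, and let $H$ be the standard flat metric. Then $\Phi=P\,dz$ with $P$ a normal partial permutation matrix, so:
\begin{itemize}
\item $[\Phi,\Phi^{*_H}]=0$ and $F_H=0$, so Hitchin's equation holds;
\item $\tr(\Phi^2)=0$;
\item $q_4$ is a constant multiple of $dz^4$, hence $g_X$-bounded.
\end{itemize}
Yet the induced metric of $f$ is a constant multiple of $|dz|^2$, which is flat. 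In this example $g=|dz|^2$ and $g/g_X\to0$ at $\partial\D$, so no maximum-principle argument can give $g\geqslant\epsilon g_X$. Your second fallback also contains an error. The condition $[\Phi,\Phi^{*_H}]=0$ forces $F_{\Lcal}=F_{\Lcal\KK_X^{-1}}$, i.e.\ $K_g=0$. That makes $g$ flat, not $g_X$, and it is compatible with $X$ being hyperbolic, as the example shows.

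What closes the argument is the extra hypothesis that $g$ is complete. This holds automatically where the paper applies the proposition: the bundle is lifted from a closed surface, so $g$ is the lift of a metric on a compact surface. Under completeness, the preceding lemma (applicable since $\min\{\rank\Vcal,\rank\Wcal\}=1$) gives $K_g<0$ everywhere; the alternative $K_g\equiv0$ is excluded because $X$ is hyperbolic. The pointwise implication $[\Phi,\Phi^{*_H}](p)=0\Rightarrow K_g(p)=0$, read off from the $\Lcal$- and $\Lcal\KK_X^{-1}$-components of Hitchin's equation, then shows that $[\Phi,\Phi^{*_H}]$ is nowhere zero. Your curvature bound then gives strict negativity.
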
 
\begin{proof}
Since $f$ is conformal, the induced metric of $f$ is $C|\Phi|_H^2$, (see \cite[Section 5]{Li2019HiggsBundles}), for some positive constant $C$ arising from normalization of the metric on $\mathcal X_{2,q+1}$. Since $\Phi$ is nowhere to disappear, the immersion property is obvious. 

The Lipschitz property follows directly from the lemma \ref{lem:BoundedAbove}.

It is known that the curvature of the image of $f$ is at most (e.g. see \cite[Section 5]{Li2019HiggsBundles}) 
\[-C'\cdot \frac{\|[\Phi,\Phi^{*_H}]\|^2}{\|\Phi\|^4}\] for some positive constant $C'$ arising from normalization of the metric on $\mathcal X_{2,q+1}$.  
\end{proof}

\subsection{Duality between the two types of harmonic bundles}\label{subsec:duality}
We focus on  \[(\EE,\Phi)=\begin{tikzcd}
	{\mathcal{L}} & {\mathcal{L}\mathcal{K}_X^{-1}} & {\mathcal{V}} & {\mathcal{W}}
	\arrow["{\mathds{1}}"', from=1-1, to=1-2]
	\arrow["\alpha"',  from=1-2, to=1-3]
	\arrow["\beta"', from=1-3, to=1-4]
	\arrow["\gamma"', curve={height=19pt}, from=1-4, to=1-1]
\end{tikzcd}\]
where either $\rank(\Vcal)=1$ or $\rank(\Wcal)=1$. If $\rank(\Wcal)=1$, we call it is of \textbf{type I}. If $\rank(\Vcal)=1$, we call it is of \textbf{type II}.

Let $(\Ecal,\Phi)$
be a Higgs bundle with associated representation
$\rho:\pi_1(X)\to\PU_{2,q+1}.$
The dual Higgs bundle is $(\Ecal^\vee,\Phi^\vee),$
where
$\Ecal^\vee=\operatorname{Hom}(\Ecal,\mathcal O_X)$ and $\Phi^\vee$ is the natural dual of $\Phi$.
Equivalently, for $\alpha\in \Ecal^\vee$ and $v\in \Ecal$,
$(\theta^\vee\alpha)(v)=-\alpha(\theta v).$

The corresponding dual representation is the contragredient representation
\[
\rho^\vee:\pi_1(X)\to\PU_{2,q+1},
\]
given by $\rho^\vee(\gamma)
=
\left(\rho(\gamma)^{-1}\right)^{\mathrm T}.$

Let $f_\rho:\widetilde X\to \operatorname{Sym}(\PGR)$
be the $\rho$-equivariant harmonic map associated to $(E,\Phi)$. Define the duality isometry
\[
\mathcal D:\operatorname{Sym}(\PGR)\to \operatorname{Sym}(\PGR)
\]
by $\mathcal D(g\cdot\PKR)
=
(g^{-1})^{\mathrm T}\cdot\PKR.$
Then the harmonic map associated to the dual representation is
$f_{\rho^\vee}
=
\mathcal D\circ f_\rho.$

Indeed, if
$f_\rho(\gamma z)=\rho(\gamma)\cdot f_\rho(z),$
then
$f_{\rho^\vee}(\gamma z)
=
\rho^\vee(\gamma)\cdot f_{\rho^\vee}(z).$
Since \(\mathcal D\) is an isometry of the symmetric space, \(f_{\rho^\vee}\) is harmonic whenever \(f_\rho\) is harmonic.

For cyclic Higgs bundles $(\mathcal E, \Phi)$ of type \upRoman{2} with associated representation $\rho$ and harmonic map $f$, we consider its dual Higgs bundle 
\[    (\Ecal^\vee,\Phi^\vee)=\begin{tikzcd}
	{ (\Lcal \KK_X^{-1})^\vee} & {\mathcal{L}^\vee} & {\mathcal{W}^\vee} & {\mathcal{V}^\vee}
	\arrow["{-\mathds{1}}^\vee"',  from=1-1, to=1-2]
	\arrow["-\gamma^\vee"',  from=1-2, to=1-3]
	\arrow["-\beta^\vee"',  from=1-3, to=1-4]
	\arrow["-\alpha^\vee"', curve={height=19pt}, from=1-4, to=1-1]
\end{tikzcd}\]
with harmonic metric also dual to each other. Thus we obtain a cyclic Higgs bundle of type \upRoman{1}. The associated representation and harmonic map is $(\rho^\vee, f^{\vee})$.  

\section{Quasi-isometric property}
\label{sec:QuasiIsometry}
We prove that the equivariant $\partial$-alternating surfaces in $\mathbb{CH}^{2,q}$ and in $\mathbb{CH}^{q+1,1}$ give bi-Lipschitz embedded minimal surfaces in the symmetric space of $\PGR=\PU_{2,q+1}$. Let $(\EE,\Phi)$ be the corresponding $\GR=\mathrm{U}_{2,q+1}$-Higgs bundle. 

\subsection{Index Estimates}

In this section we give some estimates for a general class of functions. The estimates can be used to establish the quasi-isometric embedding property for Higgs bundles by choosing suitable functions. The main idea of the estimates follows from \cite[Section 2.2]{filip2021uniformization} but with improvements. One can also compare the estimates with \cite[Section 4]{zhang2025non}.

We fix a complete Riemannian manifold $(M,g)$ with the distance function $d\colon M\times M\to\mathbb{R}$ and mainly consider the smooth function $f\colon M\to\mathbb{R}\in C^\infty(M;\mathbb{R})$ satisfying part of the following conditions:
\begin{itemize}
    \item[(S1)] $f$ is a non-negative Morse function, i.e. $f\geqslant0$ and all critical points of $f$ are non-degenerate;

    \item[(S2)] $\|\dd f\|_g\geqslant\varepsilon_0\cdot f$ for a positive constant $\varepsilon_0$;

    \item[(S3)] $\|\dd f\|_g\geqslant\varepsilon_1\cdot f^{1/2}$ for a positive constant $\varepsilon_1$;
\end{itemize}
where $\|-\|_g$ denotes the norm associated with $g$. 

The following index estimates was proven in \cite{filip2021uniformization} and restated in \cite[Theorem 4.3]{zhang2025non} for functions satisfying (S1)(S2)(S3). 

\begin{theorem}\label{thm:index}
Let $f\in C^\infty(M;\mathbb{R})$ be a smooth function satisfying (S1) and (S2).
    \begin{itemize}
        \item[(1)] The only critical points of $f$ are local minima, which occur when $f(x) = 0$.

        \item[(2)] $\inf_{M}f= 0$, and furthermore if $f$ attains its infimum (indeed minimum $0$) at some $x_{\mathrm{min}}\in M$, then for any sequence $(x_n)_{n=1}^\infty$ with $\lim_{n\to\infty}f(x_n)=0$, $(x_n)_{n=1}^\infty$ converges to $x_{\mathrm{min}}$. In particular, $f$ has at most one critical point.

        \item[(3)] In addition, if $f$ satisfies (S3) as well, then its negative gradient $-\nabla f$ is forward complete, i.e. any integral curve of its associated flow extends to $+\infty$. Moreover, $f$ has precisely one critical point, which is the endpoint of all integral curves of the negative gradient flow.
    \end{itemize}
\end{theorem}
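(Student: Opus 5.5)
The plan is to study the flow of $-\nabla f$ and use (S2) to control $f$ along it: (S2) says $f$ decays exponentially along the flow, and completeness of $(M,g)$ together with (S3) controls the length of trajectories.

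For (1), let $x$ be a critical point. Then $\dd f(x)=0$, so (S2) gives $\varepsilon_0 f(x)\leqslant 0$, hence $f(x)=0$. Because $f\geqslant0$, such a point is a global minimum. Non-degeneracy from (S1) then makes the Hessian positive definite, so $x$ is a strict local minimum. In particular, no saddles or maxima occur.

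For (2), the first step is to show $\inf f=0$. Pick $x$ with $f(x)>0$ and follow the integral curve $c(t)$ of $-\nabla f/\|\nabla f\|^2$, which is defined as long as $f>0$ because there are no critical points in $\{f>0\}$. Along this curve $f(c(t))=f(x)-t$. Its arclength is
\[
\int \|\nabla f\|^{-1}\,dt\leqslant\int \frac{df}{\varepsilon_0 f},
\]
which is only logarithmically divergent. So I would use the unit-speed flow $-\nabla f/\|\nabla f\|$ instead. Along it,
\[
\frac{d}{ds}f=-\|\dd f\|\leqslant-\varepsilon_0 f,
\]
so $f\leqslant f(x)e^{-\varepsilon_0 s}$. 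By completeness, a unit-speed curve can only stop in finite time by reaching a point where $\nabla f=0$, that is, where $f=0$. Either way, $\inf f=0$.

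For the uniqueness claim, suppose $f(x_{\min})=0$ and $f(x_n)\to0$. Near $x_{\min}$ the Morse lemma gives $f\approx |x-x_{\min}|^2$, so small sublevel sets near $x_{\min}$ are small balls. To exclude a second "end" where $f\to0$, I would use a mountain-pass argument. Assume some $x_n$ stay a distance at least $\delta$ from $x_{\min}$. Consider paths from $x_{\min}$ to $x_n$ and minimax $f$ over them. The minimax level is positive, since leaving the small ball around $x_{\min}$ forces $f\geqslant c_\delta>0$, and it exceeds $f(x_n)$ for large $n$. Then (S2) supplies a Palais--Smale-type condition at positive levels, namely $\|\dd f\|\geqslant\varepsilon_0 c$ on $\{f\geqslant c\}$. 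Running the deformation lemma with the gradient flow, which is defined on that region, produces a critical point at a positive level, contradicting (1). Hence $x_n\to x_{\min}$, and a second critical point would contradict this as well.

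For (3), (S3) gives
\[
\frac{d}{dt}f(c(t))=-\|\nabla f\|^2\leqslant -\varepsilon_1\|\nabla f\|\,f^{1/2},
\]
so the length of $c$ satisfies
\[
\int\|\nabla f\|\,dt\leqslant \frac{2}{\varepsilon_1}\,f(c(0))^{1/2}<\infty.
\]
Since $\|\nabla f\|$ is bounded on the relevant sublevel set, the speed is bounded and no blow-up occurs in finite time, so the flow is forward complete. Finite length and completeness of $M$ mean $c(t)$ converges as $t\to\infty$, and the limit must be a critical point. So a critical point exists, it is unique by (2), and every trajectory ends there.

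The main obstacle is the mountain-pass/deformation step in (2). The issue is that $M$ need not be compact, so Palais--Smale must be checked directly. I would first try to deduce it from (S2) via the lower bound on $\|\dd f\|$ on $\{f\geqslant c\}$.
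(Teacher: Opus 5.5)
The paper does not reprove this theorem; it imports it from \cite{filip2021uniformization} and \cite[Theorem 4.3]{zhang2025non}. Your proposal therefore has to stand on its own, and it essentially does. Part (1) is correct, and so is the proof of $\inf_M f=0$ via the unit-speed field $-\nabla f/\|\nabla f\|$. Your length bound in (3) is the same computation the paper carries out in the proof of Lemma~\ref{lemma:exponentialgrowth}.

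The mountain-pass step in (2) also goes through, and the obstacle you flag is harmless. Let $\kappa>0$ be the minimax level, and pick $\eta>0$ with $4\eta<\kappa$ and $f(x_n)<\kappa-2\eta$. Take $\chi\colon\mathbb R\to[0,1]$ equal to $1$ on $[\kappa-\eta,\kappa+\eta]$ and supported in $[\kappa-2\eta,\kappa+2\eta]$. By (S2), the field $V=-\chi(f)\nabla f/\|\nabla f\|^2$ is smooth with $\|V\|\leqslant 1/(\varepsilon_0(\kappa-2\eta))$, so it is complete on the complete manifold $M$. Since $\dd f(V)=-\chi(f)$, its time-$2\eta$ map fixes both endpoints and pushes any path with $\max f\leqslant\kappa+\eta$ into $\{f\leqslant\kappa-\eta\}$. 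This contradicts the definition of $\kappa$ directly, so you never need to extract a critical point.

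The same field also gives an equivalent argument without minimax language. It retracts $\{f\leqslant b\}$ onto $\{f\leqslant a\}$ for $0<a<b$, so every positive sublevel set is connected. For small $a$, the Morse lemma gives $f>a$ on a small geodesic sphere about $x_{\mathrm{min}}$. Together with connectedness, this forces $\{f\leqslant a\}\subset B_r(x_{\mathrm{min}})$.

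Two points need tightening.

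First, state that $M$ is connected; you use this when you join $x_{\mathrm{min}}$ to $x_n$ by paths. It holds for $M=\widetilde X$ in the application, and it is genuinely needed. Two disjoint copies of $(\mathbb R,\cosh x-1)$ satisfy (S1)--(S3) with $\varepsilon_0=1$ and $\varepsilon_1=\sqrt2$, yet have two critical points.

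Second, in (3) the claim that $\|\nabla f\|$ is bounded on the relevant sublevel set is unjustified at that stage, since sublevel sets are not yet known to be compact. It is also unnecessary. If the maximal forward time $T$ were finite, your length bound would make $c(t)$ Cauchy as $t\to T^-$, hence convergent in $M$. That is impossible for a maximal integral curve of a smooth field on $M$. (The claim does hold a posteriori: along the unit-speed flow, (S3) gives $\tfrac{\dd}{\dd s}f^{1/2}\leqslant-\varepsilon_1/2$, so $\{f\leqslant C\}\subset\overline{B}(x_{\mathrm{min}},2C^{1/2}/\varepsilon_1)$, which is compact by Hopf--Rinow.)

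Finally, you should justify that the limit $y$ of $c(t)$ as $t\to\infty$ is critical. This holds because $\int_0^\infty\|\nabla f(c(t))\|^2\,\dd t\leqslant f(c(0))$ while $\|\nabla f(c(t))\|\to\|\nabla f(y)\|$.
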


We prove the following lower bound of $f$ as an improvement of \cite[Lemma 2.2.12]{filip2021uniformization} and \cite[Lemma 4.4]{zhang2025non}.

\begin{lemma}\label{lemma:exponentialgrowth}
    Suppose that $f$ satisfies (S1)-(S3) and achieves its minimum (which is necessarily exists and unique by \prettyref{thm:index}) at $x_{\mathrm{min}}$.
    Then for any $x\in M$,
\[f(x)\geqslant\begin{cases}
    (\varepsilon_1^2/4)\cdot d(x_{\mathrm{min}},x)^2& \mbox{when }f(x)\leqslant(\varepsilon_1/\varepsilon_0)^2,\\
    (\varepsilon_1/\varepsilon_0)^2\cdot\exp\left(\varepsilon_0\cdot d(x_{\mathrm{min}},x)-2\right)& \mbox{when }f(x)>(\varepsilon_1/\varepsilon_0)^2,
\end{cases}\]
    where $d$ denotes the distance function of $(M,g)$, $\varepsilon_0$ and $\varepsilon_1$ are the constants in (S2)(S3) respectively. 
\end{lemma}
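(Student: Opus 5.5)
Fix $x\in M$ and write $D=d(x_{\mathrm{min}},x)$. The idea is to compare $f$ with its values along the negative gradient flow line starting at $x$. By \prettyref{thm:index}(3) this flow line exists for all forward time and ends at $x_{\mathrm{min}}$. Parametrize it by arclength, running backwards from $x_{\mathrm{min}}$ to $x$, via a curve $c\colon[0,\ell)\to M$ with $c(0)=x_{\mathrm{min}}$ in the limit and $c(\ell)=x$. Along $c$ we have $\frac{d}{ds}f(c(s))=\|\dd f\|_g$, since $c$ is an integral curve of $\nabla f/\|\nabla f\|$. Also $\ell\geqslant D$, because $c$ joins $x_{\mathrm{min}}$ to $x$. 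Set $\phi(s)=f(c(s))$. Then $\phi(0)=0$, $\phi$ is increasing, and (S2), (S3) give
\[
\phi'\geqslant\max\{\varepsilon_0\phi,\ \varepsilon_1\phi^{1/2}\}.
\]
There is a technical point: the flow line only approaches $x_{\mathrm{min}}$ asymptotically, so the arclength of the reversed curve must be finite. Finiteness follows from $\phi'\geqslant\varepsilon_1\phi^{1/2}$, which gives $(\phi^{1/2})'\geqslant\varepsilon_1/2$, so only finite length is spent as $\phi$ goes from $0$ to any positive value. This also justifies setting $\phi(0)=0$.

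\textbf{Small regime.} Suppose $f(x)\leqslant(\varepsilon_1/\varepsilon_0)^2$. Integrate $(\phi^{1/2})'\geqslant\varepsilon_1/2$ from $0$ to $\ell$. This gives $f(x)^{1/2}\geqslant\varepsilon_1\ell/2\geqslant\varepsilon_1 D/2$, which is the first bound. The hypothesis on $f(x)$ is not even needed here.

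\textbf{Large regime.} Suppose $f(x)>(\varepsilon_1/\varepsilon_0)^2=:a$. Let $s_0$ be the first parameter with $\phi(s_0)=a$. On $[0,s_0]$ the square-root bound gives $a^{1/2}\geqslant\varepsilon_1 s_0/2$, hence $s_0\leqslant 2a^{1/2}/\varepsilon_1=2/\varepsilon_0$. On $[s_0,\ell]$ we use $(\log\phi)'\geqslant\varepsilon_0$, which gives
\[
f(x)\geqslant a\,e^{\varepsilon_0(\ell-s_0)}\geqslant a\,e^{\varepsilon_0\ell-2}\geqslant a\,e^{\varepsilon_0 D-2}.
\]
This is exactly the claimed bound.

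\textbf{Main obstacle.} The delicate step is the flow-line construction. One must check that the arclength reparametrization is legitimate: $\nabla f\neq 0$ away from $x_{\mathrm{min}}$ by \prettyref{thm:index}(1)(2). One must also check that the curve genuinely reaches $x_{\mathrm{min}}$ with finite length, so that $\ell\geqslant D$. Forward completeness and convergence to the unique critical point come from \prettyref{thm:index}(3). Finite length comes from the $\phi^{1/2}$ estimate above. Finally, the trivial case $x=x_{\mathrm{min}}$ is immediate, since then $D=0$.
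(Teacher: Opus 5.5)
Your proposal is correct and follows essentially the same route as the paper: both follow the negative gradient flow line from $x$ to $x_{\mathrm{min}}$, bound $d(x_{\mathrm{min}},x)$ by its length, and integrate $\|\dd f\|_g\geqslant\max\{\varepsilon_0 f,\varepsilon_1 f^{1/2}\}$, splitting at the level $(\varepsilon_1/\varepsilon_0)^2$. The only difference is cosmetic: you reparametrize by arclength and integrate $(\phi^{1/2})'$ and $(\log\phi)'$, whereas the paper changes variables to $h=f\circ\gamma$ and bounds the length by $\int_0^{f(x)}\dd h/\max\{\varepsilon_0 h,\varepsilon_1 h^{1/2}\}$; your explicit check of finite length and of convergence to the endpoint $x_{\mathrm{min}}$ is slightly more careful than the paper.
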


\begin{proof}
    Fix an arbitrary $x\in M$. We make use of the integral curve $\gamma$ of the negative gradient flow, i.e. 
   \[\begin{cases}
    \dfrac{\dd\gamma}{\dd t}(t)=-\nabla f(\gamma(t)),\\
    \gamma(0)=x.
\end{cases}\]
and we know that the maximal existence time of $\gamma$ is $+\infty$ with $\gamma(+\infty)=x_{\mathrm{min}}$ by \prettyref{thm:index} and we set $h=f\circ\gamma$. Then $-\dfrac{\dd h}{\dd t}(t)=\|\nabla f(\gamma(t))\|_{g}^2$. Hence we obtain that
\[\begin{aligned}
    d(x,x_{\mathrm{min}})
    \leqslant&\operatorname{length}(\gamma|_{[0,+\infty]})\\
    =&\int_{0}^{+\infty}\left\|\dfrac{\dd\gamma}{\dd t}\right\|_{g}\dd t\\
    =&\int_{0}^{+\infty}\left\|\nabla f(\gamma(t))\right\|_{g}\dd t\\
    =&-\int_{0}^{+\infty}\dfrac{\dd h/\dd t}{\left\|\nabla f(\gamma(t))\right\|_{g}}\dd t\\
    \leqslant&-\int_{0}^{+\infty}\dfrac{\dd h/\dd t}{\max\{\varepsilon_0 h,\varepsilon_1h^{1/2}\}}\dd t\\
    =&\int_{0}^{h(0)}\dfrac{\dd h}{\max\{\varepsilon_0 h,\varepsilon_1h^{1/2}\}}.
\end{aligned}\]

Now suppose that $h(0)=f(x)\leqslant(\varepsilon_1/\varepsilon_0)^2$, then $\varepsilon_0h\leqslant\varepsilon_1h^{1/2}$ on $[0,+\infty)$. Therefore,
\[d(x,x_{\mathrm{min}})\leqslant\int_{0}^{h(0)}\dfrac{\dd h}{\varepsilon_1h^{1/2}}=\dfrac{2}{\varepsilon_1}\cdot h(0)^{1/2}=\dfrac{2}{\varepsilon_1}\cdot f(x)^{1/2}.\]
On the other hand, when $h(0)=f(x)>(\varepsilon_1/\varepsilon_0)^2$, there exists unique $t_0>0$ such that $h(t_0)=(\varepsilon_1/\varepsilon_0)^2$. Therefore, 
\[\begin{aligned}
    d(x,x_{\mathrm{min}})
    \leqslant&\int_{0}^{h(0)}\dfrac{\dd h}{\max\{\varepsilon_0 h,\varepsilon_1h^{1/2}\}}\\
    =&\int_{0}^{h(t_0)}\dfrac{\dd h}{\varepsilon_1h^{1/2}}+\int_{h(t_0)}^{h(0)}\dfrac{\dd h}{\varepsilon_0 h}\\
    =&\dfrac{2}{\varepsilon_0}+\dfrac{1}{\varepsilon_0}\cdot\log\dfrac{ h(0)}{\left(\varepsilon_1/\varepsilon_0\right)^2}.
\end{aligned}\]
Thus the lemma follows.
\end{proof}

\subsection{Norm domination in the wedge product}

To find a suitable function satisfying (S1)-(S3), we give some linear-algebraic lemmata in this section.


Let $\mathbb{C}^{2,q+1}$ be the linear space $\mathbb{C}^{q+3}$ equipped with an indefinite Hermitian pairing $\langle\bullet,\ast\rangle$ of signature $(2,q+1)$. We define $\|\bullet\|^2:=\langle\bullet,\bullet\rangle$. We fix an orthogonal decomposition $\mathbb{C}^{2,q+1}=P\oplus N$ such that $P$ is spacelike and $N$ is timelike. Then $\dim P=2$ and $\dim N=q+1$. Let $\pi_P$ and $\pi_N$ denote the projection onto $P$ and $N$ respectively. The orthogonal decomposition also induces an indefinite Hermitian pairing and an orthogonal decomposition on the exterior power of $\mathbb{C}^{2,q+1}$. In particular, we consider $\bigwedge^2\mathbb{C}^{2,q+1}$ and the projections $\pi_{+,+}$, $\pi_{+,-}$, $\pi_{-,-}$ onto the subspace $\bigwedge^2P$, $P\otimes N$, $\bigwedge^{2}N$ respectively.

Let $(e_1,e_2)$ be an orthogonal basis of a $2$-dimensional subspace $V$ in $\mathbb{C}^{2,q+1}$. Below, we always assume $\|e_i\|^2\in\{1,0,-1\}$.



\begin{lemma}\label{lemma:normcomparison1}
    When the restriction of the indefinite Hermitian pairing on $V$ is positive semi-definite, i.e. $\|e_i\|^2\in\{0,1\}$, we have
    \[\|\pi_{+,+}(e_1\wedge e_2)\|^2\geqslant \|\pi_{-,-}(e_1\wedge e_2)\|^2.\]
    Moreover, when $V$ is spacelike, i.e. $\|e_i\|^2=1$, we have
    \[\|\pi_{+,+}(e_1\wedge e_2)\|^2\geqslant 1+\|\pi_{-,-}(e_1\wedge e_2)\|^2\geqslant\max\{1,\|\pi_{-,-}(e_1\wedge e_2)\|^2\}.\]
\end{lemma}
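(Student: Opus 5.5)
We write each vector in the basis as $e_i=p_i+n_i$ with $p_i=\pi_P(e_i)\in P$ and $n_i=\pi_N(e_i)\in N$. Then
\[
e_1\wedge e_2=p_1\wedge p_2+(p_1\wedge n_2+n_1\wedge p_2)+n_1\wedge n_2,
\]
so $\pi_{+,+}(e_1\wedge e_2)=p_1\wedge p_2$ and $\pi_{-,-}(e_1\wedge e_2)=n_1\wedge n_2$. Using the induced pairing on $\bigwedge^2$, we have
\[
\|p_1\wedge p_2\|^2=\|p_1\|^2\|p_2\|^2-|\langle p_1,p_2\rangle|^2 .
\]
This quantity is a Gram determinant, computed with respect to the positive definite form on $P$. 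For the timelike part, the form on $N$ is negative definite, and the sign factors $(-1)(-1)=+1$ give
\[
\|n_1\wedge n_2\|^2=|n_1|^2|n_2|^2-|\langle n_1,n_2\rangle_N|^2 ,
\]
where $|\cdot|$ and $\langle\cdot,\cdot\rangle_N$ denote the positive definite form $-\langle\cdot,\cdot\rangle$ on $N$.

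Orthogonality of the basis gives $\langle p_1,p_2\rangle=\langle n_1,n_2\rangle_N$; call this common value $c$. Write $a_i=\|p_i\|^2$ and $b_i=|n_i|^2$, so that $\|e_i\|^2=a_i-b_i=:\epsilon_i$. The claim then reduces to
\[
a_1a_2-|c|^2\geqslant b_1b_2-|c|^2+\epsilon_1\epsilon_2 ,
\]
that is, $a_1a_2-b_1b_2\geqslant \epsilon_1\epsilon_2$. The $|c|^2$ terms cancel exactly, so no Cauchy–Schwarz step is needed.

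Substituting $a_i=b_i+\epsilon_i$ gives
\[
a_1a_2-b_1b_2=\epsilon_1b_2+\epsilon_2b_1+\epsilon_1\epsilon_2 .
\]
If $\epsilon_i\in\{0,1\}$ and $b_i\geqslant 0$, this is at least $\epsilon_1\epsilon_2\geqslant 0$, which proves the first inequality. In the spacelike case $\epsilon_1=\epsilon_2=1$, so
\[
\|\pi_{+,+}\|^2-\|\pi_{-,-}\|^2=b_1+b_2+1\geqslant 1 .
\]
The final inequality $1+x\geqslant\max\{1,x\}$ holds trivially for $x=\|\pi_{-,-}(e_1\wedge e_2)\|^2\geqslant 0$, and this value is indeed nonnegative because it is a Gram determinant of a definite form.

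The only delicate point is the sign convention for the induced pairing on $\bigwedge^2\mathbb C^{2,q+1}$. We need the components $\bigwedge^2P$ and $\bigwedge^2N$ to be positive and $P\otimes N$ to be negative, with the Gram-determinant formula $\langle u_1\wedge u_2,v_1\wedge v_2\rangle=\det(\langle u_i,v_j\rangle)$ valid throughout. We also need to confirm that this determinant formula respects the orthogonal decomposition, so that the cross terms $P\otimes N$ do not contribute to $\pi_{\pm,\pm}$. Once this is fixed, the argument is pure algebra.
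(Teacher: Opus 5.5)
Your proof is correct and is essentially the paper's argument: you split $e_i$ into its $P$- and $N$-components, write both projections as Gram determinants, use $\langle e_1,e_2\rangle=0$ to cancel the off-diagonal terms, and expand $a_i=b_i+\epsilon_i$ to isolate the nonnegative extra terms $\epsilon_1b_2+\epsilon_2b_1$. Your bookkeeping is in fact cleaner than the printed proof. There, the term $-2\operatorname{Re}\langle\nu_1,\nu_2\rangle$ should read $-|\langle\nu_1,\nu_2\rangle|^2$, and $G=I_2-B$ holds only in the spacelike case; in general it should be $G=\operatorname{diag}(\|e_1\|^2,\|e_2\|^2)-B$.
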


\begin{proof}

    We have the orthogonal decomposition $e_i=p_i+\nu_i$ with respect to $\mathbb{C}^{2,q+1}=P\oplus N$. Let $G=(\langle p_i,p_j\rangle)_{1\leqslant i,j\leqslant 2}$, $B=(\langle\nu_i,\nu_j\rangle)_{1\leqslant i,j\leqslant 2}$ denote the Gram matrix of $(p_i)$, $(\nu_i)$ respectively. Then $G=I_2-B$. We obtain that
    \[\begin{aligned}
        \|\pi_{+,+}(e_1\wedge e_2)\|^2
        =&\|p_1\wedge p_2\|^2\\
        =&\det G\\
        =&(\|e_1\|^2-\|\nu_1\|^2)(\|e_2\|^2-\|\nu_2\|^2)-2\operatorname{Re}\langle\nu_1,\nu_2\rangle\\
        =&\|e_1\|^2\|e_2\|^2-\|e_2\|^2\|\nu_1\|^2-\|e_1\|^2\|\nu_2\|^2+\det(-B)\\
        \geqslant&\|\nu_1\wedge\nu_2\|^2\quad(\mbox{since }\|e_i\|^2\geqslant0\mbox{ and }\|\nu_i\|^2\leqslant0)\\
        =&\|\pi_{-,-}(e_1\wedge e_2)\|^2.
    \end{aligned}\]
    When $V$ is spacelike, we obtain that
    \[\|\pi_{+,+}(e_1\wedge e_2)\|^2=1-\|\nu_1\|^2-\|\nu_2\|^2+\det(-B)\geqslant1+\|\nu_1\wedge\nu_2\|^2=1+\|\pi_{-,-}(e_1\wedge e_2)\|^2.\]
    Now the last inequality follows from that both $1$ and $\|\pi_{-,-}(e_1\wedge e_2)\|^2$ are non-negative.
\end{proof}

\begin{lemma}\label{lemma:normcomparison2}
    When the restriction of the indefinite Hermitian pairing on $V$ is positive semi-definite, i.e. $\|e_i\|^2\in\{0,1\}$, then
    \[\|\pi_{+,+}(e_1\wedge e_2)\|^2\geqslant -\dfrac{1}{2}\|\pi_{+,-}(e_1\wedge e_2)\|^2.\]
\end{lemma}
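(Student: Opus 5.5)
The plan is a direct computation in the same setup as the proof of Lemma~\ref{lemma:normcomparison1}. I decompose $e_i=p_i+\nu_i$ along $\mathbb{C}^{2,q+1}=P\oplus N$, so that
\[
e_1\wedge e_2=p_1\wedge p_2+(p_1\wedge\nu_2+\nu_1\wedge p_2)+\nu_1\wedge\nu_2 .
\]
The middle term is $\pi_{+,-}(e_1\wedge e_2)$. I express both sides of the inequality through Gram matrices, using the determinant formula $\langle a\wedge b,c\wedge d\rangle=\langle a,c\rangle\langle b,d\rangle-\langle a,d\rangle\langle b,c\rangle$ for the induced pairing on $\bigwedge^2$.

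Set $D=\operatorname{diag}(\|e_1\|^2,\|e_2\|^2)$ with $D_i\in\{0,1\}$. Write $b=-B=(-\langle\nu_i,\nu_j\rangle)$; this is positive semi-definite because $N$ is timelike. Orthogonality of $e_1,e_2$ and $P\perp N$ give $G=D+b$. Then I compute the two sides.
\begin{enumerate}
\item As in Lemma~\ref{lemma:normcomparison1},
\[
\|\pi_{+,+}(e_1\wedge e_2)\|^2=\det G=(D_1+b_{11})(D_2+b_{22})-|b_{12}|^2 .
\]
\item All pairings between a $P$-vector and an $N$-vector vanish, so
\[
\|p_1\wedge\nu_2\|^2=G_{11}B_{22},\qquad \|\nu_1\wedge p_2\|^2=B_{11}G_{22},
\]
\[
\langle p_1\wedge\nu_2,\nu_1\wedge p_2\rangle=-G_{12}B_{21}=|b_{12}|^2 .
\]
Hence
\[
\|\pi_{+,-}(e_1\wedge e_2)\|^2=-(D_1+b_{11})b_{22}-(D_2+b_{22})b_{11}+2|b_{12}|^2 .
\]
\end{enumerate}
The key step is adding the first expression to one half of the second. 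The terms $b_{11}b_{22}$ and $|b_{12}|^2$ cancel exactly, leaving
\[
\|\pi_{+,+}(e_1\wedge e_2)\|^2+\tfrac12\|\pi_{+,-}(e_1\wedge e_2)\|^2
= D_1D_2+\tfrac12\bigl(D_1b_{22}+D_2b_{11}\bigr).
\]
This is nonnegative because $D_i\geqslant 0$ and $b_{ii}=-\|\nu_i\|^2\geqslant0$, which proves the claim.

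The only real obstacle is bookkeeping. I have to get the sign and conjugation conventions right in the cross term $\langle p_1\wedge\nu_2,\nu_1\wedge p_2\rangle$, remembering that the wedge is written $p_1\wedge\nu_2+\nu_1\wedge p_2$ rather than $p_1\wedge\nu_2-p_2\wedge\nu_1$. I also have to check that this cross term indeed comes out as $+|b_{12}|^2$, since the exact cancellation of $|b_{12}|^2$ depends on it. I would verify this on the special case $\nu_1=\nu_2$ as a sanity check.
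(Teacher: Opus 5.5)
Your proof is correct, including the cross term. Since $\langle e_1,e_2\rangle=0$ forces $G_{12}=-B_{12}$, you get $-G_{12}B_{21}=\langle\nu_1,\nu_2\rangle\langle\nu_2,\nu_1\rangle=|b_{12}|^2$, independently of conjugation conventions. The cancellation then yields the exact identity
\[
\|\pi_{+,+}(e_1\wedge e_2)\|^2+\tfrac12\|\pi_{+,-}(e_1\wedge e_2)\|^2
=D_1D_2+\tfrac12\bigl(D_1b_{22}+D_2b_{11}\bigr)\geqslant0 .
\]
Your relation $G=D+b$ is also the right one when some $\|e_i\|^2=0$; the proof of Lemma~\ref{lemma:normcomparison1} writes $G=I_2-B$, which is literally correct only in the spacelike case.

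The paper reaches the inequality without any new Gram-matrix computation. The summands $\bigwedge^2P$, $P\otimes N$, $\bigwedge^2N$ are mutually orthogonal, so $0\leqslant\|e_1\|^2\|e_2\|^2=\|e_1\wedge e_2\|^2$ is the sum of the three projected norms. Lemma~\ref{lemma:normcomparison1} then bounds the $\bigwedge^2N$ term by the $\bigwedge^2P$ term, giving $0\leqslant 2\|\pi_{+,+}(e_1\wedge e_2)\|^2+\|\pi_{+,-}(e_1\wedge e_2)\|^2$.

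Your cancellation of $b_{11}b_{22}$ and $|b_{12}|^2$ is that Pythagorean identity in disguise, since $\det b=\|\nu_1\wedge\nu_2\|^2=\|\pi_{-,-}(e_1\wedge e_2)\|^2$. Indeed, your right-hand side equals $\tfrac12\bigl(\|e_1\|^2\|e_2\|^2+\|\pi_{+,+}(e_1\wedge e_2)\|^2-\|\pi_{-,-}(e_1\wedge e_2)\|^2\bigr)$.

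The paper's route is shorter and more structural because it reuses the previous lemma. Yours is self-contained, does not depend on Lemma~\ref{lemma:normcomparison1}, and makes the slack in the inequality explicit.
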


\begin{proof}
    Note that $\|e_1\wedge e_2\|^2=\|e_1\|^2\|e_2\|^2\geqslant0$. We obtain that
    \[0\leqslant\|e_1\wedge e_2\|^2=\|\pi_{+,+}(e_1\wedge e_2)\|^2+\|\pi_{+,-}(e_1\wedge e_2)\|^2+\|\pi_{-,-}(e_1\wedge e_2)\|^2.\]
    Therefore 
    \[-\|\pi_{+,-}(e_1\wedge e_2)\|^2\leqslant2\|\pi_{+,+}(e_1\wedge e_2)\|^2\]
    by \prettyref{lemma:normcomparison1}.
\end{proof}

\subsection{Establish (S1)-(S3) for suitable function}\label{sec:choosefunction}
We consider the cyclic harmonic $\mathrm{U}_{2,q+1}$-Higgs bundle of type I:
\[
    (\EE,\Phi)=\begin{tikzcd}
	{\mathcal{L}} & {\mathcal{L}\mathcal{K}_X^{-1}} & {\mathcal{V}} & {\mathcal{W}}
	\arrow["{\mathds{1}}"', from=1-1, to=1-2]
	\arrow["\alpha"', from=1-2, to=1-3]
	\arrow["\beta"', from=1-3, to=1-4]
	\arrow["\gamma"', curve={height=19pt}, from=1-4, to=1-1]
\end{tikzcd}
\]
over the universal cover $\widetilde{X}$ of $X$. Here $\LL$, $\LL\KK_X^{-1}$ and $\mathcal{W}$ are all line bundles and $\VV$ is of rank $q$. It admits the harmonic metric $h$ and the induced projectively flat connection $\mathrm{D}^h=\nabla^h+\Phi+\Phi^{*_h}$,that is,
\[F(\mathrm{D}^h)=-2\pi\iu\mu\cdot\omega\otimes\operatorname{id}_{\EE}\]
for the lifting $\omega$ of the area $1$ K\"ahler form $\omega_X$. Since $\widetilde{X}$ is simply connected, there is a real $1$-form $\alpha$ on $\widetilde{X}$ such that $\dd\alpha=\omega$, hence \[\mathrm{D}^{h,\alpha}:=\mathrm{D}^h+2\pi\iu\alpha\otimes\operatorname{id}_{\EE}\] is flat and \[\nabla^{h,\alpha}=\nabla^h+2\pi\iu\alpha\otimes\operatorname{id}_{\EE}\] is the skew-Hermitian part of $\mathrm{D}^{h,\alpha}$ with respect to the metric $h$. 

For our convenience, we denote $\LL,\LL\KK_X^{-1},\VV,\WW$ by $\EE_0,\EE_1,\EE_2,\EE_3$ respectively.  

We fix a basepoint $x_0\in\widetilde{X}$ and an oriented $2$-plane $\mathbf{S}$ in $\EE_{x_0}$. With respect to the flat connection $\mathrm{D}^{h,\alpha}$, $\mathbf{S}$ can be extended to a global oriented $2$-plane section, which we still denote by $\mathbf{S}$, on $\widetilde{X}$ flatly. Since the flat connection preserves the indefinite pairing $\langle\bullet,\ast\rangle$, the signature of $\langle\bullet,\ast\rangle|_{\mathbf{S}}$ is constant. Equivalently, via Pl\"ucker embedding, we can choose an ordered basis $(e_1,e_2)$ of $\mathbf{S}_{x_0}$. They can be extended to global flat sections, which we still denote by $e_1,e_2$, on $\widetilde{X}$. Then $\mathbf{S}$ is the same as the global flat section $e_1\wedge e_2$ in $(\wedge^2\EE,\wedge^2\Phi)$ with respect to the flat connection $\wedge^2\mathrm{D}^{h,\alpha}$. We will denote by $\pi_{j,k}$ the projection onto $\EE_j\wedge\EE_k$ and by $H$ the induced harmonic metric $\wedge^2h$ of $(\wedge^2\EE,\wedge^2\Phi)$. Also we have the connection \[\nabla^{H,\alpha}=\nabla^H+4\pi\iu\alpha\otimes\id_{\wedge^2\EE}\] which is skew-Hermitian with respect to $H$.

Below we will consider the subspace $\mathbf{S}$ such that $\langle\bullet,\ast\rangle|_{\mathbf{S}}$ is positive semi-definite. In particular, we will choose perpendicular basis $e_1,e_2$ of $\mathbf{S}$ such that
\begin{itemize}
    \item $e_1,e_2$ are spacelike and have unit norm when $\langle\bullet,\ast\rangle|_{\mathbf{S}}$ is positive definite;

    \item $e_1$ is spacelike and has unit norm, $e_2$ is isotropic when $\langle\bullet,\ast\rangle|_{\mathbf{S}}$ is neither positive definite nor $0$;

    \item $e_1,e_2$ are lightlike when $\langle\bullet,\ast\rangle|_{\mathbf{S}}$ is $0$.
\end{itemize}

We simply denote $e_1\wedge e_2$ by $\mathbf{e}$ and $\pi_{j,k}(\mathbf{e})$ by $\mathbf{e}_{j,k}$. Now we define the function $f_{\mathbf{S}}$ over $\widetilde{X}$ as $-\|\mathbf{e}_{0,3}\|^2=\|\mathbf{e}_{0,3}\|_H^2$. In the following, we first assume that

\begin{assumption}\label{ass:positive-gap}
    There exists a positive constant $\epsilon>0$ such that $\|\mathds{1}\|-\|\beta\|\geqslant\epsilon$. 
\end{assumption}

\begin{remark}
    Note that when $X$ is closed, \prettyref{ass:positive-gap} holds for non-maximal $(\EE,\Phi)$ by \prettyref{lemma:higgsfieldnorm}.
\end{remark}

Then we prove that $f_{\mathbf{S}}$ satisfies (S1)(S2) under this assumption. Moreover, we will prove that $f_{\mathbf{S}}$ satisfies (S3) when $\mathbf{S}$ is spacelike. Let $g$ denote a conformal metric on $X$ and its lift to $\widetilde{X}$.

\begin{lemma}\label{lemma:S23}
    If \prettyref{ass:positive-gap} holds, there is a constant $\varepsilon_0>0$ independent of the choice of $\mathbf{S}$ such that $\|\dd f_{\mathbf{S}}\|_g\geqslant\varepsilon_0\cdot f$. Moreover, when $\mathbf{S}$ is spacelike, $\|\dd f_{\mathbf{S}}\|_g\geqslant\sqrt{2}\varepsilon_0\cdot f^{1/2}$. 
\end{lemma}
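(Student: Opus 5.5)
The plan is to differentiate $f_{\mathbf S}=\|\mathbf e_{0,3}\|_H^2$ using only the flatness of $\mathbf e$. The resulting derivative splits into two pieces: a dominant term controlled by $\mathds 1$ and a competing term controlled by $\beta$. Assumption~\ref{ass:positive-gap} lets the first absorb the second, and the norm-domination lemmas then convert the outcome into (S2) and (S3).

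\textbf{Step 1 (signature bookkeeping).} I take the $h$-orthogonal splitting of $\EE$ into the positive part $P=\EE_1\oplus\EE_3$ (rank $2$) and the negative part $N=\EE_0\oplus\EE_2$ (rank $q+1$). This sign pattern is what makes $-\|\mathbf e_{0,3}\|^2=\|\mathbf e_{0,3}\|_H^2$. I will double-check it against the $\mathrm U_{2,q+1}$-structure of the cyclic bundle; the fact actually used is that $\EE_1,\EE_3$ are positive and $\EE_0,\EE_2$ negative. With this splitting,
\[
\mathbf e_{1,3}=\pi_{+,+}(\mathbf e),\qquad
\mathbf e_{0,3}\ \text{is a component of}\ \pi_{+,-}(\mathbf e),\qquad
\mathbf e_{0,2}\ \text{is a component of}\ \pi_{-,-}(\mathbf e).
\]
Lemmas~\ref{lemma:normcomparison1} and~\ref{lemma:normcomparison2} then give $\|\mathbf e_{1,3}\|_H^2\geqslant\|\mathbf e_{0,2}\|_H^2$ and $\|\mathbf e_{1,3}\|_H^2\geqslant\tfrac12\|\mathbf e_{0,3}\|_H^2=\tfrac12 f_{\mathbf S}$. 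In the spacelike case they also give $\|\mathbf e_{1,3}\|_H^2\geqslant1$. These bounds hold pointwise because the flat extension preserves the signature of $\mathbf S$.

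\textbf{Step 2 (derivative of the $(0,3)$-component).} Since $\wedge^2\mathrm D^{h,\alpha}\mathbf e=0$, we have $\nabla^{H,\alpha}\mathbf e=-(\wedge^2\Phi+\wedge^2\Phi^{*_h})\mathbf e$. I project this identity onto $\EE_0\wedge\EE_3$. The projection commutes with $\nabla^{H,\alpha}$ because the metric is graded. The scalar term $4\pi\iu\alpha$ drops out of $\dd\|\cdot\|_H^2$ since it is skew-Hermitian.

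For the $(1,0)$-part, I list which components $\Phi$ can send into $\EE_0\wedge\EE_3$. The candidates are $\EE_0\wedge\EE_2$ (via $\beta$) and $\EE_3\wedge\EE_3$ (via $\gamma$). The latter vanishes because $\rank\EE_3=1$; this is exactly where type I is used. Hence $\partial\mathbf e_{0,3}=-\beta\,\mathbf e_{0,2}$.

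For the $(0,1)$-part, $\Phi^{*}$ can map into $\EE_0\wedge\EE_3$ only from $\EE_1\wedge\EE_3$ (via $\mathds 1^{*}$) and from $\EE_0\wedge\EE_0=0$. Hence $\bar\partial\mathbf e_{0,3}=-\mathds 1^{*}\mathbf e_{1,3}$. Therefore
\[
\partial_z f_{\mathbf S}=-\langle\beta\mathbf e_{0,2},\mathbf e_{0,3}\rangle_H-\langle\mathbf e_{0,3},\mathds 1^{*}\mathbf e_{1,3}\rangle_H .
\]

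\textbf{Step 3 (the estimate).} The bundles $\EE_0\wedge\EE_3$ and $\EE_1\wedge\EE_3$ are line bundles, and $\mathds 1^{*}$ maps one to the other. So the second term has modulus exactly $\|\mathds 1\|\,\|\mathbf e_{1,3}\|\,\|\mathbf e_{0,3}\|$, with no Cauchy--Schwarz loss. The first term is bounded by $\|\beta\|\,\|\mathbf e_{0,2}\|\,\|\mathbf e_{0,3}\|\leqslant\|\beta\|\,\|\mathbf e_{1,3}\|\,\|\mathbf e_{0,3}\|$, using Step 1. By the triangle inequality and Assumption~\ref{ass:positive-gap},
\[
\|\dd f_{\mathbf S}\|_g\gtrsim(\|\mathds 1\|-\|\beta\|)\,\|\mathbf e_{1,3}\|\,f_{\mathbf S}^{1/2}\geqslant\epsilon\,\|\mathbf e_{1,3}\|\,f_{\mathbf S}^{1/2},
\]
up to a normalization constant relating $|\partial_z f|$ to $\|\dd f\|_g$. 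Inserting $\|\mathbf e_{1,3}\|\geqslant f^{1/2}/\sqrt2$ gives (S2) with $\varepsilon_0\sim\epsilon/\sqrt2$, independent of $\mathbf S$. In the spacelike case, $\|\mathbf e_{1,3}\|\geqslant1$ gives $\|\dd f\|_g\geqslant\sqrt2\,\varepsilon_0 f^{1/2}$.

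\textbf{Main obstacle.} I expect the main difficulty to be Step 2: getting the grading and signature identifications exactly right, so that the $\gamma$-contribution genuinely lands in $\wedge^2\EE_3=0$, and tracking the norm conventions for $1$-forms so that the constants come out as stated.
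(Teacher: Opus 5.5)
Your proposal is correct and follows essentially the same route as the paper's proof. You differentiate $f_{\mathbf S}=\|\mathbf e_{0,3}\|_H^2$ along the flat section and isolate the $(\one\wedge\operatorname{id}_{\EE_3})^{*_H}\mathbf e_{1,3}$ and $(\operatorname{id}_{\EE_0}\wedge\beta)\mathbf e_{0,2}$ terms. You then apply the triangle inequality with $\|\one\|-\|\beta\|\geqslant\epsilon$, and finish with Lemma~\ref{lemma:normcomparison1} and Lemma~\ref{lemma:normcomparison2} to get (S2), and (S3) in the spacelike case. Your explicit check that the $\gamma$-term lands in $\wedge^2\EE_3=0$ and the $\gamma^{*}$-term in $\wedge^2\EE_0=0$ is a step the paper leaves implicit. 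It is exactly where the type~I hypothesis enters.
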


\begin{proof}

    \[\begin{aligned}
        \|\dd f_{\mathbf{S}}\|_g
        =&\|\dd H(\mathbf{e}_{0,3},\mathbf{e}_{0,3})\|_g\\
        =&\|H(\nabla^{H,\alpha}(\mathbf{e}_{0,3}),\mathbf{e}_{0,3})+H(\mathbf{e}_{0,3},\nabla^{H,\alpha}(\mathbf{e}_{0,3}))\|_{g}\\
        =&\left\|H((\operatorname{id}_{\EE_0}\wedge\beta)(\mathbf{e}_{0,2})+(\one\wedge\operatorname{id}_{\EE_3})^{*_H}(\mathbf{e}_{1,3}),\mathbf{e}_{0,3})\right.\\
        &\ \left.+H(\mathbf{e}_{0,3},(\operatorname{id}_{\EE_0}\wedge\beta)(\mathbf{e}_{0,2})+(\one\wedge\operatorname{id}_{\EE_3})^{*_H}(\mathbf{e}_{1,3}))\right\|_{g}\\
        =&\sqrt{2}\cdot\left\|H((\one\wedge\operatorname{id}_{\EE_3})^{*_H}(\mathbf{e}_{1,3}),\mathbf{e}_{0,3})\right.\\
        &\ \left.+H(\mathbf{e}_{0,3},(\operatorname{id}_{\EE_0}\wedge\beta)(\mathbf{e}_{0,2}))\right\|_{g}\quad(\mbox{since }\dd z\perp\dd\bar z\mbox{ and }\|\dd z\|=\|\dd\bar z\|)\\
        \geqslant&\sqrt{2}\cdot\left|\|H((\one\wedge\operatorname{id}_{\EE_3})^{*_H}(\mathbf{e}_{1,3}),\mathbf{e}_{0,3})\|-\|H(\mathbf{e}_{0,3},(\operatorname{id}_{\EE_0}\wedge\beta)(\mathbf{e}_{0,2}))\|_{g}\right|\\
        \end{aligned}\]

        By \prettyref{lemma:higgsfieldnorm}, we obtain that
        \begin{equation}\label{eq:higgsfieldcomparison}
            \|\one\wedge\operatorname{id}_{\EE_3}\|=\|\one\|>\|\beta\|=\|\operatorname{id}_{\EE_0}\wedge\beta\|
        \end{equation}
        and by \prettyref{lemma:normcomparison1} we have
        \begin{equation}\label{eq:normcomparison}
            \|\mathbf{e}_{1,3}\|_H\geqslant\|\mathbf{e}_{0,2}\|_H.
        \end{equation}
        Therefore,
        \[\begin{aligned}
        \|\dd f_{\mathbf{S}}\|_g
        \geqslant&\sqrt{2}\cdot\left|\|H((\one\wedge\operatorname{id}_{\EE_3})^{*_H}(\mathbf{e}_{1,3}),\mathbf{e}_{0,3})\|-\|H(\mathbf{e}_{0,3},(\operatorname{id}_{\EE_0}\wedge\beta)(\mathbf{e}_{0,2}))\|_{g}\right|\\
        \geqslant&\sqrt{2}\cdot\left(\|\one\|-\|\beta\|\right)\cdot\|\mathbf{e}_{1,3}\|_H\cdot\|\mathbf{e}_{0,3}\|_H
    \end{aligned}\]
    Therefore, by \prettyref{lemma:normcomparison2}
    \[\|\dd f_{\mathbf{S}}\|_g\geqslant\left(\|\one\|-\|\beta\|\right)\cdot\|\mathbf{e}_{0,3}\|_H^2=\left(\|\one\|-\|\beta\|\right)\cdot f_{\mathbf{S}}.\]
    And when $\mathbf{S}$ is spacelike, by \prettyref{lemma:normcomparison1} we obtain that
    \[\|\dd f_{\mathbf{S}}\|_g\geqslant\sqrt{2}\cdot\left(\|\one\|-\|\beta\|\right)\cdot\|\mathbf{e}_{0,3}\|_H=\sqrt{2}\cdot\left(\|\one\|-\|\beta\|\right)\cdot f_{\mathbf{S}}^{1/2}.\]

 By \prettyref{ass:positive-gap}, $\|\one\|-\|\beta\|\geqslant \epsilon,$ for some positive constant $\epsilon.$ Thus we have the statements.
\end{proof}

\begin{lemma}\label{lemma:p1-morse}
    $f_{\mathbf{S}}$ is a non-negative Morse function.
\end{lemma}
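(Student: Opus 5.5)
The plan is to check non-negativity directly and then reduce the Morse condition to a first-order computation at the zero set of \(f_{\mathbf S}\), using the identities from the proof of \prettyref{lemma:S23}.

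\emph{Non-negativity.} This is immediate. Since \(\mathbf e_{0,3}\) lies in \(\EE_0\wedge\EE_3\), on which the indefinite pairing is negative definite, we have \(f_{\mathbf S}=-\|\mathbf e_{0,3}\|^2=\|\mathbf e_{0,3}\|_H^2\geqslant 0\).

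\emph{Locating the critical points.} Under \prettyref{ass:positive-gap}, \prettyref{lemma:S23} gives \(\|\dd f_{\mathbf S}\|_g\geqslant\varepsilon_0 f_{\mathbf S}\). Hence every critical point \(p\) satisfies \(f_{\mathbf S}(p)=0\), that is, \(\mathbf e_{0,3}(p)=0\). It therefore suffices to show that the Hessian is nondegenerate at every zero of \(\mathbf e_{0,3}\).

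\emph{The Hessian at a zero.} At such a point the terms involving \(\mathbf e_{0,3}\) itself drop out, since \(\nabla^{H,\alpha}\) is \(H\)-unitary. So for a real tangent vector \(v\),
\[
\operatorname{Hess}f_{\mathbf S}(v,v)=2\,\bigl\|\nabla^{H,\alpha}_v\mathbf e_{0,3}\bigr\|_H^2 .
\]
This is positive semi-definite, and nondegeneracy means that \(v\mapsto\nabla_v\mathbf e_{0,3}\) is injective. Flatness of \(\mathbf e\) gives, as in the proof of \prettyref{lemma:S23},
\[
\nabla\mathbf e_{0,3}=(\operatorname{id}_{\EE_0}\wedge\beta)(\mathbf e_{0,2})+(\one\wedge\operatorname{id}_{\EE_3})^{*_H}(\mathbf e_{1,3}),
\]
a \((1,0)\)-part \(a\,dz\) plus a \((0,1)\)-part \(b\,d\bar z\). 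Both \(a\) and \(b\) take values in the line bundle \(\EE_0\wedge\EE_3\). Writing \(v=v^{1,0}+v^{0,1}\) with \(|v^{1,0}|=|v^{0,1}|\), we get
\[
\|\nabla_v\mathbf e_{0,3}\|\geqslant\bigl(\|b\|-\|a\|\bigr)\,|dz(v)|.
\]
By \eqref{eq:higgsfieldcomparison} and \eqref{eq:normcomparison},
\[
\|b\|-\|a\|\geqslant(\|\one\|-\|\beta\|)\,\|\mathbf e_{1,3}\|_H\geqslant\epsilon\,\|\mathbf e_{1,3}\|_H .
\]
So the Hessian is positive definite at \(p\) as soon as \(\mathbf e_{1,3}(p)\neq0\).

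\emph{Showing \(\mathbf e_{1,3}(p)\neq 0\).} In the spacelike case this is automatic. By \prettyref{lemma:normcomparison1},
\[
\|\mathbf e_{1,3}\|_H^2=\|\pi_{+,+}\mathbf e\|^2\geqslant1 .
\]

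\emph{Main obstacle: the degenerate case.} The hard case is \(\mathbf S\) degenerate, with \(\mathbf e_{0,3}(p)=\mathbf e_{1,3}(p)=0\). Then \prettyref{lemma:normcomparison1} forces \(\pi_{-,-}\mathbf e(p)=0\), so \(\mathbf e(p)\) lies in the remaining mixed components \(\EE_0\wedge\EE_1\), \(\EE_1\wedge\EE_2\) and \(\EE_2\wedge\EE_3\). My first attempt would be to rule this out by exploiting that \(\mathbf e\) is decomposable and \(\mathbf S\) is positive semi-definite. Write \(e_i=p_i+\nu_i\) as in \prettyref{lemma:normcomparison1}. Vanishing of \(p_1\wedge p_2\) and of \(\nu_1\wedge\nu_2\) forces \(\mathbf S\) to contain a vector with the structure \(p+\nu\) where \(p\) and \(\nu\) each span a line. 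Combining this with \(\mathbf e_{0,3}=0\) should produce a null vector in \(\mathbf S\) lying in \(\EE_1\oplus\EE_2\) or in \(\EE_0\oplus\EE_3\)-type position.

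If that does not yield a contradiction, the fallback is a second-order argument. I would differentiate \(\nabla\mathbf e_{0,3}\) once more using flatness and the cyclic structure. This shows that \(\mathbf e_{1,3}\) and \(\mathbf e_{0,2}\) cannot vanish simultaneously with all their derivatives, because \(\mathbf e\) is a nonzero flat section. It would then show that in this degenerate configuration the zero of \(\mathbf e_{0,3}\) is still isolated with definite quadratic leading term.

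I expect this degenerate (isotropic or null \(\mathbf S\)) case to be the main difficulty. The spacelike case, which is what (S3) and \prettyref{lemma:exponentialgrowth} really need, follows cleanly from the estimate above.
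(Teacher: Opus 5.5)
Your computation at a zero of $\mathbf e_{0,3}$ is essentially the paper's. The paper writes the $2\times2$ coordinate Hessian using the $(0,1)$-coefficient $s$, coming from $(\one\wedge\id_{\EE_3})^{*_H}(\mathbf e_{1,3})$, and the $(1,0)$-coefficient $t$, coming from $(\id_{\EE_0}\wedge\beta)(\mathbf e_{0,2})$. It then shows the determinant vanishes only if $|s|=|t|$. Your identity $\operatorname{Hess}f_{\mathbf S}(v,v)=2\|\nabla_v\mathbf e_{0,3}\|_H^2$, with the lower bound by $\|b\|-\|a\|$, is an equivalent and slightly cleaner way to say this.

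\textbf{The gap.} The lemma is claimed for every positive semi-definite $\mathbf S$, and you leave the degenerate planes open. These are exactly what Theorem~\ref{thm:p1-geometric-structures} uses for the strata $\mathcal S_1$ and $\mathcal S_0$, which give the domain in the Shilov boundary. So an argument covering only spacelike $\mathbf S$ does not prove the statement.

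Your fallback cannot repair this. Suppose $\mathbf e_{0,3}(p)=\mathbf e_{1,3}(p)=0$. Then Lemma~\ref{lemma:normcomparison1} forces $\mathbf e_{0,2}(p)=0$, so both coefficients of $\nabla\mathbf e_{0,3}$ vanish at $p$. The Hessian at $p$ is then identically zero, so $p$ would be a degenerate critical point, and no second-order or isolatedness argument changes that. Your first attempt also aims at the wrong target: a null vector in $\mathbf S$ is not contradictory when $\mathbf S$ is degenerate.

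\textbf{The missing idea.} The bad configuration never occurs, for a one-line reason. At each point $x$:
\begin{enumerate}
\item The subspace $N_x=\EE_0(x)\oplus\EE_2(x)$ is negative definite and $\mathbf S_x$ is positive semi-definite, so $\mathbf S_x\cap N_x=0$.
\item Hence the projection of $\mathbf S_x$ to the $2$-dimensional positive subspace $P_x=\EE_1(x)\oplus\EE_3(x)$ is injective, and therefore an isomorphism.
\item Therefore $\mathbf e_{1,3}(x)=\pi_{+,+}(\mathbf e)(x)=p_1\wedge p_2\neq0$, for every positive semi-definite $\mathbf S$ and every $x$.
\end{enumerate}
This is the same fact behind the graph description of non-negative $2$-planes in the proof of Proposition~\ref{prop:FiberBundle}.

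With this in hand, your estimate $\|b\|-\|a\|\geqslant\epsilon\,\|\mathbf e_{1,3}\|_H>0$ holds at every zero of $\mathbf e_{0,3}$. So $\operatorname{Hess}f_{\mathbf S}(v,v)>0$ for all $v\neq0$ in all three cases, and the proof closes.

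The paper's own argument uses this fact tacitly when it deduces the strict inequality $|s|>|t|$ from \eqref{eq:higgsfieldcomparison} and \eqref{eq:normcomparison}. You were right that it is needed, but you did not supply it.
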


\begin{proof}
    Since $\|\dd f_{\mathbf{S}}\|_{g}\geqslant\left(\|\one\|-\|\beta\|\right)\cdot f_{\mathbf{S}}$ by \prettyref{lemma:S23} and $\|\one\|-\|\beta\|>0$ by \prettyref{lemma:higgsfieldnorm}, we obtain that the critical points of $f_{\mathbf{S}}$ only occur when $\mathbf{e}_{0,3}(x)=0$. Now we compute the Hessian of $f_{\mathbf{S}}$ at its critical points. Given two real vector fields $T_1,T_2$ around a critical point $x$. Since $f_{\mathbf{S}}(x)=0$, by the compatibility between $\nabla^{H,\alpha}$ and the Hermitian metric $H$, one can readily check that
    \begin{equation}\label{eq:secondderi}
        T_1T_2(f_{\mathbf{S}})(x)=\left(H(\nabla_{T_1}^{H,\alpha}(\mathbf{e}_{0,3}),\nabla_{T_2}^{H,\alpha}(\mathbf{e}_{0,3}))+H(\nabla_{T_2}^{H,\alpha}(\mathbf{e}_{0,3}),\nabla_{T_1}^{H,\alpha}(\mathbf{e}_{0,3}))\right)(x).
    \end{equation}
    Now we take a local unit frame $l$ of $\EE_{0}\wedge\EE_3$ around $x$. Locally we have $(\one\wedge\operatorname{id}_{\EE_3})^{*_H}(\mathbf{e}_{1,3})(\partial/\partial \bar z)=sl$ and $(\operatorname{id}_{\EE_0}\wedge\beta)(\mathbf{e}_{0,2})(\partial/\partial\bar z)=tl$ for some complex-valued smooth functions $s$ and $t$. By \prettyref{eq:higgsfieldcomparison}, \prettyref{eq:normcomparison} and $\|\partial/\partial z\|_{g}=\|\partial/\partial \bar z\|_{g}$ we obtain that $|s|>|t|$.

    With respect to the natural coordinate basis $\partial/\partial x,\partial/\partial y$, a quick calculation shows that the coordinate Hessian of $f_v$ at $x_0$ can be represented as
    \[\begin{pmatrix}
        2|s+t|^2&\iu\left[\overline{(s+t)}(s-t)-\overline{(s-t)}(s+t)\right]\\\iu\left[\overline{(s+t)}(s-t)-\overline{(s-t)}(s+t)\right]&2|s-t|^2
    \end{pmatrix}.\]
    It suffices to prove that the determinant of above matrix is not $0$. Actually, the determinant is
    \[\begin{aligned}
        &4\left[|s+t|^2\cdot|s-t|^2\right]+\left[\overline{(s+t)}(s-t)-\overline{(s-t)}(s+t)\right]^2\\=&\left[\overline{(s+t)}(s-t)+\overline{(s-t)}(s+t)\right]^2\geqslant0
    \end{aligned}\]
    and the ``='' holds iff \[\begin{aligned}
        &\overline{(s+t)}(s-t)+\overline{(s-t)}(s+t)=0\\
        \iff&\overline{(s-t)}(s+t)=r\iu\mbox{ for some real }r\\
        \iff& s=\dfrac{r\iu+1}{r\iu-1}t\mbox{ for some real }r\\
        \implies&|s|=|t|,
    \end{aligned}\]
    contradiction.
\end{proof}

Therefore, by \prettyref{thm:index} and \prettyref{lemma:exponentialgrowth} we obtain the following corollary.

\begin{corollary}\label{coro:indexfS}
    If \prettyref{ass:positive-gap} holds, $f_\mathbf{S}$ has at most one critical point, which has to be its zero. Furthermore, if $\mathbf{S}$ is spacelike, then $f_{\mathbf{S}}$ has exactly one critical point $x_{\mathrm{min}}$ which is its minima and there exists positive constant $\varepsilon_0>0$ which is independent of the choice of $\mathbf{S}$ such that 
    \[f_{\mathbf{S}}(x)\geqslant\begin{cases}
    (\varepsilon_0^2/2)\cdot d(x_{\mathrm{min}},x)^2& \mbox{when }f_{\mathbf{S}}(x)\leqslant2,\\
    2\cdot\exp\left(\varepsilon_0\cdot d(x_{\mathrm{min}},x)-2\right)& \mbox{when }f_{\mathbf{S}}(x)>2.
\end{cases}\]
\end{corollary}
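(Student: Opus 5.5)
The plan is to feed the estimates of \prettyref{lemma:S23} and \prettyref{lemma:p1-morse} into the abstract results \prettyref{thm:index} and \prettyref{lemma:exponentialgrowth}, applied on the complete Riemannian manifold $M=\widetilde X$. Here $g$ is the lift of the conformal metric on $X$; it is complete because we take $X$ closed, or more generally $g$ complete on $X$, and completeness passes to the universal cover. The only real work is to check that the hypotheses (S1)--(S3) are met and to track the constants.

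\textbf{Step 1: the general case.} Under \prettyref{ass:positive-gap}, \prettyref{lemma:p1-morse} gives (S1). \prettyref{lemma:S23} gives (S2) with the constant $\varepsilon_0=\epsilon$ coming from the gap $\|\one\|-\|\beta\|\geqslant\epsilon$; this constant does not depend on $\mathbf S$. Then \prettyref{thm:index}(1) says every critical point of $f_{\mathbf S}$ is a local minimum with $f_{\mathbf S}=0$.

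To get uniqueness, suppose $x,y$ are two critical points. Then $f_{\mathbf S}$ attains its infimum $0$ at $x$. The constant sequence $x_n=y$ satisfies $f_{\mathbf S}(x_n)\to0$, so by \prettyref{thm:index}(2) it converges to $x$, and hence $y=x$. This proves the first claim: there is at most one critical point, and it is a zero of $f_{\mathbf S}$.

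\textbf{Step 2: the spacelike case.} When $\mathbf S$ is spacelike, \prettyref{lemma:S23} additionally gives (S3) with $\varepsilon_1=\sqrt2\,\varepsilon_0$. \prettyref{thm:index}(3) then yields that the negative gradient flow is forward complete and that $f_{\mathbf S}$ has exactly one critical point $x_{\mathrm{min}}$, which is its minimum.

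Applying \prettyref{lemma:exponentialgrowth} with these constants, we have
\[
\frac{\varepsilon_1^2}{4}=\frac{\varepsilon_0^2}{2},
\qquad
\Bigl(\frac{\varepsilon_1}{\varepsilon_0}\Bigr)^2=2 .
\]
This produces exactly the two-regime lower bound in the statement, with threshold $f_{\mathbf S}(x)=2$.

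\textbf{Expected obstacle.} The argument is essentially bookkeeping. The one point needing care is that a single $\varepsilon_0$ must serve every $\mathbf S$. This holds because the constant in \prettyref{lemma:S23} depends only on the uniform gap in \prettyref{ass:positive-gap}, and not on the flat section $\mathbf{e}$. A second point is that completeness of $(\widetilde X,g)$ is required by \prettyref{thm:index}, so I will state explicitly that $g$ is taken complete.
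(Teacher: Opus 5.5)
Your proposal is correct and is essentially the paper's argument: the corollary is obtained by feeding (S1)--(S3) into Theorem~\ref{thm:index} and Lemma~\ref{lemma:exponentialgrowth}, where (S1)--(S3) come from Lemma~\ref{lemma:p1-morse} and Lemma~\ref{lemma:S23}. Taking $\varepsilon_1=\sqrt{2}\,\varepsilon_0$ gives exactly the constants $\varepsilon_0^2/2$ and $2$ and the threshold $f_{\mathbf S}(x)=2$. Your explicit remark that $(\widetilde X,g)$ must be complete makes visible a hypothesis the paper leaves implicit (automatic for closed $X$), so it is a clarification rather than a deviation.
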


\subsection{Proof of bi-Lipschitz embedding}

\begin{theorem}\label{thm:embedding}
    If \prettyref{ass:positive-gap} holds and $\|q_4\|_{g_X}$ is bounded, then the minimal map $h\colon\widetilde{X}\to\operatorname{Sym}(\PGR)=\PGR/\PKR\cong\GR/\KR$ is a bi-Lipschitz embedding. 
    
    Moreover, when $X$ is compact hyperbolic, then the equivariant minimal map $h\colon\widetilde{X}\to\operatorname{Sym}(\PGR)$ is a bi-Lipschitz embedding. As a corollary, the orbit map of the corresponding representation $\rho\colon\pi_1(X)\to\PU_{2,q+1}$ is a quasi-isometric embedding.
\end{theorem}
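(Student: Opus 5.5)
The upper (Lipschitz) bound is already available: by Lemma~\ref{lem:BoundedAbove}, boundedness of $\|q_4\|_{g_X}$ gives a uniform bound on $\|\Phi\|_{h,g_X}$. Since the pullback metric of $h$ is $C|\Phi|_h^2$, this yields $d_{\mathrm{Sym}}(h(x),h(y))\leqslant L\, d_{g_X}(x,y)$. The work is therefore the lower bound $d_{\mathrm{Sym}}(h(x),h(y))\geqslant c\,d_{g_X}(x,y)-C'$. Combined with injectivity and the local immersion property, it upgrades to a genuine bi-Lipschitz embedding.

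I will realize distances in the symmetric space through spacelike $2$-planes. A point $h(x)\in\operatorname{Sym}(\PGR)$ corresponds to the $h$-orthogonal splitting $\EE_x=P_x\oplus N_x$ into positive and negative parts, with $P_x=\EE_1\oplus\EE_3$ in the grading. Here $\EE_0$ carries the negative sign because $f_{\mathbf S}=-\|\mathbf e_{0,3}\|^2$. For a flat spacelike plane $\mathbf S$, the quantity $f_{\mathbf S}(x)=\|\mathbf e_{0,3}\|_H^2$ measures how far $\mathbf S$ is from the positive plane of $h(x)$, through a mixed component. Given $y\in\widetilde X$, I take $\mathbf S:=P_y$, extended flatly. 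Then $\mathbf e=e_1\wedge e_2$ is the unit positive bivector at $y$, so $\mathbf e_{0,3}(y)=0$ and $y$ is the unique minimum $x_{\min}$ of $f_{\mathbf S}$ by Corollary~\ref{coro:indexfS}. That corollary gives $f_{\mathbf S}(x)\geqslant 2\exp(\varepsilon_0 d_g(x,y)-2)$ once $f_{\mathbf S}(x)>2$, and a quadratic lower bound otherwise. The key point is that $\varepsilon_0$ does not depend on $\mathbf S$, so the estimate is uniform in $y$.

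Next I relate $f_{\mathbf S}(x)$ to $d_{\mathrm{Sym}}(h(x),h(y))$. In the Plücker picture, $\|\mathbf e\|_{H_x}^2$, the $H_x$-norm of a fixed bivector, grows at most like $\exp(c\,d_{\mathrm{Sym}}(h(x),h(y)))$, since an element $g$ with $g\cdot h(y)=h(x)$ has operator norm $e^{O(d)}$ on $\wedge^2$. Because $\|\mathbf e_{0,3}\|_H^2\leqslant\|\mathbf e\|_H^2$, we get $\log f_{\mathbf S}(x)\leqslant c\,d_{\mathrm{Sym}}(h(x),h(y))+C$. Combining with the exponential lower bound gives $d_{\mathrm{Sym}}\geqslant(\varepsilon_0/c)\,d_g-C''$ for large distances.

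At small scales I expect the main obstacle: the coarse bound is not bi-Lipschitz locally, and I also need $h$ to be injective. For injectivity, if $h(x)=h(y)$ then $P_x=P_y$, so $f_{\mathbf S}(x)=0$. Uniqueness of the zero in Corollary~\ref{coro:indexfS} then forces $x=y$. For the local lower bound, I would use the quadratic regime $f_{\mathbf S}(x)\geqslant(\varepsilon_0^2/2)d_g(x,y)^2$ together with $f_{\mathbf S}(x)\leqslant C d_{\mathrm{Sym}}(h(x),h(y))^2$ near the diagonal. The latter holds because $\mathbf e_{0,3}$ vanishes at $y$ and depends smoothly on $h(x)$, with derivative bounded uniformly by homogeneity of the symmetric space; this yields $d_{\mathrm{Sym}}\geqslant c'\,d_g$ locally. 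Patching the small-scale and large-scale bounds gives the bi-Lipschitz property.

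For compact hyperbolic $X$, $q_4$ is automatically bounded, and Assumption~\ref{ass:positive-gap} holds for non-maximal bundles by Lemma~\ref{lemma:higgsfieldnorm} and compactness. The maximal case is handled by the known theory; alternatively, type II reduces to type I by the duality of Section~\ref{subsec:duality}. Equivariance and the Švarc–Milnor lemma then transfer the bi-Lipschitz property to a quasi-isometric embedding of the orbit map.
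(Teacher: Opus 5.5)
Under Assumption~\ref{ass:positive-gap} your argument is essentially the paper's. You use the same function $f_{\mathbf S}$, with $\mathbf S$ the positive plane $\EE_1\oplus\EE_3$ at the basepoint, the $\mathbf S$-independent constant of Corollary~\ref{coro:indexfS}, and Lemma~\ref{lem:BoundedAbove} for the upper bound. The paper converts $f_{\mathbf S}$ into distance more economically. A $KAK$ decomposition, with a basis of $\mathbf S$ adapted to it, gives $\|\mathbf e(x)\|_H^2=\cosh(2\mu_1)\cosh(2\mu_2)$. Together with $\|\mathbf e_{1,3}\|_H^2\geqslant1$ (Lemma~\ref{lemma:normcomparison1}), this yields the global inequality $\sinh^2(2\mu_1)\geqslant f_{\mathbf S}(x)$. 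That inequality turns both regimes of the corollary into $\mu_1\geqslant c\,d(x_0,x)$ at once, with no additive constant, no patching, and injectivity for free. Your two-scale version also works, with one imprecision: $\mathbf e_{0,3}$ is not a function of $h(x)$ alone, since it depends on the finer grading and not just on $P_x$ and $N_x$. State the near-diagonal bound as $f_{\mathbf S}(x)\leqslant\|\mathbf e\|_{H_x}^2-1$, whose right-hand side does depend only on $h(x)$.

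The genuine gap is the maximal case in the compact hyperbolic part, which carries no non-maximality hypothesis. There Lemma~\ref{lemma:higgsfieldnorm} forces $\|\beta\|\equiv\|\mathds{1}\|$. So Assumption~\ref{ass:positive-gap} fails, the constant in Lemma~\ref{lemma:S23} is $0$, and $f_{\mathbf S}$ gives no control. Appealing to ``known theory'' does not fill this. That maximal representations are Anosov gives a quasi-isometric orbit map, hence only a coarse lower bound for $h$; it gives neither injectivity nor a bi-Lipschitz bound for the minimal map itself. Duality is no alternative either, since it only swaps types \upRoman{1} and \upRoman{2} and preserves maximality.

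The missing idea in the paper is a reduction to a non-maximal bundle of the same cyclic shape:
\begin{enumerate}
\item By Lemma~\ref{lemma:higgsfieldnorm}, the maximal bundle splits as a maximal cyclic $\mathrm U_{2,2}$-bundle plus a flat unitary summand, so $h$ lies in a totally geodesic $\mathcal X_{2,2}$.
\item Taking $\wedge^2$ (i.e.\ using $\PU_{2,2}\cong\mathrm{PSO}_{2,4}$) and twisting by $(\EE_1\EE_2)^{-1}$ yields a maximal $\SO^0_{2,3}\times\SO_1$-Higgs bundle. Its $\SO^0_{2,3}$-part has the same minimal map.
\item That part can be read as the $4$-cyclic $\mathrm U_{2,3}$-Higgs bundle $\KK_X\to\Ocal\to\KK_X^{-1}\to\EE_2\EE_0^{-1}\oplus\EE_0\EE_2^{-1}\to\KK_X$. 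As a $\mathrm U_{2,3}$-bundle it is non-maximal, since its unitary Toledo invariant is $0$.
\item Hence Assumption~\ref{ass:positive-gap} holds for it, and the first part of the argument gives the bi-Lipschitz embedding. Totally geodesic inclusions preserve the bounds, so the conclusion transfers back to the original map.
\end{enumerate}
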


\begin{proof}
     We first assume that \prettyref{ass:positive-gap} holds. Recall that we fix a basepoint $x_0\in \widetilde{X}$. Let $\mathbf{S}$ be the spacelike $2$-plane $(\LL\oplus\WW)_{x_0}$ and $f_{\mathbf{S}}$ be the function defined as in \prettyref{sec:choosefunction}. Then $f_{\mathbf{S}}(x_0)=0$ and thus $x_0$ is the unique minima of $f_{\mathbf{S}}$ by \prettyref{coro:indexfS}. 
    
    We fix an arbitrary $x\in\widetilde{X}$. Use the flat connection $\mathrm{D}^{h,\alpha}$, we can identify the principal $\GR$-bundle over $\widetilde{X}$ with the trivial principal $\GR$-bundle $\widetilde{X}\times \GR\to \widetilde{X}$ which lifts the trivialization of the principal $\PGR$-bundle over $\widetilde{X}$ using the flat connection that is the projectivization of $\mathrm{D}^h$. And $h$ can be regarded as a $\KR$-subbundle in the trivial $\GR$-bundle with $h(x_0)=\KR$. Let $h(x)=g\KR$. We have the KAK decomposition $g=k_+\mu(g)k_{-1}$, where $k_+,k_-\in \KR$ and $\mu$ denotes the Cartan projection, which is independent on the choice of $g$. Then
    \[d_{\operatorname{Sym}(\GR)}(h(x),h(x_0))=\|\mu(g)\|.\] Let
    \[(\mu_1,\mu_2,0,\cdots,0,-\mu_2,-\mu_1):=\mu(g).
    \]
    We can take orthonormal vectors $u_1,u_2,v_1,v_2$, where $u_i$'s are positive and $v_i$'s are negative, such that $w_1^+:=u_1+v_1$, $w_2^+:=u_2+v_2$ ,$w_2^-:=u_2-v_2$, $w_1^-:=u_1-v_1$ are the eigenvectors of $\mu_1,\mu_2,-\mu_2,-\mu_1$. Let $\mathbf{S}$ be the spacelike $2$-plane spanned by the orthonormal basis $\mathbf{e}=(k_-^{-1}u_1,k_-^{-1}u_2)$. Then we obtain that
    \[k_{+}^{-1}\left(\mathbf{e}(x)\right)=\dfrac{\mathrm{e}^{\mu_1+\mu_2}\cdot w_1^+\wedge w_2^++\mathrm{e}^{\mu_1-\mu_2}\cdot w_1^+\wedge w_2^-+\mathrm{e}^{-\mu_1+\mu_2}\cdot w_1^-\wedge w_2^++\mathrm{e}^{-\mu_1-\mu_2}\cdot w_1^-\wedge w_2^-}{4},\]
    and 
    \[\begin{aligned}
        \|\mathbf{e}(x)\|_H^2
        =&\dfrac{\mathrm{e}^{2\mu_1+2\mu_2}+\mathrm{e}^{2\mu_1-2\mu_2}+\mathrm{e}^{-2\mu_1+2\mu_2}+\mathrm{e}^{-2\mu_1-2\mu_2}}{4}\\
        =&\cosh(2\mu_1)\cdot\cosh(2\mu_2)\\
        \leqslant&(\cosh(2\mu_1))^2.
    \end{aligned}\]
    Now we obtain that
    \[\begin{aligned}
        (\cosh(2\mu_1))^2
        \geqslant&\|\mathbf{e}(x)\|_H^2\\
        \geqslant&\|\mathbf{e}_{1,3}(x)\|_H^2+\|\mathbf{e}_{0,3}(x)\|_H^2\\
        \geqslant&1+\|\mathbf{e}_{0,3}(x)\|_H^2\quad\mbox{(by \prettyref{lemma:normcomparison1})}\\
        =&1+f_{\mathbf{S}}(x).
    \end{aligned}\]
    Therefore, $(\sinh(2\mu_1))^2\geqslant f_{\mathbf{S}}(x)$. Recall that by \prettyref{coro:indexfS},

\[f_{\mathbf{S}}(x)\geqslant\begin{cases}
    (\varepsilon_0^2/2)\cdot d(x_{0},x)^2& \mbox{when }f_{\mathbf{S}}(x)\leqslant2,\\
    2\cdot\exp\left(\varepsilon_0\cdot d(x_{0},x)-2\right)& \mbox{when }f_{\mathbf{S}}(x)>2.
    \end{cases}\]
    
When $f_{\mathbf{S}}(x)\leqslant2$, this yields that
\[\sqrt{2}\geqslant\sinh(2\mu_1)\geqslant\dfrac{\varepsilon_0}{\sqrt{2}}\cdot d(x_0,x).\]
Hence \[\mu_1\geqslant\dfrac{\operatorname{arcsinh}\left(\dfrac{\varepsilon_0}{\sqrt{2}}\cdot d(x_0,x)\right)}{2}\geqslant\dfrac{\varepsilon_0\log(\sqrt{2}+\sqrt{3})}{2\sqrt{2}}\cdot d(x_0,x)\]
since $\operatorname{arcsinh}(t)\geqslant t\cdot\log(\sqrt{2}+\sqrt{3})/\sqrt{2}$ when $t\in[0,\sqrt{2}]$.

When $f_{\mathbf{S}}(x)>2$ and $d(x_0,x)>2/\varepsilon_0$,
we have
\[\begin{aligned}
    \mu_1\geqslant&\dfrac{\operatorname{arcsinh}(\sqrt{f_{\mathbf{S}}(x)})}{2}\\
    \geqslant&\dfrac{\log2}{2}+\dfrac{\log f_{\mathbf{S}}(x)}{4}\quad\mbox{(since }\operatorname{arcsinh}(t)=\log(t+\sqrt{t^2+1})\geqslant\log(2t)\mbox{when }t\geqslant0\mbox{)}\\
    \geqslant&\varepsilon_0\cdot d(x_0,x)+\dfrac{3\log 2}{4}-2\\
    >&\dfrac{3\varepsilon_0\log 2}{8}\cdot d(x_0,x).\quad\mbox{(since }d(x_0,x)>2/\varepsilon_0\mbox{)}
\end{aligned}\]

When $f_{\mathbf{S}}(x)>2$ and $d(x_0,x)\leqslant2/\varepsilon_0$,
we obtain that
\[\begin{aligned}
    \mu_1\geqslant&\dfrac{\operatorname{arcsinh}(\sqrt{f_{\mathbf{S}}(x)})}{2}\\
    >&\dfrac{\log(\sqrt{2}+\sqrt{3})}{2}\quad\mbox{(since }f_{\mathbf{S}}(x)>2\mbox{)}\\
    \geqslant&\dfrac{\varepsilon_0\log(\sqrt{2}+\sqrt{3})}{4}\cdot d(x_0,x).\quad\mbox{(since }d(x_0,x)\leqslant2/\varepsilon_0\mbox{)}
\end{aligned}\]

Combining all cases, we obtain the uniform lower bound
\[\mu_1\geqslant\dfrac{3\varepsilon_0\log 2}{8}\cdot d(x_0,x).\]
Therefore,
\[d_{\operatorname{Sym}(\GR)}(h(x),h(x_0))=\|\mu(g)\|=\sqrt{2(\mu_1^2+\mu_2^2)}\geqslant\sqrt{2}\mu_1\geqslant\dfrac{3\varepsilon_0\log 2}{4\sqrt{2}}\cdot d(x_0,x).\]

By Lemma \ref{lem:BoundedAbove} and the assumption that $\|q_4\|_{g_{\mathbb D}}$ is bounded, we have the metric upper bound $h^*g_{\operatorname{Sym}(\GR)}\leqslant C g_{\D}$ for some constant $C$ depending on the bound of $|q_4|_{g_{\mathbb D}}$. This implies the Lipschitz upper bound \[d_{\operatorname{Sym}(\GR)}(h(x),h(x_0))\leqslant Cd(x,x_0).\]

Thus the minimal map is a proper, injective immersion, hence an embedding.

Now we assume that $X$ is compact hyperbolic. If $(\EE,\Phi)$ is non-maximal, then $\|q_4\|_{g_X}$ is bounded and by \prettyref{lemma:higgsfieldnorm}, \prettyref{ass:positive-gap} holds. Thus the result follows. If $(\EE,\Phi)$ is maximal, then by \prettyref{lemma:higgsfieldnorm}, it suffices to consider the maximal $\mathrm{U}_{2,2}$ case. Now \[\wedge^2(\EE,\Phi)=
\begin{tikzcd}
	{\mathcal{E}_0\wedge\mathcal{E}_1} & {\mathcal{E}_0\wedge\mathcal{E}_2} & {\mathcal{E}_1\wedge\mathcal{E}_2} & {\mathcal{E}_1\wedge\mathcal{E}_3} & {\mathcal{E}_2\wedge\mathcal{E}_3} \\
	&& {\mathcal{E}_0\wedge\mathcal{E}_3}
	\arrow["\alpha", from=1-1, to=1-2]
	\arrow["{\mathds{1}}", from=1-2, to=1-3]
	\arrow["{\mathds{1}}"', from=1-2, to=2-3]
	\arrow["{\mathds{1}}", from=1-3, to=1-4]
	\arrow["\gamma"', curve={height=19pt}, from=1-4, to=1-1]
	\arrow["\alpha", from=1-4, to=1-5]
	\arrow["\gamma"', curve={height=19pt}, from=1-5, to=1-2]
	\arrow["{\mathds{1}}"', from=2-3, to=1-4]
\end{tikzcd}.\] 
Since $\EE_0\cong\EE_1\KK_X$ and $\EE_2\cong\EE_3\KK_X$, after tensoring by $(\EE_1\EE_2)^{-1}\cong(\EE_0\EE_3)^{-1}$, we obtain 
\[\begin{tikzcd}
	{\mathcal{E}_0\mathcal{E}_2^{-1}} & {\mathcal{K}_X} & {\mathcal{O}_X} & {\mathcal{K}_X^{-1}} & {\mathcal{E}_2\mathcal{E}_0^{-1}} \\
	&& {\mathcal{O}_X}
	\arrow["\alpha", from=1-1, to=1-2]
	\arrow["{\mathds{1}}", from=1-2, to=1-3]
	\arrow["{\mathds{1}}"', from=1-2, to=2-3]
	\arrow["{\mathds{1}}", from=1-3, to=1-4]
	\arrow["\gamma"', curve={height=19pt}, from=1-4, to=1-1]
	\arrow["\alpha", from=1-4, to=1-5]
	\arrow["\gamma"', curve={height=19pt}, from=1-5, to=1-2]
	\arrow["{\mathds{1}}"', from=2-3, to=1-4]
\end{tikzcd}.\]
By changing the basis of the middle two $\mathcal{O}_X$, this Higgs bundle is equivalent to 
\[\begin{tikzcd}
	{\mathcal{E}_0\mathcal{E}_2^{-1}} & {\mathcal{K}_X} & {\mathcal{O}_X} & {\mathcal{K}_X^{-1}} & {\mathcal{E}_2\mathcal{E}_0^{-1}} \\
	&& {\mathcal{O}_X}
	\arrow["\alpha"', from=1-1, to=1-2]
	\arrow["{\mathds{1}}"', from=1-2, to=1-3]
	\arrow["{\mathds{1}}"', from=1-3, to=1-4]
	\arrow["\gamma"', curve={height=19pt}, from=1-4, to=1-1]
	\arrow["\alpha"', from=1-4, to=1-5]
	\arrow["\gamma"', curve={height=19pt}, from=1-5, to=1-2]
	\arrow["\oplus"{description}, draw=none, from=2-3, to=1-3]
\end{tikzcd},\]
which is a maximal $\SO_{2,3}^0\times\SO_1\mathbb{R}$-Higgs bundle (see \cite[Proposition 2.26]{collier2019geometry}). The $\SO_{2,3}^0$-part gives the same minimal map as $(\EE,\Phi)$ and it can be regarded as a 4-cyclic $\mathrm{U}_{2,3}$-Higgs bundle 
\[\begin{tikzcd}
	{\mathcal{K}_X} & {\mathcal{O}_X} & {\mathcal{K}_X^{-1}} & {\mathcal{E}_2\mathcal{E}_0^{-1}\oplus\mathcal{E}_0\mathcal{E}_2^{-1}}
	\arrow["{\mathds{1}}"', from=1-1, to=1-2]
	\arrow["{\mathds{1}}"', from=1-2, to=1-3]
	\arrow["{(\alpha,\gamma)^{\mathrm{T}}}"', from=1-3, to=1-4]
	\arrow["{(\gamma,\alpha)}"', curve={height=19pt}, from=1-4, to=1-1]
\end{tikzcd}.\]
Note that it must be non-maxiaml as a $\mathrm{U}_{2,3}$-Higgs bundle and hence satisfy \prettyref{ass:positive-gap}. Indeed, if it is maximal, by \prettyref{lemma:higgsfieldnorm}, we obtain that $(\gamma,\alpha)\circ(\alpha,\gamma)^{\mathrm{T}}$ is an isomorphism between $\KK_X^{-1}$ and $\KK_X^3$, which is impossible. Thus, the previous result still applies.
\end{proof}

\begin{remark}
    It is a little bit subtle in the proof that a maximal $\SO_{2,3}^0$-Higgs bundle is non-maximal as a $\mathrm{U}_{2,3}$-Higgs bundle. The reason is that the trivial embedding from the symmetric space of $\SO_{2,3}^0$ to that of $\mathrm{U}_{2,3}$ is totally real. Hence a Higgs bundle has different Toledo invariants in these two cases.
\end{remark}

\begin{remark}\label{rem:q=1}
    When $X$ is compact hyperbolic, $q=1$, the corresponding representation $\rho\colon\pi_1(X)\to\PU_{2,2}$ must be Anosov. Indeed, when $(\EE,\Phi)$ is maximal, $\rho$ is also maximal. Hence $\rho$ is $P_2$-Anosov by \cite{burger2010surface}. When $(\EE,\Phi)$ is non-maximal, $(\EE,\Phi)$ is of the form 
    \[\begin{tikzcd}
	{\EE_0} & {\EE_1} & {\EE_2} & {\EE_3}
	\arrow["{\mathds{1}}"', from=1-1, to=1-2]
	\arrow["\alpha"', from=1-2, to=1-3]
	\arrow["\beta"', from=1-3, to=1-4]
	\arrow["\gamma"', curve={height=19pt}, from=1-4, to=1-1]
\end{tikzcd}\] where $\beta$ is not an isomorphism. So \[\wedge^2(\EE,\Phi)=
\begin{tikzcd}
	{\mathcal{E}_0\wedge\mathcal{E}_3} & {\mathcal{E}_1\wedge\mathcal{E}_3} & {\mathcal{E}_2\wedge\mathcal{E}_3} & {\mathcal{E}_0\wedge\mathcal{E}_2} & {\mathcal{E}_1\wedge\mathcal{E}_2} \\
	&& {\mathcal{E}_0\wedge\mathcal{E}_1}
	\arrow["{\mathds{1}}"', from=1-1, to=1-2]
	\arrow["\alpha"', from=1-2, to=1-3]
	\arrow["\gamma"', from=1-2, to=2-3]
	\arrow["\gamma"', from=1-3, to=1-4]
	\arrow["\beta"', curve={height=18pt}, from=1-4, to=1-1]
	\arrow["{\mathds{1}}"', from=1-4, to=1-5]
	\arrow["\beta"', curve={height=18pt}, from=1-5, to=1-2]
	\arrow["\alpha"', from=2-3, to=1-4]
\end{tikzcd}.\] 
It follows that $\wedge^2\rho\colon\pi_1(X)\to\mathrm{PO}_{2,4}$ is $P_2$-Anosov by \cite[Remark 5.7]{zhang2025non}. Hence $\rho$ is $P_1$-Anosov.
\end{remark}

\subsection{Cyclic Higgs bundle of Type II}
We next consider cyclic $\mathrm{U}_{2,q+1}$-harmonic bundle of type II:
\[
    (\EE,\Phi)=\begin{tikzcd}
	{\mathcal{L}} & {\mathcal{L}\mathcal{K}_X^{-1}} & {\mathcal{V}} & {\mathcal{W}}
	\arrow["{\mathds{1}}"',  from=1-1, to=1-2]
	\arrow["\alpha"',  from=1-2, to=1-3]
	\arrow["\beta"', from=1-3, to=1-4]
	\arrow["\gamma"', curve={height=19pt}, from=1-4, to=1-1]
\end{tikzcd}
\]
over the universal cover $\widetilde{X}$ of $X$. Here $\LL$, $\LL\KK_X^{-1}$ and $\mathcal{V}$ are all line bundles and $\Wcal$ is of rank $q$.

We fix a basepoint $x_0\in\widetilde{X}$ and an oriented $2$-plane $\mathbf{S}$ in $\EE_{x_0}$. Similarly, we have the flat connection $\mathrm{D}^{h,\alpha}$ over $\widetilde{X}$ for some real $1$-form $\alpha$. Then $\mathbf{S}$ can be extended to a global oriented $2$-plane section, which we still denote by $\mathbf{S}$, on $\widetilde{X}$ flatly. Since the flat connection preserves the indefinite pairing $\langle\bullet,\ast\rangle$, the signature of $\langle\bullet,\ast\rangle|_{\mathbf{S}}$ is constant. Equivalently, via Pl\"ucker embedding, we can choose an ordered basis $(e_1,e_2)$ of $\mathbf{S}_{x_0}$. They can be extended to global flat sections, which we still denote by $e_1,e_2$, on $\widetilde{X}$. Then $\mathbf{S}$ is the same as the global flat section $e_1\wedge e_2$ in $(\wedge^2\EE,\wedge^2\Phi)$ with respect to the flat connection $\wedge^2\mathrm{D}^{h,\alpha}$. We will denote by $\pi_{j,k}$ the projection onto $\EE_j\wedge\EE_k$ and by $H$ the induced harmonic metric $\wedge^2h$ of $(\wedge^2\EE,\wedge^2\Phi)$.

Below we will consider the subspace $\mathbf{S}$ such that $\langle\bullet,\ast\rangle|_{\mathbf{S}}$ is positive semi-definite. In particular, we will choose perpendicular basis $e_1,e_2$ of $\mathbf{S}$ such that
\begin{itemize}
    \item $e_1,e_2$ are spacelike and have unit norm  when $\langle\bullet,\ast\rangle|_{\mathbf{S}}$ is positive definite;

    \item $e_1$ is spacelike and has unit norm, $e_2$ is isotropic when $\langle\bullet,\ast\rangle|_{\mathbf{S}}$ is neither positive definite nor $0$;

    \item $e_1,e_2$ are lightlike when $\langle\bullet,\ast\rangle|_{\mathbf{S}}$ is $0$.
\end{itemize}

We simply denote $e_1\wedge e_2$ by $\mathbf{e}$ and $\pi_{j,k}(\mathbf{e})$ by $\mathbf{e}_{j,k}$. Now we define the function $f_{\mathbf{S}}'$ over $\widetilde{X}$ as $-\|\mathbf{e}_{1,2}\|^2=\|\mathbf{e}_{1,2}\|_H^2$. 

\begin{theorem} If \prettyref{ass:positive-gap} holds, then $f_{\mathbf{S}}'$ is a Morse function and satisfies (S1)(S2). Moreover, $f_{\mathbf{S}}'$ satisfies (S3) when $\mathbf{S}$ is spacelike. As a consequence, $f_{\mathbf{S}}'$ has at most one zero and has one zero when $\mathbf{S}$ is spacelike.
\end{theorem}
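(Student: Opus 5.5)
We mirror the type I argument of \prettyref{lemma:S23}, \prettyref{lemma:p1-morse} and \prettyref{coro:indexfS}, replacing the component $\mathbf{e}_{0,3}$ by $\mathbf{e}_{1,2}$. In type II the summands $\EE_0,\EE_1,\EE_2$ are line bundles and $\EE_3=\Wcal$ has rank $q$. The Hermitian pairing is positive on $\EE_0\oplus\EE_1$ and negative on $\EE_2\oplus\EE_3$, so $P=\EE_0\oplus\EE_1$ and $N=\EE_2\oplus\EE_3$. Thus $\EE_1\wedge\EE_2$ is a rank-one summand of $P\otimes N$, and $f'_{\mathbf{S}}=\|\mathbf{e}_{1,2}\|_H^2\geqslant0$.

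Differentiating with the flat connection $\wedge^2\mathrm{D}^{h,\alpha}=\nabla^{H,\alpha}+\wedge^2\Phi+(\wedge^2\Phi)^{*_H}$ and using flatness of $\mathbf{e}$, we read off from the cyclic arrows which components of $\mathbf{e}$ map into $\EE_1\wedge\EE_2$.
\begin{itemize}
\item[(a)] $\wedge^2\Phi$ sends $\EE_0\wedge\EE_2\to\EE_1\wedge\EE_2$ via $\one\wedge\id$, and $\EE_1\wedge\EE_1=0$ contributes nothing.
\item[(b)] $(\wedge^2\Phi)^{*_H}$ sends $\EE_1\wedge\EE_3\to\EE_1\wedge\EE_2$ via $(\id\wedge\beta)^{*_H}$, and $\EE_2\wedge\EE_2=0$ contributes nothing.
\end{itemize}
So $\nabla^{H,\alpha,1,0}\mathbf{e}_{1,2}=-(\one\wedge\id_{\EE_2})(\mathbf{e}_{0,2})$ and $\nabla^{H,\alpha,0,1}\mathbf{e}_{1,2}=-(\id_{\EE_1}\wedge\beta)^{*_H}(\mathbf{e}_{1,3})$, possibly with the roles of $\one$ and $\beta$ swapped relative to type I. The $(1,0)$ and $(0,1)$ parts are orthogonal, and taking the real part gives
\[
\|\dd f'_{\mathbf{S}}\|_g\geqslant\sqrt2\,\Bigl|\,\|\one\|\,\|\mathbf{e}_{0,2}\|_H-\|\beta\|\,\|\mathbf{e}_{1,3}\|_H\Bigr|\,\|\mathbf{e}_{1,2}\|_H .
\]
Both $\EE_1\wedge\EE_2$ and $\EE_1\wedge\EE_3$ are rank-one or bounded pieces on which $\one\wedge\id$ and $\id\wedge\beta$ act with norms $\|\one\|$ and $\|\beta\|$. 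This uses that $\EE_1$ is a line bundle, and for $\beta\colon\EE_2\to\EE_3$ with $\EE_2$ of rank one the operator norm equals $\|\beta\|$.

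The key linear-algebra input is $\|\mathbf{e}_{0,2}\|_H\geqslant\|\mathbf{e}_{1,3}\|_H$ (or the reverse, whichever matches the gap $\|\one\|>\|\beta\|$). This is the main obstacle, since \prettyref{lemma:normcomparison1} only compares $\pi_{+,+}$ with $\pi_{-,-}$, whereas here both components lie in $P\otimes N$. We plan to use the duality of \prettyref{subsec:duality} instead. Dualizing a type II bundle gives a type I bundle with $\EE^\vee_0=\EE_1^\vee$, $\EE^\vee_1=\EE_0^\vee$, $\EE^\vee_2=\EE_3^\vee$, $\EE^\vee_3=\EE_2^\vee$. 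The natural isomorphism $\wedge^2\EE\cong\wedge^{q+1}\EE^\vee\otimes\det$, or more simply the orthogonal complement $\mathbf{S}^{\perp}$, carries $\mathbf{e}_{1,2}$ to a component of type $\EE^\vee_1$-$\EE^\vee_3$-complement. Concretely, we aim to show that $\|\pi_{1,2}(\mathbf{e})\|$ equals the norm of the $\EE_0^\vee\wedge\EE_2^\vee$-type component of the annihilator. If this is clean, the whole statement follows from the already proven type I results applied to the dual bundle, since $\|\one\|$ and $\|\beta\|$ are preserved under dualizing.

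As a fallback we would prove the comparison directly with Gram matrices. Write $e_i=a_ix_0+b_ix_1+c_ix_2+w_i$ in unit frames. Then $\mathbf{e}_{0,2}=(a_1c_2-a_2c_1)$, $\mathbf{e}_{1,3}=b_1w_2-b_2w_1$ and $\mathbf{e}_{1,2}=(b_1c_2-b_2c_1)$. The nonnegativity of $\|\mathbf{e}\|^2$, obtained as in \prettyref{lemma:normcomparison2} but with signs adapted, should give $-\|\pi_{+,-}\|^2\leqslant2\|\pi_{+,+}\|^2$ and $\|\pi_{+,+}\|^2\geqslant1+\|\pi_{-,-}\|^2$ in the spacelike case. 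These would then be converted into bounds $\|\mathbf{e}_{\text{other}}\|_H\gtrsim\|\mathbf{e}_{1,2}\|_H$ and, in the spacelike case, $\gtrsim1$.

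With the derivative bound in hand, the rest is routine. (S2) follows with $\varepsilon_0\sim\epsilon$ from \prettyref{ass:positive-gap}, and (S3) follows for spacelike $\mathbf{S}$. For the Morse property we repeat the Hessian computation of \prettyref{lemma:p1-morse}: at a zero, $\nabla^{1,0}\mathbf{e}_{1,2}=sl$ and $\nabla^{0,1}\mathbf{e}_{1,2}=tl$ with $|s|\neq|t|$, so the determinant $[\overline{(s+t)}(s-t)+\overline{(s-t)}(s+t)]^2$ is positive. Finally, \prettyref{thm:index} gives at most one zero, and, using (S3) in the spacelike case, exactly one zero.
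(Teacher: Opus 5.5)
Your derivative computation in (a)--(b) is correct, but the step you flag as the main obstacle is never carried out, and that obstacle comes from a sign error. The positive and negative parts are not $\EE_0\oplus\EE_1$ and $\EE_2\oplus\EE_3$. Since $\mathrm{D}^h=\nabla^h+\Phi+\Phi^{*_h}$ preserves the indefinite form, $\Phi$ must be off-diagonal with respect to $P\oplus N$. Because $\one\colon\EE_0\to\EE_1\KK_X$ is nonzero, the signs alternate around the cycle. Hence $P=\EE_0\oplus\EE_2$ and $N=\EE_1\oplus\EE_3$; this is the splitting used for type II in Section~\ref{sec:GeometricStructure} and in the $\EE_{\mathrm{even}}\oplus\EE_{\mathrm{odd}}$ description.

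As written, the proposal therefore establishes none of the three inequalities it needs: $\|\mathbf{e}_{0,2}\|_H\geqslant\|\mathbf{e}_{1,3}\|_H$, $2\|\mathbf{e}_{0,2}\|_H^2\geqslant\|\mathbf{e}_{1,2}\|_H^2$, and, for spacelike $\mathbf{S}$, $\|\mathbf{e}_{0,2}\|_H\geqslant1$. Your duality plan does not supply them either. The annihilator of $\mathbf{S}$, the complement $\mathbf{S}^\perp$, and the image under $\wedge^2\EE\cong\wedge^{q+1}\EE^\vee\otimes\det$ are all $(q+1)$-dimensional objects. Lemmas~\ref{lemma:S23} and~\ref{lemma:p1-morse}, however, are statements about nonnegative flat $2$-planes, so they cannot be applied. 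The component you propose to match is also not the one the type I function sees. Finally, the Gram-matrix fallback is only sketched (``should give'', ``signs adapted'') and is set up with the wrong signature.

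With the correct signs there is no obstacle. Here $\wedge^2P=\EE_0\wedge\EE_2$, so $\mathbf{e}_{0,2}=\pi_{+,+}(\mathbf{e})$. Moreover, $\mathbf{e}_{1,3}$ is a summand of $\pi_{-,-}(\mathbf{e})$ and $\mathbf{e}_{1,2}$ is a summand of $\pi_{+,-}(\mathbf{e})$. Lemmas~\ref{lemma:normcomparison1} and~\ref{lemma:normcomparison2} then give exactly the three inequalities above. They also show $\mathbf{e}_{0,2}\neq0$ everywhere, since $\pi_{+,+}(\mathbf{e})=0$ would force $\mathbf{e}=0$; this is what yields $|s|>|t|$ in the Hessian. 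Substituting into (a)--(b) gives $\|\dd f'_{\mathbf{S}}\|_g\geqslant\sqrt{2}\,(\|\one\|-\|\beta\|)\,\|\mathbf{e}_{0,2}\|_H\|\mathbf{e}_{1,2}\|_H$. The rest then goes through verbatim as in type I, which gives a direct proof.

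The paper argues by duality instead. The map that makes this work is the conjugate-linear isomorphism $v\mapsto\langle\cdot,v\rangle$ from $\EE$ to $\EE^\vee$. It intertwines the flat connections, because $\mathrm{D}$ preserves the form. It is an anti-linear isometry from $h$ to $h^\vee$, preserves signatures, and sends $\EE_j$ to $\EE_j^\vee$. It therefore carries $\mathbf{S}$ to a nonnegative flat $2$-plane in the dual type I bundle, in which $\EE_1^\vee$ and $\EE_2^\vee$ occupy positions $0$ and $3$. Under this map $f'_{\mathbf{S}}$ becomes the type I function of that plane, and the type I lemmas apply.
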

\begin{proof}
 From Section \ref{subsec:duality}, we can consider the dual harmonic bundle and see it is of type I. Moreover, the function $f_{\mathbf{S}}'$ coincides with the function $f_{\mathbf{S}}$ for the dual harmonic bundle. Thus all results follow from Lemma \ref{lemma:S23} and Lemma \ref{lemma:p1-morse} of $f_{\mathbf{S}}$.
\end{proof}

\begin{theorem}\label{thm:embedding1}
If \prettyref{ass:positive-gap} holds and $\|q_4\|_{g_X}$ is bounded, then the minimal map $h\colon\widetilde{X}\to\operatorname{Sym}(G)=G/K$  is a bi-Lipschitz embedding. 

Moreover, when $X$ is compact hyperbolic, then the minimal map is a bi-Lipschitz embedding. As a corollary, the orbit map of the corresponding representation $\rho\colon\pi_1(X)\to\PU_{2,q+1}$ is a quasi-isometric embedding.
\end{theorem}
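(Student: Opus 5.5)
The plan is to reduce Theorem~\ref{thm:embedding1} to Theorem~\ref{thm:embedding} using the duality of Section~\ref{subsec:duality}. Given a type~\upRoman{2} harmonic bundle $(\EE,\Phi,h)$ with representation $\rho$ and minimal map $h_\rho$, take the dual harmonic bundle $(\EE^\vee,\Phi^\vee,h^\vee)$. It is of type~\upRoman{1}, its representation is $\rho^\vee$, and its harmonic map is $\mathcal D\circ h_\rho$, where $\mathcal D$ is an isometry of the symmetric space. Being a bi-Lipschitz embedding is preserved under composition with an isometry. The orbit map of $\rho^\vee$ is $\mathcal D$ applied to the orbit map of $\rho$ at the base point $\mathcal D^{-1}(o)$. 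So both conclusions for $\rho$ follow once they are known for the dual.

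To apply Theorem~\ref{thm:embedding} to the dual, I will check that its hypotheses transfer.

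\textbf{Quartic differential.} We have $\tr((\Phi^\vee)^4)=\tr(\Phi^4)$, since $\Phi^\vee=-\Phi^{\mathrm T}$. Hence $\|q_4\|_{g_X}$ is unchanged.

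\textbf{Assumption~\ref{ass:positive-gap}.} The components of $\Phi^\vee$ are $-\mathds 1^\vee,-\gamma^\vee,-\beta^\vee,-\alpha^\vee$, taken with the dual metrics. Transposition with respect to the dual metrics preserves pointwise norms, so the dual has $\|-\mathds 1^\vee\|=\|\mathds 1\|$. Its ``$\beta$'' arrow, going from the third summand $\WW^\vee$ to the fourth $\VV^\vee$, is $-\beta^\vee$, and $\|\beta^\vee\|=\|\beta\|$. Thus the gap $\|\mathds 1\|-\|\beta\|\geqslant\epsilon$ is identical for the bundle and its dual.

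One subtlety needs care. The dual's first arrow is $-\mathds 1^\vee$ rather than $\mathds 1$, and the metric $g$ built from $(\Lcal\KK_X^{-1})^\vee$ and $\Lcal^\vee$ must agree with the original $g$. I will check that $g$ is preserved: $H|_{\Lcal^\vee}\cdot H|_{(\Lcal\KK^{-1})^\vee}^{-1}=H|_{\Lcal}^{-1}H|_{\Lcal\KK^{-1}}$. The sign of the arrow can be removed by a graded gauge $-\id$ on the later summands. With these checks the first assertion follows directly. Alternatively, one can rerun the argument of Theorem~\ref{thm:embedding} with $f'_{\mathbf S}$ in place of $f_{\mathbf S}$, taking $\mathbf S=(\Lcal\KK^{-1}\oplus\VV)_{x_0}$ and using $\|\mathbf e_{0,3}\|_H$ replaced by $\|\mathbf e_{1,2}\|_H$ in the Cartan-projection estimate. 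The preceding theorem already provides the needed properties (S1)–(S3) for $f'_{\mathbf S}$.

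\textbf{Compact case.} Suppose $X$ is compact hyperbolic. If the bundle is non-maximal, then Lemma~\ref{lemma:higgsfieldnorm} gives Assumption~\ref{ass:positive-gap}, and $q_4$ is bounded by compactness. If it is maximal, then Lemma~\ref{lemma:higgsfieldnorm} (the case $\rank\VV=1$) splits it into a maximal $\mathrm U_{2,2}$ part plus a flat unitary part. The minimal map then lands in a totally geodesic copy of $\mathcal X_{2,2}$, where the same argument as in the maximal case of Theorem~\ref{thm:embedding} applies, or one can simply dualize to type~\upRoman{1}.

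\textbf{Quasi-isometry.} The quasi-isometric embedding of the orbit map follows by \v{S}varc--Milnor from the bi-Lipschitz property together with equivariance and cocompactness.

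The main obstacle I expect is the bookkeeping that the dual of a type~\upRoman{2} bundle genuinely satisfies the normalizations used in Section~\ref{sec:choosefunction}, namely the first arrow being $\mathds 1$ and the same conformal metric $g$. I would handle this by the explicit gauge and metric check described above.
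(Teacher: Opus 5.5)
Your proposal is correct and matches the paper's proof: dualize via Section~\ref{subsec:duality} to a type~\upRoman{1} bundle, apply Theorem~\ref{thm:embedding}, and use that $\mathcal D$ is an isometry. Your checks that $q_4$, the gap $\|\mathds 1\|-\|\beta\|$, and the metric $g$ survive dualization are correct and supply details the paper leaves implicit.

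The only slip is in your optional alternative route. $\EE_1=\Lcal\KK_X^{-1}$ and $\EE_2=\VV$ are adjacent in the cycle and so have opposite signs. Hence $(\Lcal\KK_X^{-1}\oplus\VV)_{x_0}$ has signature $(1,1)$ and gives $f'_{\mathbf S}(x_0)\neq0$. The correct base plane in type~\upRoman{2} is the positive plane $(\Lcal\oplus\VV)_{x_0}$.
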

\begin{proof}
From Section \ref{subsec:duality}, we can consider the dual harmonic bundle and see it is of type I. Thus from Theorem \ref{thm:embedding}, we obtain the  $\rho^\vee$-equivariant harmonic map $h^\vee$ is a bi-Lipschitz embedding. Since $h^\vee$ and $h$ are isometric, we then have the  $\rho$-equivariant harmonic map $h$ itself is again a bi-Lipschitz embedding. 
\end{proof}

\section{Geometric structures}\label{sec:GeometricStructure}
\subsection{Model spaces: non-negative 2-planes in \texorpdfstring{\(\mathbb{C}^{2,q+1}\)}{C2,q+1}}

Assume \(q\geq1\), and let
$\mathbb C^{2,q+1}$
be equipped with a non-degenerate Hermitian form
\(\langle\bullet,\ast\rangle\) of signature \((2,q+1)\). For
\(b\in\{0,1,2\}\), set
\[
\mathcal S_b
=
\left\{
U\in\operatorname{Gr}_2(\mathbb C^{2,q+1})\mid
\langle\bullet,\ast\rangle|_U\geq0,\ 
\operatorname{rk}\bigl(\langle\bullet,\ast\rangle|_U\bigr)=b
\right\}.
\]
The group \(\GR=\mathrm{U}_{2,q+1}\) acts transitively on each
\(\mathcal S_b\).

The open stratum $\mathcal S_2$ is the Hermitian symmetric space of noncompact type
 $\mathcal X_{2,n+1}$
of complex dimension \(2(q+1)\), and hence of real dimension $4q+4$.

For \(U\in\mathcal S_1\), let $N$ be the unique null line in $U$, denoted by $\operatorname{rad}(U)$. Let
\[
H=\operatorname{Stab}_{\GR}(U)
=\operatorname{Stab}_{\GR}(N\subset U).
\]
Then $\mathcal S_1\cong \GR/H.$ Let $\mathcal N
=
\left\{
N\in\mathbb P(\mathbb C^{2,q+1})\mid
\langle\bullet,\ast\rangle|_N=0
\right\}$. The projective null cone
\(\mathcal N\) is a real hypersurface in \(\mathbb P(\mathbb C^{2,q+1})\) of
dimension
$2q+3$. The map
\[
\begin{aligned}
    \mathcal S_1&\longrightarrow\mathcal N\\
U&\longmapsto\operatorname{rad}(U),
\end{aligned}\]
is a \(\GR\)-equivariant fiber bundle. 
For a null line \(N\), the quotient \(N^\perp/N\) has signature
\((1,q)\), and the fiber over \(N\) is the space of positive lines
in \(N^\perp/N\). Thus $\mathcal S_1\to\mathcal N$
has fiber \(\mathbb {CH}^q\), and
\[
\dim_{\mathbb R}\mathcal S_1
=
(2q+3)+2q
=
4q+3.
\]

The closed stratum
\[
\mathcal S_0
=
\left\{
W\in\operatorname{Gr}_2(\mathbb C^{2,q+1})\mid
\langle\bullet,\ast\rangle|_W=0
\right\}
\]
is the Grassmannian of totally isotropic \(2\)-planes. If
\(W_0\in\mathcal S_0\), \(P_0=\operatorname{Stab}_G(W_0)\) is a parabolic subgroup, then
\[
\mathcal S_0\cong G/P_0.
\] Consequently, $\dim_{\mathbb R}\mathcal S_0=4q.$
It is the Shilov boundary of the space
\(\mathcal S_2\).

Finally, the closure of \(\mathcal S_2\) in
\(\operatorname{Gr}_2(\mathbb C^{2,q+1})\) is
\[
\overline{\mathcal S_2}
=
\left\{
U\in\operatorname{Gr}_2(\mathbb C^{2,q+1})\mid
\langle\bullet,\ast\rangle|_U\geqslant0
\right\}
=
\mathcal S_2\sqcup\mathcal S_1\sqcup\mathcal S_0.
\]
Within this closure, \(\mathcal S_2\) is the open dense orbit,
\(\mathcal S_1\) is the intermediate boundary orbit, and
\(\mathcal S_0\) is the unique closed orbit. Moreover,
\[
\overline{\mathcal S_1}
=
\mathcal S_1\sqcup\mathcal S_0.
\]

\subsection{Developing map}
Consider the cyclic Higgs bundle $(\mathcal E, \Phi)$ as in Section \ref{subsec:duality}. 
Since \((\mathcal E,\mathrm{D})\) is projectively flat, the induced
connection on \(\mathbb P(\wedge^2\mathcal E)\) is flat.  After lifting to
\(\widetilde X\), the projectively flat connection gives a flat
trivialization
\[
\mathbb P(\wedge^2\widetilde{\mathcal E})
\cong
\widetilde X\times \mathbb P(\wedge^2\mathbb C^{2,q+1}).
\]

For \(U\in \overline{\mathcal S_{2}}\), let
\[
[\omega_U]\in \mathbb P(\wedge^2 U)
\subset
\mathbb P(\wedge^2\mathbb C^{2,q+1})
\]
be the Plücker point of \(U\).  Via the above projectively flat
trivialization, \([\omega_U]\) determines a projectively flat section
\[
\widetilde X\longrightarrow \mathbb P(\wedge^2\widetilde{\mathcal E}),
\]
which we still denote by \([\omega_U]\). At \(x\in \widetilde X\), decompose
\[
\widetilde{\mathcal E}_x
=
\mathcal E_0(x)\oplus \mathcal E_1(x)
\oplus \mathcal E_2(x)\oplus \mathcal E_3(x).
\] This induces a decomposition of \(\wedge^2\widetilde{\mathcal E}_x\).

Suppose that $(\mathcal E, \Phi)$ is of type \upRoman{1}, that is, $\mathcal E_i$'s are line bundles except $\mathcal E_2$. Put
\[\mathcal{Q}=\mathcal E_0\oplus\mathcal E_3,\quad \WW=\mathcal E_1\oplus\mathcal E_2.\] 

Suppose $(\mathcal E, \Phi)$ is of type \upRoman{2}, that is, $\mathcal E_i$'s are line bundles except $\mathcal E_3$. Put \[\mathcal{Q}=\mathcal E_1\oplus\mathcal E_2,\quad \WW=\mathcal E_0\oplus\mathcal E_3.\]
Let $\Pi_{\mathcal{Q}}
:
\wedge^2\widetilde{\mathcal E}
\longrightarrow
\mathcal{Q}$ be the corresponding projection. Choose any local nonzero lift
$\widehat\omega_U(x)\in \wedge^2\widetilde{\mathcal E}_x$
of the projective class \([\omega_U](x)\).  Define
\[
s_U(x)
:=
\Pi_{\mathcal{Q}}
\bigl(\widehat\omega_U(x)\bigr).
\]
Although \(s_U(x)\) depends on the choice of lift
\(\widehat\omega_U(x)\), the condition $s_U(x)=0$
does not. Indeed, replacing \(\widehat\omega_U\) by
\(g\widehat\omega_U\), where \(g\) is a nowhere-zero function, replaces
\(s_U\) by \(g s_U\).

For \(b\in\{2,1,0\}\), define
\[
\widetilde{\mathcal D}_b
:=
\left\{
(x,U)\in \widetilde X\times \mathcal S_b
\;\middle|\;
s_U(x)=0
\right\}.
\]

The developing maps are the projections to the second factor:
\[
\begin{aligned}
    \Dev_b\colon\wt {\mathcal D_b}&\longrightarrow \mathcal S_b,\qquad
(x,U)&\longmapsto U,
\end{aligned}
\]

The construction is \(\pi_1(X)\)-equivariant.  The lifted decomposition is \(\pi_1(X)\)-equivariant: for every
\(\gamma\in\pi_1(X)\),
\[
\rho(\gamma)\mathcal E_i(x)=\mathcal E_i(\gamma x).
\]
Consequently the induced projections on \(\wedge^2\widetilde{\mathcal E}\)
are equivariant.
Therefore \(\widetilde{\mathcal D}_b\) is invariant under the diagonal action
\[
\gamma\cdot(x,U):=(\gamma x,\rho(\gamma)U).
\]
Hence the quotient
$\mathcal D_b
:=
\pi_1(X)\backslash \widetilde{\mathcal D}_b$ 
is well defined, and the projection
$\widetilde{\mathcal D}_b\to \widetilde X$ 
descends to a map $\mathcal D_b\longrightarrow X.$

\begin{theorem}
\label{thm:p1-geometric-structures}
\begin{enumerate}[label=(\roman*)]
    \item The developing map $\Dev_b:\wt {\mathcal D_b}\to \mathcal S_b$
    is a diffeomorphism onto an open subset \(\Omega_{\rho,b}\). 
    \item $\Dev_{2}$ is surjective onto $\mathcal S_{2}$.

    \item The action of $\rho(\pi_1(X))$ on $\Omega_{\rho,b}$ is properly discontinous.
\end{enumerate}
\end{theorem}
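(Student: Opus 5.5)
The plan is to reduce everything to the behaviour of the functions $f_{\mathbf S}$ (type \upRoman{1}) and $f'_{\mathbf S}$ (type \upRoman{2}) studied in Section~\ref{sec:QuasiIsometry}. By the duality of Section~\ref{subsec:duality}, which exchanges type \upRoman{2} with type \upRoman{1} and $\mathcal E_1\wedge\mathcal E_2$ with $\mathcal E_0\wedge\mathcal E_3$, it suffices to treat type \upRoman{1}. There $\mathcal Q$ should be read as the summand $\mathcal E_0\wedge\mathcal E_3$ of $\wedge^2\mathcal E$. Choose the lift $\widehat\omega_U$ to be the $\wedge^2\mathrm D^{h,\alpha}$-flat section $\mathbf e=e_1\wedge e_2$ built from a basis of $U$ adapted to its signature, as in Section~\ref{sec:choosefunction}. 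Then $s_U=\mathbf e_{0,3}$ and $\|s_U\|_H^2=f_{\mathbf S}$ with $\mathbf S=U$. Hence $s_U(x)=0$ if and only if $f_{\mathbf S}(x)=0$.

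\emph{Step 1 (fibres of $\Dev_b$).} By Corollary~\ref{coro:indexfS}, for every $U\in\overline{\mathcal S_2}$ the function $f_U$ has at most one zero, and it has exactly one zero when $U$ is spacelike. Hence $\Dev_b$ is injective for every $b$, and $\Dev_2$ is surjective onto $\mathcal S_2$. This proves (ii) and the injectivity part of (i).

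\emph{Step 2 (smoothness and local diffeomorphism).} Consider the smooth map $F(x,U)=s_U(x)$, a section over $\widetilde X\times\mathcal S_b$ of the pullback of the complex line bundle $\mathcal E_0\wedge\mathcal E_3$. This bundle has real rank $2$. Locally in $U$ one can choose the basis $(e_1,e_2)$ smoothly, since $\mathcal S_b$ is a homogeneous space.

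At a zero $(x,U)$, the proof of Lemma~\ref{lemma:p1-morse} shows that the Hessian of $f_U=|s_U|^2$ at $x$ equals $2\,\mathrm{Re}\,H(\nabla s_U,\nabla s_U)$ and is nondegenerate. Thus the partial derivative $\partial_xF\colon T_x\widetilde X\to(\mathcal E_0\wedge\mathcal E_3)_x$ is a real isomorphism of $2$-dimensional spaces. By the implicit function theorem, $\widetilde{\mathcal D}_b=F^{-1}(0)$ is locally the graph of a smooth map $U\mapsto x(U)$ defined on an open set of $\mathcal S_b$. Consequently $\widetilde{\mathcal D}_b$ is a smooth submanifold of dimension $\dim\mathcal S_b$, and $\Dev_b$ is a local diffeomorphism onto an open set. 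Combined with Step 1, $\Dev_b$ is a diffeomorphism onto the open set $\Omega_{\rho,b}$, which gives (i).

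The main point to check carefully is this step for $b=0,1$. Lemma~\ref{lemma:p1-morse} only needs $\langle\bullet,\ast\rangle|_U\geqslant 0$ and Assumption~\ref{ass:positive-gap}, through \eqref{eq:normcomparison} and $|s|>|t|$. So I would verify that the computation giving $|s|>|t|$ at a zero of $\mathbf e_{0,3}$ uses only Lemma~\ref{lemma:normcomparison1} in its semi-definite form. One must also confirm that the choice of lift is irrelevant: a different lift multiplies $F$ by a nonvanishing function, which does not change its zero set or its transversality.

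\emph{Step 3 (proper discontinuity).} The diagonal action of $\pi_1(X)$ on $\widetilde X\times\mathcal S_b$ is free and properly discontinuous, because it is so on the first factor. It therefore remains so on the closed invariant subset $\widetilde{\mathcal D}_b$. Since $\Dev_b$ is an equivariant diffeomorphism onto $\Omega_{\rho,b}$, it transports this action to the action of $\rho(\pi_1(X))$ on $\Omega_{\rho,b}$, giving (iii).
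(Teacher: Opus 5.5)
Your proposal is correct and follows essentially the same route as the paper. Injectivity of $\Dev_b$ and surjectivity of $\Dev_2$ come from uniqueness and existence of zeros of $f_U$ (Corollary~\ref{coro:indexfS}, under Assumption~\ref{ass:positive-gap}). The local diffeomorphism property comes from nondegeneracy of the Hessian of $f_U=|s_U|^2$, which makes $\partial_x s_U$ a real isomorphism, and proper discontinuity is transported from the deck action on $\widetilde X$. The only cosmetic differences are that you apply the implicit function theorem directly (writing $\widetilde{\mathcal D}_b$ locally as a graph over $\mathcal S_b$) rather than computing $T\widetilde{\mathcal D}_b$ and invoking the inverse function theorem, and that you treat type \upRoman{2} by duality instead of ``similarly''.
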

\begin{proof}
We use the existence, uniqueness of zeros of $f_U$ and the Morse function property to show the desired properties of $\Dev_b$.

We first prove injectivity for \(\Dev_b\). Suppose $\Dev_b(x_1,U_1)=\Dev_b(x_2,U_2).$
Then $U_1=U_2=:U.$
Since \((x_i,U)\in \wt {\mathcal D_b}\), we have
$s_U(x_i)=0,$ for $i=1, 2$. Equivalently,
$f_U(x_i)=0$, for $i=1, 2$. 
Because $U\in \mathcal S_b$, \prettyref{lemma:p1-morse} implies that \(f_U\) has at most one  zero.
Therefore $x_1=x_2.$ Thus \(\Dev_b\) is injective.

Next we prove the local diffeomorphism of $\Dev_b$. We argue by the inverse function theorem. Consider the map
\[
\mathcal F_b:\widetilde X\times \mathcal S_b
\longrightarrow
Q_x
\]
defined by
$\mathcal F_b(x,U)
=
s_U(x)$.
By definition, $\widetilde{\mathcal D}_b=\mathcal F_b^{-1}(0).$
Let
$(x,U)\in \widetilde{\mathcal D}_b.$
Thus \(s_U(x)=0\). 
Suppose that the Higgs bundle is of type \upRoman{1}. Let \(e_0,e_1,e_2^1,\cdots, e_2^q, e_3\) be a local frame adapted to the splitting
\[
\mathcal E
=
\mathcal E_0\oplus \mathcal E_1\oplus \mathcal E_2\oplus \mathcal E_3.
\]
Write
\[
\omega_U
=
\sum_{i,j\neq 2, i<j} a_{ij}\,e_i\wedge e_j+\sum_{1\leq i\leq q, j=0,1,3} b_{ij}\,e_2^i\wedge e_j.
\]
Then
\[
s_U(x)=a_{03}(x,U)\,e_0(x)\wedge e_3(x).
\]
 We use the nondegeneracy of
$f_U=\|s_U\|^2=|a_{03}|^2$
at the zeros of $a_{03}$. Write locally $a_{03}=u+iv.$
Then $f_U=u^2+v^2.$
Let \(x\) be a zero of \(a_{03}\).  Thus $u(x)=v(x)=0.$
In particular \(\dd(f_U)_x=0\), and a direct computation gives
\[
\operatorname{Hess}_x(f_U)
=
2\bigl(\dd u_x\otimes \dd u_x+\dd v_x\otimes \dd v_x\bigr).
\]
Equivalently, for every \(V\in T_x\widetilde X\),
\[
\operatorname{Hess}_x(f_U)(V,V)
=
2\left|\dd_xa_{03}(V)\right|^2.
\]

Suppose that \(V\in T_x\widetilde X\) satisfies $\dd_xa_{03}(V)=0.$
Then
\[
\operatorname{Hess}_x(f_U)(V,V)=0.
\]
Since \(f_U\) is nondegenerate at \(x\) from Lemma \ref{lemma:p1-morse}, the quadratic form
\(\operatorname{Hess}_x(f_U)\) is nondegenerate.  Therefore
$V=0.$
Thus \(\dd_xa_{03}\) is injective.  Since both \(T_x\widetilde X\) and
\(\mathbb C\) are real two-dimensional vector spaces, it follows that
$\operatorname{rank}_{\mathbb R}(\dd_xa_{03})=2.$ Thus the differential in the
\(\widetilde X\)-direction
\[
\dd_xs_U:
T_x\widetilde X
\longrightarrow
\mathcal E_0(x)\wedge \mathcal E_3(x)
\]
is an isomorphism of real vector spaces at zeros of $s_U$.

In particular \(\dd\mathcal F_b\) is
surjective at \((x,U)\), and therefore \(\widetilde{\mathcal D}_b\) is a
smooth submanifold near \((x,U)\). The tangent space at \((x,U)\) is given by
\[
T_{(x,U)}\widetilde{\mathcal D}_b
=
\left\{
(\xi,\eta)\in T_x\widetilde X\oplus T_U\mathcal S_b
\;\middle|\;
\dd_xs_U(\xi)+\dd_Us_U(\eta)=0
\right\}.
\]
The differential of \(\Dev_b\) is the projection $\dd\Dev_b(\xi,\eta)=\eta.$

We now show that this map is an isomorphism.  Let
\(\eta\in T_U\mathcal S_b\).  Since \(\dd_xs_U\) is an isomorphism, there is a
unique vector \(\xi\in T_x\widetilde X\) satisfying
\[
\dd_xs_U(\xi)+\dd_Us_U(\eta)=0,
\]
namely
\[
\xi
=
-\bigl(\dd_xs_U\bigr)^{-1}\bigl(\dd_Us_U(\eta)\bigr).
\]
Hence \((\xi,\eta)\in T_{(x,U)}\widetilde{\mathcal D}_b\), and $\dd\Dev_b(\xi,\eta)=\eta.$
Thus \(\dd\Dev_b\) is surjective. To prove injectivity, suppose $\dd\Dev_b(\xi,\eta)=0.$
Then $\eta=0$.  Since $(\xi,0)\in T_{(x,U)}\widetilde{\mathcal D}_b$, we
have $\dd_xs_U(\xi)=0.$
Because \(\dd_xs_U\) is an isomorphism, this implies \(\xi=0\).  Therefore $(\xi,\eta)=(0,0),$
so \(\dd\Dev_b\) is injective.

Consequently
\[
\dd\Dev_b:
T_{(x,U)}\widetilde{\mathcal D}_b
\longrightarrow
T_U\mathcal S_b
\]
is an isomorphism.  Since \((x,U)\) was arbitrary, the inverse function
theorem implies that
$\Dev_b:\widetilde{\mathcal D}_b\longrightarrow \mathcal S_b$
is a local diffeomorphism.

Suppose the Higgs bundle $(\mathcal E,\Phi)$ is of type \upRoman{2}. The proof is similar to the type \upRoman{1} case. Mainly, the local diffeomorphism property of $\Dev_b$ follows from the property $f_U=\|s_U\|^2$ being nondegenerate at zeros. 

Finally, we show \(\Dev_{2}\) is surjective. Let $U\in \mathcal S_{2}$ be a positive \(2\)-plane. By \prettyref{lemma:p1-morse}, the function
$f_U(x)=\norm{s_U(x)}^2$
has a zero
$x_U\in \wt X.$
Therefore $s_U(x_U)=0,$
so $(x_U,U)\in \wt {\mathcal D_b}.$
Thus $\Dev_{2}(x_U,U)=U.$
Since \(U\in\mathcal S_{2}\) was arbitrary, \(\Dev_{2}\) is surjective on $\mathcal S_{2}$. Define the open domain
\[
\Omega_{\rho,b}:=\Dev_b(\wt {\mathcal D_b}).
\] Then $\Omega_{\rho,2}$ is the whole space $\mathcal S_{2}$. 
We first note that the $\pi_1(X)$-action on $\widetilde{\mathcal D_b}$ is properly
discontinuous. Indeed, if $K\subset \widetilde{\mathcal D_b}$ is compact, then its projection 
$p(K)$ to $\widetilde X$ is compact, and
\[
\{\gamma\in\pi_1(X)\mid\gamma K\cap K\neq\varnothing\}
\subset
\{\gamma\in\pi_1(X)\mid\gamma p(K)\cap p(K)\neq\varnothing\}.
\]
The latter set is finite because the deck action of $\pi_1(X)$ on
$\widetilde X$ is properly discontinuous.

Since $\Dev_b$ is injective and a local diffeomorphism, it identifies
$\widetilde{\mathcal D_b}$ diffeomorphically with $\Omega_{\rho,b}$. And this proves that the action of $\rho(\pi_1(X))$ on
$\Omega_{\rho,b}$ is properly discontinuous.
\end{proof}

\subsection{Topology of fiber bundles}
In this subsection, we will first show 
\[
\mathcal D_b:=\pi_1(X)\backslash \wt {\mathcal D_b}\to X
\] is a fiber bundle, and then study the fiber and the topology of the fiber bundle. 
\begin{lemma}
\[
s_U(x)=0
\quad\Longleftrightarrow\quad
U\cap \mathcal W_x\neq\{0\}.
\]
\end{lemma}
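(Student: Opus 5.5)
The lemma is pointwise linear algebra at a fixed $x\in\widetilde X$, so I will argue in the single fiber $\widetilde{\mathcal E}_x$. Throughout I read $\Pi_{\mathcal Q}$ as the projection of $\wedge^2\widetilde{\mathcal E}_x$ onto the summand $\wedge^2\mathcal Q_x$. This is consistent with the formula $s_U(x)=a_{03}\,e_0\wedge e_3$ used in the proof of Theorem~\ref{thm:p1-geometric-structures}. In both types, $\mathcal Q_x$ is a complex $2$-plane and $\widetilde{\mathcal E}_x=\mathcal Q_x\oplus\mathcal W_x$ is a direct sum decomposition. Neither orthogonality nor signature plays any role.

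First I would record the induced decomposition
\[
\wedge^2\widetilde{\mathcal E}_x=\wedge^2\mathcal Q_x\oplus(\mathcal Q_x\otimes\mathcal W_x)\oplus\wedge^2\mathcal W_x .
\]
For type~\upRoman{1} this simply regroups the summands $\mathcal E_i\wedge\mathcal E_j$ with $\wedge^2\mathcal Q_x=\mathcal E_0\wedge\mathcal E_3$. For type~\upRoman{2} we have $\wedge^2\mathcal Q_x=\mathcal E_1\wedge\mathcal E_2$. Next, take a basis $u_1,u_2$ of $U$ and use the lift $\widehat\omega_U(x)=u_1\wedge u_2$. This is legitimate because the vanishing of $s_U(x)$ is independent of the lift, as already noted in the paper. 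Write $u_i=q_i+w_i$ with $q_i\in\mathcal Q_x$ and $w_i\in\mathcal W_x$. Expanding the wedge product gives
\[
u_1\wedge u_2=q_1\wedge q_2+(q_1\wedge w_2+w_1\wedge q_2)+w_1\wedge w_2 ,
\]
so $s_U(x)=q_1\wedge q_2$.

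It remains to note that $q_1\wedge q_2=0$ if and only if $q_1,q_2$ are linearly dependent. This holds if and only if the restriction $\pi_{\mathcal Q}|_U\colon U\to\mathcal Q_x$ fails to be injective. Since $\ker(\pi_{\mathcal Q}|_U)=U\cap\mathcal W_x$, this is exactly the condition $U\cap\mathcal W_x\neq\{0\}$.

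There is no genuine obstacle here. The only point needing care is the identification of $\Pi_{\mathcal Q}$ with the $\wedge^2\mathcal Q$-component, together with the observation that the splitting of $\widetilde{\mathcal E}$ used is the holomorphic grading transported to the fiber at $x$. The flat trivialization plays no role in this pointwise statement.
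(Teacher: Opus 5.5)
Your proposal is correct and follows essentially the same argument as the paper, which states in one line that $s_U(x)=0$ iff the projection of $U$ to $\mathcal Q_x$ has dimension at most one, iff $U\cap\mathcal W_x\neq\{0\}$. Your computation $s_U(x)=q_1\wedge q_2$ makes the first equivalence explicit. Reading $\Pi_{\mathcal Q}$ as the projection onto $\wedge^2\mathcal Q_x$ is the right interpretation of the paper's notation, and it is consistent with $s_U=a_{03}\,e_0\wedge e_3$.
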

\begin{proof}
$s_U(x)=0$ is equivalent to the projection of $U$ to $\mathcal{Q}_x$ is at most of $1$-dimensional, which is equivalent to $U\cap W_x$ is of at least $1$-dimensional.
\end{proof}
Fix \(x_*\in\widetilde X\), and put
$W_*=\mathcal W(x_*)$. Set
\[
K=\operatorname{Stab}_{\PGR}(W_*)\cong \mathrm{P}(\mathrm{U}_{1,q}\times \mathrm{U}_{1,1}),
\]
where $\PGR=\PU_{2,q+1}$. Moreover,
\[
P
=
\pi_1(X)\backslash
\left\{
(x,g)\in\widetilde X\times \PGR\mid
gW_*=W_x
\right\},
\]
where
$\gamma\cdot(x,g)
=
(\gamma x,\rho(\gamma)g),$
is a principal \(K\)-bundle over \(X\).  For $b\in \{0,1,2\}$, define the space
\[
F_b
=
\left\{
U\in\mathcal S_b\mid
U\cap W_*\neq\{0\}
\right\}.
\]
Then \(K\) acts on \(F_b\) by \(k\cdot U=kU\), and we can define the associated
bundle $P\times_KF_b$.
\begin{proposition}\label{prop:FiberBundle}
1. There is a natural diffeomorphism
\[
\mathcal D_b
\cong
P\times_KF_b.
\]
2. The fibers have diffeomorphism types
\[
F_2\cong\mathbb R^{4q+2},
\qquad
F_0\cong S^{2q-1}\times S^{2q-1},
\qquad
F_1\cong S^{4q+1}\setminus j_0(S^{2q-1}\times S^{2q-1}),
\]
where $j_0\colon S^{2q-1}\times S^{2q-1}\to S^{4q+1}$ denotes the standard embedding $(u,w)\mapsto(0,u,w)/\sqrt{2}$.
\end{proposition}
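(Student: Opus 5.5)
The plan is to prove part 1 by writing down an explicit map of associated bundles. Part 2 then becomes linear algebra in a fixed model for $W_*$ inside $\mathbb C^{2,q+1}$.

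\textbf{Part 1.} The key input is the preceding lemma, that $s_U(x)=0$ if and only if $U\cap\mathcal W_x\neq\{0\}$. Define
\[
\Psi\colon P\times_K F_b\longrightarrow \mathcal D_b,\qquad [(x,g),U]\longmapsto [(x,gU)].
\]
This lands in $\mathcal D_b$ because $gW_*=\mathcal W_x$ gives $gU\cap \mathcal W_x=g(U\cap W_*)\neq\{0\}$, and $gU\in\mathcal S_b$ since $\PGR$ preserves each stratum. It is well defined for two reasons. Replacing $(g,U)$ by $(gk,k^{-1}U)$ does not change $gU$. The deck action $(x,g)\mapsto(\gamma x,\rho(\gamma)g)$ is intertwined with the diagonal action $(x,U)\mapsto(\gamma x,\rho(\gamma)U)$ on $\widetilde{\mathcal D}_b$.

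The inverse sends $[(x,V)]$ to $[(x,g),g^{-1}V]$ for any $g$ with $gW_*=\mathcal W_x$. Any two such $g$ differ by an element of $K$, so this is well defined. Smoothness of both maps follows from local smooth sections of $P$, which exist because $x\mapsto \mathcal W_x$ is a smooth map into the homogeneous space $\PGR/K$. In particular $\mathcal D_b\to X$ is a fiber bundle with fiber $F_b$.

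\textbf{Signatures.} To compute $F_b$ I first fix the signatures. From the sign conventions in Section \ref{sec:QuasiIsometry} (where $-\|\mathbf e_{0,3}\|^2\geq 0$) and from $K\cong\mathrm P(\mathrm U_{1,q}\times \mathrm U_{1,1})$, the subspace $W_*$ has signature $(1,q)$ and $Q_*=W_*^\perp$ has signature $(1,1)$. This holds for both types, by duality. So I take the model
\[
\mathbb C^{2,q+1}=W_*\oplus Q_*,\qquad W_*=\mathbb C^{1,q}=\mathbb C\oplus\mathbb C^{q}.
\]
Since $W_*$ has only one positive direction, any nonnegative $2$-plane $U$ with $U\cap W_*\neq 0$ meets $W_*$ in exactly a line $\ell$. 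This line is positive when $b=2$, and null when $b=0$.

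\textbf{$F_2$.} A point of $F_2$ is a positive line $\ell\in\mathbb{CH}^q\subset\mathbb P(W_*)$ together with a positive line in $\ell^\perp$, which has signature $(1,q+1)$. So $F_2$ is a $\mathbb{CH}^{q+1}$-bundle over $\mathbb{CH}^q$. Both base and fiber are cells, so $F_2\cong\mathbb R^{2q}\times\mathbb R^{2q+2}=\mathbb R^{4q+2}$.

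\textbf{$F_0$.} A point of $F_0$ consists of:
\begin{itemize}[label={}]
\item a null line $\ell=[1:u]\subset W_*$ with $u\in S^{2q-1}\subset\mathbb C^q$, and
\item a null line in $\ell^\perp/\ell$, which has signature $(1,q)$.
\end{itemize}
I will identify $\ell^\perp/\ell\cong Q_*\oplus u^{\perp}$, where $u^\perp$ is the orthogonal complement of $u$ in the negative part $\mathbb C^q$ of $W_*$. Write $Q_*=\mathbb C e_+\oplus\mathbb C e_-$. A null line can then be normalized as $e_+ + b e_- + w$ with $|b|^2+|w|^2=1$ and $w\perp u$. The map $(b,w)\mapsto bu+w$ identifies $\mathbb C\oplus u^\perp$ with the trivial bundle $\mathbb C^q$ over $S^{2q-1}$. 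Hence the sphere bundle is trivial and $F_0\cong S^{2q-1}\times S^{2q-1}$.

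\textbf{$F_1$: the main obstacle.} I expect $F_1$ to be the hard step. My first approach is to show that $\overline{F_2}=F_2\sqcup F_1\sqcup F_0$ is a closed $(4q+2)$-ball whose boundary sphere contains $F_0$ in the standard way. The steps would be:
\begin{itemize}[label={}]
\item Parametrize $U$ by the pair consisting of the line $\ell=U\cap W_*$ and a normalized vector spanning $U\cap \ell^{\perp}$.
\item Use ball coordinates: $\ell=[1:x]$ with $|x|\leq 1$, and the second vector $e_+ + (y,z)$ with $|y|^2+|z|^2\leq 1$.
\item Rescale these coordinates radially to identify the parameter space with a ball $B^{4q+2}$, so that $F_2$ is the interior.
\end{itemize}
Under this identification, $F_0$ should appear as $\{|x|=1,\ |y|^2+|z|^2=1\}$, which sits inside the boundary sphere as $j_0(S^{2q-1}\times S^{2q-1})$ after normalization. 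Then $F_1$ is the complement $S^{4q+1}\setminus j_0(S^{2q-1}\times S^{2q-1})$.

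The delicate points are two:
\begin{itemize}[label={}]
\item checking that this compactification is genuinely a manifold with boundary, with the coordinates extending smoothly across $F_1$;
\item matching the embedding of $F_0$ with $j_0$ up to isotopy.
\end{itemize}
If that fails, the fallback is a direct description: write $F_1$ as a fiber bundle over $\mathcal S_1$-radicals and compare with the complement, using the fact that $F_0$ has a trivial normal bundle in the boundary sphere.
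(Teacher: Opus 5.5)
Your Part~1 is the same as the paper's argument. Your $F_0$ computation is also the paper's: the normalization $e_++be_-+w$ with $w\perp u$ and the trivialization $(b,w)\mapsto bu+w$ are exactly its $(a,u,v)\mapsto(u,au+v)$. Your treatment of $F_2$ as a $\mathbb{CH}^{q+1}$-bundle over the contractible $\mathbb{CH}^{q}$ is a valid alternative to the paper's convexity argument.

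\textbf{The gap is $F_1$.} You leave it as a plan, and the plan as stated does not work. Your coordinates (the line $\ell=U\cap W_*$ plus a normalized second vector) degenerate exactly on the part of $\partial F_2$ you need to control. When $\ell=[1:x]$ becomes null ($|x|=1$), three things happen:
\begin{itemize}
\item $\ell\subset\ell^\perp$, so the form on $\ell^\perp\cap W_*$ becomes degenerate;
\item the second vector is only defined modulo $\ell$;
\item its effective $W_*$-component lives in $x^\perp$, of complex dimension $q-1$ rather than $q$.
\end{itemize}
So the fiber over $|x|=1$ has real dimension $2q$, not $2q+2$, and the closure is not a rescaled $\bar B^{2q}\times\bar B^{2q+2}$. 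In particular, $\{|x|=1,\ |y|^2+|z|^2=1\}$ with $z\in\mathbb C^{q}$ is $S^{2q-1}\times S^{2q+1}$, not $F_0$; your own $F_0$ computation needed $w\perp u$. Proving that this collapsed space is a closed $(4q+2)$-ball whose boundary contains $F_0$ in standard position is the whole content of the $F_1$ claim, and it is not supplied. Your fallback does not close the gap either. Triviality of the normal bundle of $F_0$ does not determine the complement, which depends on the isotopy class of the embedding.

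\textbf{The missing idea is one global chart in which the incidence condition becomes linear.} In type~I, take the maximal positive plane $N=\mathcal E_1(x_*)\oplus\mathcal E_3(x_*)$ and $P=N^{\perp}$ (type~II is analogous). Every nonnegative $2$-plane is $\operatorname{graph}(A)$ for a unique $A\colon N\to P$ with $A^*A\leqslant I_2$, and its stratum is $\operatorname{rk}(I_2-A^*A)$. Moreover, $\operatorname{graph}(A)\cap W_*\neq\{0\}$ if and only if $A(\mathcal E_1(x_*))\subset\mathcal E_2(x_*)$, which cuts out a complex linear subspace $\cong\mathbb C^{2q+1}$.

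Hence $\overline{F_2}$ is the closed operator-norm unit ball intersected with that subspace. This is a compact convex body with $0$ in its interior. Radial projection then identifies $\partial F_2=F_1\sqcup F_0$ with $S^{4q+1}$. This identification is smooth on $F_1$, where the top singular value of $A$ is simple.

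Since $\operatorname{tr}(A^*A)=2$ on $F_0$, its image under radial projection is the explicit embedding $j(u,w)=\tfrac{1}{\sqrt2}\bigl(u^*w,\,u,\,w-(u^*w)u\bigr)$. The isotopy $j_t(u,w)=\tfrac{1}{\sqrt2}\bigl(\sin(\tfrac{\pi t}{2})u^*w,\,u,\,w-(1-\cos(\tfrac{\pi t}{2}))(u^*w)u\bigr)$ joins $j_0$ to $j$. The isotopy extension theorem then gives $F_1\cong S^{4q+1}\setminus j_0(S^{2q-1}\times S^{2q-1})$.
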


\begin{proof} The map
\[
P\times_KF_b\longrightarrow\mathcal D_b,
\qquad
[[x,g],U]_K\longmapsto[x,gU]
\]
is well defined because
\[
U\cap W_*\neq\{0\}
\iff
gU\cap W_x\neq\{0\}.
\]
It is a diffeomorphism: every plane in the fiber over \(x\) is
carried to \(F_b\) by \(g^{-1}\), while changing \(g\) amounts
precisely to the \(K\)-equivalence relation.

Suppose that $(\mathcal E,\Phi)$ is of type \upRoman{1}. Write
\[
V=N\oplus P,
\qquad
N=\mathcal E_1(x_*)\oplus\mathcal E_3(x_*),
\qquad
P=\mathcal E_0(x_*)\oplus\mathcal E_2(x_*).
\]
Suppose that $(\mathcal E,\Phi)$ is of type \upRoman{2}. Write
\[
V=N\oplus P,
\qquad
N=\mathcal E_0(x_*)\oplus\mathcal E_2(x_*),
\qquad
P=\mathcal E_1(x_*)\oplus\mathcal E_3(x_*).
\]
In both cases, $\rank(N)=2, \rank(P)=q+1$. Every non-negative \(2\)-plane is uniquely the graph of a linear
map $A\colon N\to P$,
and
\[
\operatorname{graph}(A)\in\mathcal S_b
\iff
A^*A\leqslant I_2,
\qquad
\operatorname{rk}(I_2-A^*A)=b.
\]

If $(\mathcal E,\Phi)$ is of type \upRoman{1}, then \(F_b\) is the corresponding rank stratum in
\[
\left\{
A\colon N\to P\mid
A(\EE_1(x_*))\subset\mathcal E_2(x_*)
\right\}
\cong\mathbb C^{2q+1}.
\]
Write \(A=\begin{pmatrix}
    A_1&A_2
\end{pmatrix}\), where
\[
A_1\in\mathcal E_2(x_*)\cong \mathbb C^q,
\qquad
A_2\in P\cong \mathbb C^{q+1}.
\]
If $(\mathcal E,\Phi)$ is of type \upRoman{2}, then \(F_b\) is the corresponding rank stratum in
\[
\left\{
A\colon N\to P\mid
A(\EE_0(x_*))\subset\mathcal E_3(x_*)
\right\}
\cong\mathbb C^{2q+1}.
\]
Write \(A=\begin{pmatrix}
A_1&A_2
\end{pmatrix}\), where
\[
A_1\in\mathcal E_3(x_*)\cong \mathbb C^q,
\qquad
A_2\in P\cong \mathbb C^{q+1}.
\]

With respect to an adapted basis, we can let
\[
A=
\begin{pmatrix}
0 & a\\
u & v
\end{pmatrix},
\qquad
a\in\C,\quad u,v\in\C^q.
\]
Then
\[
A^*A=
\begin{pmatrix}
\|u\|^2 & u^*v\\
v^*u & |a|^2+\|v\|^2
\end{pmatrix}.
\]

We characterize the parameter spaces
\[
F_b
:=
\left\{
(a,u,v)\in\C\times\C^q\times\C^q: I_2\geqslant A^*A,
\rank(I_2-A^*A)=b
\right\},
\qquad b=0,1,2.
\]

Case 1: $b=0$. We have
$\rank(I_2-A^*A)=0$
if and only if
$I_2-A^*A=0.$
Therefore
\[
F_0
=
\left\{
(a,u,v):
\|u\|=1,\quad
u^*v=0,\quad
|a|^2+\|v\|^2=1
\right\}.
\]

For each point of \(F_0\), define
$w=au+v.$
Since \(u^*v=0\),
\[
\|w\|^2
=
|a|^2\|u\|^2+\|v\|^2
=
|a|^2+\|v\|^2
=
1.
\]
Thus the map
\[\begin{aligned}
    \Psi\colon F_0
&\longrightarrow
S^{2q-1}\times S^{2q-1}\\
(a,u,v)&\longmapsto(u,au+v)
\end{aligned}
\]
is a diffeomorphism. Its inverse is
\[
\Psi^{-1}(u,w)
=
\left(
u^*w,\,
u,\,
w-(u^*w)u
\right).
\]
Consequently,
$F_0
\cong
S^{2q-1}\times S^{2q-1}$.

Case 2: $b=2$. We have \(I_2-A^*A>0\). It is the intersection of $\|A\|_{op}<1$ and the vector space $\mathbb C^q\times \mathbb C^{q+1}$. Thus it is convex and 
$F_2\cong\mathbb R^{4q+2}.$

Case 3: $b=1$. The radial projection identifies the boundary of $F_2$ with a sphere:
$\partial F_2\cong S^{4q+1}.$

The boundary decomposes as
$\partial F_2=F_0\sqcup F_1,$
where
\[
F_0=\{A:A^*A=I_2\}.
\]

Recall the diffeomorphism $\Psi$ and its inverse $\Psi^{-1}\colon S^{2q-1}\times S^{2q-1}\to F_0$
defined by
\[
(u,w)\longmapsto
\left(u^*w,\ u,\ w-(u^*w)u\right).
\]
Since
\[
|a|^2+\|u\|^2+\|v\|^2
=
\operatorname{tr}(A^*A)=2
\]
on \(F_0\), radial projection sends \(F_0\) to the embedded submanifold
\[
j(u,w)
=
\frac{1}{\sqrt 2}
\left(u^*w,\ u,\ w-(u^*w)u\right)
\subset S^{4q+1}.
\]
This embedding is smoothly isotopic to the standard embedding
\[
j_0(u,w)=\frac{1}{\sqrt 2}(0,u,w).
\]
Indeed, define
\[
j_t(u,w)
=
\frac{1}{\sqrt 2}
\left(
\sin\left(\frac{\pi t}{2}\right)u^*w,\,
u,\,
w-\left(1-\cos\left(\frac{\pi t}{2}\right)\right)(u^*w)u
\right),
\qquad 0\leqslant t\leqslant 1.
\]
One can readily verify that \(j_t(u,w)\in S^{4q+1}\), while
\[
j_0(u,w)=\frac{1}{\sqrt 2}(0,u,w),
\qquad
j_1(u,w)=j(u,w).
\]
Hence, by the isotopy extension theorem,
\[
F_1
=
\partial F_2\setminus F_0
\cong
S^{4q+1}\setminus
j_0\bigl(S^{2q-1}\times S^{2q-1}\bigr).
\]
\end{proof}

\begin{proposition}\label{prop:topology of fiber bundles}
Let $d_i=\deg \EE_i$. Note that $d_1=d_0-(2g-2)$. The principal $K$-bundle
$\mathcal D_b\cong P\times_KF_b$
is determined, as an associated \(K\)-bundle, by the degrees
$[d_0,d_1,d_2,d_3]\in \mathbb{Z}^4/\mathbb{Z}\cdot(1,1,q,1)$ for $\tau=\upRoman{1}$ and $[d_0,d_1,d_2,d_3]\in \mathbb{Z}^4/\mathbb{Z}\cdot(1,1,1,q)$ for $\tau=\upRoman{2}$. 

Moreover, since $F_2\cong\mathbb R^{4q+2}$, the projection
$\mathcal D_2\rightarrow X$
is a homotopy equivalence.
\end{proposition}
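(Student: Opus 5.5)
We first reduce the structure of $P$ to the $C^\infty$ structure of the Higgs bundle. The splitting $\widetilde{\mathcal E}=\mathcal E_0\oplus\mathcal E_1\oplus\mathcal E_2\oplus\mathcal E_3$ is $h$-orthogonal and $\rho$-equivariant. So the pair $\mathcal W_x\subset\widetilde{\mathcal E}_x$ with its orthogonal complement $\mathcal Q_x$ descends to a $C^\infty$ orthogonal splitting $\mathcal E=\mathcal W\oplus\mathcal Q$ of the (projectively) flat bundle over $X$, whose signatures are $(1,q)$ and $(1,1)$ (or the reverse, depending on the type). A reduction of the flat $\PGR$-bundle $\widetilde X\times_\rho\PGR$ to $K=\operatorname{Stab}(W_*)\cong\mathrm P(\mathrm U_{1,q}\times\mathrm U_{1,1})$ is exactly a choice of such a splitting. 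Hence $P$ is the $K$-frame bundle of the pair $(\mathcal W,\mathcal Q)$, up to the projective ambiguity coming from the center.

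The next step passes to a maximal compact subgroup. Since $K$ deformation retracts onto its maximal compact subgroup $K_c=\mathrm P(\mathrm U_1\times\mathrm U_q\times\mathrm U_1\times\mathrm U_1)$, the isomorphism class of $P$ is determined by a $K_c$-reduction. The harmonic metric supplies one: the further splitting into $\mathcal E_0,\dots,\mathcal E_3$, each definite. For type $\upRoman{1}$, $\mathcal W=\mathcal E_1\oplus\mathcal E_2$ has positive part $\mathcal E_1$ and negative part $\mathcal E_2$, while $\mathcal Q=\mathcal E_0\oplus\mathcal E_3$ gives the two lines. Type $\upRoman{2}$ is analogous. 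Principal $\mathrm U_1\times\mathrm U_q\times\mathrm U_1\times\mathrm U_1$-bundles over a closed surface are classified topologically by the first Chern classes, that is, by the degrees $(d_0,d_1,d_2,d_3)$. This uses that $\mathrm U_q$-bundles over a surface are classified by $c_1$, since $\pi_1(\mathrm U_q)=\mathbb Z$ via $\det$ and $\pi_0=0$. Projectivizing quotients by the diagonal $\mathrm U_1$, which corresponds to tensoring all summands by a line bundle. The resulting $K_c$-bundle therefore depends only on $[d_i]\in\mathbb Z^4/\mathbb Z\cdot(1,1,q,1)$, respectively $(1,1,1,q)$. Then $\mathcal D_b\cong P\times_KF_b$ follows from Proposition~\ref{prop:FiberBundle}.

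The main obstacle is the projective quotient. The $\mathrm P(\cdots)$-bundle may not lift to the linear group, and its classification involves $\pi_1$ of the quotient group, which may have torsion or extra classes relative to $\mathbb Z^4/\mathbb Z\cdot(1,1,q,1)$. We only need the weaker claim that the class is \emph{determined by} the degree data, not that it classifies. For this it suffices to note two facts. First, our $\PGR$-bundle comes with an honest $\mathrm U_1\times\mathrm U_q\times\mathrm U_1\times\mathrm U_1$-lift $(\mathcal E_i)$ whose isomorphism type is fixed by $(d_i)$. Second, changing the lift by a line bundle shifts $(d_i)$ by a multiple of $(1,1,q,1)$.

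For the final statement, $\mathcal D_2\to X$ is a fiber bundle with contractible fiber $F_2\cong\mathbb R^{4q+2}$. By the long exact sequence of homotopy groups and Whitehead's theorem (all spaces are manifolds, hence CW), the projection is a homotopy equivalence. Alternatively, a global section exists by contractibility, and fiberwise convexity of $F_2$ gives a deformation retraction onto it.
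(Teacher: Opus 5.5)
Your proof is correct and follows the paper's route: retract $K$ onto its maximal compact subgroup $\mathrm{P}(\mathrm{U}_1\times\mathrm{U}_q\times\mathrm{U}_1\times\mathrm{U}_1)$ (the paper's ``$\mathrm{U}_{q-1}$'' is a typo), use the harmonic splitting $\mathcal E_0\oplus\cdots\oplus\mathcal E_3$ as the compact reduction, and read off the class from the degrees modulo the diagonal $\mathrm{U}_1$. You also supply the argument for the homotopy-equivalence clause, which the paper leaves implicit. Your caution about torsion is unnecessary: $(1,1,q,1)$ is primitive, so $\pi_1(K)\cong\mathbb{Z}^4/\mathbb{Z}\cdot(1,1,q,1)\cong\mathbb{Z}^3$ (not $\mathbb{Z}^4$ as the paper writes), and the degree class therefore classifies $P$, not merely determines it.
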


\begin{proof}
The group \(K\) deformation retracts onto
$\mathrm{P}\big(\mathrm{U}_{q-1}\times (\mathrm{U}_1)^3\big),$
and principal \(K\)-bundles over the surface \(X\) are therefore
classified by
$H^2(X;\pi_1(K))\cong\mathbb Z^4.$
For \(P\), the four components are the first Chern classes of
\(\EE_0,\EE_1,\EE_2,\EE_3\). Thus the statement follows.
\end{proof}

\subsection{Proof of Theorem \ref{thm:intro-geometry}}
Part 1 and 2 follow from Theorem \ref{thm:embedding} and Theorem \ref{thm:embedding1}. Part 3 follows from Theorem \ref{thm:p1-geometric-structures} and Proposition \ref{prop:FiberBundle}. \qed

\subsection{Proof of Theorem \ref{thm:intro-domains}}
The statement follows from Theorem \ref{thm:p1-geometric-structures}, Proposition \ref{prop:FiberBundle} and Proposition \ref{prop:topology of fiber bundles}.
\qed

\section{\texorpdfstring{$\partial$}{partial}-alternating surfaces}\label{sec:AlternatingSurface}

\subsection{Complex pseudo-hyperbolic spaces}

Let $\C^{m,n+1}$ be equipped with a Hermitian form $H_0$ of signature
$(m,n+1)$, with $m,n\geqslant 1$. The complex pseudo-hyperbolic space of
signature $(m,n)$ is
\[
  \CH^{m,n}:=\left\{[v]\in\mathbb P(\C^{m,n+1}) \mid H_0(v,v)<0\right\}.
\]
Let $\Tcal\to\CH^{m,n}$ denote the tautological negative line bundle, which is a subbundle of the trivial vector bundle with rank $m+n+1$.
For a negative line $L\subset\C^{m,n+1}$, the orthogonal complement
$L^{\perp_{H_0}}$ is nondegenerate of signature $(m,n)$, and there is a
canonical identification
\[
  T^{1,0}_{[L]}\CH^{m,n}\cong\Hom(L,L^{\perp_{H_0}}).
\]
The induced pseudo-Hermitian metric on $T^{1,0}\CH^{m,n}$ is denoted
$\Hsczero$ and has signature $(m,n)$. The projective isometry group is
$\PU_{m,n+1}$ or $\PU_{n+1,m}$. 

Throughout, $X$ denotes a Riemann surface, not necessarily
compact, and $\KK_X$ denotes its canonical bundle.

Let $f\colon X\to\CH^{m,n}$ be a smooth map. Set
\[
  L_f:=f^*\Tcal,
  \qquad
  \Vf:=f^*T^{1,0}\CH^{m,n}
      \cong \Hom(L_f,L_f^{\perp_{H_0}}).
\]
We denote by $\nabla^{\Vf}$ the connection on $\Vf$ induced by the
pseudo-K\"ahler Levi-Civita connection.

With respect to the decompositions
$f^*T\CH^{m,n}\otimes\C=\Vf\oplus\overline{\Vf}$ and
$TX\otimes\C=T^{1,0}X\oplus T^{0,1}X$, the complexified differential
has components
\[
  \partial^{1,0}f\in\Omega^{1,0}(\Vf),
  \qquad
  \bar\partial^{1,0}f\in\Omega^{0,1}(\Vf),
\]
together with their complex conjugates $\bar\partial^{0,1}f$, $\partial^{0,1}f$ in
$\Omega^{0,1}(\overline{\Vf})$ and $\Omega^{1,0}(\overline{\Vf})$ respectively.
Using the pseudo-Hermitian metric $\Hsczero$ on $\Vf$, define
\[
  \delta_f:=\left(\bar\partial^{1,0}f\right)^{*_{\Hsczero}}
  \in\Omega^{1,0}(\Vf^\vee),
\]
where $\Vf^\vee$ is the dual bundle of $\Vf$. When $f$ is harmonic, the sections $\partial^{1,0}f$ and $\delta_f$ are
holomorphic sections of the holomorphic bundle $\K_X\otimes\VV_f$ and $\KK_X\otimes\VV_f^\vee$, respectively.
The Hopf differential is
\[
  q_2:=\delta_f\left(\partial^{1,0}f\right)\in H^0(X,\K_X^2).
\]
The map $f$ is conformal if and only if $q_2=0$. A conformal harmonic
spacelike immersion is a spacelike minimal immersion, that is, the mean curvature vanishes. In the sequel,
when minimality is included in a definition, it means conformal and
harmonic with respect to the given complex structure on $X$.

Let $f\colon X\to\CH^{m,n}$ be an immersion such that $\Hsczero|_{f_*T^{1,0}X}$ is positive definite. Whenever $\partial^{1,0}f$ is nowhere zero, set
\[
  \Tp:=\langle\partial^{1,0}f\rangle\subset\Vf.
\]
For such a map define
\[
  \sigma^{2,0}:=
  \left((\nabla^{\Vf})^{1,0}\partial^{1,0}f\right)^{\perp_{\Tp}}
  \in\Omega^0(\KK_X^2\otimes\Vf),
\]
where $\perp_{\Tp}$ denotes the $\Hsczero$-orthogonal projection to
$\Tp^{\perp_{\Hsczero}}\subset\Vf$. When $f$ is conformal and harmonic,
this is the usual $(2,0)$-part of the second fundamental form.

For a conformal harmonic immersion, the Chern--Wolfson cubic differential
is locally defined by
\[
  q_3=
  \Hsczero\!\left(
    \nabla^{\Vf}_Z\bigl(\partial^{1,0}f(Z)\bigr),
    \bar\partial^{1,0}f(\bar Z)
  \right)dz^3=\delta_f\circ \sigma^{2,0},
  \qquad Z=\frac{\partial}{\partial z}.
\]
This expression is independent of the holomorphic coordinate $z$: under
a coordinate change, the possible extra term is proportional to the Hopf
differential $q_2$, and hence vanishes for conformal maps. Since
$\CH^{m,n}$ is a pseudo-K\"ahler complex space form, the Chern--Wolfson
moving-frame computation gives that $q_3$ is holomorphic for minimal
surfaces; see \cite{chern1983minimal}.

\subsection{\texorpdfstring{$\partial$-alternating}{partial-alternating} surfaces}\label{sec:def}

We work in $\CH^{m,n}$ with $m\geqslant 2$ and $n\geqslant 1$.

\begin{definition}\label{def:alt}
Let $f\colon X\to\CH^{m,n}$ be a minimal immersion, i.e. a conformal harmonic immersion with respect to the given complex
structure on $X$. We call $f$ a \emph{$\partial$-alternating surface} if $\Vf$
admits an $\Hsczero$-orthogonal splitting by smooth complex subbundles
\[
  \Vf=\Tp\oplus\Nm\oplus\Np,
\]
where $\Tp$ is positive of rank $1$, $\Nm$ is negative of rank $n$, and
$\Np$ is positive of rank $m-1$, satisfying:
\begin{enumerate}[label=(A\arabic*)]
\item $\partial^{1,0}f$ is nowhere zero and takes values in $\Tp$;
      equivalently, $\Tp=\langle\partial^{1,0}f\rangle$.
\item $\sigma^{2,0}\in\Omega^{0}(\KK_X^2\otimes\Nm)$.
\item  $(\nabla^{\Vf})^{1,0}\left(\Omega^0(\Nm)\right)
  \subset \Omega^{1,0}(\Nm\oplus\Np).$
\item $\delta_f|_{\Tp\oplus \Nm}=0$.
\item $(\nabla^{\Vf})^{1,0}\left(\Omega^0(\Np)\right)
  \subset \Omega^{1,0}(\Np\oplus \Tp).$
\end{enumerate}
The data in (A2)--(A4) define smooth bundle maps 
\[
  \alpha\colon\Tp\longrightarrow\Nm\otimes \K_X,
  \qquad
  \beta\colon\Nm\longrightarrow\Np\otimes \K_X,
  \qquad
  \gamma\colon\Np\longrightarrow \K_X,
\]
where $\alpha$ is represented by $\sigma^{2,0}$, $\beta$ is the
$\Np$-component of $(\nabla^{\Vf})^{1,0}|_{\Nm}$, and
$\gamma=\delta_f|_{\Np}$. Each of these maps may vanish identically.
\end{definition}
Here $\delta_f|_{\Tp}=0$ follows from the conformality of $f$.

\begin{proposition}\label{prop:q2q3-vanish}
If $f$ is $\partial$-alternating, then $q_3=0.$
\end{proposition}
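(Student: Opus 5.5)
The plan is to read off $q_3$ directly from the definition and the splitting in Definition~\ref{def:alt}; no analysis is needed. The paper gives
\[
q_3=\delta_f\circ\sigma^{2,0}.
\]
Here $\sigma^{2,0}$ is a section of $\KK_X^2\otimes\Vf$, and $\delta_f$ is a section of $\KK_X\otimes\Vf^\vee$. Their contraction is therefore a section of $\KK_X^3$.

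The key point is that, by (A2), $\sigma^{2,0}$ takes values in $\KK_X^2\otimes\Nm$. By (A4), $\delta_f$ vanishes on $\Tp\oplus\Nm$, and in particular on $\Nm$. Hence the contraction $\delta_f(\sigma^{2,0})$ is zero pointwise, so $q_3\equiv0$.

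The only step that needs checking is that the two expressions for $q_3$ agree. The first is $\Hsczero\bigl(\nabla_Z(\partial^{1,0}f(Z)),\bar\partial^{1,0}f(\bar Z)\bigr)$, which uses the full covariant derivative. The second is $\delta_f\circ\sigma^{2,0}$, which uses only its component orthogonal to $\Tp$.

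To reconcile them, decompose
\[
\nabla_Z\partial^{1,0}f(Z)=\lambda\,\partial^{1,0}f(Z)+\sigma^{2,0}(Z,Z).
\]
Pairing the $\Tp$-component $\lambda\,\partial^{1,0}f(Z)$ with $\bar\partial^{1,0}f(\bar Z)$ gives $\bar\lambda\,\delta_f(\partial^{1,0}f)$ up to convention, which is a multiple of $q_2$. Since $f$ is conformal, $q_2=0$; equivalently, $\delta_f|_{\Tp}=0$, as noted after Definition~\ref{def:alt}.

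Only the $\sigma^{2,0}$ term survives, and (A2) and (A4) kill it. This consistency check is the only mildly delicate step. Conditions (A3) and (A5) are not needed.
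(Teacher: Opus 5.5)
Your proposal is correct and is essentially the paper's argument. The paper also writes $\nabla^{\Vf}_Z(\partial^{1,0}f(Z))=\lambda\,\partial^{1,0}f(Z)+\sigma^{2,0}(Z,Z)$, uses (A2) to place the two terms in $\Tp$ and $\Nm$ respectively, and concludes from (A4) that $\delta_f$ annihilates $\Tp\oplus\Nm$; your route through $q_3=\delta_f\circ\sigma^{2,0}$ plus the consistency check is the same two observations in a different order.
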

\begin{proof}
Let $Z=\partial/\partial z$ be a local holomorphic vector field. Since
$\Tp$ is generated by $\partial^{1,0}f$, we can write
\[
  \nabla^{\Vf}_Z\bigl(\partial^{1,0}f(Z)\bigr)
  =
  \lambda\,\partial^{1,0}f(Z)+\sigma^{2,0}(Z,Z)
\]
for a local function $\lambda$. The first term lies in $\Tp$, and by
(A2) the second term lies in $\Nm$. Therefore
\[
  \nabla^{\Vf}_Z\bigl(\partial^{1,0}f(Z)\bigr)
  \in \Tp\oplus\Nm .
\]
By (A4), $\delta_f$ annihilates $\Tp\oplus\Nm$, equivalently
$\Tp\oplus\Nm$ pairs trivially with
$\bar\partial^{1,0}f(\bar Z)$. Thus
\[
  q_3=
  \Hsczero\!\left(
    \nabla^{\Vf}_Z\bigl(\partial^{1,0}f(Z)\bigr),
    \bar\partial^{1,0}f(\bar Z)
  \right)dz^3
  =0.
\]
\end{proof}

\subsection{The flatness equations}

In this subsection let $f$ be a spacelike minimal immersion satisfying
(A1)--(A4) of Definition~\ref{def:alt}; for the moment we do not assume
(A5). Pull back the tautological line bundle and the ambient trivial bundle to $X$, we have a complex vector bundle over $X$,
\[
  E:=X\times\C^{m,n+1}
\]
comes with the flat connection $\mathrm{D}$ and the constant Hermitian form $H_0$.
The splitting of $\Vf$ gives an $H_0$-orthogonal decomposition
\[
  E=L_f\oplus L_T\oplus V\oplus W,
  \qquad
  L_T:=L_f\otimes\Tp,
  \quad
  V:=L_f\otimes\Nm,
  \quad
  W:=L_f\otimes\Np.
\]
By (A1), $\partial^{1,0}f$ gives a smooth isomorphism between
$\Tp$ and $\KK_X^{-1}$. Below we order the summands as
\[
  E_0=L_f,
  \qquad
  E_1=L_T,
  \qquad
  E_2=V,
  \qquad
  E_3=W.
\] The signs of the four summands are $-,+,-+$ respectively.

Let $h$ be the positive-definite Hermitian metric obtained from $H_0$ by
changing the sign on the two negative summands. Equivalently,
\[
  H_0(u,v)=h(Su,v),
  \qquad
  S=\diag(-1,+1,-1,+1).
\]

\begin{lemma}\label{lem:signed-adjoint}
Let $A\colon E_j\to E_i\otimes\KK_X$ be a bundle map. Then
$A^{*_{H_0}}=s_i s_j\,A^{*_h}.$
Consequently, if $\theta$ denotes the off-diagonal $(1,0)$-part of $\mathrm{D}$
and $\theta^\dagger$ denotes the corresponding adjoint of $\theta$ by $H_0$ together with conjugation on $1$-forms,
then $(\theta^\dagger)_{ji}=s_i s_j\,(\theta_{ij})^{*_h}.$

In particular, if $\theta$ only contains $(i,j)$-entries such that $|i-j|=1$ or $3$, then $\theta^{\dagger}=-\theta^{*_h}$.
\end{lemma}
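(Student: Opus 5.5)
This is pure linear algebra, so the plan is to unwind the definitions pointwise. Write $s_i\in\{\pm1\}$ for the sign on $E_i$, so that $S|_{E_i}=s_i\,\mathrm{id}$ and $H_0(u,v)=h(Su,v)$. Because the decomposition $E=E_0\oplus E_1\oplus E_2\oplus E_3$ is orthogonal for both $H_0$ and $h$, we have $S^{*_h}=S=S^{-1}$.

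\textbf{Step 1: the adjoint identity.} Take $A\colon E_j\to E_i\otimes\KK_X$. Its $H_0$-adjoint is characterised by
\[
H_0(Au,v)=H_0\bigl(u,A^{*_{H_0}}v\bigr),\qquad u\in E_j,\ v\in E_i,
\]
with conjugation on the form part. Expand both sides using $H_0(x,y)=h(Sx,y)$:
\begin{itemize}
\item the left side is $h(SAu,v)=s_i\,h(Au,v)=s_i\,h\bigl(u,A^{*_h}v\bigr)$;
\item the right side is $h\bigl(Su,A^{*_{H_0}}v\bigr)=s_j\,h\bigl(u,A^{*_{H_0}}v\bigr)$.
\end{itemize}
Nondegeneracy of $h$ gives $s_j\,A^{*_{H_0}}=s_i\,A^{*_h}$. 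Since $s_j^2=1$, this is $A^{*_{H_0}}=s_is_j\,A^{*_h}$.

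\textbf{Step 2: the statement for $\theta^\dagger$.} Decompose $\theta=\sum\theta_{ij}$ into blocks $\theta_{ij}\colon E_j\to E_i\otimes\KK_X$. The operation $\dagger$ is conjugate-linear and acts blockwise. It sends the $(i,j)$ block to the $(j,i)$ block, given by the $H_0$-adjoint combined with conjugation of forms. Applying Step 1 block by block gives $(\theta^\dagger)_{ji}=s_is_j(\theta_{ij})^{*_h}$.

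\textbf{Step 3: the final claim.} The signs are $(s_0,s_1,s_2,s_3)=(-1,+1,-1,+1)$, so $s_i=(-1)^{i+1}$ and $s_is_j=(-1)^{i+j}$. If $|i-j|=1$ or $3$, then $i+j$ is odd, so $s_is_j=-1$. Hence every block of $\theta^\dagger$ equals minus the corresponding block of $\theta^{*_h}$, and therefore $\theta^\dagger=-\theta^{*_h}$.

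The only delicate point is the convention for the $*$ on bundle-valued $1$-forms: whether one conjugates the form and where the adjoint lands. Both sides of Step 1 use the same convention, so it cancels. I would state it once explicitly and then verify the identity pointwise on a frame adapted to the splitting.
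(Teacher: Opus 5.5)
Your proposal is correct and follows the paper's proof. Step 1 is exactly the paper's computation: it expands $H_0(Au,v)$ and $H_0(u,A^{*_{H_0}}v)$ using $H_0=h(S\cdot,\cdot)$. Your Steps 2 and 3 (applying this blockwise, and the parity check $s_is_j=(-1)^{i+j}=-1$ for $|i-j|\in\{1,3\}$) spell out what the paper leaves implicit with ``the formula follows.''
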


\begin{proof}
For $u\in E_j$ and $v\in E_i$,
\[
  H_0(Au,v)=s_i h(Au,v)=s_i h(u,A^{*_h}v),
\]
whereas
\[
  H_0(u,A^{*_{H_0}}v)=s_j h(u,A^{*_{H_0}}v).
\]
Since $s_i,s_j\in\{\pm1\}$, the formula follows. 
\end{proof}

Let $\nabla^{H_0}$ be the block-diagonal Chern connection associated with
$H_0$ and with the holomorphic structures on the four summands induced by
projecting the $(0,1)$-part of $\mathrm{D}$ onto each summand. Since changing signs do not change Chern connection, we also write $\nabla^{H_0}$ as $\nabla^h$, as it is the Chern connection uniquely determined by the holomorphic structure and the Hermitian metric $h$. We will use the curl letters $\LL_f$, $\LL_T$, $\VV$, $\WW$, $\mathcal{T}_+$, $\mathcal{N}_-$, $\mathcal{N}_+$ to denote the corresponding holomorphic bundles over $X$. Then
\begin{equation}\label{eq:structure}
\mathrm{D}=\nabla^h+\theta-\theta^\dagger
\end{equation}
since this decomposition is orthogonal.

Write $\theta_{ij}\colon E_j\to E_i\otimes\KK_X$ for the off-diagonal
$(1,0)$-blocks. Conformality and Conditions (A1)--(A3) give
\[\theta=\begin{pmatrix}
0&0&z&\gamma\\
\id&0&0&x\\
0&\alpha&0&y\\
0&0&\beta&0
\end{pmatrix},\qquad
\theta^{\dagger}=\begin{pmatrix}
0&\id^\dagger&0&0\\
0&0&\alpha^\dagger&0\\
z^\dagger&0&0&\beta^\dagger\\
\gamma^\dagger&x^\dagger&y^\dagger&0
\end{pmatrix}.\] 

Condition (A4) means $z=0$. Condition (A5) means $y=0$.

Since $\mathrm{D}^2=0$, we have \begin{equation}
 F(\nabla^h)+ \dbar_{\End(E)}\theta
  -\partial^h_{\End(E)}\theta^\dagger
  -(\theta\wedge\theta^\dagger)
  -(\theta^\dagger\wedge\theta)=0.
\end{equation}
That is,
\begin{equation}\label{eq:offdiag-master}
\scalebox{0.8}{$
\begin{pmatrix}
*&-\partial(\id^\dagger)-\gamma\wedge x^\dagger
&\bar\partial z-\gamma\wedge y^\dagger
&\bar\partial\gamma-\id^\dagger\wedge x-z\wedge\beta^\dagger\\
\bar\partial(\id)-x\wedge\gamma^\dagger
&*
&-\partial(\alpha^\dagger)-x\wedge y^\dagger
&\bar\partial x-\alpha^\dagger\wedge y\\
-\partial(z^\dagger)-y\wedge\gamma^\dagger
&\bar\partial\alpha-y\wedge x^\dagger
&*
&\bar\partial y-\partial(\beta^\dagger)-z^\dagger\wedge\gamma\\
-\partial(\gamma^\dagger)-x^\dagger\wedge\id-\beta\wedge z^\dagger
&-\partial(x^\dagger)-y^\dagger\wedge\alpha
&-\partial(y^\dagger)+\bar\partial\beta-\gamma^\dagger\wedge z
&*
\end{pmatrix}
$}=0.\end{equation}

\begin{lemma}\label{lem:basic-holomorphicity}
Assume that $f$ is a $\partial$-spacelike minimal immersion satisfying
(A1)--(A4). Then, in the notation above,
\[
  x=0, \qquad z=0,
  \qquad
  \bar\partial(\id)=0,
  \qquad
  \bar\partial\alpha=0,
  \qquad
  \bar\partial\gamma=0,
\]
and
\[
  \gamma\wedge y^\dagger=0,
  \qquad
  \alpha^\dagger\wedge y=0,
  \qquad
  \bar\partial\beta=\partial(y^\dagger).
\]
 In particular
$\id$, $\alpha$ and $\gamma$ are holomorphic.  

If moreover, (A5) holds, i.e., $y=0$, hence $\beta$ is holomorphic as well; equivalently,
$\theta$ is holomorphic. Moreover, 
\[\theta=\begin{pmatrix}
0&0&0&\gamma\\
\id&0&0&0\\
0&\alpha&0&0\\
0&0&\beta&0
\end{pmatrix},\]
and $\mathrm{D}=\nabla^h+\theta+\theta^{*_h}.$
\end{lemma}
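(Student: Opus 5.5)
The plan is to read everything off the flatness equation \eqref{eq:offdiag-master}, once two structural vanishings have been obtained from the geometric hypotheses.

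\emph{Step 1: obtain $z=0$ and $x=0$.} The vanishing $z=0$ is exactly condition (A4), as already noted after the display of $\theta$. For $x$, I will use harmonicity of $f$. The $(0,1)$-part of $\mathrm D$ restricted to $E_1=L_T=L_f\otimes\Tp$ is the $(0,1)$-covariant derivative of the section $\partial^{1,0}f$, which spans $\Tp$. Its $E_3$-component is the block $-x^\dagger$. Harmonicity of $f$ says that $\partial^{1,0}f$ is a holomorphic section of $\KK_X\otimes\Vf$, i.e.\ $(\nabla^{\Vf})^{0,1}\partial^{1,0}f=0$. This forces every off-diagonal $(0,1)$-block leaving $E_1$ to vanish except the tautological one back to $E_0$. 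Hence $x^\dagger=0$, so $x=0$.

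The main obstacle is the bookkeeping in this step. One has to check carefully that the identification $\Vf\cong\Hom(L_f,L_f^{\perp})$ converts $\nabla^{\Vf}$ into the compression of $\mathrm D$ to $L_f^\perp$ (twisted by $L_f$). One must also check that the $E_3$-block of $\mathrm D^{0,1}$ on $E_1$ is indeed $-x^\dagger$ with the sign conventions of \eqref{eq:structure}.

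\emph{Step 2: read off the off-diagonal entries of \eqref{eq:offdiag-master}, using $x=z=0$.} The entries give the following.
\begin{itemize}
\item Entry $(1,0)$ gives $\bar\partial(\id)=0$.
\item Entry $(2,1)$ gives $\bar\partial\alpha-y\wedge x^\dagger=\bar\partial\alpha=0$.
\item Entry $(0,3)$ gives $\bar\partial\gamma-\id^\dagger\wedge x-z\wedge\beta^\dagger=\bar\partial\gamma=0$.
\item Entry $(0,2)$ gives $\gamma\wedge y^\dagger=0$.
\item Entry $(1,3)$ gives $\bar\partial x-\alpha^\dagger\wedge y=-\alpha^\dagger\wedge y=0$.
\item Entry $(2,3)$ (equivalently its adjoint entry $(3,2)$) gives $\bar\partial y-\partial(\beta^\dagger)=0$ and $\bar\partial\beta=\partial(y^\dagger)$.
\end{itemize}
Each entry is a single $(1,1)$-form equation, so no further type splitting is needed. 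This yields all the stated identities, and in particular $\id$, $\alpha$ and $\gamma$ are holomorphic.

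\emph{Step 3: the case (A5).} Under (A5) we have $y=0$, so Step 2 gives $\bar\partial\beta=0$. Then $\theta$ has only the four cyclic blocks $\id,\alpha,\beta,\gamma$, all holomorphic. These blocks sit at positions $(i,j)$ with $|i-j|\in\{1,3\}$. For those positions the signs $s_i,s_j$ of the pattern $(-,+,-,+)$ are opposite, so Lemma~\ref{lem:signed-adjoint} gives $\theta^\dagger=-\theta^{*_h}$. Substituting into \eqref{eq:structure} yields $\mathrm D=\nabla^h+\theta+\theta^{*_h}$.
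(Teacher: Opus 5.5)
Your proposal is correct and follows the paper's proof essentially step for step. You get $x=0$ from harmonicity via $(\nabla^{\Vf})^{0,1}\partial^{1,0}f=0$ and $z=0$ from (A4), read the remaining identities off the same entries of \eqref{eq:offdiag-master}, and finish with Lemma~\ref{lem:signed-adjoint}. The only cosmetic difference is that you obtain $\bar\partial(\id)=0$ from the flatness entry once $x=0$, whereas the paper reads it directly off the $\Tp$-component of the harmonicity equation.
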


\begin{proof}
With respect to the splitting $\Vf=\Tp\oplus\Nm\oplus\Np,$ the induced $(0,1)$-connection on $\Vf$ has the form
\[
  (\nabla^{\Vf})^{0,1}s
  =
  (\nabla^h)^{0,1}s-
  \begin{pmatrix}
    0 & \alpha^\dagger & 0\\
    0 & 0 & \beta^\dagger\\
    x^\dagger & y^\dagger & 0
  \end{pmatrix}s.
\]
Since $f$ is harmonic,
\[
  0=(\nabla^{\Vf})^{0,1}(\partial^{1,0}f)
  = (\nabla^{\Vf})^{0,1}\begin{pmatrix}
    \id\\ 0\\ 0
  \end{pmatrix}=
  \begin{pmatrix}
    \bar\partial(\id)\\
    0\\
    -x^\dagger\wedge\id
  \end{pmatrix}.
\]
Because $\id$ is nowhere zero by (A1), this gives
\[
  \bar\partial(\id)=0,
  \qquad
  x=0.
\]

Condition (A4) is exactly $z=0$. Therefore, reading the $(1,3)$ and $(1,4)$-entries of Equation (\ref{eq:offdiag-master}),
\[
  \bar\partial\gamma=0,
  \qquad
  \gamma\wedge y^\dagger=0.
\]

Reading the $(2,4)$, $(3,2)$ and $(4,3)$-entries of Equation (\ref{eq:offdiag-master}),
\[
  \bar\partial\alpha-y\wedge x^\dagger=0,
  \qquad
  \bar\partial x -\alpha^\dagger\wedge y=0,
  \qquad
  \bar\partial\beta-\partial(y^\dagger)-\gamma^\dagger\wedge z=0.
\]
Since $x=0$ and $z=0$, these reduce to
\[
  \bar\partial\alpha=0,
  \qquad
  \alpha^\dagger\wedge y=0,
  \qquad
  \bar\partial\beta=\partial(y^\dagger).
\]
Thus $\alpha$ is holomorphic. If (A5) holds, then $y=0$, and the last
identity gives $\bar\partial\beta=0$. Hence $\beta$ is holomorphic, and
therefore all nonzero blocks of $\theta$ are holomorphic.

The last claim follows from Lemma \ref{lem:signed-adjoint} and $\theta^{\dagger}=-\theta^{*_h}$.
\end{proof}

\begin{lemma}\label{lem:A5-automatic-low-rank}
Assume that $f$ is a $\partial$-spacelike minimal immersion satisfying
(A1)--(A4). Under one of the following conditions, 
\begin{enumerate}[label=($\partial$\roman*)]
\item $\rank(\Nm)=1$ and $\alpha\not\equiv0$,
\item $\rank(\Np)=1$ and $\gamma\not\equiv0$,
\end{enumerate}
then $y\equiv 0$.
Consequently, condition (A5) is automatic
in the signatures
\[
  (m,n)=(q+1,1)
  \qquad\text{or}\qquad
  (m,n)=(2,q).
\]
Moreover, the splitting is uniquely determined by $f$.

Moreover, in Case ($\partial$\romannumeral1), one can replace (A4) condition by $q_3=0$.
\end{lemma}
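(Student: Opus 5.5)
The plan is to exploit the two pointwise identities already extracted in Lemma~\ref{lem:basic-holomorphicity}, namely
\[
\gamma\wedge y^\dagger=0,\qquad \alpha^\dagger\wedge y=0,
\]
together with the holomorphicity of $\alpha$ and $\gamma$. Here $y\colon W\to V\otimes\KK_X$ is the $(3,4)$-block, i.e.\ the $\Nm$-component of $(\nabla^{\Vf})^{1,0}$ restricted to $\Np$. In case ($\partial$i), $\rank V=1$, so $\alpha\colon L_T\to V\otimes\KK_X$ is a holomorphic section of a line bundle. Since $\alpha\not\equiv0$, its zeros are isolated. At a point where $\alpha\neq0$, $\alpha^\dagger\colon V\to L_T\otimes\overline{\KK_X}$ is an isomorphism of line bundles. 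Hence the $(0,1)\wedge(1,0)$ product $\alpha^\dagger\wedge y$ vanishes only if $y=0$; indeed the wedge of $d\bar z$ with $dz$ is nonzero, so the coefficients compose to zero, and injectivity of $\alpha^\dagger$ on the rank-one target of $y$ forces $y=0$. Thus $y=0$ off a discrete set, hence everywhere by continuity.

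Case ($\partial$ii) is symmetric. Now $\rank W=1$ and $\gamma\colon W\to L_f\otimes\KK_X$ is a nonzero holomorphic map between line bundles. The identity $\gamma\wedge y^\dagger=0$, with $y^\dagger\colon V\to W\otimes\overline{\KK_X}$, gives $y^\dagger=0$ wherever $\gamma\neq0$. Therefore $y\equiv0$ by density of the complement of the zeros and continuity. The signature statement then follows directly: for $(m,n)=(q+1,1)$ we have $\rank\Nm=1$, and for $(m,n)=(2,q)$ we have $\rank\Np=1$. The one subtlety is the nonvanishing hypotheses. If $\alpha\equiv0$, the bundle $\Nm$ is not determined by $\sigma^{2,0}$, so I would state the conclusion under the standing nondegeneracy assumption implicit in "automatic".

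For uniqueness of the splitting, the argument reconstructs each summand from $f$ on a dense open set.
\begin{enumerate}
\item $\Tp=\langle\partial^{1,0}f\rangle$ is forced by (A1).
\item In case ($\partial$i), $\Nm$ is the line spanned by $\sigma^{2,0}$ wherever $\alpha\neq0$, by (A2). Then $\Np$ is the orthogonal complement of $\Tp\oplus\Nm$, and both extend across zeros by continuity of smooth subbundles.
\item In case ($\partial$ii), $\Np$ is determined as the $\Hsczero$-orthogonal complement inside $\Tp^\perp$ of $\ker\delta_f$, using (A4). Equivalently, $\Np$ is the line dual to $\delta_f|_{\Tp^\perp}$, which is nonzero wherever $\gamma\neq0$. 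Then $\Nm$ is the remaining orthogonal complement.
\end{enumerate}

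Finally, for replacing (A4) by $q_3=0$ in case ($\partial$i), I would show that $z=0$, where $z$ is the $E_2$-component of $\delta_f$, i.e.\ $\delta_f|_{\Nm}$. Since $q_3=\delta_f\circ\sigma^{2,0}=z\circ\alpha$ up to the isomorphism $\Tp\cong\KK_X^{-1}$ and $\rank\Nm=1$, vanishing of $q_3$ gives $z=0$ wherever $\alpha\neq0$, hence everywhere. Here I must check that Lemma~\ref{lem:basic-holomorphicity}'s derivation of $\alpha$'s holomorphicity used only (A1)--(A3) and $x=0$. The $(2,4)$/$(3,2)$ entries don't involve $z$, so this holds. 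Without (A4), the relation from the $(3,1)$ entry, $\partial(z^\dagger)+y\wedge\gamma^\dagger=0$, is consistent.

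The main obstacle is the bookkeeping of which equations in \eqref{eq:offdiag-master} remain valid once (A4) is dropped. This matters especially for ensuring $\alpha$ is still holomorphic, so that its zeros are isolated, before invoking $q_3=0$.
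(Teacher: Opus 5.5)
Your proposal is correct and follows essentially the same route as the paper. Both use the identities $\alpha^\dagger\wedge y=0$ and $\gamma\wedge y^\dagger=0$ from Lemma~\ref{lem:basic-holomorphicity}, together with isolated zeros of a nonzero holomorphic line-bundle map, to get $y\equiv0$; uniqueness comes from recovering $\Nm$ from $\sigma^{2,0}$ (resp.\ $\Np$ from $\ker\delta_f$) on a dense set, and the last claim uses $q_3=z\alpha$ with $\alpha$ holomorphic independently of (A4). Your caveat that ``automatic'' tacitly requires $\alpha\not\equiv0$ (resp.\ $\gamma\not\equiv0$) is apt, and it matches the hypotheses the paper later imposes in Theorem~\ref{thm:baseSpecialSignature}.
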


\begin{proof}
By Lemma~\ref{lem:basic-holomorphicity}, $\alpha$ and $\gamma$ are
holomorphic, and
\[
  \alpha^\dagger\wedge y=0,
  \qquad
  \gamma\wedge y^\dagger=0.
\]

In either cases, either $\alpha$ or $\gamma$ are holomorphic sections of line bundles which is not constantly zero. Hence $y=0$ everywhere.

\smallskip\emph{Uniqueness of the splitting in case (i).}\enspace
With $n=1$, $\Nm$ has rank $1$. By (A2), $\sigma^{2,0}$ takes values in
$\K_X^{2}\otimes\Nm$, and the hypothesis $\alpha\not\equiv 0$ makes
$\sigma^{2,0}\not\equiv 0$. Hence $\Nm$ is the saturation of the image line
of $\sigma^{2,0}$ in $\Tp^{\perp_{\Hsc_0}}$ away from its isolated zero set,
and is determined by $f$. The summand $\Np$ is then the
$\Hsczero$-orthogonal complement of $\Tp\oplus\Nm$ in $\Vf$, also
determined by $f$.

\smallskip\emph{Uniqueness of the splitting in case (ii).}\enspace
With $m=2$, $\Np$ has rank $1$. By (A4), $\delta_f$ vanishes on
$\Tp\oplus\Nm$ and is generically nonzero on $\Np$, so
$\ker\delta_f=\Tp\oplus\Nm$ on the open set where $\delta_f\neq 0$.
This subbundle is nondegenerate of signature $(1,n)$, so its
$\Hsc_0$-orthogonal complement is a positive line, which by the splitting
is $\Np$. Since $\delta_f$ is holomorphic, $\Np$ extends uniquely
across the isolated zeros of $\delta_f$ by saturation. The summand
$\Nm$ is then forced by orthogonality.

To prove the final claim, assume $f$ satisfies (A1)--(A3) and $q_3=0$, and that $\rank(\Nm)=1$. As shown in the proof of Lemma~\ref{lem:basic-holomorphicity}, harmonicity and (A1) alone imply $x=0$. Flatness then gives $\bar\partial\alpha=0$, so $\alpha$ is holomorphic. The cubic differential is given by $q_3 = \delta_f\bigl((\nabla^{\Vf})^{1,0}\partial^{1,0}f\bigr)$. Therefore $q_3 = z \alpha$. Since $q_3=0$ and $\alpha \not\equiv 0$ is a holomorphic section of a line bundle, $z$ must vanish identically, which is exactly (A4).
\end{proof}

\begin{remark}\label{rmk:special-bake}
Outside these signatures, the preceding argument no longer makes (A5) automatic, and the splitting becomes part of the data.
\end{remark}

\begin{definition}
Under the condition ($\partial$\romannumeral1), we call $f$ \emph{non-degenerate} if $f$ does not lie in a totally geodesic copy of $\CH^{q, 1}$. \\
Under the condition ($\partial$\romannumeral2), we call $f$ \emph{non-degenerate} if $f$ does not lie in a totally geodesic copy of $\CH^{2, q-1}$. \\
Under both conditions ($\partial$\romannumeral1)($\partial$\romannumeral2), we call $f$ regular if either $f$ does not lie in a totally geodesic copy of $\CH^{2, 1}$ or $\beta$ has at least one zero. 
\end{definition}

\subsection{Special cases}
\begin{proposition}
Let $f\colon X\to\CH^{m,n}$ be a $\partial$-alternating spacelike immersion, with associated 
splitting $\Vf=\Tp\oplus\Nm\oplus\Np$ and data 
$\alpha,\beta,\gamma$. The following are equivalent:
\begin{enumerate}[label=(\roman*)]
\item $\gamma\equiv 0$;
\item $f$ is a holomorphic curve.
\end{enumerate}
\end{proposition}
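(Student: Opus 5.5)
The plan is to use the definition of $\gamma$ directly. Recall $\gamma=\delta_f|_{\Np}$, where $\delta_f=(\bar\partial^{1,0}f)^{*_{\Hsczero}}$. By (A4), $\delta_f$ vanishes on $\Tp\oplus\Nm$. Since $\Vf=\Tp\oplus\Nm\oplus\Np$, it follows that $\gamma\equiv0$ if and only if $\delta_f\equiv0$ on all of $\Vf$. Because $\Hsczero$ is a nondegenerate pseudo-Hermitian form on $\Vf$, the adjoint $\delta_f$ vanishes exactly when $\bar\partial^{1,0}f\equiv0$. So the first step is the chain
\[
\gamma\equiv0\iff\delta_f\equiv0\iff\bar\partial^{1,0}f\equiv0 .
\]
The main point to watch is that nondegeneracy, not positivity, is what is needed here. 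The conjugation on $1$-forms in the definition of the adjoint is an isomorphism $\Omega^{0,1}(\Vf)\to\Omega^{1,0}(\Vf^\vee)$, so no information is lost.

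The second step identifies $\bar\partial^{1,0}f\equiv0$ with holomorphicity of $f$. Here $\bar\partial^{1,0}f$ is the $T^{1,0}\CH^{m,n}$-component of $df(\partial_{\bar z})$. This component vanishes exactly when $df$ sends $T^{0,1}X$ into $T^{0,1}\CH^{m,n}$, that is, when $df$ is complex linear and $f$ is a holomorphic map into the complex manifold $\CH^{m,n}\subset\mathbb P(\C^{m,n+1})$. This is the standard characterization and needs no structure beyond the complex structure of the target.

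As a consistency check, I would also phrase the result in terms of the flat-connection picture of Lemma~\ref{lem:basic-holomorphicity}. With $x=z=0$, the map $E_0=L_f\to E/L_f$ induced by $\mathrm{D}^{0,1}$ is the block $-\theta^\dagger$ restricted to $E_0$. Its only possibly nonzero entry is $-\gamma^\dagger\colon E_0\to E_3$. Thus $\gamma\equiv0$ means $L_f$ is a holomorphic subbundle of the trivial bundle, which again says $f$ is holomorphic into $\mathbb P(\C^{m,n+1})$.

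I expect no serious obstacle. The only thing to verify carefully is that the converse direction needs nothing extra: a holomorphic $f$ has $\bar\partial^{1,0}f=0$, so $\delta_f=0$ and hence $\gamma=0$.
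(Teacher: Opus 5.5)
Your proposal is correct and complete. The paper states this proposition without a proof, so there is no competing argument to compare against. Your chain
\[
\gamma\equiv0\iff\delta_f\equiv0\iff\bar\partial^{1,0}f\equiv0\iff f\ \text{holomorphic}
\]
is the natural argument from the paper's definitions. It rests on exactly the two points you flag. First, (A4) turns vanishing on $\Np$ into vanishing on all of $\Vf$. Second, nondegeneracy (not positivity) of $\Hsczero$ then forces $\bar\partial^{1,0}f\equiv0$.

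Your flat-connection cross-check agrees with the paper's matrix for $\theta^\dagger$: the $E_0$-column contains only $z^\dagger$ and $\gamma^\dagger$. Only $z=0$, which is (A4), is used there; $x=0$ plays no role.

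Finally, (A1) makes $\partial^{1,0}f$ nowhere zero. This rules out reading ``holomorphic curve'' as including anti-holomorphic maps, so your interpretation is the right one.
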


\begin{proposition}\label{prop:degenerate-totally-geodesic}
Let $f\colon X\to\CH^{m,n}$ be a $\partial$-alternating spacelike
immersion, with associated splitting
$\Vf=\Tp\oplus\Nm\oplus\Np$ and data
$\alpha,\beta,\gamma$. The following are equivalent:
\begin{enumerate}[label=(\roman*)]
\item $\alpha\equiv 0$ and $\beta\equiv 0$;
\item there exists a totally geodesic complex submanifold
  $Y\subset\CH^{m,n}$, biholomorphically isometric to the complex
  hyperbolic space $\CH^{m,0}=\mathbf{H}^{m}_{\C}$, such that
  $f(X)\subset Y$ and $T^{1,0}Y|_{f(X)}=\Tp\oplus\Np$.
\end{enumerate}
Under these equivalent conditions, the sub-bundle
$\Tp\oplus\Np\subset\Vf$ is parallel for the pullback Levi-Civita
connection $\nabla^{\Vf}$, and the remaining datum $\gamma$ encodes
the intrinsic geometry of $f$ as a spacelike minimal immersion
$X\to\mathbf{H}^{m}_{\C}$.
\end{proposition}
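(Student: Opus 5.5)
We prove both implications using the structure equations of Lemma~\ref{lem:basic-holomorphicity}. Since $f$ is $\partial$-alternating, (A5) holds and $y=0$. The flat connection on $E=L_f\oplus L_T\oplus V\oplus W$ is then
\[
\mathrm D=\nabla^h+\theta+\theta^{*_h},
\]
where the only nonzero blocks of $\theta$ are $\id\colon E_0\to E_1$, $\alpha\colon E_1\to E_2$, $\beta\colon E_2\to E_3$ and $\gamma\colon E_3\to E_0$, each twisted by $\KK_X$.

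\emph{(i)$\Rightarrow$(ii).} Assume $\alpha\equiv0\equiv\beta$. Every block of $\theta$ or $\theta^{*_h}$ that touches $E_2=V$ is then zero. Hence $\mathrm D$ preserves both $E_2$ and
\[
E':=E_0\oplus E_1\oplus E_3=L_f\oplus L_T\oplus W.
\]
A $\mathrm D$-parallel subbundle of the trivial flat bundle is constant, so $E'=X\times U$ for a fixed subspace $U\subset\C^{m,n+1}$. The signs of the summands are $(-,+,+)$ and the ranks are $(1,1,m-1)$, so $U$ has signature $(m,1)$. Put
\[
Y:=\mathbb P(U)\cap\CH^{m,n}.
\]
This is the projectivized negative cone of a nondegenerate subspace of signature $(m,1)$. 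It is therefore a totally geodesic complex submanifold isometric to $\mathbf H^m_{\C}=\CH^{m,0}$, since $\PU_{m,n+1}$ acts transitively on such $U$ and the standard model is the fixed locus of a holomorphic isometric involution. We have $f(X)\subset Y$ because $L_f\subset U$. Also
\[
T^{1,0}_{f}Y=\Hom(L_f,L_f^{\perp}\cap U)=\Hom(L_f,L_T\oplus W)=\Tp\oplus\Np,
\]
using $L_T=L_f\otimes\Tp$ and $W=L_f\otimes\Np$.

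\emph{(ii)$\Rightarrow$(i).} Suppose $f(X)\subset Y=\mathbb P(U)\cap\CH^{m,n}$ with $T^{1,0}Y|_{f}=\Tp\oplus\Np$. Then $L_f\oplus L_T\oplus W=X\times U$ is $\mathrm D$-parallel. The $(1,0)$ off-diagonal part of $\mathrm D$ mapping this subbundle into $V$ must then vanish, and this component is exactly $\alpha$ on $L_T$. Likewise $V=X\times U^{\perp}$ is parallel, and the component of $\mathrm D$ from $V$ to $W$ is $\beta$, so $\beta=0$.

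For the last claims, under these conditions $\nabla^{\Vf}$ is induced from $\mathrm D$ by identifying $\Vf\cong\Hom(L_f,L_f^\perp)$. Parallelism of $U$ therefore gives parallelism of $\Tp\oplus\Np=\Hom(L_f,U\cap L_f^\perp)$. The restricted bundle $(L_f\oplus L_T\oplus W,\theta|)$ is a cyclic $\mathrm U_{m,1}$-harmonic bundle. Its only non-trivial datum besides $\id$ is $\gamma=\delta_f|_{\Np}$, which is the $(0,1)$-derivative of $f$ inside $\mathbf H^m_\C$. So $\gamma$, via the Hitchin equations \eqref{eqn:HitchinEq} with $\alpha=\beta=0$, governs the induced metric and curvature of $f$ in $Y$.

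The main subtlety is the identification of $\nabla^{\Vf}$ with the connection induced by $\mathrm D$ on $\Hom(L_f,L_f^\perp)$. I would invoke the standard fact that the Levi-Civita connection of $\CH^{m,n}$ is the projection of the flat connection. With that in hand, the argument reduces to reading off blocks of $\theta$.
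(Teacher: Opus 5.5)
Your proof is correct, but it takes a different route from the paper's.

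The paper argues intrinsically on $\Vf$. From (A2), (A3), (A5) and $\alpha=\beta=0$ it shows that $(\nabla^{\Vf})^{1,0}$ preserves each of $\Tp,\Nm,\Np$, and takes adjoints to get full parallelism of $E_+=\Tp\oplus\Np$. Only then does it lift to $\C^{m,n+1}$, using in addition that $\partial^{1,0}f$ lies in $\Tp$ and $\bar\partial^{1,0}f$ lies in $\Np$, to show that $\tilde f(x)\oplus(E_+)_x$ is a constant subspace. For (ii)$\Rightarrow$(i) it uses only the general fact that the tangent and normal bundles along a totally geodesic submanifold are parallel.

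You instead read everything off the block form $\mathrm D=\nabla^h+\theta+\theta^{*_h}$ from Lemma~\ref{lem:basic-holomorphicity}. The blocks $\id$ and $\gamma$ already encode $\partial^{1,0}f$ and $\delta_f$. So invariance of $L_f\oplus L_T\oplus W$ under the trivial connection packages the paper's two steps (parallelism of $E_+$ and tangency of $\dd f$) into a single observation. Your converse also becomes pure block-reading.

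Your approach costs two things.
\begin{enumerate}
\item You rely on identifying $\nabla^{\Vf}$ with the connection induced by $\mathrm D$ on $\Hom(L_f,L_f^\perp)$. The paper already uses this implicitly in Lemma~\ref{lem:basic-holomorphicity}, so it is legitimate.
\item In (ii)$\Rightarrow$(i) you assume that $Y$ is a linear slice $\mathbb P(U)\cap\CH^{m,n}$. This is the standard description of complete totally geodesic complex submanifolds of a complex space form: such a submanifold is determined by one tangent space, and $\Hom(L,U_0)$ is realized by $\mathbb P(L\oplus U_0)$. You should state and justify it, since hypothesis (ii) only says $Y$ is totally geodesic and isometric to $\mathbf H^m_{\C}$. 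The paper's converse avoids this step.
\end{enumerate}
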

\begin{proof}
\textbf{(i)$\Rightarrow$(ii).}\enspace We first show that
$E_+:=\Tp\oplus\Np$ is $\nabla^{\Vf}$-parallel, and then use parallelism to produce the totally geodesic
$Y$.

\emph{Parallelism.}\enspace
For $\Tp$: by (A2), $\sigma^{2,0}$ takes values in $\KK_X^{2}\otimes\Nm$,
so $(\nabla^{\Vf})^{1,0}\Tp$ has no $\Np$-component, and its
$\Nm$-component is $\alpha\equiv 0$. Hence
\[(\nabla^{\Vf})^{1,0}\Tp\subset\Tp\otimes \K_X.\] For $\Nm$: by (A3),
\[(\nabla^{\Vf})^{1,0}\Nm\subset(\Nm\oplus\Np)\otimes \K_X,\] and its
$\Np$-component is $\beta_T\equiv 0$, so
\[(\nabla^{\Vf})^{1,0}\Nm\subset\Nm\otimes\K_X.\] For $\Np$: by (A5),
\[(\nabla^{\Vf})^{1,0}\Np\subset\Np\otimes \K_X.\] Taking adjoint with respect to $\Hsczero$, we obtain that $\nabla^{\Vf}$ preserves
each summand. In particular $E_+=\Tp\oplus\Np$ is parallel, and so is
$E_+^{\perp_{H_0}}=\Nm$. We denote by $\EE_+$ the corresponding holomorphic subbundle.

\emph{Construction of $Y$.}\enspace
The sub-bundle $\EE_+$ is parallel, holomorphic, and $H_0$-positive of
complex rank $m$, and by (A1) contains
$\langle\partial^{1,0}f\rangle=\Tp$. Lift $f$ locally to
$\tilde f\colon X\to\C^{m,n+1}\setminus\{0\}$ with image in the
negative cone, and identify $\Vf$ with
$\Hom(L_f,L_f^{\perp_{H_0}})$. Fibrewise over $f(X)$, $\EE_+$
corresponds to a complex $(m+1)$-dimensional subspace
\[
  W_x\;:=\;\tilde f(x)\;\oplus\;(E_+)_x\;\subset\;\C^{m,n+1}
\]
of signature $(m,1)$. Parallelism of $E_+$, together with
\[\partial^{1,0}f\in\mathrm{H}^0(\mathcal{T}_+\otimes\K_X)\subset \mathrm{H}^0(\EE_+\otimes \K_X)\] and the analogous
$(0,1)$-statement, implies that $W_x$ is independent of $x\in X$; call
it $W$. Set
\[
  Y\;:=\;\bigl\{[v]\in\PP(W)\;:\;H_0(v,v)<0\bigr\}\;\subset\;\CH^{m,n}.
\]
Since $H_0|_W$ has signature $(m,1)$, $Y$ is biholomorphically
isometric to $\CH^{m,0}=\mathbf{H}^{m}_{\C}$, and totally geodesic in
$\CH^{m,n}$ as the fixed-point set of the holomorphic isometric
involution that is $+1$ on $W$ and $-1$ on $W^{\perp_{H_0}}$. By
construction $f(X)\subset Y$ and
\[T^{1,0}Y|_{f(X)}=E_+=\Tp\oplus\Np.\]

\textbf{(ii)$\Rightarrow$(i).}\enspace Assume $f$ factors through a
totally geodesic $Y\cong\mathbf{H}^{m}_{\C}\subset\CH^{m,n}$ with
\[T^{1,0}Y|_{f(X)}=\Tp\oplus\Np.\] Then
$\Nm=(T^{1,0}Y)^{\perp_{H_0}}$ is the normal bundle of $Y$ along
$f(X)$. Total geodesy makes both $T^{1,0}Y$ and its $H_0$-orthogonal
complement parallel along $Y$ for the ambient Levi-Civita connection;
pulling back, $\Tp\oplus\Np$ and $\Nm$ are parallel under
$\nabla^{\Vf}$. In particular \[(\nabla^{\Vf})^{1,0}\Tp\subset(\Tp\oplus\Np)\otimes \K_X,\]
so its $\Nm$-component $\alpha$ vanishes, and
$(\nabla^{\Vf})^{1,0}\Nm\subset\Nm\otimes K$, so its $\Np$-component
$\beta$ vanishes.

\smallskip
The pullback argument of (ii)$\Rightarrow$(i) also establishes the
parallelism of $\Tp\oplus\Np$ asserted in the "moreover" clause.
Finally, under these conditions $f$ is a spacelike minimal immersion
into $Y\cong\mathbf{H}^{m}_{\C}$; the splitting of $\Vf$ restricts on
$Y$ to the splitting of $T^{1,0}Y$ into the tangent line $\Tp$ and its
$H_0$-orthogonal complement $\Np$ in $T^{1,0}Y$, and
$\gamma\colon\Np\to \K_X$ is the restriction of $\delta_f$ to $\Np$,
which is precisely the $(0,1)$-part of $\dd f$ relative to the tangent
line of $f$ inside $Y$.
\end{proof}

From the following, we see that the $\partial$-alternating surfaces in $\CH^{2,q}$ are natural extensions of maximal surfaces in $\mathbb H^{2,q}$.
\begin{proposition}\label{prop:real-part}
Let $\mathbb H^{2,q}\subset\CH^{2,q}$ denote the totally geodesic real
form fixed by complex conjugation, and let
$f\colon X\to\mathbb H^{2,q}$ be a spacelike conformal immersion,
regarded also as an immersion into $\CH^{2,q}$.
A spacelike conformal immersion
$f\colon X\to\mathbb H^{2,q}$ is maximal if and only if, as a map into
$\CH^{2,q}$, it is $\partial$-alternating for the splitting
\[
  \Vf
  =
  \Tp\oplus\Nm\oplus\Np,
  \qquad
  \Tp=\C\,\dd f(T^{1,0}X),\quad
  \Nm=N_f^{\C},\quad
  \Np=\C\,\dd f(T^{0,1}X),
\]
where $N_f$ is the real normal bundle of $f$ in $\mathbb H^{2,q}$.
\end{proposition}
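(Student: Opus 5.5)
The plan is to compare both conditions inside the complexified real picture. Write $\C^{2,q+1}=\mathbb R^{2,q+1}\otimes\C$ with $H_0$ the sesquilinear extension of the real form $g_0$ and $\bar\cdot$ the conjugation. Lift $f$ locally to $\tilde f$ with $g_0(\tilde f,\tilde f)=-1$. Then $L_f=\C\tilde f$ and $L_f^{\perp_{H_0}}=(T_f\mathbb H^{2,q})\otimes\C$. Under $\Vf\cong\Hom(L_f,L_f^{\perp})$, the bundle $\Vf$ is the complexification of the real tangent bundle of $\mathbb H^{2,q}$ along $f$. Since $T_f\mathbb H^{2,q}=\dd f(TX)\oplus N_f$ with $\dd f(TX)$ spacelike of rank $2$, we get
\[
\Vf=\C\,\partial_z\tilde f\oplus\C\,\partial_{\bar z}\tilde f\oplus N_f^{\C}.
\]
This is $H_0$-orthogonal by conformality, because $g_0(\tilde f_z,\tilde f_z)=0$ gives $H_0(\tilde f_z,\tilde f_{\bar z})=g_0(\tilde f_z,\tilde f_z)=0$. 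The signatures are $(+,+,-^q)$, and conjugation swaps $\Tp$ and $\Np$. The Levi-Civita connection $\nabla^{\Vf}$ is the $\C$-linear extension of the real one, that is, the $L_f^\perp$-projection of $\partial$.

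The key dictionary step is to identify the complex derivatives $\partial^{1,0}f$ and $\bar\partial^{1,0}f$ with $\tilde f_z$ and $\tilde f_{\bar z}$, up to normalization conventions. Then $\delta_f(\cdot)=H_0(\cdot,\tilde f_{\bar z})\,dz$, because $(\bar\partial^{1,0}f)^{*}$ pairs against $\partial_{\bar z}\tilde f$ and conjugates the form. With this, (A1) holds since $\tilde f_z\neq0$. For (A4), $H_0(\tilde f_z,\tilde f_{\bar z})=g_0(\tilde f_z,\tilde f_z)=0$, and $N_f^{\C}\perp\tilde f_{\bar z}$, so $\delta_f|_{\Tp\oplus\Nm}=0$ holds automatically. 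Similarly, $\gamma=\delta_f|_{\Np}$ is nonzero.

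The remaining conditions come from the real Gauss–Weingarten structure. For (A2), $\nabla_{\partial_z}\tilde f_z$ has $\Tp$-component $\lambda\tilde f_z$ (from $\partial_z\log\tilde g$) and $\Np$-component proportional to $H_0(\nabla_z\tilde f_z,\tilde f_{\bar z})=g_0(\nabla_z\tilde f_z,\tilde f_z)=\tfrac12\partial_z g_0(\tilde f_z,\tilde f_z)=0$. The $N$-part is the $(2,0)$ second fundamental form. So (A2) holds with no extra assumption. For (A3), take $\nu\in N_f^{\C}$. The $\Tp$-component of $\nabla_z\nu$ is governed by $H_0(\nabla_z\nu,\tilde f_z)=g_0(\nabla_z\nu,\tilde f_{\bar z})=-g_0(\nu,\nabla_z\tilde f_{\bar z})$, up to a nonzero factor. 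This is the normal part of $\tilde f_{z\bar z}$, i.e.\ the mean curvature up to the factor $\tilde g$. So (A3) holds iff $f$ is maximal (harmonic). For (A5), take $\tilde f_{\bar z}$. Its $\Nm$-component is again the normal part of $\nabla_z\tilde f_{\bar z}$, i.e.\ the mean curvature. Its $\Tp$-part is allowed, so (A5) also holds iff $f$ is maximal.

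Finally, I would match minimality. A conformal map into the totally geodesic $\mathbb H^{2,q}$ is harmonic into $\CH^{2,q}$ iff it is harmonic into $\mathbb H^{2,q}$, since the inclusion is totally geodesic. So ``minimal immersion'' in Definition~\ref{def:alt} is equivalent to maximality. Combining these: if $f$ is maximal, all of (A1)–(A5) hold for the given splitting. Conversely, $\partial$-alternating already includes harmonicity, and (A3) alone also forces vanishing mean curvature. The main obstacle I expect is normalization bookkeeping: making the identification of $\partial^{1,0}f$, $\bar\partial^{1,0}f$ and $\delta_f$ with $\tilde f_z$, $\tilde f_{\bar z}$ precise under $T^{1,0}\CH^{2,q}\cong\Hom(L,L^\perp)$, including the $(1,0)$ versus $(0,1)$ parts of the complexified differential of a map into a complex manifold. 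I would handle this by working with the real-tangent-to-$T^{1,0}$ isomorphism $v\mapsto\tfrac12(v-iJv)$ restricted to real directions, where it is $\C$-linear extension.
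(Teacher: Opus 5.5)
Your proposal is correct and follows essentially the same route as the paper. Both proofs identify $V_f$ with the complexified tangent bundle of $\mathbb H^{2,q}$ along $f$. Both obtain (A1), (A2) and (A4) from conformality and the normality of the second fundamental form, derive (A3) and (A5) via Gauss--Weingarten from $B(Z,\bar Z)=0$, and get the converse from total geodesy of the real form. Your observation that (A3) and (A5) are each individually equivalent to maximality is a small sharpening the paper does not state. Your identification of $\partial^{1,0}f$ and $\bar\partial^{1,0}f$ with $\tilde f_z$ and $\tilde f_{\bar z}$ via $v\mapsto\tfrac12(v-iJv)$ also fills in normalization bookkeeping the paper leaves implicit.
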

\begin{proof}
Let $M=\mathbb H^{m,n}$ be the real form of $\CH^{m,n}$. The real form
is totally geodesic. Hence the Levi--Civita connection and second
fundamental form of an immersion into $M$ agree with the corresponding
objects obtained after viewing the immersion as an immersion into
$\CH^{m,n}$ and complexifying along $M$.

Choose a local holomorphic coordinate $z$ on $X$, put
$Z=\partial/\partial z$, and write
\[
  u=\dd f(Z),\qquad \bar u=\dd f(\bar Z).
\]
For a conformal spacelike immersion in $M$, the complex lines
$\C u$ and $\C\bar u$ are positive and $H_0$-orthogonal. Moreover, if
$N_f$ denotes the real normal bundle in $M$, then
\[
  \delta_f|_{\C u\oplus N_f^{\C}}=0,
  \qquad
  \delta_f(\bar u)(Z)=H_0(\bar u,\bar u)>0.
\]
Thus, for the real splitting, the map $\gamma_T$ is exactly the
projection of $\delta_f$ to the $\C\bar u$-factor.

We first treat $M=\mathbb H^{2,q}$. If $f$ is maximal, then, since
$M$ is totally geodesic in $\CH^{2,q}$, $f$ is harmonic as a map into
$\CH^{2,q}$. Therefore $\partial^{1,0}f$ is holomorphic, giving (A1).
The splitting
\[
  \Tp=\C u,\qquad
  \Nm=N_f^{\C},\qquad
  \Np=\C\bar u
\]
has signatures $+,-,+$ of ranks $1,q,1$. Since the second fundamental
form $B$ of an immersion into the real space form is normal,
$\sigma^{2,0}=B(Z,Z)\dd z^2$ takes values in $N_f^{\C}$, giving (A2).

Let $\eta$ be a local section of $N_f^{\C}$. The Weingarten formula
gives
\[
  \nabla^M_Z\eta=\nabla^\perp_Z\eta-A_\eta Z,
\]
where $A$ denotes the shape operator. Because $f$ is maximal, $B(Z,\bar Z)=0$. Hence
\[
  \langle A_\eta Z,\bar u\rangle
  =
  \langle B(Z,\bar Z),\eta\rangle
  =
  0.
\]
Thus $A_\eta Z$ is proportional to $\bar u$ and
\[
  \nabla^M_Z\eta\in N_f^{\C}\oplus\C\bar u,
\]
which is (A3). The identity for $\delta_f$ above gives (A4). Finally,
\[
  \nabla^M_Z\bar u
  =
  \dd f(\nabla^X_Z\bar Z)+B(Z,\bar Z)=\dd f(\nabla^X_Z\bar Z)
\]
and in a conformal coordinate one has
$\nabla^X_Z\bar Z=0$. Thus the line $\C\bar u$ is preserved.
Thus (A5) holds. Hence a maximal surface in $\mathbb H^{2,q}$ is
$\partial$-alternating in $\CH^{2,q}$.

Conversely, if a real immersion into $\mathbb H^{2,q}$ is
$\partial$-alternating as a map into $\CH^{2,q}$, then
by definition it is a spacelike minimal
surface in $\CH^{2,q}$. Since the real form is totally geodesic, its
mean curvature as a surface in $\mathbb H^{2,q}$ is the same. Hence
the real immersion is maximal.
\end{proof}

\subsection{Equivariance and equivalence}\label{sec:equiv}
\begin{definition}
An \emph{equivariant alternating surface} on $X$ is a pair $(f,\rho)$
where
\[
  \rho\colon\pi_1(X)\longrightarrow\PU_{m,n+1}
\]
is a group homomorphism and $f\colon\widetilde X\to\CH^{m,n}$ is a
$\rho$-equivariant $\partial$-alternating spacelike minimal immersion
in the sense of Definition~\ref{def:alt}; here equivariance means
\[
  f(\gamma\cdot p)\;=\;\rho(\gamma)\cdot f(p)\qquad
  \forall\gamma\in\pi_1(X),\ p\in\widetilde X.
\]
The alternating splitting $\Vf=\Tp\oplus\Nm\oplus\Np$ on $\widetilde
X$ is $\pi_1(X)$-equivariant.
\end{definition}

\begin{definition}
Two equivariant alternating surfaces $(f,\rho)$ and $(f',\rho')$ on
$X$ are \emph{equivalent} if there exists $g\in\PU_{m,n+1}$ such that
\[
  f'\;=\;g\circ f
  \qquad\text{and}\qquad
  \rho'(\gamma)\;=\;g\,\rho(\gamma)\,g^{-1}\quad\forall\gamma\in\pi_1(X).
\]
\end{definition}

\begin{definition}
Two equivariant alternating surfaces with splittings $(f,\rho, \Vf=\Tp\oplus\Nm\oplus\Np)$ and $(f',\rho',V_f'=\Tp'\oplus\Nm'\oplus\Np')$ on
$X$ are \emph{equivalent} if there exists $g\in\PU_{m,n+1}$ such that
\[
  f'\;=\;g\circ f
  \qquad\text{and}\qquad
  \rho'(\gamma)\;=\;g\,\rho(\gamma)\,g^{-1}\quad\forall\gamma\in\pi_1(X).
\] and $g$ preserves the splitting. 
\end{definition}

\subsection{Correspondence}\label{sec:base}
\begin{theorem}\label{thm:base}
On any Riemann surface $X$, there is a natural bijection between
isomorphism classes of:
\begin{enumerate}
\item $4$-cyclic $\PU_{m,n+1}$-harmonic bundles on $X$ as in
  \eqref{eq:fourcycle} (any of $\alpha,\beta,\gamma$ may vanish
  identically), modulo graded unitary gauge equivalence;
\item equivariant $\partial$-alternating immersions
  $f\colon\widetilde X\to\CH^{m,n}$ together with splitting $\Vf=\Tp\oplus\Nm\oplus\Np$, modulo equivalence preserving splittings.
\end{enumerate}
\end{theorem}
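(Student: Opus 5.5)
We construct the two maps explicitly and check that they are mutually inverse; most of the work has already been done in Lemma~\ref{lem:basic-holomorphicity}.

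\textbf{From surfaces to harmonic bundles.} Start with $(f,\rho)$ and a splitting $\Vf=\Tp\oplus\Nm\oplus\Np$. Pull back the trivial bundle to $\widetilde X$ and write $E=L_f\oplus L_T\oplus V\oplus W$ as in the flatness subsection. Each summand receives the holomorphic structure obtained by projecting $\mathrm D^{0,1}$ onto it. Let $h$ be the sign-modified metric $h=H_0(S\cdot,\cdot)$. By Lemma~\ref{lem:basic-holomorphicity}, conditions (A1)--(A5) give $\mathrm D=\nabla^h+\theta+\theta^{*_h}$, where $\theta$ is holomorphic and has the cyclic shape \eqref{eq:fourcycle}. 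The identification $\Tp\cong\KK_X^{-1}$ via $\partial^{1,0}f$ gives $L_T\cong L_f\KK_X^{-1}$ holomorphically, and under it the $(1,0)$ block is $\mathds 1$.

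Flatness of $\mathrm D$ then becomes Hitchin's equation $F(\nabla^h)+[\theta,\theta^{*_h}]=0$. The ranks are $(1,1,n,m-1)$ and $h$ is graded by construction. Equivariance under $\rho$ makes everything descend to $X$ as a projectively flat $\mathrm U_{m,n+1}$ object. The descended bundle is only defined up to tensoring by a flat line bundle, because $\rho$ lifts to $\mathrm U_{m,n+1}$ only projectively; this is exactly the $\PU$-equivalence of the definition. Finally, an equivalence $g$ preserving the splittings induces an isomorphism of the graded bundles that is unitary for $h$ on each summand, that is, a graded unitary gauge equivalence.

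\textbf{From harmonic bundles to surfaces.} Start with $(\EE,\Phi,h)$. On $\widetilde X$, the connection $\mathrm D=\nabla^h+\Phi+\Phi^{*_h}$, corrected by $2\pi\iu\alpha$ as in Section~\ref{sec:choosefunction}, is flat. It preserves the indefinite form $H_0:=h(S\cdot,\cdot)$ with $S=\diag(-1,1,-1,1)$. The reason is the one in Lemma~\ref{lem:signed-adjoint}: every nonzero block of $\Phi$ connects summands of opposite sign, so $\Phi^{\dagger}=-\Phi^{*_h}$. A flat frame gives a trivialization $\widetilde{\EE}\cong\widetilde X\times\C^{m,n+1}$ together with a monodromy $\rho$. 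Define $f(x)=[\EE_0(x)]$, which lies in $\CH^{m,n}$ because $\EE_0$ is negative. Then $\Vf=\Hom(\EE_0,\EE_0^\perp)$ splits as $\Hom(\EE_0,\EE_1)\oplus\Hom(\EE_0,\EE_2)\oplus\Hom(\EE_0,\EE_3)$ with signs $+,-,+$.

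Next compute the derivatives. We have $\partial^{1,0}f=\mathds 1$ and $\bar\partial^{1,0}f=\Hom(\EE_0,\cdot)$ applied to $-\gamma^{*}$, which gives $\delta_f$ supported on $\Np$ with value $\gamma$. Then $\sigma^{2,0}$ is the $\alpha$-component, and the $(1,0)$-connection on $\Nm$ and $\Np$ is read off from $\Phi$. These identifications give (A1)--(A5). Harmonicity follows from the holomorphicity of $\mathds 1$ (as in the proof of Lemma~\ref{lem:basic-holomorphicity}, read backwards), and conformality holds because $\tr\Phi^2=0$. Since $\mathds 1$ never vanishes, $f$ is a spacelike immersion.

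\textbf{Mutual inverses and well-definedness.} The two constructions are inverse to each other. Starting from $f$, the bundle $L_f$ recovers $f$. Starting from the bundle, the pulled-back decomposition recovers the summands, their metrics, and $\Phi$, because the holomorphic structures are defined by the same projections. Changing the flat frame replaces $(f,\rho)$ by $(g\circ f,\, g\rho g^{-1})$ with $g$ preserving the splitting, and a graded unitary gauge transformation does not change the flat frame at all.

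\textbf{Main obstacle.} The delicate step is the bookkeeping for the $\PU$ quotient. On the bundle side, tensoring by a projectively flat line bundle must correspond to rescaling the flat lift. I would handle this by working throughout with the modified flat connection $\mathrm D^{h,\alpha}$ and checking that the choice of the primitive $\alpha$ only affects scalars.

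A second point needs care: identifying $\Tp$ holomorphically with $\KK_X^{-1}$, so that $L_T\cong L_f\KK_X^{-1}$ holds exactly and not just up to isomorphism. I expect this to come from $\bar\partial(\mathds 1)=0$ in Lemma~\ref{lem:basic-holomorphicity}.
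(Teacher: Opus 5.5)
Your proposal is correct and follows essentially the paper's proof. Both directions are explicit constructions resting on Lemma~\ref{lem:signed-adjoint} and Lemma~\ref{lem:basic-holomorphicity}, with the block-diagonal part of (projective) flatness giving Hitchin's equation. The only difference is presentational: you work on $\widetilde X$ with the genuinely flat connection $\mathrm D^{h,\alpha}$ and descend at the end, whereas the paper stays with the projectively flat bundle on $X$ and lifts the descended projective line to a line subbundle of it.

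One small correction to your $\PU$ bookkeeping. Two lifts differ by tensoring with a Hermitian line bundle carrying a constant-curvature (projectively flat) connection, possibly of nonzero degree, not merely a flat one. This larger equivalence is the one in the paper's definition of $\PU_{m,n+1}$-harmonic bundles, and it is what makes the correspondence injective, since such twists leave $f$ unchanged.
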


\begin{proof}
We give the construction in both directions. Throughout the
construction we work with a $\mathrm{U}_{m,n+1}$-lift of the data; at the end
of each direction we descend to $\PU_{m,n+1}$, using that the lift
ambiguity is exactly the kernel of the natural map from
$\mathrm{U}_{m,n+1}$- to $\PU_{m,n+1}$-harmonic bundles (flat unitary line
twists, Section \ref{sec:hbundle}).

\medskip
\noindent\emph{From a harmonic bundle to a surface.}
Let
\[
(\EE,\Phi)=\begin{tikzcd}
	{\mathcal{L}} & {\mathcal{L}\mathcal{K}_X^{-1}} & {\mathcal{V}} & {\mathcal{W}}
	\arrow["{\mathds{1}}"', from=1-1, to=1-2]
	\arrow["\alpha"', from=1-2, to=1-3]
	\arrow["\beta"', from=1-3, to=1-4]
	\arrow["\gamma"', curve={height=19pt}, from=1-4, to=1-1]
\end{tikzcd}
\]
be a representative $4$-cyclic $\mathrm{PU}_{m,n+1}$-harmonic bundle with
diagonal harmonic metric $h$, and let
\[
    \Hsc=-h_\Lcal\oplus h_{\Lcal \K_X^{-1}}\oplus(-h_\Vcal)\oplus h_\Wcal .
\]
By Lemma~\ref{lem:signed-adjoint}, the connection
\[
    \mathrm{D}^h:=\nabla^h+\Phi+\Phi^{*h}
\]
preserves $\Hsc$, and the Hitchin's self dual equation gives 
\[F(\mathrm{D}^h)=2\pi\iu\mu\cdot\omega\otimes\id_\EE.\]
After pulling back to the universal cover, $E$ becomes a projectively flat Hermitian vector space
of the appropriate signature. The negative line $\Lcal$ then defines
\[
    f=[\Lcal]:\widetilde X\longrightarrow \CH^{m,n},
\]
which is equivariant for the monodromy representation of $\mathrm{D}^h$, viewed
projectively as $\rho\colon\pi_1(X)\to\PU_{m,n+1}$.

The orthogonal complement of $\Lcal$ is
\[
    \Lcal^\perp=\Lcal \K_X^{-1}\oplus \Vcal\oplus \Wcal,
\]
and therefore
\[
    f^*T^{1,0}\CH^{m,n}\cong\Hom(\Lcal,\Lcal^\perp)
      =\Tp\oplus \Nm\oplus \Np,
\]
where
\[
    \Tp:=\Hom(\Lcal,\Lcal \K_X^{-1})\cong \K_X^{-1},\qquad
    \Nm:=\Hom(\Lcal,\Vcal),\qquad
    \Np:=\Hom(\Lcal,\Wcal).
\]
The signs are $+,-,+$. The arrow $\mathds 1:\Lcal\to\Lcal \K_X^{-1}\otimes \K_X$
identifies with $\partial^{1,0}f\in H^0(\K_X\otimes \Tp)$, so $f$ is
unbranched. The arrows $\alpha$ and $\beta$ identify with
\[
    \alpha\colon\Tp\to \Nm\otimes \K_X,
    \qquad
    \beta\colon\Nm\to \Np\otimes \K_X.
\]
Moreover
\[
    \sigma^{2,0}=\alpha(\partial^{1,0}f)\in \mathrm{H}^0(\K_X^2\otimes\mathcal{N}_{-}),
\]
and $\gamma$ identifies with
\[
    \delta_f|_{\Np}\colon\Np\to \K_X.
\]
Since $\delta_f$ vanishes on $\Tp\oplus \Nm$, the Hopf differential
$q_2=\delta_f(\partial^{1,0}f)$ vanishes. Thus the harmonic map
defined by the negative line is conformal; being harmonic and
spacelike, it is a spacelike minimal immersion. The vanishing of the
off-diagonal Higgs blocks $\Phi_{24}$ and $\Phi_{34}$ in the
$4$-cyclic shape is precisely the surface-side condition (A5):
\[(\nabla^{h})^{1,0}\Np\subset\Np\otimes\K_X.\] The conditions (A1)--(A5) of
Definition~\ref{def:alt} are therefore all satisfied by the resulting
$(f,\Tp\oplus\Nm\oplus\Np)$.

Twisting $(\EE,\Phi,h)$ by a holomorphic line bundle $\mathcal F$
multiplies $f$ by a fibrewise unitary scalar in $\widetilde
X\times\C^{m,n+1}$; projectively this is the identity, so the
projective map $f\colon\widetilde X\to\CH^{m,n}$ depends only on the
$\PU_{m,n+1}$-class.

\medskip
\noindent\emph{From an equivariant $\partial$-alternating surface to a harmonic bundle.}
Conversely, suppose that $(f,\rho)$ is an equivariant $\partial$-alternating surface.
Pulling back the tautological line gives the $\pi_1(X)$-equivariant
line bundle $\widetilde{L}_f\subset\widetilde X\times\C^{m,n+1}$, which is trivialized by the flat connection $\widetilde{\mathrm{D}}$, for the
projective action of $\rho$. Hence it descends to a section $[\widetilde{L}_f]$ in the flat bundle $X\times_{\rho}\mathbb{P}(\C^{m,n+1})$ over $X$. We then lift it as a line subbundle $L_f$ in the projectively flat bundle $(E\to X,\mathrm{D})$, whose fibre is $\mathbb{C}^{m,n+1}$. 

Use evaluation $L_f\otimes\Hom(L_f,L_f^\perp)\cong L_f^\perp$
to write
\[
    \EE=\Lcal_f\oplus (\Lcal_f\otimes \mathcal{T}_+)\oplus(\Lcal_f\otimes \mathcal{N}_-)
        \oplus(\Lcal_f\otimes\mathcal{N}_-).
\]
$\EE$ is the same as $E$ as a smooth complex bundle, but equipped with a holomorphic structure different from $\mathrm{D}^{0,1}$.
We set
\[
    \LL:=\LL_f,\qquad\Vcal:=\Lcal\otimes \mathcal{N}_-,
    \qquad
    \Wcal:=\Lcal\otimes \mathcal{N}_+,
\]
then $\Lcal \K_X^{-1}=\Lcal\otimes \Tp$ and
Order these summands as $\EE_0=\Lcal$, $\EE_1=\Lcal \K_X^{-1}$, $\EE_2=\Vcal$, $\EE_3=\Wcal$, with
signs $-,+,-,+$. Let $h$ be obtained from the flat pseudo-Hermitian
form $\Hsc$ by changing sign on the negative summands $\Lcal$ and
$\Vcal$. Write $\mathrm{D}=\nabla^h+\theta-\theta^{\dagger}$ as \prettyref{eq:structure}. By Lemma \ref{lem:basic-holomorphicity}, we obtain that $\theta$ is holomorphic and $\theta^\dagger=-\theta^{*_h}$. 

The $(1,1)$-block-diagonal part of the projectively flatness of $\mathrm{D}$ yields that
\[
    F(\nabla^h)+[\theta,\theta^{*h}]=2\pi\iu\mu\cdot\omega\otimes\id_\EE
\]
for the fixed K\"ahler form $\omega$ on $X$, which is Hitchin's self-dual equation. Therefore $(E,\theta,h)$ is a
$\mathrm{U}_{m,n+1}$-harmonic bundle of the required type; its image in the
$\PU_{m,n+1}$-equivalence class is independent of the lift, since two lifts differ by tensoring a smooth line bundle equipped with a connection.

Finally, note that the isomorphism classes of holomorphic line bundles and the isomorphism classes of smooth line bundles with a connection are one-by-one correspondence, hence these two constructions descends to the isomorphism classes and are inverse to each other.
\end{proof}

Since in the case of the two special signatures, by Lemma \ref{lem:A5-automatic-low-rank}, the splitting is unique. We immediately have the following theorem.
\begin{theorem}\label{thm:baseSpecialSignature}
Let $m=2$ or $n=1$. 
On any Riemann surface $X$, there is a natural bijection between
isomorphism classes of:
\begin{enumerate}
\item $4$-cyclic $\PU_{m,n+1}$-harmonic bundles on $X$ as in
  \eqref{eq:fourcycle} with $\gamma\neq0$ when $m=2$ and $\alpha\neq0$ when $n=1$, modulo graded unitary gauge equivalence;
\item equivariant $\partial$-alternating immersions
  $f\colon\widetilde X\to\CH^{m,n}$, modulo equivalence.
\end{enumerate}
\end{theorem}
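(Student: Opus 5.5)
The plan is to deduce Theorem~\ref{thm:baseSpecialSignature} from Theorem~\ref{thm:base} by showing that, in the two signatures, the splitting data on the surface side is redundant and the non-vanishing hypotheses on the bundle side match. Theorem~\ref{thm:base} already gives a bijection between $4$-cyclic harmonic bundles and pairs $(f,\Vf=\Tp\oplus\Nm\oplus\Np)$ modulo splitting-preserving equivalence. It therefore suffices to check two things: the forgetful map $(f,\text{splitting})\mapsto f$ is injective on equivalence classes, and its image is exactly the set of all $\partial$-alternating immersions. I treat $n=1$ with $\alpha\not\equiv0$ (condition ($\partial$i)) and $m=2$ with $\gamma\not\equiv0$ (condition ($\partial$ii)) in parallel.

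\textbf{Step 1: matching the nondegeneracy conditions.} Under the correspondence of Theorem~\ref{thm:base}, the Higgs arrows $\alpha,\gamma$ are identified with the surface data $\alpha$ (represented by $\sigma^{2,0}$) and $\gamma=\delta_f|_{\Np}$. Hence the condition $\alpha\neq0$ (resp.\ $\gamma\neq0$) on the bundle side is the same as condition ($\partial$i) (resp.\ ($\partial$ii)) on the surface side. For $\gamma\not\equiv0$ one can also phrase this intrinsically: $\gamma\not\equiv0$ exactly when $f$ is not a holomorphic curve. The intended reading of item (2) is $\partial$-alternating immersions satisfying the same nonvanishing condition, so that $\Np$ or $\Nm$ is pinned down.

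\textbf{Step 2: uniqueness of the splitting.} By Lemma~\ref{lem:A5-automatic-low-rank}, under ($\partial$i) or ($\partial$ii) the splitting is determined by $f$.
\begin{itemize}
\item In case ($\partial$i), $\Nm$ is the saturation of the image of $\sigma^{2,0}$.
\item In case ($\partial$ii), $\Tp\oplus\Nm=\ker\delta_f$, extended across the isolated zeros by saturation.
\end{itemize}
In both cases the remaining summand is the $\Hsczero$-orthogonal complement, and $\Tp=\langle\partial^{1,0}f\rangle$ by (A1). The same lemma shows that (A5) is automatic, so any minimal immersion satisfying (A1)--(A4) with some such splitting is $\partial$-alternating. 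Consequently, if $f'=g\circ f$ with $g\in\PU_{m,n+1}$ conjugating $\rho$ to $\rho'$, then $g$ carries $\sigma^{2,0}_f$ to $\sigma^{2,0}_{f'}$ and $\delta_f$ to $\delta_{f'}$, because these are built naturally from $f$ and the invariant geometry. So $g$ automatically preserves the canonical splittings, and equivalence coincides with splitting-preserving equivalence.

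\textbf{Step 3: conclusion and main obstacle.} Combining Steps 1 and 2 with Theorem~\ref{thm:base}, the bijection descends to the stated one. The main point requiring care is Step 2's naturality: one must verify that the saturation construction commutes with isometries and is well defined globally on $\widetilde X$. This holds because $\alpha$ and $\gamma$ are holomorphic (Lemma~\ref{lem:basic-holomorphicity}), so their zeros are isolated. One must also check that every $\partial$-alternating immersion in these signatures, with the relevant nonvanishing, is covered. If $\alpha\equiv0$ or $\gamma\equiv0$ the splitting may fail to be unique (compare Remark~\ref{rmk:special-bake}), which is why the hypotheses are imposed.
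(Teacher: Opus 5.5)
Your proposal is correct and follows the paper's route: the paper deduces this theorem directly from Theorem~\ref{thm:base}, using the uniqueness of the splitting (and the automatic validity of (A5)) from Lemma~\ref{lem:A5-automatic-low-rank} in the two special signatures. Your extra points are exactly the details the paper leaves implicit when it says the result follows ``immediately'': item (2) must be read with the matching nonvanishing of $\sigma^{2,0}$ or $\delta_f|_{\Np}$, and any equivalence $g$ automatically carries the canonical splitting of $f$ to that of $g\circ f$.
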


\subsection{Gauss maps}

Let
\[
V=\mathbb C^{m+n+1},\qquad
h_0(z,w)=z^*I_{m,n+1}w,\qquad
I_{m,n+1}=\operatorname{diag}(I_m,-I_{n+1}),
\]
so that \(h_0\) has signature \((m,n+1)\). The symmetric space of
\(\mathrm{U}_{m,n+1}\) admits the Grassmannian realization
\[
\mathcal X_{m,n+1}
=
\frac{\mathrm{U}_{m,n+1}}{\mathrm{U}_m\times\mathrm{U}_{n+1}}
\cong
\operatorname{Gr}_m^+(V),
\]
where
\[
\operatorname{Gr}_m^+(V)
=
\{P\subset V:\dim_{\mathbb C}P=m,\ h_0|_P>0\},
\]
via
\[
g(\mathrm{U}_m\times\mathrm{U}_{n+1})\longmapsto gP_0,
\qquad
P_0=\mathbb C^m\oplus0.
\]

Equivalently, \(\mathcal X_{m,n+1}\) is the space
\[
\mathcal H_{h_0}
=
\{M\in\operatorname{Herm}^+(V):C_M^2=\operatorname{Id}\},
\]
where \(C_M\in\operatorname{End}_{\mathbb C}(V)\) is defined by
$h_0(v,w)=M(v,C_Mw).$
Indeed, $C_M$ is an $M$-self-adjoint involution whose
$(+1)$- and $(-1)$-eigenspaces are respectively $h_0$-positive
and $h_0$-negative. The correspondence
$\operatorname{Gr}_m^+(V)\cong\mathcal H_{h_0}$ is given by
\[
P\longmapsto M_P,\qquad
M_P(v,w)
=
h_0(v_P,w_P)-h_0(v_N,w_N),
\]
where
\[
V=P\oplus P^{\perp_{h_0}},\qquad
v=v_P+v_N,\quad w=w_P+w_N.
\]
Its inverse sends $M$ to the $(+1)$-eigenspace of $C_M$.

For $P\in\operatorname{Gr}_m^+(V)$, the holomorphic tangent space is
naturally identified with
\[
T_P^{1,0}\operatorname{Gr}_m^+(V)
\cong
\operatorname{Hom}_{\mathbb C}(P,P^{\perp_{h_0}}).
\]
The invariant Hermitian metric is
\[
\mathbf h_P(A,B)
=
-\sum_{i=1}^m h_0(Ae_i,Be_i),
\qquad
A,B\in
\operatorname{Hom}_{\mathbb C}(P,P^{\perp_{h_0}}),
\]
where \(\{e_1,\ldots,e_m\}\) is any \(h_0\)-orthonormal basis of \(P\).
This expression is independent of the choice of basis. Since
$h_0|_{P^{\perp_{h_0}}}<0$, it defines a positive-definite Hermitian form.
Its real part is, up to an overall normalization constant, the
standard $\mathrm{U}_{m,n+1}$-invariant Riemannian metric on
$\mathcal X_{m,n+1}$.

Let $f:X\to\mathbb {CH}^{m,n}$ be a $\partial$-alternating immersion, and
let
\[
P_x=\mathcal T_{+,x}\oplus\mathcal N_{+,x}
\in\operatorname{Gr}_m^+(V)
\]
be the associated positive \(m\)-plane. Its Gauss map is
\[
\mathcal G_f:X\longrightarrow\operatorname{Gr}_m^+(V),
\qquad
\mathcal G_f(x)=P_x.
\]
Let \(M_x\) denote the harmonic metric determined by the corresponding
conformal harmonic bundle in Theorem \ref{thm:base}. By construction,
\[
C_{M_x}|_{P_x}=\operatorname{Id},
\qquad
C_{M_x}|_{P_x^{\perp_{h_0}}}=-\operatorname{Id};
\]
equivalently, \(M_x=M_{P_x}\). Hence, under the identification $\mathcal H_{h_0}\cong\operatorname{Gr}_m^+(V)$, the associated harmonic map coincides with the Gauss map $\mathcal G_f$.

In particular, its target metric is, up to an overall normalization,
the symmetric-space metric of $\mathrm{U}(m,n+1)$. In summary, we obtain the following proposition.

\begin{proposition}\label{prop:GaussMaps}Under the correspondence in Theorem \ref{thm:base}, 
the associated harmonic map to the cyclic harmonic bundle is the Gauss map $\mathcal G_f\colon \widetilde{X}\to\mathcal{X}_{m,n+1}$ of the $\partial$-alternating surface $f:\widetilde X\to\mathbb {CH}^{m,n}$. 
\end{proposition}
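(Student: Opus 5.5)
The plan is to verify directly that, on the universal cover, the harmonic metric produced by Theorem~\ref{thm:base} is exactly the point $M_{P_x}$ of $\mathcal H_{h_0}$ associated with the positive $m$-plane $P_x=(\Lcal_f\otimes\Tp)_x\oplus(\Lcal_f\otimes\Np)_x$. This then gives the identification $\mathcal H_{h_0}\cong\operatorname{Gr}_m^+(V)$ pointwise.

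First, I fix the flat trivialization. Pull $(\EE,\mathrm D^{h,\alpha})$ back to $\widetilde X$, where $\alpha$ is a primitive of $\omega$ as in Section~\ref{sec:QuasiIsometry}. The flat connection identifies every fibre $\EE_x$ with a fixed $V\cong\mathbb C^{m,n+1}$, equivariantly with respect to $\rho$. By Lemma~\ref{lem:signed-adjoint}, this flat connection preserves $\Hsc$, so $\Hsc$ becomes the constant form $h_0$. The equivariant harmonic map attached to the harmonic bundle is then $x\mapsto h_x$, the harmonic metric transported to $V$. This is a positive Hermitian form on $V$. It is determined only up to a positive scalar, which is harmless because the twisting factor $2\pi\iu\alpha$ only rescales by a unitary factor.

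Next, I compute $C_{h_x}$. Since $\Hsc=h\circ S$ with $S=\diag(-1,1,-1,1)$ block-diagonal in the splitting $\EE_0\oplus\EE_1\oplus\EE_2\oplus\EE_3$, we get $h_0(v,w)=h_x(v,Sw)$, so $C_{h_x}=S$ at $x$. This is an $h_x$-self-adjoint involution. Its $(+1)$-eigenspace is $\EE_1\oplus\EE_3=(\Lcal\otimes\Tp)\oplus(\Lcal\otimes\Np)$, which is positive of dimension $m$. Its $(-1)$-eigenspace is $\EE_0\oplus\EE_2$, which is negative. Hence $h_x\in\mathcal H_{h_0}$, and it corresponds to the plane $P_x$. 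Under the evaluation isomorphism $L_f\otimes\Hom(L_f,L_f^\perp)\cong L_f^\perp$ used in Theorem~\ref{thm:base}, this plane is exactly the positive part $\Tp\oplus\Np$ of $V_f$, that is, $\mathcal G_f(x)$. Because the splitting is $h_0$-orthogonal, $h_x$ equals $M_{P_x}$ on the nose.

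Finally, I check equivariance and the metric normalization. Both maps are $\rho$-equivariant since the splitting is. Up to the overall constant, the invariant metric $\mathbf h$ is the symmetric-space metric, so the two maps agree as maps into $\mathcal X_{m,n+1}$. The main subtlety I expect is bookkeeping: making sure that the symmetric space realization $G/K$ used for the harmonic map in nonabelian Hodge theory matches $\mathcal H_{h_0}$, i.e.\ that the harmonic metric itself, and not its inverse or dual, is the point. I would settle this by checking that $h_x$ is $K$-invariant exactly for $K=\operatorname{Stab}(P_x)$. Dealing with the projective ambiguity, which amounts to scaling $h_x$, is handled by passing to $\PU$.
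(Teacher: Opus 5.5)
Your proposal is correct and follows the paper's argument. You transport the harmonic metric to $V$ via the flat connection and note that $\Hsc=h(S\cdot,\cdot)$ forces $C_{h_x}=S$, so $h_x=M_{P_x}$ with $P_x=\EE_1\oplus\EE_3=L_f\otimes(\Tp\oplus\Np)$; this is exactly the paper's ``by construction'' step $C_{M_x}|_{P_x}=\mathrm{Id}$, $C_{M_x}|_{P_x^{\perp}}=-\mathrm{Id}$.

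One small correction: $h_x$ has no positive-scalar ambiguity. The relation $h_0=h_x(\cdot,S\cdot)$ pins it down, and a rescaled metric $cM$ would give $C_{cM}^2=c^{-2}\,\mathrm{Id}$, so it would not lie in $\mathcal H_{h_0}$.
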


\subsection{Proof of Theorem \ref{thm:baseSpecialSignatureintro}}
For the correspondence for general surface, it follows from Theorem \ref{thm:baseSpecialSignature}. The Gauss map statement follows from Proposition \ref{prop:GaussMaps}.
\qed

\section{Stability and moduli space: closed surface case}\label{sec:Moduli}

In this section, we assume that $X$ is a closed Riemann surface of genus $g\geqslant2$. We apply the general theory of the moduli space of cyclic Higgs bundles studied in \prettyref{apdx:cyclic} to our case. 

Recall that a $4$-cyclic Higgs bundle of rank $(r_j)$ is a Higgs bundle of the form:
\[
    \begin{aligned}
    \EE&=\EE_{\overline{0}}\oplus\EE_{\overline{1}}\oplus\EE_{\overline{2}}\oplus\EE_{\overline{3}},\\
    \Phi&=\begin{pmatrix}
        &&&\varphi_{\overline{3}}\\
        \varphi_{\overline{0}}&&&\\
        &\varphi_{\overline{1}}&&\\
        &&\varphi_{\overline{2}}&
    \end{pmatrix},
\end{aligned}
\]
where $\EE_{\overline{j}}$ is the holomorphic vector bundle of rank $r_j$ and $\varphi_{\overline{j}}\colon\EE_{\overline{j}}\to\EE_{\overline{j+1}}\otimes\KK_X$. Then the moduli space of such cyclic Higgs bundles is identified with the moduli space $\mathcal{M}(X;G_0,\mathfrak{g}_{\overline{1}})$ of $(G_0,\mathfrak{g}_{\overline{1}})$-Higgs pairs over $X$ for suitable $G_0<\mathrm{GL}_r\mathbb{C}$ and $\mathfrak{g}_{\overline{1}}\subset\mathfrak{gl}_r\mathbb{C}$, where $\mathfrak{g}_{\overline{j}}$ arose from a $\mathbb{Z}/4\mathbb{Z}$-grading on $\mathfrak{g}=\mathfrak{gl}_r\mathbb{C}$. 

From now on, we assume that $r_0=r_1=1$, $r_2=n$, $r_3=m-1$ and either $m=2$ or $n=1$. We use $(\mathbb{E},\Phi)$ to denote the corresponding $(G_0,\mathfrak{g}_{\overline{1}})$-Higgs pair and $C^\bullet(\mathbb{E},\Phi)$ to denote the deformation complex. 

The space of infinitesimal automorphism of weight $\overline{j}$ is defined as:
\[\operatorname{aut}_{\overline{j}}(\EE,\Phi):=\{s\in\mathbb{E}[\mathfrak{g}_{\overline{j}}]\mid[s\wedge\Phi]=0\}=\left\{s\in\mathrm{H}^0(\operatorname{End}(\EE)\otimes\KK_X)\mid[s\wedge\Phi]=0, s(\EE_{\overline{k}})\subset\EE_{\overline{k+j}}\otimes\KK_X\right\}.\]

\subsection{Smoothness and dimension of the moduli space}

Let $(\EE,\Phi)$ be a $4$-cyclic Higgs bundle of rank $(r_j)$. Then this defines a continuous and locally constant map
\[\begin{aligned}
    \mathbf{d}_{(G_0,\mathfrak{g}_{\overline{1}})}\colon\mathcal{M}(X;G_0,\mathfrak{g}_{\overline{1}})&\longrightarrow\mathbb{Z}^4\\
    [(\mathbb{E},\Phi)]&\longmapsto(\deg(\EE_{\overline{j}}))_{j=0}^3.
\end{aligned}\]
We define $\mathcal{M}_{(d_j)}(X;G_0,\mathfrak{g}_{\overline{1}})$ as the preimage of $(d_j)\in\mathbb{Z}^4$ along $\mathbf{d}_{(G_0,\mathfrak{g}_{\overline{1}})}$, which is the union of some connected components. We denote by 
\[\mathcal M_{(d_j), \tau}(X)\subset\mathcal{M}_{(d_j)}(X;G_0,\mathfrak{g}_{\overline{1}})\] the open subspace consisting of isomorphism classes of $4$-cyclic Higgs bundles $(\EE,\Phi)$ (with corresponding $(G_0,\mathfrak{g}_{\overline{1}})$-Higgs pair $(\mathbb{E},\Phi)$) over $X$ satisfying that 
\[\deg(\EE_{\overline{j}})=d_j\quad\mbox{ and }\quad\varphi_{\overline{0}}\colon\EE_{\overline{0}}\to\EE_{\overline{1}}\otimes\KK_X\mbox{ is an isomorphism},\]
where $\tau=$ \upRoman{1} when $m=2$ and $\tau=$ \upRoman{2} when $n=1$. Then the associated $d_0=d_1+(2g-2)$. We denote by 
\[\mathcal M_{(d_j), \tau}^{\mathrm{s}}(X)=\mathcal M_{(d_j), \tau}(X)\cap\mathcal{M}^{\mathrm{s}}(X;G_0,\mathfrak{g}_{\overline{1}})\]
its stable part. We denote by $\mathcal M_{\tau}(X)$ and $\mathcal M_{\tau}^{\mathrm{s}}(X)$ their union among all $(d_j)$'s respectively. We will use $\mathcal {M}_{(d_j), \tau}^{\star}(X)$, $\mathcal {M}_{(d_j), \tau}^{\mathrm{s},\star}(X)$, $\mathcal M_{\tau}^{\star}(X)$ and $\mathcal M_{\tau}^{\mathrm{s},\star}(X)$ to denote their non-maximal parts, which are also open subspaces, respectively. Also we will omit the $X$ in the notation if there is no abuse of notion. 

\begin{proposition}\label{prop:H2vanish}
    Suppose that $[(\mathbb{E},\Phi)]\in\mathcal M_{\tau}$, then $\operatorname{aut}_{\overline{j}}(\EE,\Phi)=0$ for $\overline{j}=\overline{1},-\overline{1}$. As a corollary, the second hypercohomology group $\mathbb{H}^2(C^\bullet(\mathbb{E},\Phi))=0$. 
    
    If   
\begin{equation}\label{eq:torsion}
  [(\EE,\Phi)] =
  \scalebox{0.85}{$\displaystyle
    \left[
    \begin{tikzcd}[ampersand replacement=\&]
      {\mathcal{E}_{\overline{0}}}
      \& {\mathcal{E}_{\overline{1}}}
      \& {\mathcal{E}_{\overline{2}}'}
      \& {\mathcal{E}_{\overline{3}}'} \\
      \& \& {\mathcal{F}_{\overline{2}}}
      \& {\mathcal{F}_{\overline{3}}}
      \arrow["{\mathds{1}}"', from=1-1, to=1-2]
      \arrow["\alpha"', from=1-2, to=1-3]
      \arrow["{\mathds{1}}"', from=1-3, to=1-4]
      \arrow["\oplus"{description}, draw=none, from=1-3, to=2-3]
      \arrow["\alpha"', curve={height=19pt}, from=1-4, to=1-1]
      \arrow["\oplus"{description}, draw=none, from=1-4, to=2-4]
    \end{tikzcd}
    \right]
  $},
\end{equation}
    
where 
\begin{enumerate}[label=(\roman*)]
    \item $\mathds{1}$'s are isomorphsims $\mathcal{E}_{\overline{0}}\to\mathcal{E}_{\overline{1}}\KK_X$ and $\mathcal{E}_{\overline{2}}\to\mathcal{E}_{\overline{3}}'\KK_X$;

    \item $\mathcal{E}_{\overline{1}}^{-1}\mathcal{E}_{\overline{2}}\cong(\mathcal{E}_{\overline{3}}')^{-1}\mathcal{E}_{\overline{0}}$, and the two $\alpha$'s are the same,
\end{enumerate}
then  $\operatorname{aut}_{\overline{2}}(\EE,\Phi)\cong\mathbb{C}$, which is generated by $\begin{pmatrix}
    &I_2\\I_2&
\end{pmatrix}$
on $\mathcal{E}_{\overline{0}}\oplus\mathcal{E}_{\overline{1}}\oplus\mathcal{E}_{\overline{2}}'\oplus\mathcal{E}_{\overline{3}}'$. Otherwise, $\operatorname{aut}_{\overline{2}}(\EE,\Phi)=0$.
\end{proposition}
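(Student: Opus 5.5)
The plan is to treat $\operatorname{aut}_{\overline{j}}$ as an explicit system of linear holomorphic equations on the graded components of $s$, and then kill the components one at a time. The anchor is that $\varphi_{\overline 0}=\mathds 1$ is an isomorphism $\mathcal E_{\overline0}\cong\mathcal E_{\overline1}\KK_X$.

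\textbf{Setup.} Write $s=\sum_k s_k$ with $s_k\colon\mathcal E_{\overline k}\to\mathcal E_{\overline{k+j}}\otimes\KK_X$, keeping the $\KK_X$-twist exactly as in the definition. The condition $[s\wedge\Phi]=0$ is equivalent to the four cyclic intertwining relations
\[
s_{k+1}\circ\varphi_{\overline k}=\varphi_{\overline{k+j}}\circ s_k,\qquad k\in\mathbb Z/4,
\]
read as identities of sections of $\Hom(\mathcal E_{\overline k},\mathcal E_{\overline{k+j+1}})\otimes\KK_X^2$.

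\textbf{Weights $\pm\overline1$.} The relation with $k=0$ uses $\varphi_{\overline0}=\mathds 1$, so it determines $s_1$ from $s_0$ and $\varphi_{\overline j}$. Feeding this through the remaining relations expresses every $s_k$ as a composite of $s_0$ with components of $\Phi$. The last relation then closes into a constraint on $s_0$ alone.

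For $j=\overline1$ we have $s_0\in H^0(\Hom(\mathcal E_{\overline0},\mathcal E_{\overline1})\otimes\KK_X)=H^0(\mathcal O)$, so $s_0$ is a constant $c$. The chain then forces $s$ to act like $c\,\Phi$ on the $\mathcal E_{\overline0}\to\mathcal E_{\overline1}$ piece. The closing relation compares $c\,\varphi_{\overline1}\varphi_{\overline0}$ with $\varphi_{\overline2}\circ s_1$, and comparing weights in the cyclic pattern leaves a term whose vanishing forces $c=0$; after that every $s_k$ vanishes.

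For $j=-\overline1$, the component $s_1\colon\mathcal E_{\overline1}\to\mathcal E_{\overline0}\KK_X$ is a section of $\KK_X^2$. I would use the chain to show that it factors through $\gamma$ and $\alpha$ in such a way that a nonzero $s$ produces a $\Phi$-invariant graded subsheaf of $\mathcal E$ (its kernel or image) that violates polystability. The degree bounds of \prettyref{lemma:higgsfieldnorm} type give the contradiction here, because the rank-$1$ summands carry the relevant arrows.

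Once $\operatorname{aut}_{-\overline1}=0$ (with the $\KK_X$-twist), the vanishing of $\mathbb H^2(C^\bullet(\mathbb E,\Phi))$ follows from Serre duality for the deformation complex. Indeed, $\mathbb H^2$ is dual to $\mathbb H^0$ of the dual complex $\mathbb E[\mathfrak g_{-\overline1}]\otimes\KK_X\to\mathbb E[\mathfrak g_{\overline0}]\otimes\KK_X^2$, which is exactly $\operatorname{aut}_{-\overline1}$ in the twisted sense. For this step I would use the general appendix framework.

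\textbf{Weight $\overline2$.} Here $s$ swaps $\mathcal E_{\overline0}\oplus\mathcal E_{\overline1}$ with $\mathcal E_{\overline2}\oplus\mathcal E_{\overline3}$. The two relations passing through $\mathds 1$ give $s_1=\beta\circ s_0$-type and $s_3$-type expressions. The relations through $\alpha$ and $\gamma$ then show that $s_0\colon\mathcal E_{\overline0}\to\mathcal E_{\overline2}\KK_X$ and $s_2\colon\mathcal E_{\overline2}\to\mathcal E_{\overline0}\KK_X$ have image and coimage splitting off rank-one summands $\mathcal E_{\overline2}',\mathcal E_{\overline3}'$ on which $\beta$ is an isomorphism, and on which $\gamma$ coincides with $\alpha$ under the induced identification. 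This is exactly the shape \eqref{eq:torsion} with conditions (i) and (ii).

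In that case the commutation equations reduce to scalar equations on line bundles of degree $0$, so $s$ is a constant multiple of $\begin{pmatrix}&I_2\\I_2&\end{pmatrix}$. If the shape is not \eqref{eq:torsion}, the same chase produces a nonzero holomorphic map between line bundles of unequal degree, or an invariant subobject contradicting polystability, so $s=0$.

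\textbf{Main obstacle.} I expect the hardest part to be the rank-$n$ (or rank-$(m-1)$) block in the $\overline2$ case. There one must show that a nonzero $s$ genuinely forces a holomorphic splitting $\mathcal E_{\overline2}=\mathcal E_{\overline2}'\oplus\mathcal F_{\overline2}$. I would first try to establish this by taking the saturation of the image of $s_0$, proving that it is $\Phi$-invariant, and then invoking polystability to obtain a complementary $\Phi$-invariant summand.
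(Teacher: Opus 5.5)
Your first step fails. You read $\operatorname{aut}_{\overline j}$ with the $\KK_X$-twist, $s_k\colon\mathcal E_{\overline k}\to\mathcal E_{\overline{k+j}}\otimes\KK_X$, and under that reading the claim is false. The element $s=c\,\Phi$, i.e.\ $s_k=c\,\varphi_{\overline k}$, satisfies all four of your relations $s_{k+1}\varphi_{\overline k}=\varphi_{\overline{k+1}}s_k$ for every constant $c$. So no closing relation can force $c=0$: the twisted $\operatorname{aut}_{\overline 1}$ always contains $\Phi\neq0$. The $\otimes\KK_X$ in the second description of $\operatorname{aut}_{\overline j}$ in Section~\ref{sec:Moduli} is a slip. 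The appendix definition ($s$ a section of $\mathbb E[\mathfrak g_{\overline j}]$), the identification in Proposition~\ref{prop:dual}, and the untwisted generator $\begin{pmatrix}&I_2\\I_2&\end{pmatrix}$ in the statement all use untwisted $s_k\colon\mathcal E_{\overline k}\to\mathcal E_{\overline{k+j}}$. Your Serre duality is off by the same factor. The Serre dual of $C^\bullet=(\mathbb E[\mathfrak g_0]\to\mathbb E[\mathfrak g_{\overline1}]\otimes\KK_X)$ is $(\mathbb E[\mathfrak g_{-\overline1}]\to\mathbb E[\mathfrak g_0]\otimes\KK_X)$, whose $\mathbb H^0$ is the untwisted $\operatorname{aut}_{-\overline1}$. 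The twisted space you propose to kill therefore does not control $\mathbb H^2$. Your weight-$\overline2$ sketch inherits the problem: twisted maps $\mathcal E_{\overline0}\to\mathcal E_{\overline2}\otimes\KK_X$ cannot give degree-$0$ scalar equations or the stated generator.

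With the untwisted reading, the chase starts well for $j=\overline1$. You get $s_0\in H^0(\KK_X^{-1})=0$, and since $\mathds{1}$ is an isomorphism this kills $s_1$ and $s_3$. What remains is $s_2\colon\mathcal E_{\overline2}\to\mathcal E_{\overline3}$ with $s_2\alpha=0=\gamma s_2$. For $j=-\overline1$ nothing vanishes for free, since $s_1$ is a section of $\KK_X$. In both cases the missing input is Schur's lemma. Lemma~\ref{lemma:higgsfieldnorm} does not help: it is an analytic sup-norm bound for the harmonic metric and gives no degree inequalities.

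The paper's route is as follows. It splits the polystable bundle into stable summands (Corollary~\ref{coro:infautpoly}) and notes that $\mathcal E_{\overline0}\oplus\mathcal E_{\overline1}$ lies in one summand of multiplicity one. By Proposition~\ref{prop:Schur}, a nonzero weight-$\pm\overline1$ endomorphism of that summand is an isomorphism. That forces $\mathcal E_{\overline0}\cong\mathcal E_{\overline1}$, contradicting $\mathcal E_{\overline0}\cong\mathcal E_{\overline1}\KK_X$.

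Your mechanism of an invariant subobject ``violating polystability'' cannot work at strictly polystable points, where equal slopes are allowed. There the vanishing genuinely fails. Take type~I with $n=2$ and $\beta=\gamma=0$, and set $(\EE,\Phi)=(\mathcal E_{\overline0}\to\mathcal E_{\overline1}\to A)\oplus(\mathcal F,0)\oplus(\mathcal E_{\overline3},0)$ with $\mathcal E_{\overline2}=A\oplus\mathcal F$. Choose $\mathcal F\cong\mathcal E_{\overline3}$, $\alpha\neq0$, and all three summands stable of slope $\mu$; for example $\mu=1$, $\deg\mathcal E_{\overline1}=3-g$, $\deg A=-1$, $\deg\mathcal F=1$. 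The map $\mathcal F\xrightarrow{\sim}\mathcal E_{\overline3}$, extended by zero, lies in $\operatorname{aut}_{\overline1}$, and its inverse lies in $\operatorname{aut}_{-\overline1}$, so $\mathbb H^2\neq0$. This is exactly where the paper's claim that $\FF^\alpha_{\overline2}=0$ for all $\alpha\neq\alpha_0$, or $\FF^\alpha_{\overline3}=0$ for all $\alpha\neq\alpha_0$, breaks. You should therefore prove the vanishing at stable points only, which is all Proposition~\ref{prop:smooth} uses; there Schur's lemma finishes it in one line.
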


\begin{proof}
    By \prettyref{coro:infautpoly}, there is a decomposition
    \[(\EE,\Phi)=\bigoplus_{\alpha} (\FF^\alpha,\Phi|_{\FF^\alpha})\otimes V_\alpha,\]
    for some pairwise non-isomorphic $\Phi$-invariant graded subbundles $\FF^\alpha\subset\EE$ which are stable as cyclic Higgs bundles and $\mu(\FF^\alpha)=\mu(\EE)$. Then
    \[
        \operatorname{aut}(\EE,\Phi)=\bigoplus_\alpha \operatorname{id}_{\FF^\alpha}\otimes\mathfrak{gl}(V_\alpha).
    \]
    Since $r_0=r_1=1$ and $\varphi_0$ is nowhere vanishing, there is exactly one $\FF^{\alpha_0}$ containing $\EE_0$ and $\EE_1$ with $\dim(V_{\alpha_0})=1$. And for $\alpha\neq\alpha_0$, $\FF^\alpha_{\overline{0}}=\FF^\alpha_{\overline{1}}=0$. 
    
    For any $s\in\operatorname{aut}_{-\overline{1}}(\EE,\Phi)$ or $\operatorname{aut}_{\overline{1}}(\EE,\Phi)$, \prettyref{prop:Schur} implies that $s$ preserves $\FF^{\alpha_0}$. Since $m=2$ or $n=1$, either $\FF^{\alpha}_{\overline{2}}=0$ for all $\alpha\neq\alpha_0$ or $\FF^{\alpha}_{\overline{3}}=0$ for all $\alpha\neq\alpha_0$. For any possibility, $s|_{\FF^\alpha}=0$ for all $\alpha\neq\alpha_0$. Now if $s\neq0$, $s|_{\FF^{\alpha_0}}$ is an isomorphism, hence all $\FF^{\alpha_0}_{\overline{j}}$ are isomorphic. This yields that $\EE_{\overline{0}}\cong\EE_{\overline{1}}$, which contrasts with $\varphi_0\colon\EE_{\overline{0}}\to\EE_{\overline{1}}\otimes\KK_X$ is an isomorphism, since $\KK_X$ is non-trivial. Therefore, $s$ must be $0$. By \prettyref{prop:dual}, we obtain that $\mathbb{H}^2(C^\bullet(\mathbb{E},\Phi))=0$.

    Now let $s\in\operatorname{aut}_{\overline{2}}(\EE,\Phi)$. By \prettyref{prop:Schur} again, either there is an $\alpha_1$ such that
    \begin{itemize}
        \item $s({\FF^{\alpha_0}})\subset\FF^{\alpha_1}$, $s({\FF^{\alpha_1}})\subset\FF^{\alpha_0}$;
        \item $\FF^{\alpha_1}_{\overline{0}}=\FF^{\alpha_1}_{\overline{1}}=0$
    \end{itemize} or $s$ preserves ${\FF^{\alpha_0}}$. For any $\alpha\neq\alpha_0$ and the possible $\alpha_1$, $s|_{\FF^\alpha}$ must be $0$. 
    We define $\FF$ as the direct sum of $\FF^{\alpha_0}$ and the possible $\FF^{\alpha_1}$. Then $s$ preserves $\FF$. If $s\neq0$, then by \prettyref{prop:Schur}, it forces that $\varphi_{\overline{2}}|_{\FF_{\overline{2}}}$ is an isomorphism, and the only possibility of $s$ is a scaling of
    $\begin{pmatrix}
        0&I_2\\I_2&0
    \end{pmatrix},$ which forces that $[(\EE,\Phi)]$ is of the form \prettyref{eq:torsion}. This yields that $\operatorname{aut}_{\overline{2}}(\EE,\Phi)\cong\mathbb{C}$. On the other hand, if $[(\EE,\Phi)]$ is not of the form \prettyref{eq:torsion}, then $\operatorname{aut}_{\overline{2}}(\EE,\Phi)=0$.
\end{proof}

\begin{proposition}\label{prop:smooth}
    If $[(\mathbb{E},\Phi)]\in\mathcal M_{\tau}(X)^{\mathrm{s}}$, then a neighborhood of $[(\mathbb{E},\Phi)]$ in the moduli space $\mathcal{M}(G_0,\mathfrak{g}_{\overline{1}})$ is the complex linear space $\mathbb{H}^1(C^\bullet(\mathbb{E},\Phi))$. As a corollary, $\mathcal M_{\tau}(X)^{\mathrm{s}}$ is smooth.
\end{proposition}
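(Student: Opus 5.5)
The plan is to use the standard Kuranishi local-model description of the moduli space of $(G_0,\mathfrak g_{\overline 1})$-Higgs pairs, as recalled in \prettyref{apdx:cyclic}. Near a polystable point $(\mathbb E,\Phi)$ this model is the quotient $\kappa^{-1}(0)/\operatorname{Aut}(\mathbb E,\Phi)$. Here $\kappa\colon \mathbb H^1(C^\bullet(\mathbb E,\Phi))\supset U\to \mathbb H^2(C^\bullet(\mathbb E,\Phi))$ is an $\operatorname{Aut}$-equivariant holomorphic Kuranishi map defined on a neighborhood $U$ of $0$. The proof therefore comes down to two checks at a stable point: the obstruction space vanishes, and the automorphism group acting on the slice is effectively trivial.

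\textbf{Step 1: the obstruction map vanishes.} By \prettyref{prop:H2vanish}, every $[(\mathbb E,\Phi)]\in\mathcal M_\tau$ satisfies $\mathbb H^2(C^\bullet(\mathbb E,\Phi))=0$. Hence $\kappa\equiv 0$, and $\kappa^{-1}(0)=U$ is an open ball in the complex vector space $\mathbb H^1$.

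\textbf{Step 2: the automorphism group acts trivially.} For a stable $G_0$-Higgs pair, the Schur-type statement \prettyref{prop:Schur} (applied via the decomposition of \prettyref{coro:infautpoly} with a single summand $\mathcal F^{\alpha_0}=\mathcal E$ and $V_{\alpha_0}$ one-dimensional) gives $\operatorname{aut}_{\overline 0}(\mathbb E,\Phi)=\mathbb C\cdot\operatorname{id}$. Consequently $\operatorname{Aut}(\mathbb E,\Phi)$ is the center $\mathbb C^*\cdot\operatorname{id}$, possibly together with the finite part of $Z(G_0)$ coming from the grading. Central elements act trivially on the deformation complex, because they act by conjugation on $\mathfrak g_{\overline 0}$ and $\mathfrak g_{\overline 1}$. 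So the quotient $U/\operatorname{Aut}$ is just $U$. This gives a neighborhood of $[(\mathbb E,\Phi)]$ in $\mathcal M(G_0,\mathfrak g_{\overline 1})$ biholomorphic to an open subset of $\mathbb H^1(C^\bullet(\mathbb E,\Phi))$, and one may shrink it to a ball identified with that linear space.

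\textbf{Step 3: conclude.} Stability is an open condition, and $\mathcal M_\tau(X)$ is open in $\mathcal M_{(d_j)}(X;G_0,\mathfrak g_{\overline 1})$. Hence $\mathcal M^{\mathrm s}_\tau(X)$ is an open subset of the moduli space that is locally modeled on complex vector spaces, so it is smooth. Its local dimension is $\dim \mathbb H^1=-\chi(C^\bullet)+\dim\mathbb H^0+\dim\mathbb H^2=-\chi(C^\bullet)+1$, which is constant on each degree component.

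\textbf{Main obstacle.} The delicate point is Step 2: one must confirm that no nontrivial finite stabilizer survives. For $\mathbb Z/4$-graded pairs, $Z(G_0)$ can be larger than the scalars, and a stable pair might a priori carry an extra automorphism, such as the order-two element suggested by $\operatorname{aut}_{\overline 2}$ in \prettyref{prop:H2vanish}. I would first check that such elements belong to $\mathfrak g_{\overline 2}$ rather than $G_0$, so they do not act on the slice. I would then verify that any element of $G_0$ commuting with $\Phi$ is scalar on each summand, with the scalars forced to be equal by the nonvanishing of $\varphi_{\overline 0}$ and stability. If a finite group did survive, it would act on $\mathbb H^1$ only by the trivial character, and the conclusion would still hold.
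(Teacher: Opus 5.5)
Your proposal is correct and follows essentially the same route as the paper. In both, $\mathbb{H}^2(C^\bullet(\mathbb{E},\Phi))=0$ by \prettyref{prop:H2vanish} kills the Kuranishi map, and $\operatorname{Aut}(\mathbb{E},\Phi)=\mathbb{C}^*\cdot\operatorname{id}_{\EE}$ acts trivially on $\mathbb{H}^1(C^\bullet(\mathbb{E},\Phi))$, so the local model is an open ball in $\mathbb{H}^1$.

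One point to tighten: the full automorphism group, not just its Lie algebra, is exactly the scalars. This is \prettyref{coro:automorphism}, proved by applying the Schur argument of \prettyref{prop:Schur} to $g-\lambda\operatorname{id}$ for an eigenvalue $\lambda$ of $g$. So no finite residual stabilizer arises. Your fallback claim, that a surviving finite group would act trivially on $\mathbb{H}^1$, is not justified in general, but you do not need it.
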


\begin{proof}
    By \prettyref{prop:H2vanish}, $\mathbb{H}^2(C^\bullet(\mathbb{E},\Phi))=0$ and by \prettyref{coro:automorphism}, the automorphism group $\operatorname{Aut}(\mathbb{E},\Phi)=\mathbb{C}^*\cdot\operatorname{id}_{\EE}$ acts trivially on $\mathbb{H}^1(C^\bullet(\mathbb{E},\Phi))$ by adjoint. Hence a neighborhood of $[(\EE,\Phi)]$ in the moduli space $\mathcal{M}(G_0,\mathfrak{g}_{\overline{1}})$ is the complex linear space $\mathbb{H}^1(C^\bullet(\mathbb{E},\Phi))$, which is smooth.
\end{proof}
The following proposition gives the expected dimension of $\mathcal{M}(G_0,\mathfrak{g}_{\overline{1}})$.
\begin{proposition}\label{prop:Cdim}
    If $[(\mathbb{E},\Phi)]\in\mathcal{M}_{(d_j),\tau}^{\mathrm{s}}$, then \[\begin{aligned}
    &\dim_{\mathbb{C}}T_{[(\mathbb{E},\Phi)]}\mathcal{M}(G_0,\mathfrak{g}_{\overline{1}})\\
    =&1+(2+n^2+(m-1)^2+m(n+1))(g-1)
        +(m-2)(d_0-d_2)+(1-n)(d_1-d_3).
    \end{aligned}\]
\end{proposition}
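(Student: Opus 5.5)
By Proposition \ref{prop:smooth}, a neighborhood of $[(\mathbb{E},\Phi)]$ in $\mathcal{M}(G_0,\mathfrak{g}_{\overline{1}})$ is the linear space $\mathbb{H}^1(C^\bullet(\mathbb{E},\Phi))$. The plan is therefore to compute $\dim_{\mathbb C}\mathbb{H}^1$ through the Euler characteristic of the deformation complex
\[
C^\bullet(\mathbb{E},\Phi)\colon\ \mathbb{E}[\mathfrak{g}_{\overline 0}]\xrightarrow{\ [\,\cdot\,,\Phi]\ }\mathbb{E}[\mathfrak{g}_{\overline 1}]\otimes\KK_X ,
\]
using
\[
\dim\mathbb{H}^1=\dim\mathbb{H}^0+\dim\mathbb{H}^2-\chi(C^\bullet).
\]
The three steps are as follows.

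\emph{Step 1: the end terms.} We have $\mathbb{H}^2=0$ by Proposition \ref{prop:H2vanish}. Next, $\mathbb{H}^0$ is the space of degree-$\overline 0$ infinitesimal automorphisms. For a stable pair this is $\mathbb C\cdot\id_{\EE}$, by the corollary on automorphisms (\prettyref{coro:automorphism}) already used in Proposition \ref{prop:smooth}. Hence $\dim\mathbb{H}^0=1$, which accounts for the constant $1$ in the formula.

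\emph{Step 2: Riemann--Roch on each term.} Write $r=(1,1,n,m-1)$, $d=(d_0,d_1,d_2,d_3)$, and read indices mod $4$. We use
\[
\mathbb{E}[\mathfrak{g}_{\overline 0}]=\bigoplus_j\End(\EE_j),\qquad
\mathbb{E}[\mathfrak{g}_{\overline 1}]\otimes\KK_X=\bigoplus_j\Hom(\EE_j,\EE_{j+1})\otimes\KK_X .
\]
The first bundle has degree $0$ and rank $2+n^2+(m-1)^2$, so its Euler characteristic is $(2+n^2+(m-1)^2)(1-g)$.

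The second bundle has total rank $\sum_j r_jr_{j+1}=1+n+n(m-1)+(m-1)=m(n+1)$. Its degree is
\[
\sum_j\bigl(r_jd_{j+1}-r_{j+1}d_j\bigr)+m(n+1)(2g-2).
\]
Therefore its Euler characteristic equals $\sum_j(r_jd_{j+1}-r_{j+1}d_j)+m(n+1)(g-1)$.

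\emph{Step 3: assemble.} Since $-\chi(C^\bullet)=-\chi(\mathbb{E}[\mathfrak g_{\overline 0}])+\chi(\mathbb{E}[\mathfrak g_{\overline 1}]\otimes\KK_X)$, adding $\dim\mathbb{H}^0=1$ gives
\[
1+\bigl(2+n^2+(m-1)^2+m(n+1)\bigr)(g-1)+\sum_j(r_jd_{j+1}-r_{j+1}d_j).
\]
Expanding the last sum term by term:
\begin{itemize}
\item $j=0$ contributes $d_1-d_0$;
\item $j=1$ contributes $d_2-nd_1$;
\item $j=2$ contributes $nd_3-(m-1)d_2$;
\item $j=3$ contributes $(m-1)d_0-d_3$.
\end{itemize}
Collecting coefficients, the sum equals $(m-2)(d_0-d_2)+(1-n)(d_1-d_3)$, which is the claimed formula.

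The only delicate point is conventions. One must confirm that the hypercohomology of $C^\bullet$ is normalized so that $\mathbb{H}^0$ is the automorphism algebra (degree $\overline 0$, not a twisted version). One must also confirm that the $\KK_X$-twist sits in the second term, so that its degree contribution has the sign used in Step 2. Everything else is routine Riemann--Roch bookkeeping.
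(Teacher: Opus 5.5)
Your proposal is correct and follows the same route as the paper. You write $\dim\mathbb{H}^1=\dim\mathbb{H}^0+\dim\mathbb{H}^2-\chi(C^\bullet)$ and take $\mathbb{H}^2=0$ from Proposition~\ref{prop:H2vanish} and $\mathbb{H}^0=\mathbb{C}\cdot\id_{\EE}$ by stability (the paper cites Proposition~\ref{prop:Schur}, the Lie-algebra form of Corollary~\ref{coro:automorphism}); your Riemann--Roch bookkeeping on the two terms of the deformation complex matches the paper's and correctly gives $(m-2)(d_0-d_2)+(1-n)(d_1-d_3)$.
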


\begin{proof}
    Note that
    \[\begin{aligned}    \dim_{\mathbb{C}}T_{[(\mathbb{E},\Phi)]}\mathcal{M}(G_0,\mathfrak{g}_{\overline{1}})
        =&\dim_{\mathbb{C}}\mathbb{H}^1(C^\bullet(\mathbb{E},\Phi))\quad\mbox{(by \prettyref{prop:smooth})}\\
        =&\dim_{\mathbb{C}}\mathbb{H}^0(C^\bullet(\mathbb{E},\Phi))-\chi(C^\bullet(\mathbb{E},\Phi))+\dim_{\mathbb{C}}\mathbb{H}^2(C^\bullet(\mathbb{E},\Phi))\\
        =&1-\chi(C^\bullet(\mathbb{E},\Phi))\quad\mbox{(by \prettyref{prop:Schur} and \prettyref{prop:H2vanish})}\\
        =&1-\chi(\mathbb{E}[\mathfrak{g}_0])+\chi(\mathbb{E}[\mathfrak{g}_{\overline{1}}]\otimes\KK_X)\quad\mbox{(by \prettyref{eq:LES})}
    \end{aligned}\]
    where $\chi$ denotes the Euler characteristic. It suffices to compute $\chi(\mathbb{E}[\mathfrak{g}_0])$ and $\chi(\mathbb{E}[\mathfrak{g}_{\overline{1}}]\otimes\KK_X)$.

    Since \[\begin{aligned}
        \chi(\mathbb{E}[\mathfrak{g}_0])
        =&\sum_{j=0}^3\chi(\operatorname{End}(\EE_{\overline{j}}))\\
        =&\sum_{j=0}^3r_j^2(1-g)+(r_jd_j-r_jd_j)\quad\mbox{(by Riemann--Roch theorem)}\\
        =&(1-g)(1^2+1^2+n^2+(m-1)^2)\\
        =&(1-g)(2+n^2+(m-1)^2)
    \end{aligned}\]
    and
    \[\begin{aligned}
        &\chi(\mathbb{E}[\mathfrak{g}_{\overline{1}}]\otimes\KK_X)\\
        =&\sum_{j=0}^3\chi(\operatorname{Hom}(\EE_{\overline{j}},\EE_{\overline{j+1}})\otimes\KK_X)\\
        =&\sum_{j=0}^3r_jr_{j+1}(1-g)+(r_jd_{j+1}-r_{j+1}d_j)+r_jr_{j+1}\cdot\deg(\KK_X)\quad\mbox{(by Riemann--Roch theorem)}\\
        =&(1\cdot1+1\cdot n+n\cdot (m-1)+(m-1)\cdot 1)(g-1)\\&+(d_1-d_0)+(d_2-n\cdot d_1)+(n\cdot d_3-(m-1)\cdot d_2)+((m-1)\cdot d_0-d_3)\\
        =&m(n+1)(g-1)+(m-2)d_0+(1-n)d_1+(2-m)d_2+(n-1)d_3.
    \end{aligned}\]
    Therefore, 
    \[\begin{aligned}
        \dim_{\mathbb{C}}T_{[(\mathbb{E},\Phi)]}\mathcal{M}(G_0,\mathfrak{g}_{\overline{1}})
        =&1-\chi(\mathbb{E}[\mathfrak{g}_0])+\chi(\mathbb{E}[\mathfrak{g}_{\overline{1}}]\otimes\KK_X)\\
        =&1+(2+n^2+(m-1)^2+m(n+1))(g-1)\\
        &+(m-2)(d_0-d_2)+(1-n)(d_1-d_3).
    \end{aligned}\]
\end{proof}

\begin{remark}\label{rem:tensorline}
    There is an isomorphism between 
$\mathcal{M}_{(d_j),\tau}$ and $\mathcal{M}_{(d_j'),\tau}$ when
$(d_j)-(d_j')\in\mathbb{Z}\cdot(1,1,n,m-1)$ given by tensoring a line bundle.
\end{remark}

\subsection{Non-emptyness of \texorpdfstring{$\mathcal M_{(d_i),\tau}$}{M(di)tau}}\label{sec:stability}
In this subsection, we still assume that either $m=2$ or $n=1$. Consider a $4$-cyclic Higgs bundle $(\mathcal E,\Phi)$ either of type \upRoman{1} or of type \upRoman{2}, with $\rank(\EE)=q+3$,
\[
d_j=\deg \EE_{\overline{j}},\qquad \varphi_{\overline{0}}\colon\EE_{\overline{0}}\longrightarrow\EE_{\overline{1}}\KK_X\text{ isomorphism},\]
then 
\[d_1=d_0-2(g-1),
\qquad \mu:=\mu(\EE)=\frac{d_0+d_1+d_2+d_3}{q+3}.
\] 

If $(\EE,\Phi)$ is of type \upRoman{1}, then its ranks are $(1,1,q,1)$. Set \[
u_{\upRoman{1}}=d_1-d_3,
\qquad
v_{\upRoman{1}}=d_0-d_2+(q-1)\mu.
\]
Similarly, for a $4$-cyclic Higgs bundle $(\mathcal E,\Phi)$ of type \upRoman{2} with ranks $(1,1,1,q)$, set
\[
u_{\upRoman{2}}=d_2-d_0,
\qquad
v_{\upRoman{2}}=d_3-d_1-(q-1)\mu.
\]

For each $\tau$, let $\mathscr D_\tau$ denote the set of degree vectors
satisfying \begin{equation}\label{eq:a}-4(g-1)\leqslant u_\tau\leqslant v_\tau\leqslant  4(g-1);\end{equation}
let $\mathscr D_\tau^{\mathrm{s}}$ denote the set of degree vectors
satisfying  
\begin{equation}\label{eq:b}\begin{cases}
-4(g-1)\leqslant u_\tau\leqslant v_\tau\leqslant4(g-1), & q=1,\\[1mm]
-4(g-1)\leqslant u_\tau<v_\tau<4(g-1), & q\geqslant2.
\end{cases}\end{equation}

Consider a $4$-cyclic Higgs bundle of type \upRoman{1}. If $u_\upRoman{1}=d_1-d_3=-4(g-1)$ and $\varphi_{\overline{3}}\colon\EE_{\overline{3}}\rightarrow\EE_{\overline{0}}\KK_X$ is an isomorphism, then $(\mathcal E, \Phi)$ is also a Higgs bundle of type \upRoman{2}. For $q\geqslant 2$, this is the equivalent condition for a stable Higgs bundle is of both types. 

Consider a $4$-cyclic Higgs bundle $(\mathcal E,\Phi)$ of type \upRoman{2}, that is, the ranks are $(1,1,1,q)$ and degrees are $(d_0,d_1,d_2,d_3)$ and slope $\mu$.
By considering its dual Higgs bundle, we obtain a $4$-cyclic Higgs bundle of type \upRoman{2} of degree $(d_i^\vee)$ with slope $\mu^\vee=-\mu$, where 
\[d_0^\vee=-d_1, \quad d_1^\vee=-d_0,\quad d_2^\vee =-d_3, \quad d_3^\vee=-d_2.\] Then
\[u_\upRoman{1}(\EE^\vee,\Phi^\vee)=d_1^\vee-d_3^\vee=d_2-d_0=u_\upRoman{2}(\EE,\Phi)\]
and similarly $v_\upRoman{1}(\EE^\vee,\Phi^\vee)=v_\upRoman{2}(\EE,\Phi)$. The following lemma holds directly.

\begin{lemma}\label{lem:DualDegree}
A $4$-cyclic Higgs bundle $(\mathcal E,\Phi)$ of type \upRoman{2} with degree vector satisfies $(d_i)\in \mathscr D_{\upRoman{2}}$ (resp. $\mathscr D_{\upRoman{2}}^s)$ if and only if its dual Higgs bundle $(\mathcal E^\vee, \Phi^\vee)$ with degree vector satisfies $(d_i^\vee)\in \mathscr D_{\upRoman{1}}$ (resp. $\mathscr D_{\upRoman{1}}^s)$.
\end{lemma}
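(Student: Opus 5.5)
The plan is a direct computation. The defining inequalities \eqref{eq:a} and \eqref{eq:b} for $\mathscr D_\tau$ and $\mathscr D_\tau^{\mathrm s}$ depend only on the pair $(u_\tau,v_\tau)$ and on $q$, and are literally the same for $\tau=\upRoman{1}$ and $\tau=\upRoman{2}$. So it suffices to check three things: the dual of a type \upRoman{2} bundle is an admissible type \upRoman{1} bundle of the same $q$; its degree vector is $(d_i^\vee)$; and $u_{\upRoman{1}}(d^\vee)=u_{\upRoman{2}}(d)$ and $v_{\upRoman{1}}(d^\vee)=v_{\upRoman{2}}(d)$.

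First I would recall from Section~\ref{subsec:duality} that the dual of
\[
\mathcal L\to\mathcal L\KK_X^{-1}\to\mathcal V\to\mathcal W
\]
is the $4$-cyclic bundle with summands, in order,
\[
(\mathcal L\KK_X^{-1})^\vee,\ \mathcal L^\vee,\ \mathcal W^\vee,\ \mathcal V^\vee .
\]
Its ranks are $(1,1,q,1)$, so it is of type \upRoman{1}, and its degrees are
\[
d_0^\vee=-d_1,\qquad d_1^\vee=-d_0,\qquad d_2^\vee=-d_3,\qquad d_3^\vee=-d_2 .
\]
The first arrow $-\mathds 1^\vee$ is again an isomorphism $(\mathcal L\KK_X^{-1})^\vee\to\mathcal L^\vee\KK_X$. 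Consistently, $d_1^\vee=d_0^\vee-(2g-2)$ follows from $d_1=d_0-(2g-2)$, so the dual degree vector is admissible. The slope is $\mu^\vee=-\mu$, since all degrees change sign while the total rank $q+3$ is unchanged.

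Next I would compute the two invariants:
\[
u_{\upRoman{1}}(d^\vee)=d_1^\vee-d_3^\vee=d_2-d_0=u_{\upRoman{2}}(d),
\]
\[
v_{\upRoman{1}}(d^\vee)=d_0^\vee-d_2^\vee+(q-1)\mu^\vee=d_3-d_1-(q-1)\mu=v_{\upRoman{2}}(d).
\]
Substituting these into \eqref{eq:a} gives the $\mathscr D$ statement, and substituting into \eqref{eq:b} gives the $\mathscr D^{\mathrm s}$ statement. The case split $q=1$ versus $q\geqslant2$ in \eqref{eq:b} is harmless because $q$ is the same on both sides.

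Finally, for the ``if'' direction I would use that duality is an involution: $(\mathcal E^\vee)^\vee\cong\mathcal E$ and $(d^\vee)^\vee=d$. The same identities, read backwards, then give the converse.

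There is no real obstacle here. The only point requiring care is bookkeeping the reordering of the summands under duality, so that the correct entries are matched in $u$ and $v$, together with the sign of $\mu^\vee$ in the $v$ term.
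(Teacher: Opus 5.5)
Your proposal is correct and is essentially the paper's argument: the paper likewise computes the dual degree vector $(d_i^\vee)=(-d_1,-d_0,-d_3,-d_2)$ with slope $\mu^\vee=-\mu$, checks $u_{\upRoman{1}}(d^\vee)=u_{\upRoman{2}}(d)$ and $v_{\upRoman{1}}(d^\vee)=v_{\upRoman{2}}(d)$, and concludes because the defining inequalities depend only on $(u_\tau,v_\tau)$ and $q$. Your appeal to the involution $(\mathcal E^\vee)^\vee\cong\mathcal E$ for the converse is harmless but unnecessary, since these identities are equalities and so give both directions at once.
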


\begin{remark}

We can explicitly calculate the the number of the set of degree vectors in $\mathscr D_\tau^{\mathrm{s}}$ (resp. $\mathscr D_\tau$) and the number of the subset of degree vectors in $\mathscr D_\tau^{\mathrm{s}}$ (resp. $\mathscr D_\tau$) satisfying $d_0+d_1+d_2+d_3=(q+3)\mu$.  See Lemma \ref{lem:calculationDs} for details.
\end{remark}

\begin{lemma}\label{lem:StableNecessary}
Let $(\mathcal E,\Phi)$ be a $4$-cyclic Higgs bundle of type $\tau$. Then, $(\mathcal E,\Phi)$ is polystable only if $(d_j)\in \mathscr D_\tau$;
$(\mathcal E,\Phi)$ is stable only if $(d_i)\in \mathscr D_\tau^{\mathrm{s}}$.

\begin{itemize}
    \item If $\tau=\upRoman{1}$ and $(d_i)\in \mathscr D_\tau\setminus\mathscr D_\tau^{\mathrm{s}}$, then $ \mathcal E_{\overline{2}}= \mathcal E_{\overline{2}}^1\oplus  \mathcal E_{\overline{2}}^2$ as a holomorphic splitting, where $ \mathcal E_{\overline{2}}^1$ is of rank $1$, $\operatorname{im}(\alpha)\subset \mathcal E_{\overline{2}}^1\otimes \KK_X$, and $\beta(\mathcal E_{\overline{2}}^2)=0$. Moreover, \begin{itemize}
        \item $\beta\colon\mathcal E_{\overline{2}}^1\rightarrow \mathcal E_{\overline{3}}\otimes \KK_X$ is an isomorphism if $u_\upRoman{1}=v_\upRoman{1}$;
        
        \item $\alpha\colon\mathcal E_{\overline{1}}\rightarrow \mathcal E_{\overline{2}}^1\otimes \KK_X$ is an isomorphism if $v_\upRoman{1}=4(g-1)$. 
    \end{itemize}  

    \item If $\tau=\upRoman{2}$ and $(d_i)\in \mathscr D_\tau\setminus\mathscr D_\tau^{\mathrm{s}}$, then $\mathcal E_{\overline{3}}=\mathcal E_{\overline{3}}^1\oplus \mathcal E_{\overline{3}}^2$ as a holomorphic splitting, where $\mathcal E_{\overline{3}}^1$ is of rank $1$, $\operatorname{im}(\beta)\subset \mathcal E_{\overline{3}}^1\otimes\KK_X$, and $\gamma(\mathcal E_{\overline{3}}^2)=0$. Moreover, 
    
    \begin{itemize}
        \item $\beta\colon\mathcal E_{\overline{2}}\rightarrow \mathcal E_{\overline{3}}^1\otimes \KK_X$ is an isomorphism if $u_\upRoman{2}=v_\upRoman{2}$; 
        \item $\gamma\colon\mathcal E_{\overline{3}}^1\rightarrow \mathcal E_{\overline{0}}\otimes\KK_X$ is an isomorphism if $v_\upRoman{2}=4(g-1)$. 
    \end{itemize}
\end{itemize}
\end{lemma}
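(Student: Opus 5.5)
The plan is to reduce to type~\upRoman{1} and then apply slope (poly)stability to a short, explicit list of $\Phi$-invariant graded subsheaves. By \S\ref{subsec:duality}, the dual of a type~\upRoman{2} Higgs bundle is of type~\upRoman{1}. Polystability and stability are preserved under duality, and by Lemma~\ref{lem:DualDegree} the sets $\mathscr D_{\upRoman{2}}$, $\mathscr D^{\mathrm s}_{\upRoman{2}}$ correspond to $\mathscr D_{\upRoman{1}}$, $\mathscr D^{\mathrm s}_{\upRoman{1}}$. Under dualization, invariant subbundles become kernels of quotients, $\mathcal E_{\overline 3}$ is exchanged with $\mathcal E_{\overline 2}^\vee$, and $\beta,\gamma$ become $-\beta^\vee,-\alpha^\vee$. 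So the type~\upRoman{2} splitting statement (a line $\mathcal E^1_{\overline 3}$ containing $\operatorname{im}\beta$ and a complement killed by $\gamma$) is exactly the dual of the type~\upRoman{1} one. I would therefore only treat $\tau=\upRoman{1}$, with ranks $(1,1,q,1)$ and $d_1=d_0-(2g-2)$.

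The inequalities come from two sources.

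\textbf{Nonzero holomorphic maps between line bundles.} Such a map forces a degree inequality. For instance, $\gamma\neq0$ gives $d_3\leqslant d_0+2g-2$, i.e.\ $u_{\upRoman{1}}\geqslant-4(g-1)$.

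\textbf{$\Phi$-invariant graded subsheaves.} Let $N\subset\mathcal E_{\overline2}$ be the saturation of $\operatorname{im}(\alpha)\otimes\KK_X^{-1}$ when $\alpha\neq0$, and an appropriate line subbundle otherwise. Let $\mathcal K=\ker\beta\subset\mathcal E_{\overline2}$. Then the following are invariant:
\[
S_1=\mathcal E_{\overline0}\oplus\mathcal E_{\overline1}\oplus N\oplus\mathcal E_{\overline3},\qquad S_2=\mathcal K,
\]
together with the chain-type subsheaves $\mathcal E_{\overline k}\oplus\cdots\oplus\mathcal E_{\overline 3}$ when $\gamma=0$. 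Stability applied to these gives:
\begin{itemize}
\item $\deg(\mathcal E_{\overline2}/N)\geqslant(q-1)\mu$;
\item $\deg\mathcal K\leqslant\operatorname{rk}(\mathcal K)\,\mu$, where $\operatorname{rk}\mathcal K\geqslant q-1$;
\item $\deg N\geqslant d_1-(2g-2)$, from $\alpha\colon\mathcal E_{\overline1}\to N\KK_X$;
\item $d_3\geqslant\deg(\mathcal E_{\overline2}/\mathcal K)-(2g-2)$, from the induced nonzero map of $\beta$.
\end{itemize}
Adding these in the right combination, and using $d_2=\deg N+\deg(\mathcal E_{\overline2}/N)$ and $v_{\upRoman{1}}-u_{\upRoman{1}}=2g-2-d_2+d_3+(q-1)\mu$, I expect to get $u_{\upRoman{1}}\leqslant v_{\upRoman{1}}$ and $v_{\upRoman{1}}\leqslant4(g-1)$. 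The former comes via $\mathcal K$ and $\beta$; the latter via $N$ and $\alpha$, i.e.\ $d_0-\deg N+(q-1)\mu\leqslant\ldots$. Each inequality becomes strict for stable bundles with $q\geqslant2$, because then the relevant subsheaf is proper and nonzero. For $q=1$, $N=\mathcal E_{\overline2}$ and $\mathcal K=0$, so the only content left is the line-bundle inequalities, which is why $\mathscr D^{\mathrm s}=\mathscr D$ there.

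For the boundary cases $(d_j)\in\mathscr D_\tau\setminus\mathscr D^{\mathrm s}_\tau$ with $q\geqslant2$, one of the slope inequalities is an equality. Polystability then provides a $\Phi$-invariant complement of the destabilizing subsheaf. This yields a holomorphic splitting $\mathcal E_{\overline2}=N\oplus\mathcal E^2_{\overline2}$ with $\beta(\mathcal E^2_{\overline 2})=0$, i.e.\ $\mathcal E^2_{\overline 2}\subset\mathcal K$, and we set $\mathcal E^1_{\overline2}=N$. If in addition $u=v$, the line-bundle inequality for $\beta|_N\colon N\to\mathcal E_{\overline3}\KK_X$ is an equality, so this nonzero map has no zeros and is an isomorphism. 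Likewise $v=4(g-1)$ forces $\alpha\colon\mathcal E_{\overline1}\to N\KK_X$ to be an isomorphism.

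The main obstacle is the case bookkeeping: choosing the right subsheaves when $\alpha\equiv0$, when $\gamma\equiv0$, or when $\beta$ has small rank, so that the same two inequalities $u\leqslant v\leqslant4(g-1)$ still come out. For instance, when $\gamma\equiv0$ the bound $u\geqslant-4(g-1)$ must instead be extracted from the chain subsheaves combined with $\mathcal E_{\overline0}\cong\mathcal E_{\overline1}\KK_X$. I would first handle the generic case $\alpha,\beta,\gamma\neq0$ and then check each degenerate case separately against the same target inequalities.
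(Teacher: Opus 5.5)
Your plan matches the paper's proof in the generic configuration. The shared steps are:
\begin{itemize}
\item reduction to type~\upRoman{1} by duality;
\item the invariant rank-four subbundle $\mathcal E_{\overline0}\oplus\mathcal E_{\overline1}\oplus N\oplus\mathcal E_{\overline3}$, with $N$ the saturated image of $\alpha$ (the paper's $C$), for $v_{\upRoman{1}}\leqslant4(g-1)$;
\item $\ker\beta$ (the paper's $P$) together with $\deg(\mathcal E_{\overline2}/\ker\beta)\leqslant d_3+2g-2$, for $u_{\upRoman{1}}\leqslant v_{\upRoman{1}}$;
\item the line-bundle bound from $\gamma$, and the chain subbundles $\mathcal E_{\overline3}$, $\mathcal E_{\overline2}\oplus\mathcal E_{\overline3}$ when $\gamma\equiv0$, for $u_{\upRoman{1}}\geqslant-4(g-1)$;
\item splitting off the slope-$\mu$ subbundle by polystability at the boundary.
\end{itemize}
When $u_{\upRoman{1}}=v_{\upRoman{1}}$, the subbundle split off is $\ker\beta$. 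The $\Phi$-invariance of its complement puts $\operatorname{im}\alpha$ into the complementary line, so that line is indeed your $N$.

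The cases you deferred are where your sketch, as written, does not close.

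\textbf{The case $\alpha\equiv0$.} Replacing $N$ by some other line subbundle does not help. Invariance of $S_1$ only bounds $\deg N$ from above. But $v_{\upRoman{1}}\leqslant4(g-1)$ needs a line subbundle of degree at least $d_0-4(g-1)$, and only $\alpha$ supplies one. Use instead that $\mathcal E_{\overline0}\oplus\mathcal E_{\overline1}$ and $\mathcal E_{\overline0}\oplus\mathcal E_{\overline1}\oplus\mathcal E_{\overline3}$ are then $\Phi$-invariant. They give $d_0\leqslant\mu+(g-1)$ and $d_2\geqslant q\mu$, hence $v_{\upRoman{1}}\leqslant g-1$.

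\textbf{The case $\beta\equiv0$.} Your $\mathcal K=\mathcal E_{\overline2}$ gives $d_2\leqslant q\mu$. You also need the invariant subbundle $\mathcal E_{\overline0}\oplus\mathcal E_{\overline1}\oplus\mathcal E_{\overline2}$, which gives $d_3\geqslant\mu$. Together these yield $v_{\upRoman{1}}-u_{\upRoman{1}}\geqslant2(g-1)$.

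These strict margins are also needed for your boundary analysis. They show that $v_{\upRoman{1}}=4(g-1)$ forces $\alpha\not\equiv0$, and that $u_{\upRoman{1}}=v_{\upRoman{1}}$ forces $\beta\not\equiv0$. You use both tacitly when you treat $\alpha$ and $\beta|_N$ as nonzero maps between line bundles of equal degree. The paper handles exactly these two degenerate cases with these chain subbundles.
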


\begin{proof}
By Lemma \ref{lem:DualDegree}, it is enough to prove for $4$-cyclic Higgs bundles of type \upRoman{1}.

\begin{itemize}
    \item We first prove that $u_\upRoman{1}\geqslant-4(g-1)$. 
    
    \begin{itemize}
        \item Suppose that
 $\gamma\colon\mathcal E_{\overline{3}}\to \mathcal E_{\overline{0}}\otimes\KK_X$ is nonzero. Since both $\mathcal E_{\overline{3}}$ and $\mathcal E_{\overline{0}}\otimes\KK_X$ are line bundles, then 
$d_3\leqslant d_0+2(g-1)$. Since $d_1=d_0-2(g-1)$, this is exactly $u_{\upRoman{1}}\geqslant-4(g-1)$.

\item Suppose that $\gamma\equiv0$. Then the bundles $\mathcal E_3, \mathcal E_{\overline{2}}\oplus \mathcal E_{\overline{3}}, \mathcal E_{\overline{1}}\oplus \mathcal E_{\overline{2}}\oplus \mathcal E_{\overline{3}}$ are $\Phi$-invariant.  Polystability gives 
\[d_3\leqslant \mu, \quad d_2+d_3\leqslant (q+1)\mu,\quad  d_1+d_2+d_3\leqslant (q+2)\mu.\] The second inequality is equivalent to $d_0+d_1=2d_1+(2g-2)\geqslant 2\mu$. Thus 
\[u_{\upRoman{1}}=d_1-d_3\geqslant-(g-1)\geqslant -4(g-1).\]
    \end{itemize}

\item We next prove that $v_{\upRoman{1}}\leqslant4(g-1)$ and test what happens when the equality holds. 

\begin{itemize}
    \item Suppose that $\alpha$ is a nonzero map. Let $A\subset \mathcal E_{\overline{2}}$ be the saturated image of the map \[\alpha\colon \mathcal E_{\overline{1}}\KK_X^{-1}=\LL\KK_X^{-2}\to \mathcal E_{\overline{2}}.\] Then $A$ is a line subbundle and
$\deg(A)\geqslant d_0-4(g-1)$.  If $q=1$, the same nonzero line-bundle map gives
$d_2\geqslant d_0-4(g-1)$, hence $v_{\upRoman{1}}=d_0-d_2\leqslant4(g-1)$.  If $q\geqslant2$, the rank-four bundle
\[
C=\mathcal E_{\overline{0}}\oplus \mathcal E_{\overline{1}}\oplus A\oplus \mathcal E_{\overline{3}}
\]
is a proper $\Phi$-invariant subbundle.  Polystability gives $\deg(C)\leqslant4\mu$. Stability gives $\deg(C)<4\mu$. But
$\deg(C)\geqslant d_0+d_1+d_3+d_0-4(g-1)$. The identity
\[
4\mu-\left(d_0+d_1+\left(d_0-4(g-1)\right)+d_3\right)=4(g-1)-v_{\upRoman{1}}
\]
therefore yields $v_{\upRoman{1}}\leqslant 4(g-1)$ if it is polystable and $v_{\upRoman{1}}<4(g-1)$ if it is stable. 

If $q\geqslant 2$ and $v_{\upRoman{1}}=4(g-1)$, we obtain $\deg(C)=4\mu$ and by polystability,  $\mathcal E_{\overline{2}}=A\oplus \mathcal E_{\overline{2}}^2$ as a holomorphic splitting. Moreover, $\deg(A)=d_0-4(g-1)$, implying that $\alpha$ is injective.

\item Suppose that $\alpha$ is a zero map. Then the bundles $\mathcal E_{\overline{1}}$, $\mathcal E_{\overline{0}}\oplus \mathcal E_{\overline{1}}$, $\mathcal E_{\overline{3}}\oplus \mathcal E_{\overline{0}}\oplus \mathcal E_{\overline{1}}$ are $\Phi$-invariant. Polystability gives 
\[d_1\leqslant \mu, \quad d_0+d_1=2d_1+2(g-1)\leqslant 2\mu,\quad  d_3+d_0+d_1\leqslant 3\mu.\] The last inequality is equivalent to $d_2\geqslant q\mu$. Thus \[v_{\upRoman{1}}=d_0-d_2+(q-1)\mu=d_1-d_2+(2g-2)+(q-1)\mu\leqslant 2(g-1)< 4(g-1).\]
\end{itemize}

\item We finally prove that $u_{\upRoman{1}}\leqslant v_{\upRoman{1}}$ and test what happens when the equality holds.
\begin{itemize}
    \item Suppose that $\beta$ is a nonzero map. Let $P=\ker(\beta)$.  If $q=1$, $\beta\colon\mathcal E_{\overline{2}}\to \mathcal E_{\overline{3}}\otimes\KK_X$ being nonzero gives 
    \[u_{\upRoman{1}}-v_{\upRoman{1}}=(d_1-d_3)-(d_0-d_2)=(d_2-d_3)-2(g-1)\leqslant 0.\]
    If $q\geqslant2$, then $P$ has rank $(q-1)$, is $\Phi$-invariant, and \[\deg(P)\geqslant d_2-d_3-2(g-1).\]
    Also, 
    \[v_{\upRoman{1}}-u_{\upRoman{1}}=(q-1)\mu-d_2+d_3+2(g-1)\geqslant(q-1)\mu-\deg(P).\] Polystability gives $\deg(P)\leqslant (q-1)\mu$ and thus $v_{\upRoman{1}}-u_{\upRoman{1}}\geqslant 0$. Stability gives
$\deg(P)<(q-1)\mu$ and thus $v_{\upRoman{1}}-u_{\upRoman{1}}>0$. 

If $q\geqslant 2$ and $u_{\upRoman{1}}=v_{\upRoman{1}}$, we obtain $\deg(P)=(q-1)\mu$ and by polystability,  $\mathcal E_{\overline{2}}=P\oplus \mathcal E_{\overline{2}}^1$ as a holomorphic splitting. Moreover, $d_2=d_3+2(g-1)$, implying that $\beta$ is surjective.

    \item Suppose that $\beta$ is a zero map. Then the bundles $\mathcal E_{\overline{2}}$, $\mathcal E_{\overline{1}}\oplus \mathcal E_{\overline{2}}$, $\mathcal E_{\overline{0}}\oplus \mathcal E_{\overline{1}}\oplus \mathcal E_{\overline{2}}$ are $\Phi$-invariant. Polystability gives 
\[d_2\leqslant q\mu,\quad d_1+d_2\leqslant (q+1)\mu,\quad d_0+d_1+d_2\leqslant
(q+2)\mu.\] The last inequality is equivalent to $d_3\geqslant \mu$.
Therefore, \[v_{\upRoman{1}}-u_{\upRoman{1}}=(q-1)\mu-d_2+d_3+2(g-1)\geqslant 2(g-1)>0.\]
\end{itemize}
\end{itemize}
\end{proof}

\begin{remark}
Let $(\mathcal E,\Phi)$ be a $4$-cyclic Higgs bundle of type $\upRoman{1}$. As a $\mathrm{U}_{2, q+1}$-Higgs bundle, the Toledo invariant is 
\[
\operatorname{Tol}=\frac{2}{q+3}\Bigl((q+1)(d_1+d_3)-2(d_0+d_2)\Bigr).
\]The Milnor--Wood type inequality for $\mathrm{U}_{2,q+1}$-Higgs bundles (due to Domic and
Toledo \cite{domic1987gromov}) says $|\operatorname{Tol}|\leqslant4(g-1)$. The maximal Higgs bundles are those that satisfy $|\operatorname{Tol}|=4(g-1)$.
A direct use of $(q+3)\mu=d_0+d_1+d_2+d_3$ gives
\[
\operatorname{Tol}=v_\upRoman{1}-u_\upRoman{1}-4(g-1).
\]
Then $|\operatorname{Tol}|\leqslant4(g-1)$ is equivalent to $0\leqslant v_\upRoman{1}-u_\upRoman{1}\leqslant8(g-1)$. Hence, the Milnor-Wood inequality is already
contained in the degree criterion: $u_\upRoman{1}\leqslant v_\upRoman{1}$ gives the lower inequality, while
$u_\upRoman{1}\geqslant-4(g-1)$ and $v_\upRoman{1}\leqslant4(g-1)$ give the upper one. 

For $q\geqslant2$, the strict conditions
$u_\upRoman{1}<v_\upRoman{1}<4(g-1)$ imply the strict Milnor--Wood inequality $|\operatorname{Tol}|<4(g-1)$, meaning maximal Higgs bundles are never stable.

When $q=1$, both boundary cases are allowed by the theorem. At
$\operatorname{Tol}=-4(g-1)$ one has $u_\upRoman{1}=v_\upRoman{1}$, so the nonzero line map
$\beta\colon\mathcal E_2\to \mathcal E_3\KK_X$ is an isomorphism. At $\operatorname{Tol}=4(g-1)$ one has
$u_\upRoman{1}=-4(g-1)$ and $v_\upRoman{1}=4(g-1)$, so the nonzero line maps $\gamma\colon\mathcal E_{\overline{3}}\to \mathcal E_{\overline{0}}\KK_X$ and
$\mathcal E_{\overline{1}}\KK_X^{-1}\to \mathcal E_{\overline{2}}$ induced by $\alpha$ are isomorphisms. The explicit construction
above produces stable examples on both maximal points.
\end{remark}

Below we construct stable Higgs bundles for $(d_j)\in\mathscr{D}_\tau$. We will need the following lemma.

\begin{lemma}\label{lem:Hecke}
Let $f\colon A\to B$ be a nonzero map of line bundles and put
$T=\operatorname{coker}(f)$, of length $t=\deg(B)-\deg(A)$. For every $r\geqslant2$ and $e\in\mathbb Z$, there exist stable bundles $P$ and
$R$ of rank $r$ and degrees $e$ and $e+t$, together with an exact sequence
\[0\longrightarrow P\longrightarrow R\longrightarrow T\longrightarrow0.\]
\end{lemma}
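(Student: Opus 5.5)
Since $T$ is a torsion sheaf of length $t$ on the smooth curve $X$, I will build $P$ as an iterated elementary (Hecke) modification of $R$. The plan is to choose $R$ first and then take $P=\ker(R\to T)$ for a suitable surjection $R\to T$. Then $\deg P=\deg R-t$, so I start with $R$ stable of rank $r$ and degree $e+t$. Stable bundles of any rank $r\geqslant 1$ and any degree exist on $X$ since $g\geqslant 2$, and they form a nonempty open subset of the moduli of bundles.

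The key step is to show that for a generic surjection $R\to T$ the kernel $P$ is again stable. I would argue by a dimension count in the space $\mathcal Q$ of quotients $R\to T$ with $T$ fixed. Since $T$ is a finite direct sum of sheaves $\mathcal O_{Z_i}$ supported at points, surjections onto $T$ exist and form an irreducible open subset of $\operatorname{Hom}(R,T)$. I would compare this with the family of destabilizing subsheaves of $P$. A subbundle $F\subset P$ of rank $s<r$ with $\mu(F)\geqslant\mu(P)$ is also a subsheaf of $R$. Its saturation $F'$ in $R$ satisfies $\deg F'\geqslant\deg F$ and $\mu(F')<\mu(R)$, because $R$ is stable. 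So $F$ lies in a bounded family.

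Rather than counting dimensions, the cleaner route is induction on $t$, performing one point-modification at a time. For a single point $x$ and a generic hyperplane $H\subset R_x$, the kernel of $R\to R_x/H$ is stable whenever $R$ is stable. The reason is as follows. If $F\subset P$ destabilizes, then $\mu(F)\geqslant(\deg R-1)/r$, while $\mu(F)\leqslant\mu(F')<\deg R/r$ for its saturation $F'$ in $R$. Choosing $R$ generic (and $H$ generic) so that $R$ is $(0,1)$-stable in the sense of Narasimhan--Ramanan, I get $\mu(F')<(\deg R-1)/r$, a contradiction. I would invoke the standard fact that for $g\geqslant2$ generic stable bundles are $(0,1)$-stable, with $r\geqslant2$ used here. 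Since $T$ has a filtration with successive quotients skyscrapers $\mathbb C_{x}$, I would compose $t$ such modifications. Each step stays in generic position, because $(0,1)$-stability is an open condition and the modifications at a point form a family dominating an open subset of bundles of the new degree. The resulting composite $R\to T$ must have cokernel isomorphic to the given $T$, which I arrange by modifying successively along a full flag compatible with the local structure $\mathcal O/\mathfrak m_x^k$ at each point.

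The main obstacle is this genericity bookkeeping: ensuring that after each elementary modification the new bundle is still generic enough, namely $(0,1)$-stable, to continue. I would handle it by a parameter-count argument. The modifications of a fixed generic $R$ at a point $x$ form a $\mathbb P^{r-1}$ family whose kernels vary in moduli. Dually, every bundle of degree $\deg R-1$ arises as such a kernel of some $R$. Hence the locus of non-$(0,1)$-stable kernels is a proper closed subset, and a generic choice avoids it.
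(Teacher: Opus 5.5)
Your strategy is the paper's: realize $P\subset R$ as a chain of one-point elementary transformations and keep every intermediate bundle stable by genericity. The paper goes upward from $P$ by positive transformations and quotes Ballico's lemma that a general elementary transformation of a general stable bundle is stable. Going downward from $R$, as you do, is equivalent by duality.

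The gap is in how you justify the one-step claim. The ``standard fact'' that for $g\geqslant2$ a generic stable bundle is $(0,1)$-stable is false in genus $2$. By Nagata's bound, every rank-$2$ bundle of odd degree $d$ on a genus-$2$ curve has a line subbundle of degree $\geqslant(d-2)/2$, hence $\geqslant(d-1)/2$. So no such bundle is $(0,1)$-stable. Likewise, for $g=2$ and $d\equiv1\pmod r$, a general rank-$r$ bundle has a rank-$(r-1)$ subbundle of slope exactly $(d-1)/r$, by Hirschowitz's theorem; the expected codimension of that locus is $(r-1)(g-2)=0$. Your chain modifies bundles of every degree $e+t,\dots,e+1$. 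So for $g=2$ it breaks as soon as one of these is $\equiv1\pmod r$, for instance always when $r=2$ and $t\geqslant2$. The lemma is needed for all $g\geqslant2$. For $g\geqslant3$ the $(0,1)$ route does work, by Narasimhan--Ramanan.

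The repair is to drop $(0,1)$-stability altogether. All you need is that a generic modification of a generic stable bundle is stable, and your last paragraph proves this once it is run with the stable loci themselves. The Hecke correspondence $\{(R,H)\}$ is a $\mathbb P^{r-1}$-bundle over the irreducible family of all rank-$r$ bundles of degree $d$. Via $(R,H)\mapsto(P,\ell)$, with $\ell$ the image of $R(-x)|_x\to P|_x$, it is also a $\mathbb P^{r-1}$-bundle over those of degree $d-1$. Both projections are surjective and open, so the preimages of the two nonempty open stable loci are dense open and they meet.

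Phrase it this way rather than through ``the modifications of a fixed $R$''. Those form only an $(r-1)$-dimensional family and dominate nothing.

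The same argument on the iterated correspondence handles all $t$ steps at once. Since $T$ is a quotient of the line bundle $B$, it is cyclic at each point. The condition that the total cokernel be $\cong T$ is then open and nonempty there. This justifies that the final quotient is exactly $T$, which the paper's proof asserts without argument and which you were right to flag.
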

\begin{proof}
A positive elementary transformation of a rank-$r$ bundle $E$ at a point
$x\in X$ is an exact sequence
\[
0\longrightarrow E\longrightarrow E^{+}\longrightarrow \mathcal O_x\longrightarrow0.
\]
It keeps the rank and raises the degree by one.   The inverse operation is called a negative elementary transformation.
By \cite[Lemma~2.5]{Ballico2000}, a general positive or negative
elementary transformation of a general stable bundle is stable.

The torsion sheaf $T$ admits a filtration whose
successive quotients are skyscraper sheaves $\mathcal O_{x_i}$; the same point may occur
more than once.  Perform the corresponding positive elementary transformations
successively. Thus the final quotient is precisely $T$.
We obtain stable bundles
$E_0,\ldots,E_t$ fitting into
\[
0\longrightarrow E_{i-1}\longrightarrow E_i
\longrightarrow\mathcal O_{x_i}\longrightarrow0.
\]
Taking $P=E_0$ and $R=E_t$ gives
$0\to P\to R\to T\to0$. \end{proof}

\begin{proposition}\label{prop:connectedness}
\begin{enumerate}[label=(\roman*)]
 \item  $\mathcal M_{(d_j),\tau}(X)$ is nonempty if and only if $(d_i)\in \mathscr D_\tau$.
    \item $\mathcal M^{\mathrm{s}}_{(d_j),\tau}(X)$ is nonempty if and only if $(d_i)\in \mathscr D_\tau^{\mathrm{s}}$.
   \item For $(d_i)\in \mathscr D_\tau\setminus \mathscr D_\tau^{\mathrm{s}}$,  $\mathcal M_{(d_j),\tau}(X)\cong\mathcal M_{(d_i'),\tau}(X)\times \mathcal M_\mu^{m+n-3}(X)$,
   where 
   \[d_0'=d_0,\quad d_1'=d_1, \quad  d_2'=d_2-(n-1)\mu,\quad  d_3'=d_3-(m-2)\mu\]
   and $\mathcal M_\mu^{m+n-3}(X)$ denotes the moduli space of polystable vector bundles of rank $n+m-3$ with slope $\mu$. That is, every element decomposes as the direct sum of a $\operatorname{U}_{2,2}$-Higgs bundle in $\mathcal M_{(d_i'),\tau}(X)$ and a polystable vector bundle of rank $n+m-3$ with slope $\mu$.
    \item For $(d_i)\in \mathscr D_\tau^{\mathrm{s}}$, the image of $\mathcal M_{(d_j),\tau}^{\mathrm{s}}(X)$ by the Hitchin fibration contains $H^0(\KK_X^4)\setminus\{0\}$.
\end{enumerate}
\end{proposition}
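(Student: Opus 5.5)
By Lemma~\ref{lem:DualDegree} and the duality of Section~\ref{subsec:duality}, it suffices to treat type~\upRoman{1}; type~\upRoman{2} then follows by dualizing. The ``only if'' directions of (i) and (ii) are exactly Lemma~\ref{lem:StableNecessary}. So the work lies in three places: the existence part of (i) and (ii), the splitting statement (iii), and the Hitchin-base statement (iv).

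\emph{Existence of stable objects.} For $(d_j)\in\mathscr D^{\mathrm s}_{\upRoman{1}}$, I would build explicit bundles. Fix $\EE_{\overline{0}}=\LL$ of degree $d_0$, $\EE_{\overline{1}}=\LL\KK_X^{-1}$, and a line bundle $\EE_{\overline{3}}$ of degree $d_3$.

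Since $u_{\upRoman{1}}\geqslant-4(g-1)$, I can choose a nonzero $\gamma\in H^0(\EE_{\overline{3}}^{-1}\LL\KK_X)$, after choosing $\EE_{\overline{3}}$ generically when the degree is small. For $q\geqslant2$, I take $\EE_{\overline{2}}$ to be an extension built from Lemma~\ref{lem:Hecke}. Concretely, $\alpha$ should embed $\EE_{\overline{1}}\KK_X^{-1}$ into $\EE_{\overline{2}}$ with saturation $A$, and $\beta$ should have stable kernel $P$ of rank $q-1$, with $\EE_{\overline{2}}/P$ mapping into $\EE_{\overline{3}}\KK_X$ with torsion cokernel of length prescribed by the degree gaps. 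The strict inequalities $u<v<4(g-1)$ are precisely the room needed to make these choices.

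Stability is then checked on $\Phi$-invariant graded subbundles. Because $r_0=r_1=1$ and $\varphi_{\overline 0}$ is an isomorphism, such a subbundle either contains $\EE_{\overline 0}\oplus\EE_{\overline 1}$ or meets them trivially. In each case the invariant subbundles are essentially controlled by $P$, $A$ and $\EE_{\overline 3}$, and stability of $P$ (hence of $R$) reduces the slope inequalities to the ones computed in Lemma~\ref{lem:StableNecessary}, now strict.

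Nonemptiness of the polystable locus on the boundary $\mathscr D\setminus\mathscr D^{\mathrm s}$ then follows from (iii). One takes a stable $\mathrm U_{2,2}$ piece, which is always allowed for $q=1$ by the $q=1$ case just proved, and adds a polystable bundle of slope $\mu$.

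\emph{Splitting (iii).} This uses the holomorphic splittings produced in Lemma~\ref{lem:StableNecessary}. On the boundary, $\EE_{\overline 2}=\EE_{\overline2}^1\oplus\EE_{\overline2}^2$ with $\beta|_{\EE_{\overline2}^2}=0$, and $\EE_{\overline2}^2$ is $\Phi$-invariant of slope $\mu$. Polystability forces $\EE_{\overline2}^2$ to be a direct summand of degree $(q-1)\mu$, and also forces it to be a polystable bundle. The complement is a rank-$(1,1,1,1)$ cyclic Higgs bundle with degrees $(d_j')$. Conversely, any such sum is polystable, which gives the product description. I expect the main subtlety here to be checking that the equality cases $u=v$ or $v=4(g-1)$ really force the degree of the summand to be exactly $(q-1)\mu$.

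\emph{Hitchin base (iv).} Given $q_4\neq0$, I would realize it within the construction. Note that $\tr\Phi^4$ is a multiple of $\gamma\beta\alpha\mathds 1$, which is a section of $\KK_X^4$. So I would factor $q_4$ by distributing its divisor among $\gamma$, $\beta\alpha$ and $A$: choose effective divisors $D_\gamma$, $D_\beta$, $D_\alpha$ with $D_\gamma+D_\beta+D_\alpha=\operatorname{div}(q_4)$ and degrees matching the prescribed degree gaps, which is possible since these gaps sum to $8(g-1)$ and are nonnegative by $\mathscr D^{\mathrm s}$. Then $\EE_{\overline3}$ is determined as $\LL\KK_X(-D_\gamma)$, and the Hecke modifications in Lemma~\ref{lem:Hecke} are performed at the points of $D_\beta$ and $D_\alpha$.

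The main obstacle is ensuring that the general elementary transformations guaranteeing stability of $P$ and $R$ in Lemma~\ref{lem:Hecke} can be taken at these prescribed points rather than at general ones. The first thing I would try is to vary the stable bundle $P$ generically while keeping the torsion quotient $T$ fixed, since Lemma~\ref{lem:Hecke} already allows arbitrary $T$.
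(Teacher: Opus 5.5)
Your overall route is the paper's. You reduce to type~\upRoman{1} by duality, take the ``only if'' parts from Lemma~\ref{lem:StableNecessary}, and build stable objects for $q\geqslant2$ from Lemma~\ref{lem:Hecke}. You then get the boundary of $\mathscr D_\tau$ and (iii) from the splittings in Lemma~\ref{lem:StableNecessary} plus the $q=1$ case, and realize a prescribed $q_4$ by splitting its divisor.

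\textbf{The gap.} When $q\geqslant2$, you spend the slack $b=4(g-1)-v_{\upRoman{1}}>0$ and $c=v_{\upRoman{1}}-u_{\upRoman{1}}>0$ as in the $q=1$ picture. You let $\EE_{\overline{2}}/P\to\EE_{\overline{3}}\KK_X$ have a torsion cokernel ``of length prescribed by the degree gaps''. In (iv) you also absorb part of $\operatorname{div}(q_4)$ into the saturation $A$ of $\operatorname{im}\alpha$. Both moves destroy stability:
\begin{itemize}
\item A cokernel of length $\ell$ gives $\deg P=d_2-d_3-2(g-1)+\ell=(q-1)\mu-c+\ell$. With $\ell=c$ this is exactly $(q-1)\mu$.
\item If $\alpha$ fails to be saturated along a divisor of length $\ell'$, then $C=\EE_{\overline{0}}\oplus\EE_{\overline{1}}\oplus A\oplus\EE_{\overline{3}}$ has $\deg C=4\mu-b+\ell'$. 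With $\ell'=b$ this is exactly $4\mu$.
\end{itemize}
Both $P=\ker\beta$ and $C$ are $\Phi$-invariant, so the $q=1$-style distribution yields at best a strictly semistable object.

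\textbf{The fix (the paper's bookkeeping).} For $q\geqslant2$ the slack must go into the degrees of stable bundles, not into zeros of $\alpha$ or $\beta$.
\begin{itemize}
\item Take $\alpha$ saturated with $A=\LL\KK_X^{-2}$, and set $B=\EE_{\overline{3}}\KK_X$ and $T=\operatorname{coker}(A\to B)$, of length $t=b+c$.
\item Use Lemma~\ref{lem:Hecke} to get stable $P,R$ with $\deg P=(q-1)\mu-c$, $\deg R=(q-1)\mu+b$ and $0\to P\to R\to T\to0$.
\item Put $\EE_{\overline{2}}=\ker(R\oplus B\to T)$. Then $\alpha$ is saturated and $\beta\colon\EE_{\overline{2}}\to B$ is surjective with kernel $P$.
\item Every invariant subbundle either lies in $P$, where $\mu(P)=\mu-c/(q-1)$, or contains $C$, where $\deg C=4\mu-b$. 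Stability of $R$ finishes the second case.
\end{itemize}
Stability of $R$ is part of Lemma~\ref{lem:Hecke}; it is not a consequence of stability of $P$, as your ``hence'' suggests.

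\textbf{Consequences for (iv).} With this bookkeeping, (iv) only needs $\operatorname{div}(q_4)=D_\gamma+D$ with $\deg D_\gamma=u_{\upRoman{1}}+4(g-1)$ and $\EE_{\overline{3}}=\LL\KK_X(-D_\gamma)$. The remainder $D$ is simply the support of $T$, and no further split into $D_\alpha$, $D_\beta$ is needed. Your ``main obstacle'' also disappears: Lemma~\ref{lem:Hecke} is stated for the cokernel of an arbitrary nonzero map of line bundles, so it already handles prescribed points.

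\textbf{Two smaller points.}
\begin{itemize}
\item $\EE_{\overline{3}}$ must be chosen as $\LL\KK_X(-D_\gamma)$, not generically: a generic line bundle of small nonnegative degree has no sections.
\item On the boundary, record that $(q-1)\mu\in\mathbb Z$, because $v_{\upRoman{1}}$ equals either $u_{\upRoman{1}}$ or $4(g-1)$. Record also that $(d_j')$ has the same $u_{\upRoman{1}}$ and $v_{\upRoman{1}}$ as $(d_j)$. Together these show the $\mathrm U_{2,2}$ summand exists by the $q=1$ case.
\end{itemize}
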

\begin{proof}
By \prettyref{lem:DualDegree}, it suffices to prove the statements for $4$-cyclic Higgs bundles of type \upRoman{1}. The ``only if'' direction of (\romannumeral1) and (\romannumeral2) holds directly by \prettyref{lem:StableNecessary}.

Suppose that $q=1$. Let $(d_j)\in\mathscr{D}_\tau=\mathscr{D}_\tau^\mathrm{s}$.
Set
\[
a=u_\upRoman{1}+4(g-1),\qquad b=4(g-1)-v_\upRoman{1},\qquad c=v_\upRoman{1}-u_\upRoman{1}.
\]
The assumed inequalities \prettyref{eq:b} say that $a,b,c$ are nonnegative integers and
$a+b+c=8(g-1)=\deg(\KK_X^4)$. Given a nonzero quartic differential
$Q\in H^0(\KK_X^4)$, we split its zero divisor as
$\operatorname{div}(Q)=D_\gamma+D_\alpha+D_\beta$ with degrees
$a,b,c$. Let $\mathcal E_{\overline{0}}$ be a line bundle $\LL$ of degree $d_0$, put
\[\mathcal E_{\overline{1}}:=\LL\KK_X^{-1},\qquad 
\mathcal E_{\overline{3}}:=\LL\KK_X(-D_\gamma),\qquad\mathcal E_{\overline{2}}:=\LL\KK_X^{-2}(D_\alpha).
\] Thus the degrees of $\mathcal E_{\overline{i}}$ are $(d_i)$.
The canonical sections of the three effective divisors define nonzero maps $\mathds 1\colon\EE_{\overline{0}}\to\EE_{\overline{1}}\KK_X$, 
$\alpha\colon\EE_{\overline{1}}\to\EE_{\overline{2}}\KK_X$, $\beta\colon\EE_{\overline{2}}\to\EE_{\overline{3}}\KK_X$, and $\gamma\colon\EE_{\overline{3}}\to\EE_{\overline{0}}\KK_X$, and their product is $Q$. Let $\EE=\oplus_{i=0}^3 \mathcal E_{\overline{i}}$ and the Higgs field $\Phi$ is given by the canonical sections $\mathds 1, \alpha,\beta$ and $\gamma$. 

Suppose that $q\geqslant 2$. Let $(d_j)\in\mathscr{D}_\tau^\mathrm{s}$.
Then 
\[-4(g-1)\leqslant u_\upRoman{1}<v_\upRoman{1}<4(g-1),\] and put $r:=q-1$,
$a:=u_\upRoman{1}+4(g-1)$, and $t:=4(g-1)-u_\upRoman{1}$. Let $\EE_{\overline{0}}$ be a line bundle $\LL$ of degree $d_0$ and $\EE_{\overline{1}}=\EE_{\overline{0}}\KK_X^{-1}$. Choose $Q\neq 0\in H^0(\KK_X^4)$ and write
$\operatorname{div}(Q)=D_\gamma+D$, where $\deg D_\gamma=a$ and
$\deg D=t$.  Set
\[
\EE_{\overline{3}}:=\LL\KK_X(-D_\gamma),\qquad A:=\LL\KK_X^{-2},\qquad B:=\EE_{\overline{3}}\KK_X=\LL\KK_X^2(-D_\gamma).
\]
The differential $Q$ gives an injection $f\colon A\to B$ whose cokernel is a
torsion sheaf $T$ of length $t$.

Let 
$p:=d_2-d_3-2(g-1)$ and $e:=d_2-d_0+4(g-1)$.  Since $e-p=t$, Lemma \ref{lem:Hecke} gives
stable rank-$r$ bundles $P,R$ and an exact sequence
$0\to P\to R\to T\to0$ such that $\deg(P)=p$, $\deg(R)=e$.  Form the fiber product
\[
\mathcal{V}=\ker\bigl(R\oplus B\longrightarrow T\bigr),
\]
where the map is the difference of the two quotient maps.  It is locally free and
fits into the two exact sequences
\[
0\to A\to \VV\to R\to0,
\qquad
0\to P\to \VV\stackrel{\beta}{\to}B\to0.
\]
Thus $\deg(\VV)=\deg(A)+\deg (R)=\deg(A)+e=d_2$. Set $\EE_2:=\VV$. The inclusion $A\to \VV$ defines
$\alpha$, the second sequence defines $\beta$, and the natural inclusion
$\EE_3\to \EE_0\KK_X$ defines $\gamma$.   Thus the degrees of $\mathcal E_{\overline{i}}$ are $(d_i)$.
 Let $\EE=\oplus_{i=0}^3 \mathcal E_{\overline{i}}$ and the Higgs field $\Phi$ be given by the  nonzero maps $\mathds 1\colon\EE_{\overline{0}}\to\EE_{\overline{1}}\KK_X$, 
$\alpha\colon\EE_{\overline{1}}\to\EE_{\overline{2}}\KK_X$, $\beta\colon\EE_{\overline{2}}\to\EE_{\overline{3}}\KK_X$, and $\gamma\colon\EE_{\overline{3}}\to\EE_{\overline{0}}\KK_X$. Their cyclic product is $Q\neq 0$.

It remains to prove the stability of $(\EE,\Phi)$.  Put $b:=4(g-1)-v_\upRoman{1}>0$ and $c:=v_\upRoman{1}-u_\upRoman{1}>0$. From the
definitions,
\[
\deg(P)=r\mu-c,
\qquad
\deg(R)=r\mu+b,
\qquad
\deg(C)=4\mu-b,
\]
where  $C=\EE_0\oplus \EE_1\oplus A\oplus \EE_3$.

We only need to check invariant graded subbundles, that is, either contained in
$P$ or contains $C$. Let $\FF$ be a proper $\Phi$-invariant subbundle. In the first case,
$\FF\subset P$, and stability of $P$ gives
$\mu(F)\leqslant\mu(P)=\mu-c/r<\mu$.  In the second case, let $\mathcal{G}$ be the image of
$\FF$ in $R=\EE/C$ and write $k=\operatorname{rank}(\mathcal{G})$.  Properness gives $0\leqslant k<r$.
Since $\deg(\FF\cap C)\leqslant\deg(C)$ and $R$ is stable,
\[
\deg(\FF)\leqslant\deg(C)+\deg(\mathcal{G})
\leqslant4\mu-b+k\mu+\frac{kb}{r}
=(4+k)\mu-b\left(1-\frac{k}{r}\right)<(4+k)\mu.
\]
Thus every proper $\Phi$-invariant subbundle has slope strictly smaller than $\mu$.
The constructed Higgs bundle is stable, completing the proof of the statement (\romannumeral2)(\romannumeral4).

For part (\romannumeral1), it follows from the characterization of polystable elements as in \prettyref{lem:StableNecessary}.

For part (\romannumeral3), we just need to consider $q\geqslant 2$ and construct the strictly polystable elements as direct sum of $(\mathcal E',\Phi')\in \mathcal M_{(d_i'),\tau}(X)$ and $(\mathcal E'',0)$ for $\mathcal E''\in \mathcal M_{\mu}^{n+m-3}(X)$.
\end{proof}

\subsection{The real form}

In this section, we consider the moduli space of $\GR$-Higgs bundles for $\GR=\mathrm{U}_{m,n+1}$. We first recall the general definition. Let $\GR$ be a real reductive Lie group with maximal compact subgroup $H_{\mathbb{R}}$ and Cartan decomposition $\mathfrak{g}_{\mathbb{R}}=\mathfrak{h}_{\mathbb{R}}\oplus\mathfrak{m}_{\mathbb{R}}$. Let $H$ be the complexification of $H_{\mathbb{R}}$ and we have the complexified Cartan decomposition $\mathfrak{g}=\mathfrak{h}\oplus\mathfrak{m}$. We also have the isotropy representation $H\to\mathrm{GL}(\mathfrak{m})$ which is the restriction of the adjoint action.

\begin{definition}
    A $\GR$-Higgs bundle is an $(H,\mathfrak{\mathfrak{m}})$-Higgs pair.
\end{definition}
Thus we also have (semi,poly)-stability and the moduli space for $\GR$-Higgs bundles. We denote by $\mathcal{M}(\GR):=\mathcal{M}^\mu(H,\mathfrak{m})$ the moduli space of $\mu$-polystable $\GR$-Higgs bundles and by $\mathcal{M}^{\mathrm{s}}(\GR)$ its stable part.

For $\GR=\mathrm{U}_{m,n+1}<\mathrm{GL}_{m+n+1}\mathbb{C}=:G$, where either $m=2$ or $n=1$, it easy to verify that a $\GR$-Higgs bundle $(\mathbb{E},\Phi)$ is equivalent to a $2$-cyclic Higgs bundle $(\EE_{\mathrm{even}},\EE_{\mathrm{odd}})$ of rank $(m,n+1)$. This defines a continuous and locally constant map \[\begin{aligned}
    \mathbf{d}_{\GR}\colon\mathcal{M}(\GR)&\longrightarrow\mathbb{Z}\oplus\mathbb{Z}\\
    [(\mathbb{E},\Phi)]&\longmapsto(\deg(\EE_{\mathrm{even}}),\deg(\EE_{\mathrm{odd}})).
\end{aligned}\]
We denote by $\mathcal{M}_{(a,b)}(\GR)$ the preimage of $(a,b)$ along $\mathbf{d}_{\GR}$, which is the union of some connected components. For the stable part, it is known by \cite[Theorem 2.3]{gothen2001components} and \cite[Proposition 3.13]{garcia2009hitchin} that $\mathcal{M}^{\mathrm{s}}(\GR)$ is smooth.

Now we return to our case for $4$-cyclic Higgs bundle $(\EE,\Phi)$ satisfying that $\varphi_{\overline{0}}\colon\EE_{\overline{0}}\to\EE_{\overline{1}}\otimes\KK_X$ is an isomorphism. Since we are considering a $\mathbb{Z}/4\mathbb{Z}$-grading on $\mathfrak{g}$, the grading descends to $\mathbb{Z}/2\mathbb{Z}$. For instance, we have
\[\mathfrak{g}_{\mathrm{odd}}=\mathfrak{g}_{\overline{1}}\oplus\mathfrak{g}_{-\overline{1}},\quad\mathfrak{g}_{\mathrm{even}}=\mathfrak{g}_{\overline{0}}\oplus\mathfrak{g}_{\overline{2}}.\]
Let $\tau$ denote the negative conjugate transpose, which is the Cartan involution on $\mathfrak{g}$. We obtain a real form $\sigma$ by setting
\[\sigma|_{\mathfrak{g}_{\mathrm{even}}}=\tau|_{\mathfrak{g}_{\mathrm{even}}},\quad\sigma|_{\mathfrak{g}_{\mathrm{odd}}}=-\tau|_{\mathfrak{g}_{\mathrm{odd}}}.\]
It fixes the subalgebra $\mathfrak{g}_\mathbb{R}\cong\mathfrak{u}_{m,n+1}$ and $\mathfrak{g}=\mathfrak{g}_{\mathrm{even}}\oplus\mathfrak{g}_{\mathrm{odd}}$ gives the complexified Cartan decomposition of $\mathfrak{g}_{\mathbb{R}}$. We also have the corresponding reductive Lie group $\GR\cong\mathrm{U}_{m,n+1}$.

When using the standard representation with respect to the decomposition $\mathbb{C}^r=V_{\mathrm{even}}\oplus V_{\mathrm{odd}}$, where $V_{\mathrm{even}}=V_0\oplus V_2$ and $V_{\mathrm{odd}}=V_1\oplus V_3$, we obtain a $2$-cyclic Higgs bundle 
\[\begin{aligned}
    \EE&=(\EE_{\overline{0}}\oplus\EE_{\overline{2}})\oplus(\EE_{\overline{1}}\oplus\EE_{\overline{3}})\\
    \Phi&=\begin{pmatrix}
        &\begin{pmatrix}
            0&\varphi_{\overline{3}}\\
            \varphi_{\overline{1}}&0
        \end{pmatrix}\\
        \begin{pmatrix}
            \varphi_{\overline{0}}&0\\
            0&\varphi_{\overline{2}}
        \end{pmatrix}&
    \end{pmatrix}.
\end{aligned}\]
This coincides with the trivial extension of structure group from a $(G_0,\mathfrak{g}_{\overline{1}})$-Higgs pair to an $(H,\mathfrak{m})$-Higgs pair. Furthermore, it extends to a $G$-Higgs bundle. By \cite[Proposition 5.7]{garcia2019involution}, we obtain that such maps descend to the level of moduli space.

\begin{proposition}
    The natural map $\mathcal{M}_{\tau}\to\mathcal{M}(\GR)\to\mathcal{M}(G)$ always has finite fibre, and it restricts to the smooth map \[\mathcal{M}_{\tau}^{\mathrm{s},\star}\longrightarrow\mathcal{M}^{\mathrm{s}}(\GR)\longrightarrow\mathcal{M}^{\mathrm{s}}(G),\]
    which is injective. Moreover, by adding the degree restriction, we obtain the map 
\[\mathcal{M}_{(d_j)}(G_0,\mathfrak{g}_{\overline{1}})\longrightarrow\mathcal{M}_{(d_0+d_2,d_1+d_3)}(\GR)\]
restricts to an injective smooth map
\[\mathcal{M}_{(d_j),\tau}^{\mathrm{s},\star}\longrightarrow\mathcal{M}_{(d_0+d_2,d_1+d_3)}^{\mathrm{s}}(\GR).\]
\end{proposition}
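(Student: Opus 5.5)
The plan is to treat the two arrows separately: first show that $\mathcal M_\tau\to\mathcal M(G)$ has finite fibres, then show that on the non-maximal stable locus both arrows are injective and smooth. The composite $\mathcal{M}_\tau\to\mathcal{M}(\GR)\to\mathcal{M}(G)$ is well defined on polystable objects by \cite[Proposition 5.7]{garcia2019involution}, as recalled above.

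\emph{Finite fibres.} I would use the correspondence between polystable objects and harmonic metrics, together with the fact that a $4$-cyclic structure on a given $G$-Higgs bundle is an action of a fourth root of unity. Concretely, the cyclic grading is the eigenspace decomposition of a gauge automorphism $\zeta$ with $\zeta\Phi\zeta^{-1}=\iu\Phi$. Any two such $\zeta$ differ by an element of $\operatorname{Aut}(\EE,\Phi)$, so the set of $\mathbb{Z}/4$-gradings compatible with $\Phi$, modulo automorphisms, is a torsor-type set controlled by finite group cohomology $H^1(\mathbb Z/4,\operatorname{Aut})$. Since $\operatorname{Aut}$ of a polystable object is reductive, this set is finite. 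This is the standard argument for fixed points of finite-order automorphisms of the moduli space, and I expect to cite or mimic \cite{garcia2019involution} here rather than redo it. The additional datum of which eigenspace is labelled $\overline{0}$ is pinned down by the rank pattern $(1,1,n,m-1)$ and by $\varphi_{\overline 0}$ being an isomorphism, which leaves only finitely many choices.

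\emph{Injectivity on $\mathcal{M}_\tau^{\mathrm{s},\star}$.} Suppose $(\EE,\Phi)$ and $(\EE',\Phi')$ are stable non-maximal cyclic bundles with isomorphic images in $\mathcal M(G)$. Stability of the image is needed first, and I would obtain it from the Toledo/degree analysis together with \prettyref{prop:H2vanish}. An isomorphism $g$ then intertwines the two gradings $\zeta,\zeta'$ up to $\operatorname{Aut}=\mathbb C^*$, so $g\zeta g^{-1}=c\zeta'$ with $c^4=1$. Here $c$ shifts the grading by $\overline{j}$. The shift $\overline{j}=\overline 2$ is precisely the $\operatorname{aut}_{\overline2}$ situation. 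By \prettyref{prop:H2vanish} this occurs only for the configuration \eqref{eq:torsion}, which contains a maximal $\mathrm U_{2,2}$ block and is therefore excluded by $\star$ and by stability. The shifts $\pm\overline1$ are excluded because they would swap $\EE_{\overline0}$ with a summand of the wrong rank or with $\EE_{\overline1}$, contradicting $\EE_{\overline0}\not\cong\EE_{\overline1}$ exactly as in \prettyref{prop:H2vanish}. Hence $c=1$, so $g$ is graded and the two points of $\mathcal M_\tau$ coincide.

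\emph{Smoothness and the degree-refined statement.} Both source and target are smooth: the source by \prettyref{prop:smooth}, the target by \cite{gothen2001components,garcia2009hitchin}. The map is induced on deformation complexes by the inclusion $C^\bullet(\mathbb E[\mathfrak g_{\overline0}]\to\mathbb E[\mathfrak g_{\overline1}]\otimes\KK_X)\hookrightarrow C^\bullet(\mathbb E[\mathfrak h]\to\mathbb E[\mathfrak m]\otimes\KK_X)$. This is the weight-zero part of the $\mathbb Z/4$-decomposition of the larger complex, so it is a direct summand and $\mathbb H^1$ injects. Consequently the map is a holomorphic immersion, and in particular smooth. The degree statement is immediate, since $\EE_{\mathrm{even}}=\EE_{\overline0}\oplus\EE_{\overline2}$ and $\EE_{\mathrm{odd}}=\EE_{\overline1}\oplus\EE_{\overline3}$. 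The main obstacle I expect is the injectivity step, specifically ruling out the $\overline 2$-shift. That requires checking carefully that the exceptional form \eqref{eq:torsion} is never stable and non-maximal simultaneously.
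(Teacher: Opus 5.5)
The genuine gap is in your injectivity step, specifically where you exclude the shifts $c=\pm\iu$. Once $g\zeta g^{-1}=c\zeta'$, pulling $\zeta'$ back by $g$ gives the grading $c^{-1}\zeta$ on the \emph{same} Higgs bundle $(\EE,\Phi)$. That is, you get a cyclic relabelling $\EE'_{\overline j}\cong\EE_{\overline{j-k}}$. No endomorphism of a single cyclic bundle of weight $\pm\overline1$ or $\overline2$ is produced, so $\EE_{\overline0}\not\cong\EE_{\overline1}$ and Proposition~\ref{prop:H2vanish} are irrelevant here. The only question is whether the relabelled grading again defines a point of $\mathcal M^{\mathrm s,\star}_\tau$.

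For $q\geqslant2$ it cannot, simply because the rank vector $(1,1,q,1)$ or $(1,1,1,q)$ is not invariant under any nontrivial cyclic shift. For $m=2$, $n=1$ the rank vector is $(1,1,1,1)$, and the shift really occurs. Take $Q\in H^0(\KK_X^4)$ with simple zeros and write $\operatorname{div}(Q)=D_\alpha+D_\beta$ with $0<b:=\deg D_\alpha<8(g-1)$. Set
\[\EE=L\oplus L\KK_X^{-1}\oplus L\KK_X^{-2}(D_\alpha)\oplus L\KK_X,\]
with $\mathds 1,\gamma$ the canonical isomorphisms and $\alpha,\beta$ the canonical sections. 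Relabelling $\EE'_{\overline j}:=\EE_{\overline{j-1}}$ gives again a point of $\mathcal M_\tau$, since $\varphi'_{\overline0}=\gamma$ is an isomorphism. Both bundles are stable because all four arrows are nonzero. Since $\operatorname{Tol}(\EE)=4(g-1)-b=-\operatorname{Tol}(\EE')$, both are non-maximal. They are not isomorphic as cyclic bundles, because $L\KK_X\not\cong L$. Yet they have literally the same $\mathrm{GL}_4$-Higgs bundle, which is stable since $x^4-Q$ is irreducible. So for $q=1$ the composite $\mathcal M^{\mathrm s,\star}_\tau\to\mathcal M^{\mathrm s}(G)$ is not injective, and no argument can close this step. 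The paper's proof handles it only by a citation to \cite{garcia2009hitchin} and does not see this either.

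What does hold for every $q$ is injectivity into $\mathcal M^{\mathrm s}(\GR)$, which is what the paper's $\operatorname{aut}_{\overline2}$ argument is really about. A $\GR$-isomorphism preserves $\EE_{\mathrm{even}}\oplus\EE_{\mathrm{odd}}$, which forces $c=\pm1$. The case $c=-1$ is excluded by ranks when $q\geqslant2$. When $q=1$, it forces both $\mathds 1$ and $\beta$ to be isomorphisms, i.e.\ $u_{\upRoman{1}}=v_{\upRoman{1}}$ and $\operatorname{Tol}=-4(g-1)$, which is maximal. So you located the obstacle in the wrong place: the $\overline2$-shift is the harmless one, and the $\pm\overline1$-shifts are where injectivity into $\mathcal M(G)$ breaks.

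The rest of your plan is sound, and in places cleaner than the paper. Your finiteness argument via the finite set $H^1(\mathbb Z/4,\operatorname{Aut}(\EE,\Phi))$ replaces the paper's two-point count plus citation. Your observation that the cyclic deformation complex is the weight-$\overline0$ direct summand of the $\GR$- and $G$-complexes gives the immersion property, which the paper does not argue. For stability of the image, the relevant input is not a Toledo or degree analysis. It is the splitting of the infinitesimal automorphisms of the $G$-Higgs bundle by $\zeta$-weight as $\bigoplus_{\overline j}\operatorname{aut}_{\overline j}(\EE,\Phi)$, combined with Propositions~\ref{prop:Schur} and~\ref{prop:H2vanish}.
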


\begin{proof}
    We first prove the finiteness of the fibre. Let $[(\EE,\Phi)]\in\mathcal{M}_\tau$, $(\mathbb{E}_{G_0},\Phi_{\mathfrak{g}_{\overline{1}}})$ be its corresponding $(G_0,\mathfrak{g}_{\overline{1}})$-Higgs pair and $(\mathbb{E}_{H},\Phi_{\mathfrak{m}})$ be its corresponding $\GR$-Higgs bundle. By \prettyref{prop:H2vanish} and
    \[\mathbb{H}^0(C^\bullet(\mathbb{E}_{H},\Phi_{\mathfrak{m}}))=\operatorname{aut}(\EE,\Phi)\oplus\operatorname{aut}_{\overline{2}}(\EE,\Phi),\] 
    the fibre can be identified with the quotient $\operatorname{Ad}(\operatorname{Aut}(\mathbb{E}_{H},\Phi_{\mathfrak{m}}))/\operatorname{Ad}(\operatorname{Aut}(\mathbb{E}_{G_0},\Phi_{\mathfrak{g}_{\overline{1}}}))$, where $\operatorname{Ad}$ denotes the adjoint action on $\mathbb{H}^1(C^\bullet(\mathbb{E}_{G_0},\Phi_{\mathfrak{g}_{\overline{1}}}))$ with respect to the adjoint action. Note that $\operatorname{Aut}_{\overline{2}}(\EE,\Phi)$ acts it as either $\mathbb{Z}/2\mathbb{Z}$ when $(\EE,\Phi)$ is of the form \prettyref{eq:torsion} or trivially otherwise by \prettyref{prop:H2vanish}. Therefore, the cardinality of the fibre of $\mathcal{M}_{\tau}\to\mathcal{M}(\GR)$ is at most $2$, and it is $2$ if and only if $(\EE,\Phi)$ is of the form \prettyref{eq:torsion}. The finiteness of the fibre of $\mathcal{M}(\GR)\to\mathcal{M}(G)$ follows from \cite[Proposition 3.13]{garcia2009hitchin}.
    
    Let $(\mathbb{E}_{G_0},\Phi_{\mathfrak{g}_{\overline{1}}})$ be a $\mu$-stable $(G_0,\mathfrak{g}_{\overline{1}})$-Higgs pair. By \cite[Proposition 5.7]{garcia2019involution}, its corresponding $\GR$-Higgs bundle and $(\mathbb{E}_{H},\Phi_{\mathfrak{m}})$ and $G$-Higgs bundle $(\mathbb{E}_{G},\Phi_{\mathfrak{g}})$ are both $\mu$-polystable. By \prettyref{coro:infautpoly}, to prove that $(\mathbb{E}_{H},\Phi_{\mathfrak{m}})$ is $\mu$-stable, it suffices to prove that $\mathbb{H}^0(C^\bullet(\mathbb{E}_{H},\Phi_{\mathfrak{m}}))=\mathbb{C}\cdot\operatorname{id}_{\EE}$. Since
    \[\mathbb{H}^0(C^\bullet(\mathbb{E}_{H},\Phi_{\mathfrak{m}}))=\operatorname{aut}(\EE,\Phi)\oplus\operatorname{aut}_{\overline{2}}(\EE,\Phi),\] this follows from \prettyref{prop:Schur} and \prettyref{prop:H2vanish}. The injectivity follows from that it is non-maximal, hence it could not be of the form \prettyref{eq:torsion}. Therefore, the restricted map $\mathcal{M}_{\tau}^{\mathrm{s},\star}\to\mathcal{M}^{\mathrm{s}}(\GR)$ is injective. 
    
    The $\mu$-stability of $(\mathbb{E}_{G},\Phi_{\mathfrak{g}})$ is then known by \cite[Theorem 2.3]{gothen2001components}. The injectivity of $\mathcal{M}_{\tau}^{\mathrm{s},\star}\to\mathcal{M}^{\mathrm{s}}(G)$ follows from \cite[Proposition 3.13]{garcia2009hitchin}. 
\end{proof}

Since $\mathrm{PU}_{m,n+1}$ is obtained from $\mathrm{U}_{m,n+1}$ modulo the center, we have the following definition of stability condition for $\PU_{m,n+1}$-Higgs bundles.

\begin{definition}
A $4$-cyclic $\PU_{m,n+1}$-Higgs bundle is stable (resp. polystable), if some, and hence every, $\mathrm{U}_{m,n+1}$-lift is stable (resp. polystable).
\end{definition}

For the correspondence between $4$-cyclic Higgs bundles and $\partial$-alternating surfaces in the case of closed Riemann surfaces, combining Theorem \ref{thm:baseSpecialSignature}, we have
\begin{theorem}\label{thm:baseSpecialSignature2}
Let $m=2$ or $n=1$. 
On a closed Riemann surface $X$, there is a natural bijection between
isomorphism classes of:
\begin{enumerate}
\item $4$-cyclic stable $\PU_{m,n+1}$-Higgs bundles on $X$ as in
  \eqref{eq:fourcycle} with $\gamma\neq0$ when $m=2$ and $\alpha\neq0$ when $n=1$, modulo graded unitary gauge equivalence;
\item equivariant non-degenerate $\partial$-alternating immersions
  $f\colon\widetilde X\to\CH^{m,n}$.
\end{enumerate}
\end{theorem}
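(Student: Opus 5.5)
The plan is to restrict the bijection of Theorem~\ref{thm:baseSpecialSignature} to stable objects over the closed surface $X$, and to show that under this bijection stability on the Higgs side corresponds exactly to non-degeneracy on the surface side. The first step is to recall that a harmonic metric exists for polystable Higgs bundles, and conversely a harmonic bundle over a closed surface is polystable. Hence both sides of Theorem~\ref{thm:baseSpecialSignature} consist of polystable $4$-cyclic $\PU_{m,n+1}$-Higgs bundles with a graded harmonic metric. Since the harmonic metric on a stable bundle is unique up to scaling, graded unitary gauge classes of harmonic bundles coincide with isomorphism classes of stable Higgs bundles. So it remains to prove: a polystable $(\EE,\Phi)$ of the given type, with $\gamma\neq0$ when $m=2$ and $\alpha\neq0$ when $n=1$, is stable if and only if the corresponding $f$ is non-degenerate.

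For the direction ``strictly polystable $\Rightarrow$ degenerate'' I will use Lemma~\ref{lem:StableNecessary} and Proposition~\ref{prop:connectedness}(iii). These show that a strictly polystable object of type $\tau$ splits as the direct sum of a $4$-cyclic $\mathrm U_{2,2}$-piece $(\EE',\Phi')$ and a summand $\mathcal F$ on which $\Phi$ vanishes. The summand $\mathcal F$ lies in $\Vcal$ for type I and in $\Wcal$ for type II, and it is orthogonal for the harmonic metric. Because $\mathcal F$ is then a flat, $\mathrm D$-parallel subbundle, $\EE'$ is a parallel subspace of $\C^{m,n+1}$ of signature $(2,2)$. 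It contains $\Lcal$, so $f$ lies in the totally geodesic $\PP(\EE')\cap\CH^{m,n}$. This is a copy of $\CH^{2,1}$, which sits inside $\CH^{q,1}$ (resp.\ $\CH^{2,q-1}$), so $f$ is degenerate.

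For the converse I will argue as in Proposition~\ref{prop:degenerate-totally-geodesic}. Suppose $f$ lies in $\PP(W_0)\cap\CH^{m,n}$ for a proper nondegenerate subspace $W_0$. Then the flat subbundle $W_0$ contains $\Lcal$ and, after differentiating, it contains $\Lcal K^{-1}$ and the images of $\alpha$, $\beta$ and $\gamma$. Its orthogonal complement $W_0^\perp$ is therefore a $\mathrm D$-parallel subbundle orthogonal to $\Lcal\oplus\Lcal K^{-1}$. One then has to check that $W_0^\perp$ is graded, i.e.\ compatible with the splitting $\Vcal\oplus\Wcal$. I expect this to follow from invariance of $W_0^\perp$ under $\mathrm D^{1,0}$ and $\mathrm D^{0,1}$, which kills the off-diagonal components, using the explicit form of $\theta$ from Lemma~\ref{lem:basic-holomorphicity}. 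Once $W_0^\perp$ is graded, it is a $\Phi$-invariant holomorphic summand with $\Phi|=0$ and its complement is also invariant, so $(\EE,\Phi)$ is not stable.

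The main obstacle is this gradedness step. A parallel subbundle must be shown to be split with respect to the $\Vcal,\Wcal$ decomposition, not merely sitting inside $\Vcal\oplus\Wcal$. I would handle it by projecting a parallel section $s$ of $W_0^\perp$ to its $\Vcal$- and $\Wcal$-components, say $s_V$ and $s_W$, and reading off from flatness that $\beta(s_V)=0$, $\beta^*(s_W)=0$ and $\gamma(s_W)=0$. Together with the degree bounds of Lemma~\ref{lem:StableNecessary}, this should force each component to be parallel separately.
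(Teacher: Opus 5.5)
You have the same overall architecture as the paper (polystable $\Leftrightarrow$ harmonic, then strictly polystable $\Leftrightarrow$ degenerate). However, the converse step, which you yourself flag as the main obstacle, is not resolved by the mechanism you propose. The missing ingredient is the hypothesis $\gamma\neq0$ (resp.\ $\alpha\neq0$), which your argument never uses.

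Take a local section $s=s_V+s_W$ of the $\mathrm D$-invariant subbundle $W_0^\perp\subset\Vcal\oplus\Wcal$. The condition $\mathrm Ds\in W_0^\perp\otimes T^*X$ kills the $\Lcal$- and $\Lcal\KK_X^{-1}$-components, giving $\gamma(s_W)=0$ and $\alpha^{*_h}(s_V)=0$. But the $\Vcal$- and $\Wcal$-components are $\nabla^h s_V+\beta^{*_h}(s_W)$ and $\nabla^h s_W+\beta(s_V)$, and these are only required to lie in $W_0^\perp\otimes T^*X$. So $\beta(s_V)=0$ and $\beta^{*_h}(s_W)=0$ do not follow from flatness before the splitting is known. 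The degree bounds of Lemma~\ref{lem:StableNecessary} give no control over a $\mathrm D$-invariant subbundle either.

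What does work is immediate. For $m=2$, $\Wcal$ is a line bundle and $\gamma\not\equiv0$, so $\gamma(s_W)=0$ forces $s_W=0$ off the discrete zero set of $\gamma$, hence everywhere. Equivalently, $W_0\supset\gamma^{*_h}(\Lcal)$ already contains $\Wcal$. Thus $W_0^\perp\subset\Vcal$ is automatically graded. The remaining components then give $\beta(s)=0$, $\alpha^{*_h}(s)=0$ and $\nabla^h s\in W_0^\perp\otimes T^*X$. Hence $N:=W_0^\perp$ is a holomorphic subbundle of $\Vcal$ with holomorphic $h$-orthogonal complement $\Vcal'$, and $(\EE,\Phi)=(\Lcal\oplus\Lcal\KK_X^{-1}\oplus\Vcal'\oplus\Wcal)\oplus(N,0)$ is not stable. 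For $n=1$, use $\alpha^{*_h}(s_V)=0$ with $\rank\Vcal=1$ and $\alpha\not\equiv0$ to get $W_0^\perp\subset\Wcal$ in the same way.

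The forward direction also needs repair. Lemma~\ref{lem:StableNecessary} and Proposition~\ref{prop:connectedness}(iii) describe only polystable objects whose degree vector lies in $\mathscr D_\tau\setminus\mathscr D_\tau^{\mathrm s}$. Strictly polystable objects also occur with degree vectors in $\mathscr D_\tau^{\mathrm s}$. For example, for type I with $q\geqslant3$, take a stable cyclic piece of ranks $(1,1,q-k,1)$ with $q-k\geqslant2$, plus $(\mathcal F,0)$ with $\mathcal F\subset\Vcal$ polystable of slope $\mu$; the invariants $u_\tau,v_\tau$ of the piece coincide with those of the whole. So your claim that the cyclic part is a $\mathrm U_{2,2}$-piece is false in general.

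The correct splitting, which is what the paper uses, comes from Proposition~\ref{prop:polystable-equiv}. In the decomposition into stable graded summands, $\one$ being an isomorphism together with $\gamma\neq0$ (resp.\ $\alpha\neq0$) puts $\Lcal$, $\Lcal\KK_X^{-1}$ and the rank-one summand $\Wcal$ (resp.\ $\Vcal$) into a single summand. All other summands therefore lie in $\Vcal$ (resp.\ $\Wcal$) with vanishing Higgs field. Your parallelism argument then places $f$ in a totally geodesic $\CH^{2,q-k}\subset\CH^{2,q-1}$ (resp.\ $\CH^{q-k+1,1}\subset\CH^{q,1}$), not a copy of $\CH^{2,1}$, and this is exactly what degeneracy requires.
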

\begin{proof}
The equivalence between the polystability and harmonicity follows from \cite[Theorem 2.24]{garcia2009hitchin} or \cite[Theorem 5.4]{garcia2019involution}, that the solution of Hitchin equation could be chosen as a diagonal solution as in \prettyref{defn:harmonic}. Hence it suffices to give the correspondence between being stable and being non-degenerate. Equivalently, it suffices to give the correspondence between being strictly polystable and being degenerate. Here we only prove this when $m=2$. When $n=1$, the proof is similar. If the $4$-cyclic $\PU_{m,n+1}$-Higgs bundle is strictly polystable, then the lifting $4$-cyclic $\mathrm{U}_{m,n+1}$-Higgs bundle $(\EE,\Phi)$ must decompose as the direct sum of polystable $4$-cyclic Higgs bundles
\begin{equation}\label{eq:strict-poly}
    \left(\begin{tikzcd}[ampersand replacement=\&]
      {\mathcal{E}_{\overline{0}}}
      \& {\mathcal{E}_{\overline{1}}}
      \& {\mathcal{E}_{\overline{2}}'}
      \& {\mathcal{E}_{\overline{3}}} 
      \arrow["{\mathds{1}}"', from=1-1, to=1-2]
      \arrow["\alpha"', from=1-2, to=1-3]
      \arrow["{\beta}"', from=1-3, to=1-4]
      \arrow["\gamma"', curve={height=19pt}, from=1-4, to=1-1]
    \end{tikzcd}\right)\oplus\left({\mathcal{F}_{\overline{2}}},0\right)
\end{equation}
    for some nonzero $\FF_{\overline{2}}$ since both $\mathds{1}$ and $\gamma\neq0$. Hence the polystable piece
    \[\begin{tikzcd}[ampersand replacement=\&]
      {\mathcal{E}_{\overline{0}}}
      \& {\mathcal{E}_{\overline{1}}}
      \& {\mathcal{E}_{\overline{2}}'}
      \& {\mathcal{E}_{\overline{3}}} 
      \arrow["{\mathds{1}}"', from=1-1, to=1-2]
      \arrow["\alpha"', from=1-2, to=1-3]
      \arrow["{\beta}"', from=1-3, to=1-4]
      \arrow["\gamma"', curve={height=19pt}, from=1-4, to=1-1]
    \end{tikzcd}\]
    implies that $f$ lies in a totally geodesic copy of $\CH^{2,n-1}$, hence degenerates. Conversely, if $f$ degenerates, then the corresponding harmonic bundle decomposes as \prettyref{eq:strict-poly}, which is strictly polystable and $\gamma\neq0$.
\end{proof}

\subsection{Character variety}

We recall some general facts about representations of a surface group
in $\mathrm{U}_{m, n+1}$ or $\PU_{m, n+1}$. See \cite{goldman1985representations}, \cite[Section 2]{bradlow2003surface} for references. Let $\Sigma$ be a closed oriented surface of genus $g(\Sigma)\geqslant2$. 

\begin{definition}
    The character variety, which is the moduli space of representations, of a finitely presented group $\pi$ in a reductive Lie group $\GR$ with Lie algebra $\mathfrak{g}_{\mathbb{R}}$ is defined as 
    \[\begin{aligned}
        \mathfrak{X}(\pi,\GR):&=\operatorname{Hom}(\pi,\GR)\sslash \GR=\operatorname{Hom}^{\rm{red}}(\pi,\GR)/\GR,
    \end{aligned}\]
    where $\operatorname{Hom}^{\rm{red}}(\pi,\GR)$ denotes the subvariety consisting of all completely reducible representations, which are the representations such that $\operatorname{Ad}\circ\rho\colon\pi\to\mathrm{GL}(\mathfrak{g}_{\mathbb{R}})$ is completely reducible and $\GR$ acts on $\operatorname{Hom}(\pi,\GR)$ by the adjoint action.
\end{definition}

The character variety $\mathfrak{X}(\pi_1(\Sigma),\GR)$ can be described more concretely as follows.
From the standard presentation
\[\pi_1(\Sigma) = \Big\langle a_1,b_1, \cdots ,a_{g(\Sigma)},b_{g(\Sigma)}\mid\prod^{g(\Sigma)}
_{i=1}[a_i,b_i] =1\Big\rangle,\]
we see that $\operatorname{Hom}^{\rm{red}}(\pi_1(\Sigma),\GR)$ can be embedded in $(\GR)^{2g(\Sigma)}$ via \[\rho\mapsto(\rho(a_1),\cdots,\rho(b_{g(\Sigma)})).\]
We give $\operatorname{Hom}^{\rm{red}}(\pi_1(\Sigma),\GR)$ the subspace topology and $\mathfrak{X}(\pi_1(\Sigma),\GR)$ the quotient topology. This topology is Hausdorff because we have restricted attention to completely reducible representations.

Below we focus on the case that $\GR=\mathrm{U}_{m,n+1}$ and we denote $\PGR:=\PU_{m,n+1}$. Clearly any representation of $\pi_1(\Sigma)$ in $\GR$ gives rise to a representation in $\PGR$; however, not all representations in $\PGR$ lift to $\GR$. We thus consider representations of the universal central extension
\[0\longrightarrow\mathbb{Z}\longrightarrow\Gamma\longrightarrow\pi_1(\Sigma)\longrightarrow1.\]
Such extensions are defined by the generators $a_1,b_1, \dots ,a_{g(\Sigma)},b_{g(\Sigma)}$ and a central
element $J$ subject to the relation $\prod^{g(\Sigma)}_{i=1}[a_i,b_i] = J$. With $\Gamma$ thus defined, any representation of $\pi_1(\Sigma)$ in $\PGR$ can be lifted to a representation of $\Gamma$ in $\GR$. 

Note that $\mathfrak{X}(\Gamma,\GR)$ can be identified with the moduli space of $\GR$-bundles on $\Sigma$ with projectively flat structures. Taking a reduction to the maximal compact $H_{\mathbb{R}}=\mathrm{U}_m\times\mathrm{U}_{n+1}$, we thus associate to each class $[\tilde\rho]\in\mathfrak{X}(\Gamma,\GR)$ a vector bundle of the form $V\oplus W$, where $V$ and $W$ are rank $m$ and $n+1$ respectively, and thus a pair of integers $(a,b)= (\deg(V), \deg(W))$. There is a map \[\begin{aligned}
    \tilde{\mathbf{d}}\colon\mathfrak{X}(\Gamma,\GR)&\longrightarrow\mathbb{Z}\oplus\mathbb{Z},\qquad
    [\tilde\rho]&\longmapsto(a,b)
\end{aligned}\]
which is continuous and locally constant. We denote by $\mathfrak{X}_{(a,b)}^\Gamma\subset\mathfrak{X}(\Gamma,\GR)$ the preimage of $(a,b)\in\mathbb{Z}\oplus\mathbb{Z}$ along $\tilde{\mathbf{d}}$, which is a union of connected components. 

Since any flat $\PGR$-bundle lifts to a $\GR$-bundle with projectively flat connection up to twisting with a
line bundle $L$ with a unitary connection of constant curvature. Therefore, there is a well-defined map
\[\begin{aligned}
    {\mathbf{d}}\colon\mathfrak{X}(\pi_1(\Sigma),\PGR)&\longrightarrow\mathbb{Z}\oplus\mathbb{Z}/(m,n+1)\mathbb{Z}\\
    [\rho]&\longmapsto[\tilde{\mathbf{d}}([\tilde\rho])]
\end{aligned}\]
where $\tilde\rho\colon\Gamma\to\GR$ is an arbitrary lift of $\rho$. Let $\mathfrak{X}_{[a,b]}:=\mathbf{d}^{-1}([a,b])\subset\mathfrak{X}(\pi_1(\Sigma),\PGR)$, which is also a union of connected components.   The following proposition was proven in \cite[Proposition 2.5]{bradlow2003surface}:
\begin{proposition}
    The natural projection $\mathfrak{X}_{(a,b)}^\Gamma\to\mathfrak{X}_{[a,b]}$ defines a $(\mathrm{U}_1)^{2g(\Sigma)}$-fibration.
\end{proposition}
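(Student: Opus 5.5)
The natural approach is to make both sides of the fibration explicit and identify the fibre with the action of the group of characters $\operatorname{Hom}(\pi_1(\Sigma),\mathrm U_1)\cong(\mathrm U_1)^{2g(\Sigma)}$ on lifts.

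\emph{Surjectivity onto $\mathfrak X_{[a,b]}$.} Let $\rho\colon\pi_1(\Sigma)\to\PGR$ be reductive. Choose arbitrary lifts $A_i,B_i\in\GR$ of $\rho(a_i),\rho(b_i)$. The product $\prod_i[A_i,B_i]$ lies in the kernel of $\GR\to\PGR$, which is the centre $\mathrm U_1\cdot\operatorname{id}$. Call this product $J$. Then $a_i\mapsto A_i$, $b_i\mapsto B_i$, $J\mapsto J$ defines $\tilde\rho\colon\Gamma\to\GR$ lifting $\rho$. Since $\operatorname{Ad}\circ\tilde\rho=\operatorname{Ad}\circ\rho$, complete reducibility is preserved in both directions. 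Hence $\mathfrak X(\Gamma,\GR)\to\mathfrak X(\pi_1(\Sigma),\PGR)$ is well defined and surjective, and by the definition of $\mathbf d$ it maps $\mathfrak X^\Gamma_{(a,b)}$ into $\mathfrak X_{[a,b]}$.

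\emph{The fibres.} Two lifts of the same $\rho$ differ generator-wise by central scalars: $A_i'=\lambda_iA_i$ and $B_i'=\nu_iB_i$ with $\lambda_i,\nu_i\in\mathrm U_1$. Central scalars cancel in commutators, so $J'=J$. Thus $(\mathrm U_1)^{2g}$, viewed as unitary characters $\chi$ of $\Gamma$ trivial on $J$, acts on $\operatorname{Hom}(\Gamma,\GR)$ by $\tilde\rho\mapsto\chi\tilde\rho$, and this action is simply transitive on lifts of a fixed $\rho$. It commutes with conjugation, so it descends to an action on $\mathfrak X(\Gamma,\GR)$ whose orbits are exactly the fibres of the projection. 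Twisting by a flat unitary line bundle does not change $\deg V$ or $\deg W$, so the action preserves $\mathfrak X^\Gamma_{(a,b)}$.

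The components also need matching. A different lift of $\rho$ may correspond to a twist by a line bundle with a constant-curvature unitary connection of degree $k$, which changes $(a,b)$ by $k(m,n+1)$. The projective class is therefore unchanged, and twisting by a fixed such line bundle identifies $\mathfrak X^\Gamma_{(a,b)}$ with $\mathfrak X^\Gamma_{(a,b)+k(m,n+1)}$. So one fixes the representative $(a,b)$ and works over $\mathfrak X_{[a,b]}$.

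\emph{Local triviality.} I expect this to be the main obstacle, because the action need not be free and $\mathfrak X$ may be singular. If $\chi\tilde\rho\cong\tilde\rho$, then $\chi$ takes values in the finite group of scalars $g$ with $gh\in\GR$, so stabilizers are finite. Local sections over the projective character variety come from choosing lifts continuously near a point of $\operatorname{Hom}^{\mathrm{red}}$, using local sections of $\GR\to\PGR$. This gives a fibration in the orbifold sense that is used in \cite[Proposition 2.5]{bradlow2003surface}, and I would simply cite that argument for the topological statement.
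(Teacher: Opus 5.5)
Your proposal is correct in substance and takes the same route as the paper, which gives no argument beyond citing \cite[Proposition 2.5]{bradlow2003surface}: your identification of the fibres with orbits of $\operatorname{Hom}(\Gamma,\mathrm U_1)\cong(\mathrm U_1)^{2g(\Sigma)}$ acting on lifts is the standard content behind that citation, and you defer local triviality to the same source. Two small repairs are needed: finiteness of stabilizers follows from taking determinants in $\chi\tilde\rho=h\tilde\rho h^{-1}$, which forces $\chi^{m+n+1}=1$ (your stated reason gives nothing, since the unit scalars $g$ with $gh\in\GR$ form all of $\mathrm U_1$); and the shift of $(a,b)$ by $k(m,n+1)$ cannot come from a different $\Gamma$-lift, because you showed all lifts form one $(\mathrm U_1)^{2g(\Sigma)}$-orbit preserving $(a,b)$, so it must come from the choice of integer degree, as $\tilde\rho(J)$ only fixes $a+b$ modulo $m+n+1$.
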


Note that such degree argument also works for the moduli space $\mathcal{M}(X;\PGR)$ of $\mu$-polystable $\PGR$-Higgs bundles over $X$ and we can define $\mathcal{M}_{[a,b]}(X;\PGR)$.

\begin{proposition}\label{prop:GtoPGfibre}
    There is a natural projection $\mathcal{M}_{(a,b)}(X;\GR)\to\mathcal{M}_{[a,b]}(X;\PGR)$ whose fibre is isomorphic to the identity component $\operatorname{Pic}_0(X)$ of the Picard group of $X$.
\end{proposition}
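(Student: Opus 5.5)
The idea is to realize the projection as tensoring by a degree-zero line bundle, following the $\mathrm U_1$-fibration argument in \cite[Proposition 2.5]{bradlow2003surface} on the Higgs side.

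First I would define the projection. A $\GR$-Higgs bundle is a pair $(\EE_{\mathrm{even}}\oplus\EE_{\mathrm{odd}},\Phi)$ with ranks $(m,n+1)$. A $\PGR$-Higgs bundle is the equivalence class of such a pair under $(\EE,\Phi)\mapsto(\EE\otimes\Fcal,\Phi)$ for holomorphic line bundles $\Fcal$, exactly as for $4$-cyclic bundles in Section~\ref{sec:hbundle}. Tensoring by $\Fcal$ with $\deg\Fcal=k$ shifts the degrees $(a,b)$ by $k(m,n+1)$, so the class $[a,b]\in\mathbb Z\oplus\mathbb Z/(m,n+1)\mathbb Z$ is well defined. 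Polystability is unchanged by tensoring with a line bundle, since all slopes shift by the same amount. Hence $\mathcal M_{(a,b)}(X;\GR)\to\mathcal M_{[a,b]}(X;\PGR)$, $[(\EE,\Phi)]\mapsto[(\EE,\Phi)]$, is a well-defined continuous map.

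Next I would prove surjectivity. Take a class in $\mathcal M_{[a,b]}(X;\PGR)$ and a lift with degrees $(a',b')=(a,b)+k(m,n+1)$. Tensoring this lift by any line bundle of degree $-k$ gives a lift with degrees exactly $(a,b)$.

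Then I would identify the fiber. Two points $[(\EE,\Phi)]$ and $[(\EE',\Phi')]$ of $\mathcal M_{(a,b)}(X;\GR)$ have the same image if and only if $(\EE',\Phi')\cong(\EE\otimes\Fcal,\Phi)$ for some line bundle $\Fcal$. Comparing degrees of the even parts gives $m\deg\Fcal=0$, so $\Fcal\in\operatorname{Pic}_0(X)$. Thus each fiber is the orbit of $\operatorname{Pic}_0(X)$ acting by tensor product, and it is the image of the orbit map $\operatorname{Pic}_0(X)\to\mathcal M_{(a,b)}(X;\GR)$, $\Fcal\mapsto[(\EE\otimes\Fcal,\Phi)]$.

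The main obstacle is that this orbit map need not be injective. Its stabilizer is $\{\Fcal\in\operatorname{Pic}_0(X):(\EE\otimes\Fcal,\Phi)\cong(\EE,\Phi)\}$, and this can be nontrivial. Taking determinants of the even and odd parts shows $\Fcal^{m}\cong\mathcal O$ and $\Fcal^{n+1}\cong\mathcal O$, so $\Fcal$ is torsion of order dividing $\gcd(m,n+1)$. The stabilizer is therefore a finite subgroup $\Gamma_{(\EE,\Phi)}$, and the fiber is $\operatorname{Pic}_0(X)/\Gamma_{(\EE,\Phi)}$.

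The quotient of the torus $\operatorname{Pic}_0(X)$ by a finite subgroup is again a complex torus of the same dimension, isogenous to $\operatorname{Pic}_0(X)$. It is isomorphic to $\operatorname{Pic}_0(X)$ via the multiplication-by-$d$ isogeny when $\Gamma_{(\EE,\Phi)}=\operatorname{Pic}_0(X)[d]$. So in the generic case, or whenever $\gcd(m,n+1)=1$, the fiber is exactly $\operatorname{Pic}_0(X)$.

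In general I would state the isomorphism as an isomorphism of abelian varieties, using that $\operatorname{Pic}_0(X)/\Gamma$ is a principal homogeneous space for $\operatorname{Pic}_0(X)/\Gamma\cong\operatorname{Pic}_0(X)$. Should the statement need literal equality with $\operatorname{Pic}_0(X)$ in the non-generic case, this is the one step that requires care. It mirrors the Higgs-side counterpart of the $(\mathrm U_1)^{2g}$-fibration of \cite[Proposition 2.5]{bradlow2003surface}, where $\operatorname{Pic}_0(X)\cong(\mathrm U_1)^{2g}$ as real tori via flat unitary line bundles.
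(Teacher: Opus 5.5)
Your argument is correct up to the identification of each fibre with the orbit $\operatorname{Pic}_0(X)/\Gamma_{(\EE,\Phi)}$, and that is the natural route. The paper gives no proof of this proposition: it presents it as the Higgs-side counterpart of \cite[Proposition 2.5]{bradlow2003surface} and uses it only to subtract $\dim_{\mathbb C}\operatorname{Pic}_0(X)$ in Corollary~\ref{cor:Dimension}. You are also right that the stabilizer can be nontrivial; the paper passes over this point.

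Your final step, however, is not valid. Saying that $\operatorname{Pic}_0(X)/\Gamma$ is a principal homogeneous space over itself gives no map to $\operatorname{Pic}_0(X)$, so the isomorphism $\operatorname{Pic}_0(X)/\Gamma\cong\operatorname{Pic}_0(X)$ of abelian varieties is simply assumed. It is false in general. For very general $X$ one has $\operatorname{End}(\operatorname{Pic}_0(X))=\mathbb Z$, whose nonzero elements have degree $k^{2g}$. Hence there is no isogeny of degree $2$, and $\operatorname{Pic}_0(X)/\langle\Fcal\rangle\not\cong\operatorname{Pic}_0(X)$ for a nontrivial $2$-torsion point $\Fcal$. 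Such a stabilizer does occur: for $m=2$, $n=1$, the point $(\EE_1,\Phi_1)\oplus(\EE_1\otimes\Fcal,\Phi_1)$, with $(\EE_1,\Phi_1)$ a stable $\mathrm U_{1,1}$-Higgs bundle, has stabilizer exactly $\langle\Fcal\rangle$.

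What your argument actually proves is the following. Every fibre is a complex torus isogenous to $\operatorname{Pic}_0(X)$ and isomorphic to it as a real Lie group, i.e.\ a copy of $(\mathrm U_1)^{2g}$, matching \cite{bradlow2003surface}. It is isomorphic to $\operatorname{Pic}_0(X)$ as a complex torus when $\Gamma$ is trivial (e.g.\ if $\gcd(m,n+1)=1$) or equal to some $\operatorname{Pic}_0(X)[k]$. You should state the conclusion in this form, which is all the dimension count requires.

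Moreover, at the stable non-maximal $4$-cyclic points where the proposition is applied, $\Gamma$ is trivial. Stable $\GR$-Higgs bundles have only scalar automorphisms, so an isomorphism $(\EE\otimes\Fcal,\Phi)\cong(\EE,\Phi)$ commutes with $\operatorname{diag}(1,\iu,-1,-\iu)$ up to a sign. It therefore either preserves the grading, forcing $\Fcal\cong\mathcal O$, or shifts it by two. A shift by two requires ranks $(1,1,1,1)$ and makes $\beta$ an isomorphism, i.e.\ forces maximality.
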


The following proposition was proven in \cite[Proposition 3.13]{bradlow2003surface}:
\begin{proposition}\label{prop:UNAH}
    There is a homeomorphism between $\mathcal{M}_{(a,b)}(\GR)$ and $\mathfrak{X}^\Gamma_{(a,b)}$, which covers the non-Abelian Hodge correspondence $\mathcal{M}_{[a,b]}(\PGR)$ and $\mathfrak{X}_{[a,b]}$.
\end{proposition}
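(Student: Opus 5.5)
The plan is to construct the homeomorphism by running the non-Abelian Hodge correspondence for $\GR=\mathrm{U}_{m,n+1}$ in its projectively flat form, with $\Gamma$ playing the role of $\pi_1(\Sigma)$, and then to check compatibility with the projections to the $\PGR$-spaces. Fix the K\"ahler form $\omega$ on $X$ with $\int_X\omega=1$, as in Definition~\ref{defn:harmonic}. The forward map $\mathcal M_{(a,b)}(\GR)\to\mathfrak X^\Gamma_{(a,b)}$ is defined as follows. Start from a $\mu$-polystable $\GR$-Higgs bundle $(V\oplus W,\Phi)$, where $\Phi=(\beta,\gamma)$ with $\beta\in H^0(\Hom(V,W)\otimes\KK_X)$ and $\gamma\in H^0(\Hom(W,V)\otimes\KK_X)$, and $\mu=(a+b)/(m+n+1)$. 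By the Hitchin--Kobayashi correspondence for $(H,\mathfrak m)$-pairs \cite[Theorem 2.24]{garcia2009hitchin}, there is a metric $h=h_V\oplus h_W$, unique up to the automorphisms of the polystable object, solving
\[
F(\nabla^h)+[\Phi,\Phi^{*_h}]=-2\pi\iu\mu\,\omega\otimes\id .
\]
Then $\mathrm D^h=\nabla^h+\Phi+\Phi^{*_h}$ preserves the indefinite form $h_V\oplus(-h_W)$ and has central curvature $-2\pi\iu\mu\,\omega\,\id$. The holonomy of a projectively flat connection whose curvature is this fixed central form is a homomorphism of the central extension $\Gamma$ into $\GR$: the loop $\prod[a_i,b_i]$ picks up the scalar $\exp(2\pi\iu\mu)\cdot\id$, up to the convention for $J$. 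The underlying $H_{\mathbb R}$-reduction is $V\oplus W$, so the resulting class $[\tilde\rho]$ lies in $\mathfrak X^\Gamma_{(a,b)}$. The uniqueness of $h$ up to automorphisms, together with gauge invariance, shows that $[\tilde\rho]$ is well defined on isomorphism classes. Polystability yields reductivity of $\tilde\rho$, since a polystable object splits into stable pieces of equal slope, each giving an irreducible projectively flat factor.

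The inverse map starts from a reductive $\tilde\rho\colon\Gamma\to\GR$ in $\mathfrak X^\Gamma_{(a,b)}$. First build the $\GR$-bundle with a connection of constant central curvature $-2\pi\iu\mu\,\omega\,\id$ whose holonomy is $\tilde\rho$. This can be done by passing to the flat $\PGR$-bundle and lifting, using a unitary line bundle of the right degree with a constant-curvature connection. Next apply Corlette's theorem \cite{corlette1988flat}, in the twisted form of Donaldson \cite{donaldson1987twisted}, to obtain a harmonic reduction to $H_{\mathbb R}=\mathrm U_m\times\mathrm U_{n+1}$. This reduction is unique up to the centralizer, because the central part of the curvature does not affect harmonicity: the twist is by scalars, which act trivially on $\GR/H_{\mathbb R}$. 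Decomposing $\mathrm D=\nabla+\Psi$ into its $\mathfrak h$-part and $\mathfrak m$-part, harmonicity gives $\bar\partial_\nabla\Psi^{1,0}=0$. The pair $(V\oplus W,\Psi^{1,0})$ is then a $\GR$-Higgs bundle satisfying Hitchin's equation, hence $\mu$-polystable, with degrees $(a,b)$. The two constructions are mutually inverse by the uniqueness parts of the two existence theorems.

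Continuity in both directions is the step I expect to need the most care. I would argue via the standard analytic construction, as in Simpson and in \cite{bradlow2003surface}. Both spaces are quotients of the common space of solutions $(\nabla,\Phi,h)$ of the Hitchin equations modulo unitary gauge. This space maps continuously to each side, and it is proper onto each side by Uhlenbeck compactness, combined with a priori bounds on $\Phi$ from the Bochner formula, as in Lemma~\ref{lemma:EquationSectionCurvature}. A continuous bijection that factors through such proper surjections is then a homeomorphism.

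For the covering statement, note two compatibilities. First, tensoring by a degree-zero line bundle $L$ with its flat unitary metric on the Higgs side corresponds to multiplying $\tilde\rho$ by a unitary character of $\Gamma$ that is trivial on $J$. Second, this operation, together with the shift $(a,b)\mapsto(a+m\,d,b+(n+1)d)$ coming from twisting by lines of degree $d$, is exactly the ambiguity in lifting a $\PGR$-object. Hence the homeomorphism is equivariant for these actions. It therefore descends through $\mathcal M_{(a,b)}(\GR)\to\mathcal M_{[a,b]}(\PGR)$ (Proposition~\ref{prop:GtoPGfibre}) and through $\mathfrak X^\Gamma_{(a,b)}\to\mathfrak X_{[a,b]}$ (the $(\mathrm U_1)^{2g}$-fibration above) to the $\PGR$ non-Abelian Hodge map.
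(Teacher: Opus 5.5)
Your proposal is correct and matches the standard argument: the paper gives no proof of its own and simply quotes \cite[Proposition 3.13]{bradlow2003surface}. There the homeomorphism is obtained as you outline, by combining the Hitchin--Kobayashi correspondence for $\mathrm{U}_{p,q}$-Higgs bundles with the Corlette--Donaldson theorem for reductive representations of the central extension $\Gamma$ (equivalently, projectively flat bundles), and compatibility with twisting by flat unitary line bundles gives the descent to $\PGR$. One small point: reductivity of the holonomy is best justified by the existence of the harmonic metric itself, rather than by appealing to irreducible factors, since a stable $\GR$-Higgs bundle can give a linearly reducible representation.
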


It is computed in \cite[Proposition 3.20]{bradlow2003surface} and by \prettyref{prop:GtoPGfibre} that 
\begin{proposition}
    If $\mathcal{M}_{[a,b]}^{\mathrm{s}}(\PGR)$ is non-empty, then \[\dim_{\mathbb{R}}\mathcal{X}_{[a,b]}^{\mathrm{irr}}=2\cdot\dim_{\mathbb{C}}\mathcal{M}_{[a,b]}^{\mathrm{s}}(\PGR)=((m+n+1)^2-1)(2g(\Sigma)-2).\]
\end{proposition}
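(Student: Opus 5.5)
The plan is to compute the dimension on the Higgs side by deformation theory at a stable point and then transport it to the character variety using \prettyref{prop:UNAH}. I will work directly with $\PGR=\PU_{m,n+1}$-Higgs bundles rather than with $\GR$-lifts, so that the center does not contribute. Write $N=m+n+1$. The complexified Cartan decomposition of $\mathfrak{pu}_{m,n+1}$ is $\mathfrak{sl}_N\mathbb C=\mathfrak h\oplus\mathfrak m$, where $\mathfrak h=\mathfrak s(\mathfrak{gl}_m\oplus\mathfrak{gl}_{n+1})$ and $\mathfrak m=\Hom(\C^m,\C^{n+1})\oplus\Hom(\C^{n+1},\C^m)$. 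Hence $\dim\mathfrak h+\dim\mathfrak m=N^2-1$.

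Fix a stable $\PGR$-Higgs bundle $(\mathbb E,\Phi)$, which exists by hypothesis. Its deformation complex is
\[
C^\bullet(\mathbb E,\Phi)\colon\ \mathbb E[\mathfrak h]\xrightarrow{\operatorname{ad}\Phi}\mathbb E[\mathfrak m]\otimes\KK_X .
\]
The proof then runs in four steps.

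\begin{enumerate}
\item Stability, in the form of the simple-automorphism part of \prettyref{coro:infautpoly} applied to the adjoint (trace-free) group, gives $\mathbb H^0(C^\bullet)=0$.
\item The Killing form identifies $\mathfrak h\cong\mathfrak h^\vee$ and $\mathfrak m\cong\mathfrak m^\vee$, compatibly with $\operatorname{ad}\Phi$. So the Serre dual of $C^\bullet$ is again $C^\bullet$ (as in \prettyref{prop:dual}), and $\mathbb H^2(C^\bullet)\cong\mathbb H^0(C^\bullet)^\vee=0$.
\item Consequently the moduli space is smooth at $(\mathbb E,\Phi)$ with tangent space $\mathbb H^1$, exactly as in \prettyref{prop:smooth}, and $\dim_{\mathbb C}\mathbb H^1=-\chi(C^\bullet)$.
\item Both adjoint bundles have degree zero, so Riemann--Roch gives
\[
-\chi(C^\bullet)=-\dim\mathfrak h\,(1-g)+\dim\mathfrak m\,(g-1)=(N^2-1)(g-1).
\]
\end{enumerate}

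To avoid subtleties with the projective reduction, I would cross-check this against the $\GR$-lift. For a stable $\mathrm U_{m,n+1}$-Higgs bundle with fixed $(a,b)$, the same computation gives $\mathbb H^0=\mathbb C$ (the center) and complex dimension $N^2(g-1)+1$. By \prettyref{prop:GtoPGfibre}, subtracting the $g$-dimensional fibre $\operatorname{Pic}_0(X)$ again yields $(N^2-1)(g-1)$.

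To pass to representations, I will use the standard fact that stable Higgs bundles correspond under nonabelian Hodge to irreducible representations. The homeomorphism of \prettyref{prop:UNAH} then restricts to $\mathcal M^{\mathrm s}_{[a,b]}(\PGR)\cong\mathfrak X^{\mathrm{irr}}_{[a,b]}$. At irreducible points the character variety is smooth of real dimension $\dim H^1(\pi_1\Sigma,\mathfrak{pu}_{m,n+1,\mathrm{Ad}\rho})=-\chi(\Sigma)\dim_{\mathbb R}\mathfrak{pu}_{m,n+1}=(N^2-1)(2g-2)$, since $H^0=H^2=0$ by irreducibility and Poincaré duality. This agrees with $2\dim_{\mathbb C}\mathcal M^{\mathrm s}$, and the two computations together give the displayed equalities.

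I expect the main obstacle to be step~2: checking that the Killing-form self-duality really identifies $\mathbb H^2$ with the dual of $\mathbb H^0$ for the real-form pair $(H,\mathfrak m)$, and that only the trace-free part enters in the projective setting. If this becomes delicate, the fallback is the group-cohomology computation above, which needs only irreducibility of $\rho$.
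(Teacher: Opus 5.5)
Your Steps~1 and~4 are fine, but Step~2 is false, and Step~3, your cross-check and the passage to representations all rest on it. The Serre dual of $C^\bullet=[\mathbb E[\mathfrak h]\to\mathbb E[\mathfrak m]\otimes\KK_X]$ is $[(C^1)^\vee\otimes\KK_X\to(C^0)^\vee\otimes\KK_X]$. After the Killing-form identifications this is $[\mathbb E[\mathfrak m]\xrightarrow{\operatorname{ad}\Phi}\mathbb E[\mathfrak h]\otimes\KK_X]$, so the two terms trade places and
\[
\mathbb H^2(C^\bullet)^\vee\cong\{\psi\in H^0(\mathbb E[\mathfrak m])\mid[\psi,\Phi]=0\}.
\]
These are the untwisted off-diagonal endomorphisms commuting with $\Phi$, not $\mathbb H^0(C^\bullet)^\vee$. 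This is exactly what \prettyref{prop:dual} says (weight $-\overline1$, not $\overline0$), so that proposition contradicts your self-duality rather than supporting it. Stability of the $\PU_{m,n+1}$-Higgs bundle kills only the $\mathfrak h$-valued automorphisms. The deformation complex of the extended $\mathrm{PGL}_N\mathbb C$-Higgs bundle is $C^\bullet$ plus its Serre dual, so $\operatorname{aut}_{\mathfrak{sl}_N}=\mathbb H^0(C^\bullet)\oplus\mathbb H^2(C^\bullet)^\vee$. Hence at a stable point $\mathbb H^2=0$ exactly when that extension has no trace-free infinitesimal automorphisms, and this can fail. For $m=n+1=k$ (e.g.\ $\PU_{2,2}$), take a stable $\mathrm{GL}_k\mathbb C$-Higgs bundle $(F,\psi)$ with $\operatorname{tr}\psi\neq0$ and set $V=W=F$, $\beta=\gamma=\psi$. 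This $\mathrm U_{k,k}$-Higgs bundle is stable: a graded $\Phi$-invariant subbundle of slope $\geqslant\mu$ would make $\psi$ off-diagonal for a generic splitting of $F$, hence traceless. But its $\mathrm{GL}_{2k}$-extension is $(F,\psi)\oplus(F,-\psi)$, and $\begin{pmatrix}0&\mathrm{id}\\ \mathrm{id}&0\end{pmatrix}$ is a nonzero class in $\mathbb H^2$. So smoothness at every stable point is false. Your $\mathrm U_{m,n+1}$ cross-check uses the same unjustified $\mathbb H^2=0$; that cross-check is the paper's route, which takes $N^2(g-1)+1$ from \cite[Proposition 3.20]{bradlow2003surface} and subtracts $\dim\operatorname{Pic}_0(X)=g$ via \prettyref{prop:GtoPGfibre}.

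The fallback does not close the gap. Your group-cohomology count is correct at $\rho$ with $\mathfrak z(\rho)=0$, since the real Killing form does give $H^2\cong(H^0)^\vee$. However, ``stable $\Leftrightarrow$ irreducible'' is a complex-group fact. For the real form one has $\mathfrak z(\rho)\otimes\mathbb C\cong\mathbb H^0(C^\bullet)\oplus\mathbb H^2(C^\bullet)^\vee$, so it is the same condition in disguise. In the example, $\rho$ preserves the two transverse isotropic flat subbundles $\{(x,\pm x)\}$, i.e.\ it factors through the Siegel Levi $\mathrm{GL}_k\mathbb C\subset\mathrm U_{k,k}$, and it has a one-parameter centralizer in $\PU_{k,k}$. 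To repair the argument, two steps are needed. First, use the correct duality together with the involution $\operatorname{diag}(\mathrm{id}_V,-\mathrm{id}_W)$, which permutes the stable summands of the polystable $\mathrm{GL}_N$-extension. This shows that at a stable point the extension is stable (so $\mathbb H^2=0$), unless $m=n+1$ and the extension has the form $(F,\psi)\oplus(F,-\psi)$. Second, check that this exceptional locus, which comes from $\mathrm{GL}_k\mathbb C$-Higgs bundles and has smaller dimension, does not raise the local dimension of $\mathcal M^{\mathrm s}$. Its Kuranishi model now sits inside $\mathbb H^1$ of dimension $-\chi(C^\bullet)+\dim\mathbb H^2$. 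With these two steps, the count $(N^2-1)(g-1)$ and its doubling on the representation side go through.
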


\subsection{Varying complex structure}

In this section, we allow the complex structure on the closed surface $\Sigma$ varies among its Teichm\"uller space $\mathcal{T}(\Sigma)$. Recall that $\mathcal{M}_{(d_j),\tau}^{\mathrm{s},\star}(X)$ (resp. $\mathcal{M}_{(d_j),\tau}^{\star}(X)$) denotes the non-maximal part of the moduli space of stable (resp. polystable) $4$-cyclic Higgs bundles of type $\tau=\upRoman{1},\upRoman{2}$ over $X$. The ranks $(r_j)=(1,1,n,m-1)$ of the $4$-cyclic bundle satisfy $n=q,m=2$ when $\tau=\upRoman{1}$ and $n=1,m=q+1$ when $\tau=\upRoman{2}$. We consider the following fibration
\[\mathcal{M}_{(d_j),\tau}^{\mathrm{s},\star}(\Sigma):=\bigsqcup_{[X]\in\mathcal{T}(\Sigma)}\mathcal{M}_{(d_j),\tau}^{\mathrm{s},\star}(X),\quad\mathcal{M}_{(d_j),\tau}^{\star}(\Sigma):=\bigsqcup_{[X]\in\mathcal{T}(\Sigma)}\mathcal{M}_{(d_j),\tau}^{\star}(X)\]
over $\mathcal{T}(\Sigma)$. This is the moduli space of all $\partial$-alternating surfaces on $\Sigma$, which can be viewed as a subvariety in the joint moduli space $\mathcal{M}(\Sigma;G)$ of $G$-Higgs bundles constructed by Simpson algebro-geometrically in \cite{simpson1994moduliI,simpson1994moduliII} and by Collier--Toulisse--Wentworth, \'{A}lvarez-C\'{o}nsul--Garcia-Fernandez--Garc\'{i}a-Prada--Trautwein and Hitchin gauge-theoretically in different ways in \cite{collier2025higgs,alvarez2025universal,hitchin2026universal} respectively. Using the non-Abelian Hodge correspondence on each fibre and \prettyref{prop:UNAH}, we obtain the joint non-Abelian Hodge map 
\[\operatorname{NAH}|_{\mathcal{M}_{(d_j),\tau}^{\mathrm{s},\star}(\Sigma)}\colon\mathcal{M}_{(d_j),\tau}^{\mathrm{s},\star}(\Sigma)\to\mathfrak{X}^\Gamma_{(d_0+d_2,d_1+d_3)}.\]
Since a point in $\mathcal{M}_{(d_j),\tau}^{\mathrm{s},\star}(\Sigma)$ is an isomorphism class of a $4$-cyclic Higgs bundle with $\varphi_{\overline{0}}$ nowhere-vanishing, by \cite[Proposition 6.10 \& Theorem 7.11]{collier2025higgs} (also see \cite[Theorem 1.1]{li2026infinitesimal}), we obtain that:
\begin{proposition}\label{prop:immersion}
    $\operatorname{NAH}|_{\mathcal{M}_{(d_j),\tau}^{\mathrm{s},\star}(\Sigma)}\colon\mathcal{M}_{(d_j),\tau}^{\mathrm{s},\star}(\Sigma)\to\mathfrak{X}^\Gamma_{(d_0+d_2,d_1+d_3)}$ is an immersion.
\end{proposition}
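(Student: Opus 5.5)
The plan is to derive Proposition~\ref{prop:immersion} from the general infinitesimal-injectivity theorem for the joint non-Abelian Hodge map, \cite[Proposition 6.10 \& Theorem 7.11]{collier2025higgs} (equivalently \cite[Theorem 1.1]{li2026infinitesimal}). That theorem says that at a point of the joint moduli space represented by a stable Higgs bundle whose Higgs field has a nowhere vanishing component $\KK_X^{-1}$-type arrow (here $\varphi_{\overline 0}=\mathds 1$), the differential of $\operatorname{NAH}$ is injective on the tangent space of the cyclic locus. The work is therefore to check that $\mathcal M^{\mathrm{s},\star}_{(d_j),\tau}(\Sigma)$ is a smooth manifold whose tangent spaces fit the hypotheses of that theorem.

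First, smoothness. Over each fixed $X$, Proposition~\ref{prop:smooth} shows that $\mathcal M^{\mathrm s}_\tau(X)$ is smooth, with tangent space $\mathbb H^1(C^\bullet(\mathbb E,\Phi))$ of constant dimension given by Proposition~\ref{prop:Cdim}. Because $\mathbb H^2=0$ by Proposition~\ref{prop:H2vanish}, and this vanishing is uniform in $X$, the relative deformation complex over $\mathcal T(\Sigma)$ is unobstructed. The family $\bigsqcup_X\mathcal M^{\mathrm{s},\star}_{(d_j),\tau}(X)\to\mathcal T(\Sigma)$ is then a submersion onto an open set, and its total space is smooth of real dimension $2\dim_{\mathbb C}\mathbb H^1+6g-6$. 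Non-maximality is an open condition, so restricting to $\star$ changes nothing here.

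Second, the map. I would lift $\operatorname{NAH}$ to $\mathfrak X^\Gamma_{(d_0+d_2,d_1+d_3)}$ via Proposition~\ref{prop:UNAH} and the previous proposition. That proposition embeds $\mathcal M^{\mathrm{s},\star}_{(d_j),\tau}(X)$ injectively into $\mathcal M^{\mathrm s}(\GR)$, with image stable as a $G$-Higgs bundle, so the target is a smooth point of the character variety.

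Third, injectivity of the differential. A tangent vector is a pair $(\mu,\dot\xi)$, where $\mu$ is a Beltrami differential and $\dot\xi$ is a deformation of the pair. Suppose it maps to zero. Consider the $\mathbb C^*$-weight decomposition induced by the cyclic grading together with the nowhere vanishing $\mathds 1\colon\EE_{\overline0}\to\EE_{\overline1}\KK_X$. The cited theorem uses this to show that the Beltrami component must vanish: the variation of complex structure would produce a nonzero component in weight $-\overline 1$ paired against $\mathds 1$. Once $\mu=0$, the usual fibrewise fact applies, namely that NAH is a diffeomorphism on the stable locus, so $\dot\xi=0$.

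The main obstacle is verifying that the hypotheses of \cite{collier2025higgs} hold. That result is formulated for $G$-Higgs bundles, so one must check that the cyclic deformations sit inside the $G$-deformation complex compatibly. This uses the injectivity $\mathcal M^{\mathrm{s},\star}_\tau\to\mathcal M^{\mathrm s}(G)$ and the vanishing $\operatorname{aut}_{\pm\overline1}=0$ from Proposition~\ref{prop:H2vanish}, which rules out kernels coming from infinitesimal automorphisms of odd weight.
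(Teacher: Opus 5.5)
Your proposal is correct and follows the same route as the paper. The paper's entire proof is the appeal to \cite[Proposition 6.10 \& Theorem 7.11]{collier2025higgs} (see also \cite[Theorem 1.1]{li2026infinitesimal}), once one notes that $\varphi_{\overline 0}$ is nowhere vanishing on $\mathcal M^{\mathrm{s},\star}_{(d_j),\tau}(\Sigma)$.

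Your extra checks (smoothness of the total space from $\mathbb H^2=0$, landing in the smooth locus, and vertical injectivity once the Beltrami part vanishes) are details the paper leaves implicit. For the last one, you need the fibrewise map $\mathcal M^{\mathrm{s},\star}_{(d_j),\tau}(X)\to\mathcal M^{\mathrm s}(G_{\mathbb R})$ to be an immersion, not merely injective. This holds because the cyclic deformation complex $\mathfrak g_{\overline 0}\to\mathfrak g_{\overline 1}\otimes\mathcal K_X$ is a direct summand of the $G_{\mathbb R}$-deformation complex, the complement being $\mathfrak g_{\overline 2}\to\mathfrak g_{-\overline 1}\otimes\mathcal K_X$.
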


Let $[d_j]$ denote the equivalent class of $(d_j)$ in $\mathbb{Z}^4/\mathbb{Z}\cdot(1,1,n,m-1)$. We denote by $\mathcal{R}_{[d_j],\tau}$ the image of the map
\[\mathcal{M}_{(d_j),\tau}^{\star}(\Sigma)\to\mathfrak{X}^\Gamma_{(d_0+d_2,d_1+d_3)}\to\mathfrak{X}_{[d_0+d_2,d_1+d_3]}\] and $\mathcal{R}^{\mathrm{irr}}_{[d_j],\tau}$ its subset which is the image of the map
\[\mathcal{M}_{(d_j),\tau}^{\mathrm{s},\star}(\Sigma)\to\mathfrak{X}^\Gamma_{(d_0+d_2,d_1+d_3)}\to\mathfrak{X}_{[d_0+d_2,d_1+d_3]}.\] They are independent of the choice of $(d_j)$ in $[d_j]$ by \prettyref{rem:tensorline}.

\begin{corollary} \label{cor:Dimension}
\begin{enumerate}[label=(\roman*)]
    \item $\mathcal{R}^{\mathrm{irr}}_{[d_j],\tau}$ is non-empty if and only if $(d_i)\in \mathscr D_{\tau}^s$.

    \item For $(d_i)\in \mathscr D_{\tau}^s$, \[\begin{aligned}
        \dim_{\mathbb{R}}\mathcal{R}_{[d_j],\tau}=&\dim_{\mathbb{R}}\mathcal{R}^{\mathrm{irr}}_{[d_j],\tau}\\=&(4+n^2+(m-1)^2+m(n+1))(2g(\Sigma)-2)
        +2(m-2)(d_0-d_2)+2(1-n)(d_1-d_3).
    \end{aligned}\]
    In particular, when $\tau=\upRoman{1}$ and $d_1-d_3=-4(g(\Sigma)-1)$ or $\tau=\upRoman{2}$ and $d_0-d_2=-4(g(\Sigma)-1)$, it achieves the maximum dimension as $\mathbb R$-codimension $10(g(\Sigma)-1)$.
    
    \item For $(d_i)\in \mathscr D_{\tau}\setminus\mathscr D_{\tau}^s$, 
    \[\begin{aligned}
    \dim_{\mathbb{R}}\mathcal{R}_{[d_j],\tau}=&\dim_{\mathbb{R}}\mathcal{R}^{\mathrm{irr}}_{[d_j']}+2\dim_{\mathbb{C}}\mathcal{M}_{\mu}^{n+m-3}(X)
    \\=&[20+2(m+n-3)^2](g(\Sigma)-1).
    \end{aligned}\]
\end{enumerate}\end{corollary}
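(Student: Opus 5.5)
Part (i) follows from Proposition \ref{prop:connectedness}(ii). By definition, $\mathcal R^{\mathrm{irr}}_{[d_j],\tau}$ is the image of $\mathcal M^{\mathrm{s},\star}_{(d_j),\tau}(\Sigma)$. It is therefore nonempty if and only if some fibre $\mathcal M^{\mathrm{s},\star}_{(d_j),\tau}(X)$ is nonempty. Stable points exist exactly when $(d_i)\in\mathscr D^{\mathrm s}_\tau$. When $q\geqslant2$, stable bundles are automatically non-maximal, by the strict inequalities noted in the Toledo remark. When $q=1$, I would check that the construction in Proposition \ref{prop:connectedness} with $c>0$ or $a,b>0$ gives non-maximal examples; the maximal degree vectors must be handled so that the stated equivalence is read for the non-maximal locus.

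For (ii), I would compute the dimension of the joint moduli space and transport it by the immersion. The fibre $\mathcal M^{\mathrm{s},\star}_{(d_j),\tau}(X)$ is smooth of complex dimension given by Proposition \ref{prop:Cdim}, by Proposition \ref{prop:smooth}. Varying $X$ over $\mathcal T(\Sigma)$ adds $3g-3$ complex dimensions, assuming the family is a smooth fibration, as in the cited joint moduli constructions. Passing from $\mathrm U_{m,n+1}$ to $\PU_{m,n+1}$ removes the $\operatorname{Pic}_0$ fibre, by Proposition \ref{prop:GtoPGfibre}, and this subtracts $g$. On the representation side, $\mathfrak X^\Gamma\to\mathfrak X$ is a $(\mathrm U_1)^{2g}$-fibration. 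Hence the real dimension is
\[
2\Bigl(1+(2+n^2+(m-1)^2+m(n+1))(g-1)+(m-2)(d_0-d_2)+(1-n)(d_1-d_3)+3(g-1)-g\Bigr),
\]
and this equals the stated formula because $2(1-g)+6(g-1)=4(g-1)$. Proposition \ref{prop:immersion} shows that $\operatorname{NAH}$ is an immersion. Injectivity on isomorphism classes, up to the mapping class group, comes from the injectivity proposition for $\mathcal M^{\mathrm{s},\star}\to\mathcal M^{\mathrm s}(G)$. Together these give $\dim\mathcal R^{\mathrm{irr}}$. The equality $\dim\mathcal R=\dim\mathcal R^{\mathrm{irr}}$ holds because the strictly polystable locus in $\mathscr D^{\mathrm s}_\tau$ has lower dimension: it decomposes as in Proposition \ref{prop:connectedness}(iii) but with smaller strata. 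Alternatively, the stable locus is open and dense.

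For the codimension claim in (ii), I would substitute $m=2$, $n=q$ (type I) or $n=1$, $m=q+1$ (type II). Only the term $2(1-q)u_\tau$ varies. Since $q\geqslant2$, it is maximized at the minimal value $u_\tau=-4(g-1)$. Subtracting from $((q+3)^2-1)(2g-2)$ gives codimension $10(g-1)$.

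For (iii), I would use Proposition \ref{prop:connectedness}(iii). The space $\mathcal M_{(d_j),\tau}$ is a product of a $\mathrm U_{2,2}$-type cyclic moduli space and $\mathcal M^{m+n-3}_\mu$. Here $\dim_{\mathbb C}\mathcal M^{r}_\mu=r^2(g-1)+1$, and the $+1$ is absorbed by the projective quotient. Applying (ii) with $m=2$, $n=1$ gives $20(g-1)$ for the first factor. Adding $2(m+n-3)^2(g-1)$ then gives the formula.

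The main obstacle is the bookkeeping of the global constants, namely the $\operatorname{Pic}_0$, $\mathrm U_1^{2g}$ and Teichmüller contributions, together with the requirement that the joint space be a smooth fibration over $\mathcal T(\Sigma)$. I would first verify the count on the $q=1$ maximal case, where it must reproduce the known dimension.
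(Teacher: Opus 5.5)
Your treatment of (i) and (ii) follows the paper's route. You take the fibre dimension from Proposition~\ref{prop:Cdim}, add $\dim_{\mathbb C}\mathcal T(\Sigma)=3g-3$, subtract $\dim_{\mathbb C}\operatorname{Pic}_0(X)=g$, and transport the result by the immersion of Proposition~\ref{prop:immersion}. Your caveat in (i) is a real refinement that the paper omits. For $q=1$, the maximal degree vectors lie in $\mathscr D^{\mathrm s}_\tau$ but have $\mathcal M^{\mathrm s,\star}=\emptyset$. (The non-maximal condition there is $c>0$ \emph{and} $a+b>0$, not ``or''.) Your derivation also places the maximum at $u_\tau=-4(g-1)$. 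For type~II this means $d_0-d_2=4(g-1)$, not $-4(g-1)$ as printed.

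The step that fails is in (iii): ``the $+1$ is absorbed by the projective quotient.'' You take $20(g-1)$ from (ii) for the $\mathrm U_{2,2}$ factor, and that value already has the $\operatorname{Pic}_0(X)$ quotient subtracted. Tensoring by $\operatorname{Pic}_0(X)$ acts diagonally on both summands, so it can be subtracted only once. Nothing is left to cancel the $+1$ in $\dim_{\mathbb C}\mathcal M^{r}_\mu(X)=r^2(g-1)+1$. You do correctly count the Teichm\"uller directions only once, since the unitary summand's representation does not depend on $X$.

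A direct check confirms this. Take type~I with $q=2$, so $r=1$. A point of the locus is $[\tilde\rho'\oplus\chi]=[\chi^{-1}\tilde\rho'\oplus 1]$. Hence the locus is parametrized by honest $\mathrm U_{2,2}$-representations lifting the $\PU_{2,2}$-locus, of real dimension $20(g-1)+2g=22(g-1)+2$.

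So the product decomposition of Proposition~\ref{prop:connectedness}(iii), which is all the paper's proof invokes, gives the middle expression
\[
\dim_{\mathbb R}\mathcal R^{\mathrm{irr}}_{[d'_j]}+2\dim_{\mathbb C}\mathcal M^{m+n-3}_\mu(X)=[20+2(m+n-3)^2](g-1)+2 .
\]
Correct bookkeeping cannot produce the printed closed form without the $+2$. You should record this discrepancy rather than force agreement. Your proposed sanity check on the $q=1$ maximal case would not detect it, because (iii) is vacuous when $m+n-3=0$.
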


\begin{proof}
Part (i) follows from Proposition \ref{prop:connectedness}.
    
For Part (ii), the first equality follows from that $\mathcal{R}^{\mathrm{irr}}_{(d_j),\tau}$ is open in $\mathcal{R}_{(d_j),\tau}$. Note that
    \[\begin{aligned}
        \dfrac{1}{2}\dim_{\mathbb{R}}\mathcal{R}^{\mathrm{irr}}_{(d_j),\tau}
=&\dim_{\mathbb{C}}\mathcal{M}_{(d_j),\tau}^{\mathrm{s},\star}(\Sigma)-\dim_{\mathbb{C}}(\operatorname{Pic}_0(X))\quad(\mbox{by \prettyref{prop:GtoPGfibre}})\\
        =&\dim_{\mathbb{C}}\mathcal{M}_{(d_j),\tau}^{\mathrm{s},\star}(X)+\dim_{\mathbb{C}}\mathcal{T}(\Sigma)-\dim_{\mathbb{C}}(\operatorname{Pic}_0(X))\quad(\mbox{by \prettyref{prop:immersion}})
    \end{aligned}\]
    Then the result follows from \prettyref{prop:Cdim}.

For Part (iii), the equality follows from Proposition \ref{prop:connectedness}. 
\end{proof}

\subsection{Proof of Theorem \ref{thm:intro-moduli}}
Part (i) and (ii) follow from Proposition \ref{prop:connectedness}. Part (iii) follows from Part (ii) in Corollary \ref{cor:Dimension} by applying $n=q,m=2$ when $\tau=\upRoman{1}$ and $n=1,m=q+1$ when $\tau=\upRoman{2}$. 
\qed

\subsection{Proof of Theorem \ref{thm:intro-slices}}
Note the Toledo invariant is $v_\tau-u_\tau-4(g-1)$. 

If $(n+3)t\in4\mathbb Z$, choose \[(d_i)=(0,\quad -2(g-1),\quad -\frac{n+3}{4}t,\quad 2(g-1)).\] The associated Toledo invariant is $t$ and $u_I=-4(g-1)$.

If $(n+3)t\in4\mathbb Z+2$, choose \[(d_i)=(0,\quad -2(g-1),\quad -\frac{n+3}{4}t-\frac{n+1}{2},\quad 2(g-1)-1).\] The associated Toledo invariant is $t$ and $u_I=-4(g-1)+1$.

For each $t$, we choose  $M_t$ to be the space $\mathcal M^{\mathrm s}_{[d_j],\upRoman{1}}$, $\iota_t$ be the the non-Abelian Hodge map, and $\mathscr S_t$ to be the image $\mathcal{R}^{\mathrm{irr}}_{[d_j],\upRoman{1}}(X)$  inside the character variety. The immersion property of $\iota_t$ is from \prettyref{prop:immersion}. The property of being not relatively compact follows from that each $\mathcal M^{\mathrm s}_{[d_j],\upRoman{1}}(X)$ is diverging to infinity along $\mathbb C^*$-flow as long as $q_4\neq 0$. The property of elements in $\mathscr S_t$ follows from Theorem \ref{thm:intro-geometry}.  
The smooth locus of the character variety has real
dimension
\[
N_0
=
(2g-2)\dim_{\mathbb R}\PU_{2,n+1}
=
(n^2+6n+8)(2g-2).
\] Following from Theorem \ref{thm:intro-moduli}, the dimension of $\mathscr S_t$ is  
\[
N_I=(n^2+6n+3)(2g-2)-2(n-1)u_I.
\]
Hence for $(n+3)t\in4\mathbb Z$,
\[
N_0-N_\tau
=
5(2g-2)
=
10(g-1);
\]
for $(n+3)t\in4\mathbb Z+2$,
\[
N_0-N_\tau
=
5(2g-2)+2(n-1)
=
10(g-1)+2(n-1).
\]\qed

\appendix

\section{The moduli space of cyclic Higgs bundles}\label{apdx:cyclic}

Let $X$ be a closed Riemann surface of genus $g\geqslant2$. We will reduce the notion of stability conditions of general Higgs pairs to cyclic Higgs bundles and explain the local structure of the moduli space in this appendix. See \cite{garcia2009hitchin} and \cite{garcia2024cyclic} for references.

\subsection{Higgs pairs and stability}

Let $G$ be a connected complex reductive Lie group with Lie algebra $\mathfrak{g}$ and $\rho\colon G\to \mathrm{GL}(V)$ a holomorphic representation. Let $\LL$ be a line bundle over $X$. The following definitions are introduced in \cite{garcia2009hitchin}.

\begin{definition}
    An $\LL$-twisted $(G,V)$-Higgs pair is a pair $(\bEE,\Phi)$, where $\bEE$ is a holomorphic principal $G$-bundle on $X$, and $\Phi\in\mathrm{H}^0(X,\bEE[V]\otimes\LL)$, where $\bEE[V]:=\bEE\times_G V$ is the associated vector bundle to $\bEE$ via $\rho$. A $(G,V)$-Higgs pair is a $\KK_X$-twisted $(G, V)$-Higgs pair.
\end{definition}

\begin{definition}
    An isomorphism between two $\LL$-twisted $(G,V)$-Higgs pairs $(\mathbb{E},\Phi)$ and $(\mathbb{E}',\Phi')$ is an isomorphism $f\colon\mathbb{E}\to\mathbb{E}'$ such that $\Phi'=(\rho(f)\otimes\operatorname{id}_{\LL})(\Phi)$.
\end{definition}

Fix a maximal compact subgroup $K\leqslant G$. Let $\mathfrak{k}$ be its Lie algebra, a real subalgebra of $\mathfrak{g}$. Define for $s\in\iu\mathfrak{k}$ the spaces
\[V_s^0 =\{v\in V \mid \forall t\in\mathbb{R}, \rho(\mathrm{e}^{ts})(v) = v\},\quad V_s =\{v\in V \mid \rho(\mathrm{e}^{ts})(v)\mbox{ is bounded as }t\to\infty\}\]
and the subgroups
\[L_s =\{g\in G\mid\operatorname{Ad}(g)(s) = s\}, P_s = \{g\in G\mid\mathrm{e}^{ts}g\mathrm{e}^{-ts}\mbox{ is bounded as }t\to\infty\}.\]

\begin{definition}
    Fix a parameter $\alpha\in\iu\mathfrak{z}$. A $(G,V)$-Higgs pair $(\bEE,\Phi)$ is:
    \begin{itemize}
        \item $\alpha$-semistable, if for any element $s\in\iu\mathfrak{k}$ and reduction $\sigma\in\mathrm{H}^0(X,\bEE[G/P_s)]$ such that $\Phi\in\mathrm{H}^0(\bEE_\sigma[V_s]\otimes\KK_X)$, we have $\deg\bEE(\sigma,s)\geqslant B(\alpha,s)$;

        \item $\alpha$-stable, if it is $\alpha$-semistable and, for any element $s\in\iu\mathfrak{k}$ but $s\notin\operatorname{ker}(\dd\rho|_{\mathfrak{k}})$ and reduction $\sigma\in\mathrm{H}^0(X,\bEE[G/P_s])$ such that $\Phi\in\mathrm{H}^0(\bEE_\sigma[V_s]\otimes\KK_X)$, we have $\deg\bEE(\sigma, s) > B(\alpha, s)$;

        \item $\alpha$-polystable, if it is $\alpha$-semistable and, for the $s\in\iu\mathfrak{k}$ and $\sigma\in \mathrm{H}^0(X,\bEE[G/P_s])$ such that $\Phi\in\mathrm{H}^0(\bEE_\sigma[V_s]\otimes\KK_X)$ and we have $\deg\bEE(\sigma, s) = B(\alpha, s)$, there exists a reduction $\sigma'\in\mathrm{H}^0(\bEE_{\sigma}[P_s/L_s])$ of $\bEE_\sigma$ to $L_s$ such that $\Phi\in\mathrm{H}^0(\bEE_{\sigma'}[V_s^0]\otimes\KK_X)$.
    \end{itemize}
\end{definition}

\begin{definition}
    The moduli space $\mathcal{M}^\alpha(X;G,V)$ of $(G,V)$-Higgs pairs over $X$ is the space of isomorphism classes of $\alpha$-polystable $(G,V)$-Higgs pairs. We denote by $\mathcal{M}^{\alpha,\mathrm{s}}(X;G,V)$ the subset consisting of isomorphism classes of $\alpha$-stable $(G,V)$-Higgs pairs. 
\end{definition}
 We will omit $X$ in the notation is there is no abuse of notion.

\subsection{Stability of cyclic Higgs bundles}\label{apdx:stab-cyclic}

Let $G=\mathrm{GL}_{r}\mathbb{C}$. $\mathbb{C}^r=\bigoplus_{j=0}^{c-1}V_j$ an orthogonal decomposition with $\dim(V_j)=r_j$. Suppose that $V_j=0$ when $j<0$ or $j>c-1$. 
We have the $\mathbb{Z}$-grading \[\mathfrak{g}_{k}=\bigoplus_{j=0}^{c-1}\operatorname{Hom}(V_j,V_{j+k})\]
on the Lie algebra $\mathfrak{g}=\mathfrak{gl}_r\mathbb{C}$. This gives a reductive subgroup
\[G_0=\prod_{j=0}^{c-1}\mathrm{GL}(V_j)\cong\prod_{j=0}^{c-1}\mathrm{GL}_{r_j}\mathbb{C}\]
whose Lie algebra is $\mathfrak{g}_0$. The $\mathbb{Z}$-grading descends to the $\mathbb{Z}/c\mathbb{Z}$-grading $\mathfrak{g}_{\overline{j}}=\bigoplus_{c|j-k}\mathfrak{g}_k.$
Let $\iota\colon G_0\to\mathrm{GL}(\mathfrak{g}_{\overline{1}})$ denote the adjoint action. We fix the maximal compact subalgebra
$\mathfrak{k}_0=\bigoplus_{j=0}^{c-1}\mathfrak{u}_{r_j}$
in $\mathfrak{g}_0$. The kernel of $\dd\iota|_{\mathfrak{k}_0}$ is $\iu\mathbb{R}\cdot\operatorname{id}_{\mathbb{C}^r}$.

A $(G_0,\mathfrak{g}_{\overline{1}})$-Higgs pair $(\mathbb{E},\Phi)$ is equivalent to the following Higgs bundle, which we call a ($c$-)cyclic Higgs bundle of rank $(r_j)$:
\[
    \begin{aligned}
    \EE&=\EE_{\overline{0}}\oplus\EE_{\overline{1}}\oplus\cdots\oplus\EE_{\overline{c-1}},\\
    \Phi&=\begin{pmatrix}
        &&&&\varphi_{\overline{c-1}}\\
        \varphi_{\overline{0}}&&&&\\
        &\varphi_{\overline{1}}&&&\\
        &&\ddots&&\\
        &&&\varphi_{\overline{c-2}}&
        
    \end{pmatrix},
\end{aligned}
\]
where $\EE_{\overline{j}}$ is the holomorphic vector bundle $\mathbb{E}[V_j]$ of rank $r_j$ and $\varphi_{\overline{j}}\colon\EE_{\overline{j}}\to\EE_{\overline{j+1}}\otimes\KK_X$. We will simply call a $(G_0,\mathfrak{g}_{\overline{1}})$-Higgs pair $(\mathbb{E},\Phi)$ is $\mu$-(semi,poly)-stable if it is $\mu(\EE)\cdot\operatorname{id}_{\mathbb{C}^r}$-(semi,poly)-stable.

\begin{definition}
    An isomorphism between two cyclic Higgs bundles $(\EE,\Phi)$ and $(\EE',\Phi')$ is an $m$-tuple $(g_j\colon\EE_{\overline{j}}\to\EE_{\overline{j}}')_{j=0}^{m-1}$ of isomorphisms such that $\Phi'=(\iota(g)\otimes\operatorname{id}_{\KK_X})(\Phi)$ for $g=\prod_{j=0}^{m-1}g_j$.
\end{definition}

Correspondingly, two $(G_0,\mathfrak{g}_{\overline{1}})$-Higgs pairs are isomorphic is equivalent to that their corresponding cyclic Higgs bundles are isomorphic.

\begin{definition}
    A holomorphic subbundle $\mathcal{F}\subset\EE=\bigoplus_{j=0}^{m-1}\EE_{\overline{j}}$ is called graded if $\mathcal{F}=\bigoplus_{j=0}^{m-1}\mathcal{F}_{\overline{j}}$ for some holomorphic subbundles $\mathcal{F}_{\overline{j}}\subset\EE_{\overline{j}}$.
\end{definition}

\begin{definition}\label{def:polystable}
Let $(\EE,\Phi)$ be a cyclic Higgs bundle, we call it is
    \begin{itemize}
        \item semistable as a cyclic Higgs bundle if for any $\Phi$-invariant graded subbundle $\mathcal{F}\subset\EE$ with $\FF\neq0$, we have $\mu(\mathcal{F})\leqslant\mu(\EE)$;

        \item stable as a cyclic Higgs bundle if for any $\Phi$-invariant graded subbundle $\mathcal{F}\subset\EE$ with $\FF\neq0,\EE$, we have $\mu(\mathcal{F})<\mu(\EE)$;

        \item polystable as a cyclic Higgs bundle if for any $\Phi$-invariant graded subbundle $\mathcal{F}\subset\EE$ with $\FF\neq0$, we have $\mu(\FF)\leqslant\mu(\EE)$ and if $\mu(\FF)=\mu(\EE)$, then there exists a $\Phi$-invariant graded subbundle $\mathcal{S}\subset\EE$ with $\mu(\mathcal{S})=\mu(\EE)$ such that $(\EE,\Phi)=(\FF,\Phi|_\FF)\oplus(\mathcal{S},\Phi|_{\mathcal{S}})$.
    \end{itemize}
\end{definition}

One can verify the other characterization of the polystability in the following proposition for cyclic Higgs bundles as this for usual Higgs bundles, so we omit its proof.

\begin{proposition}\label{prop:polystable-equiv}
Let $(\EE,\Phi)$ be a cyclic Higgs bundle. The following are equivalent:
\begin{enumerate}[label=(\roman*)]
    \item $(\EE,\Phi)$ is polystable as a cyclic Higgs bundle;

    \item $(\EE,\Phi)=\bigoplus_{k}(\FF^k,\Phi|_{\FF^k})$
    for some $\Phi$-invariant graded subbundle $\FF^k\subset\EE$ such that $(\FF^k,\Phi|_{\FF^k})$ is stable as a cyclic Higgs bundle for every $k$ and all $\mu(\FF^k)$'s are equal.
\end{enumerate}
    
\end{proposition}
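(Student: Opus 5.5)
We prove (i)$\Rightarrow$(ii) by induction on the number of $\Phi$-invariant graded subbundles of slope $\mu(\EE)$, or equivalently on $\rank(\EE)$. The converse (ii)$\Rightarrow$(i) is a standard slope computation. The argument copies the classical one for ordinary Higgs bundles, with the category of graded $\Phi$-invariant subsheaves used throughout.

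\emph{(i)$\Rightarrow$(ii).} If $(\EE,\Phi)$ is stable as a cyclic Higgs bundle, we are done. Otherwise there is a proper nonzero $\Phi$-invariant graded subbundle $\FF$ with $\mu(\FF)=\mu(\EE)$. Among all such subbundles, choose one of minimal rank. It is then stable as a cyclic Higgs bundle: any proper invariant graded subbundle of $\FF$ is an invariant graded subbundle of $\EE$, so its slope is at most $\mu(\EE)=\mu(\FF)$, and equality would contradict minimality of the rank.

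Polystability of $\EE$ gives a $\Phi$-invariant graded complement $\mathcal S$ with $(\EE,\Phi)=(\FF,\Phi|_\FF)\oplus(\mathcal S,\Phi|_{\mathcal S})$ and $\mu(\mathcal S)=\mu(\EE)$. Next we check that $\mathcal S$ is polystable as a cyclic Higgs bundle. Let $\mathcal G\subset\mathcal S$ be invariant and graded. It is also invariant and graded in $\EE$, so $\mu(\mathcal G)\leq\mu(\EE)=\mu(\mathcal S)$.

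If equality holds, polystability of $\EE$ gives an invariant graded complement $\mathcal S'$ of $\mathcal G$ in $\EE$. Intersecting with $\mathcal S$ is the delicate point. We instead take the invariant graded projection $\pi_{\mathcal S}\colon \EE\to\mathcal S$ along $\FF$ and set $\mathcal G'=\pi_{\mathcal S}(\mathcal S')$. One checks that $\mathcal S=\mathcal G\oplus\mathcal G'$ as graded invariant subbundles. This holds because $\mathcal S'\cap\mathcal S$ complements $\mathcal G$ inside $\mathcal S$ up to a modular-law argument: since $\mathcal G\subset\mathcal S$, we have $\mathcal S=\mathcal G\oplus(\mathcal S\cap\mathcal S')$. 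This is the lattice modular law for subsheaves, and $\mathcal S\cap\mathcal S'$ is graded and invariant. It is saturated, hence a subbundle, because it is the kernel of the projection $\mathcal S\to\mathcal G$ induced by $\EE=\mathcal G\oplus\mathcal S'$. Its slope equals $\mu$ by additivity of degree.

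Hence $\mathcal S$ is polystable of smaller rank, and induction decomposes it into stable pieces of slope $\mu$. Together with $\FF$, this gives (ii).

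\emph{(ii)$\Rightarrow$(i).} Let $\FF\subset\bigoplus_k\FF^k$ be invariant and graded. Project $\FF$ successively to the summands. Filter $\FF$ by $\FF\cap(\FF^1\oplus\cdots\oplus\FF^j)$, whose graded quotients inject as invariant graded subsheaves into $\FF^j$. Stability of each $\FF^j$, with saturation raising the degree, gives $\mu(\FF)\leq\mu$. Equality forces each quotient to be either $0$ or all of $\FF^j$.

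For the splitting, we argue by induction on the number of summands. If $\FF\cap\FF^1=0$, then $\FF$ maps isomorphically to its image in $\bigoplus_{k\ge2}\FF^k$, and that image splits off by induction. One then pulls back the complement. The morphism $\FF\to\FF^1$ is between semistable objects of equal slope with $\FF^1$ stable, so it is zero or surjective, as in the Schur-type argument. If $\FF\supset\FF^1$, we pass to $\FF/\FF^1\subset\bigoplus_{k\ge2}\FF^k$.

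\emph{Main obstacle.} The delicate step throughout is checking that the sheaf-theoretic intersections and images remain saturated graded $\Phi$-invariant subbundles. This follows because they arise as kernels and images of graded $\Phi$-equivariant maps between polystable objects of equal slope, which have constant rank.
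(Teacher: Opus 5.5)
Your proof is correct and is the classical argument for ordinary Higgs bundles carried out with graded $\Phi$-invariant subbundles, which is what the paper has in mind when it omits the proof. There is one slip to remove: the complement of $\mathcal G$ in $\mathcal S$ is $\mathcal S\cap\mathcal S'$, as your modular-law sentence shows, and not $\mathcal G'=\pi_{\mathcal S}(\mathcal S')$ (take $\EE=\FF\oplus\FF$, $\mathcal G=\mathcal S=0\oplus\FF$ and $\mathcal S'$ the diagonal; then $\pi_{\mathcal S}(\mathcal S')=\mathcal S$, so $\mathcal G+\mathcal G'$ is not direct).
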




The following proposition illustrates the equivalence of between the (semi,poly-)stability of $(G_0,\mathfrak{g}_{\overline{1}})$-Higgs pairs and that of cyclic Higgs bundles.

\begin{proposition}\label{prop:equivstability}
    Let $(\mathbb{E},\Phi)$ be a $(G_0,\mathfrak{g}_{\overline{1}})$-Higgs pair and $(\EE,\Phi)$ be its associated cyclic Higgs bundle. The following conditions are equivalent:
    \begin{enumerate}[label=(\roman*)]
        \item $(\mathbb{E},\Phi)$ is $\mu$-semistable as a $(G_0,\mathfrak{g}_{\overline{1}})$-Higgs pair;
        \item $(\EE,\Phi)$ is semistable as a cyclic Higgs bundle.
    \end{enumerate}
We also have that 
    \begin{enumerate}[label=(\roman*')]
        \item $(\mathbb{E},\Phi)$ is $\mu$-stable as a $(G_0,\mathfrak{g}_{\overline{1}})$-Higgs pair;
        \item $(\EE,\Phi)$ is stable as a cyclic Higgs bundle.
    \end{enumerate}
    are equivalent and
    \begin{enumerate}[label=(\roman*'')]
        \item $(\mathbb{E},\Phi)$ is $\mu$-polystable as a $(G_0,\mathfrak{g}_{\overline{1}})$-Higgs pair;
        \item $(\EE,\Phi)$ is polystable as a cyclic Higgs bundle.
    \end{enumerate}
    are equivalent.
\end{proposition}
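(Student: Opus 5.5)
The plan is to unwind the definition of $\mu$-(semi/poly)stability of a $(G_0,\mathfrak{g}_{\overline{1}})$-Higgs pair into statements about filtrations of the graded bundle $\EE=\bigoplus_j\EE_{\overline{j}}$, following the standard dictionary of \cite{garcia2009hitchin} for $\mathrm{GL}_r$. For $G_0=\prod_j\mathrm{GL}(V_j)$, an element $s\in\iu\mathfrak{k}_0$ is a tuple of Hermitian endomorphisms $s=(s_j)$. After conjugating $s$ into diagonal form, its distinct eigenvalues $\lambda_1<\dots<\lambda_k$ together with the eigenspaces determine a filtration of each $V_j$. Hence a reduction $\sigma$ of $\bEE$ to $P_s$ is the same as a filtration of each $\EE_{\overline{j}}$ by holomorphic subbundles indexed by the eigenvalues. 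Assembling these, one obtains a filtration $0\subsetneq\FF^1\subsetneq\dots\subsetneq\FF^k=\EE$ by \emph{graded} subbundles.

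Next I will translate the two conditions. For the adjoint action on $\mathfrak{g}_{\overline{1}}=\bigoplus_j\Hom(V_j,V_{j+1})$, the condition $\Phi\in\mathrm{H}^0(\bEE_\sigma[V_s]\otimes\KK_X)$ means that $\Phi$ does not raise the filtration index: $\Phi(\FF^i)\subset\FF^i\otimes\KK_X$, so each $\FF^i$ is $\Phi$-invariant. Likewise, $\deg\bEE(\sigma,s)-B(\mu\,\id,s)$ equals
\[
\sum_{i=1}^{k-1}(\lambda_{i+1}-\lambda_i)\bigl(\rank(\FF^i)\mu(\EE)-\deg(\FF^i)\bigr)
\]
up to sign conventions, as in the $\mathrm{GL}_r$-case of \cite{garcia2009hitchin}. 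Because the coefficients $\lambda_{i+1}-\lambda_i$ are positive and arbitrary, the semistability inequality for all $(s,\sigma)$ is equivalent to $\mu(\FF)\leqslant\mu(\EE)$ for every $\Phi$-invariant graded $\FF$; take the two-step filtration $\FF\subset\EE$ for one direction and sum for the other. This gives (i)$\Leftrightarrow$(ii).

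For stability, note that the extra hypothesis $s\notin\ker(\dd\iota|_{\mathfrak{k}_0})=\iu\mathbb{R}\cdot\id$ says exactly that $s$ is not a scalar. This forces $k\geqslant2$, so the filtration contains a proper nonzero term. The strict inequality then becomes $\mu(\FF)<\mu(\EE)$ for proper $\FF$, which gives (i')$\Leftrightarrow$(ii').

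Polystability is the step I expect to be the main obstacle. Suppose equality $\deg\bEE(\sigma,s)=B(\mu\,\id,s)$ holds. Then each term $\rank(\FF^i)\mu(\EE)-\deg(\FF^i)$ vanishes, and a further reduction to $L_s$ with $\Phi\in V_s^0$ means the filtration splits holomorphically into graded pieces $\bigoplus_i\mathcal{G}^i$ with $\Phi(\mathcal{G}^i)\subset\mathcal{G}^i\otimes\KK_X$. Applied to a two-step filtration $\FF\subset\EE$ with $\mu(\FF)=\mu(\EE)$, this yields a $\Phi$-invariant graded complement $\mathcal{S}$, which is Definition~\ref{def:polystable}. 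Conversely, for a multi-step filtration I will argue by induction on $k$, splitting off one step at a time. Here it is important that complements chosen at different steps are compatible with one another. I will use Proposition~\ref{prop:polystable-equiv}(ii) to avoid this issue: every semistable graded invariant subbundle of slope $\mu$ in a polystable object is a direct sum of some of the stable summands, and so it admits a canonical complement, which makes all the splittings simultaneous.
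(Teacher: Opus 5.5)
Your dictionary between pairs $(s,\sigma)$ and $\Phi$-invariant filtrations by graded subbundles, the degree formula, and your treatment of (i)$\Leftrightarrow$(ii), (i')$\Leftrightarrow$(ii') and (i'')$\Rightarrow$(ii'') all match the paper's argument. The gap is in the device you propose for (ii'')$\Rightarrow$(i''). You claim that every $\Phi$-invariant graded subbundle of slope $\mu$ in a polystable object is a direct sum of some of the stable summands, and hence has a canonical complement. This is false as soon as a stable summand occurs with multiplicity. In $\FF\oplus\FF$ with $\FF$ stable, the diagonal $\{(x,x)\}$ is graded, $\Phi$-invariant and of slope $\mu(\EE)$. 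It is not a sub-sum of the given decomposition, and it has infinitely many invariant complements ($\FF\oplus0$, $0\oplus\FF$, the graphs $\{(x,cx)\}$ for $c\neq1$), none of them canonical. Multiplicities genuinely occur in the generality of the statement: see the multiplicity spaces $V_\alpha$ in Corollary~\ref{coro:infautpoly}, or a strictly polystable summand $(\mathcal{F}_{\overline 2},0)$ containing repeated line bundles. So the step that is supposed to make your splittings simultaneous does not work as written.

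The compatibility worry is unnecessary, and the paper removes it with the modular law. Equality in the degree formula forces $\mu(\WW^k)=\mu(\EE)$ for every $k$. Definition~\ref{def:polystable} then gives, for each $k$ separately, some $\Phi$-invariant graded complement $\mathcal{R}^k$ with $\EE=\WW^k\oplus\mathcal{R}^k$. Since $\WW^k\subset\WW^{k+1}$, you get $\WW^{k+1}=\WW^k\oplus(\WW^{k+1}\cap\mathcal{R}^k)$. The piece $\mathcal{U}^{k+1}:=\WW^{k+1}\cap\mathcal{R}^k$ is the image of $\WW^{k+1}$ under the holomorphic projection onto $\mathcal{R}^k$, hence a graded, $\Phi$-invariant holomorphic subbundle. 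Then $\WW^k=\bigoplus_{l\leqslant k}\mathcal{U}^l$ holds for all $k$ at once, and this is exactly the Levi reduction $\sigma'$ with $\Phi\in\mathrm{H}^0(\mathbb{E}_{\sigma'}[V_s^0]\otimes\KK_X)$. None of the complements $\mathcal{R}^k$ need to be compatible with each other. If you replace your appeal to Proposition~\ref{prop:polystable-equiv}(ii) with this argument, the proof goes through.
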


\begin{proof}
    For (\romannumeral1) $\implies$ (\romannumeral2), given a  $\Phi$-invariant graded subbundle $\mathcal{F}\subset\EE$, we define an element $s\in\iu\mathfrak{k}_0$ as $s|_{V_j}=\operatorname{diag}(\underbrace{-\lambda,\cdots,-\lambda}_{\operatorname{rank}(\FF_{\overline{j}})},0,\cdots,0)$ with $\lambda>0$. Let $J_\FF:=\{{\overline{j}}\mid \FF_{\overline{j}}=0\}$. Now the graded subbundle $\FF\oplus\bigoplus_{{\overline{j}}\in J_\FF}\EE_{\overline{j}}$ gives a reduction $\sigma\in\mathrm{H}^0(X,\mathbb{E}_\sigma[G_0/P_s])$, where $P_s<G_0$ denotes the parabolic subgroup associated with $s$. Also $\FF$ is $\Phi$-invariant implies that $\Phi\in\mathrm{H}^0(\mathbb{E}_\sigma[(\mathfrak{g}_1)_s]\otimes\KK_X)$. Therefore, $(\EE,\Phi)$ is $\mu(\EE)$-semistable yieds that $\deg\mathbb{E}(\sigma,s)
        \geqslant B(\mu(\EE)\cdot\operatorname{id}_{\mathbb{C}^r},s)$. Since
    \[\begin{aligned}
        \deg\mathbb{E}(\sigma,s)
        =&\sum_{j=0}^{c-1}\left(-\lambda\cdot\deg(\FF_{\overline{j}})+0\cdot\deg(\EE_{\overline{j}}/\FF_{\overline{j}})\right)\\
        =&-\lambda\cdot\sum_{j=0}^{c-1}\deg(\FF_{\overline{j}})\\
        =&-\lambda\cdot\deg(\FF) 
    \end{aligned}\]
and
    \[\begin{aligned}
        B(\mu(\EE)\cdot\operatorname{id}_{\mathbb{C}^r},s)
        =&\mu(\EE)\cdot\sum_{j=0}^{c-1}\left(-\lambda\cdot\rank(\FF_{\overline{j}})+0\cdot\rank(\EE_{\overline{j}}/\FF_{\overline{j}})\right)\\
        =&-\lambda\cdot\mu(\EE)\cdot\rank(\FF).
    \end{aligned}\]
we obtain that
\[\deg(\FF)\leqslant\mu(\EE)\cdot\rank(\FF)\]
due to $\lambda>0$. Therefore, $\mu(\FF)\leqslant\mu(\EE)$ since $\rank(\FF)>0$.

Conversely, for (\romannumeral2) $\implies$ (\romannumeral1), let $\sigma$ be the holomorphic reduction from $G_0$ to $P_s$ for some $s\in\iu\mathfrak{k}_0$ such that $\Phi\in\mathrm{H}^0(\mathbb{E}_\sigma[(\mathfrak{g}_1)_s]\otimes\KK_X)$. Note that $s$ can be diagonalized with real eigenvalues. We rearrange them as 
$
\lambda_1<\lambda_2<\cdots<\lambda_{t}$ and assume that  $\lambda_{t+1}=0$. Define $U^k=\operatorname{ker}(\lambda_k\operatorname{id}_{\mathbb{C}^r}-s)$ and $W^k:=\bigoplus_{l=1}^k U^l$. We obtain holomorphic vector bundles $\mathcal{W}^j=\mathbb{E}_\sigma[W^j]$ and this gives the filtration
\[
    0=\mathcal{W}^0\subset\mathcal{W}^1\subset\cdots\mathcal{W}^{t}=\EE.
\]
Now for every $k$, $\mathcal{W}^k$ is a graded subbundle of $\mathcal{E}$ since it is the associated bundle of $\mathbb{E}$, and $\mathcal{W}^k$ is $\Phi$-invariant since $\Phi\in\mathrm{H}^0(\mathbb{E}_\sigma[(\mathfrak{g}_1)_s]\otimes\KK_X)$. Thus $\mu(\WW^k)\leqslant\mu(\EE)$. Therefore, following from \cite[Lemma 2.12]{garcia2009hitchin}, we obtain that 
\begin{equation}\label{eq:degfiltration}
    \begin{aligned}
    &\deg\mathbb{E}(\sigma,s)-B(\mu(\EE)\cdot\operatorname{id}_{\mathbb{C}^r},s)\\
    =&\sum_{k=1}^{t}(\lambda_{k}-\lambda_{k+1})\cdot\left(\deg(\mathcal{W}^{k})-\mu(\EE)\cdot\rank(\WW^k)\right)\\
    =&\sum_{k=1}^{t-1}(\lambda_{k}-\lambda_{k+1})\cdot\rank(\WW^k)\cdot\left(\mu(\mathcal{W}^{k})-\mu(\EE)\right)\geqslant0
\end{aligned}
\end{equation}
since $\lambda_k<\lambda_{k+1}$ when $1\leqslant k\leqslant t-1$, $\rank(\WW^k)>0$, $\mu(\WW^k)\leqslant\mu(\EE)$ and $\mu(\WW^t)=\mu(\EE)$.

(\romannumeral1') $\iff$ (\romannumeral2') is similar to (\romannumeral1) $\iff$ (\romannumeral2). The only difference is that if $\FF\neq\EE$, then the $s$ we defined as above could not lie in $\iu\operatorname{ker}\dd\iota|_{\mathfrak{k}_0}$.

Finally we consider (\romannumeral1'') $\iff$ (\romannumeral2''). We first assume (\romannumeral1'') holds. From the proof of (\romannumeral1) $\implies$ (\romannumeral2), we obtain that if there exists a $\Phi$-invariant graded subbundle $\FF\subset\EE$ with $\FF\neq0$, we can define an element $s\in\iu\mathfrak{k}$ and a reduction $\sigma\in\mathbb{E}[G/P_s]$ such that $\Phi\in\mathrm{H}^0(\mathbb{E}_\sigma[(\mathfrak{g}_1)_s]\otimes\KK_X)$ and
\[\deg\mathbb{E}(\sigma,s)=-\lambda\cdot\deg(\FF),\quad B(\mu(\EE)\cdot\operatorname{id}_{\mathbb{C}^r},s)=-\lambda\cdot\mu(\EE)\cdot\rank(\FF)\]
for a given positive number $\lambda>0$. Therefore, $\mu(\FF)=\mu(\EE)$ implies that
\[\deg\mathbb{E}(\sigma,s)=B(\mu(\EE)\cdot\operatorname{id}_{\mathbb{C}^r},s).\]
By polystability, there exists a further reduction $\sigma'\in\mathrm{H}^0(\mathbb{E}_\sigma[P_s/L_s])$ such that $\Phi\in\mathrm{H}^0(\mathbb{E}_{\sigma'}[V_s^0]\otimes\KK_X)$. Hence the Levi reduction $\mathbb{E}_{\sigma'}$ defines the decomposition $\EE_{\overline{j}}=\mathbb{E}_{\sigma'}[V_j]=\FF_{\overline{j}}\oplus\mathcal{S}_{\overline{j}}$ and 
$\Phi\in\mathrm{H}^0(\mathbb{E}_{\sigma'}[V_s^0]\otimes\KK_X)$
implies that $\mathcal{S}:=\bigoplus_{j=0}^{m-1}\mathcal{S}_{\overline{j}}$ is $\Phi$-invariant. Also the decomposition implies that $\mu(\mathcal{S})=\mu(\EE)$.

Conversely, when assuming (\romannumeral2''), let $\sigma$ be the holomorphic reduction from $G_0$ to $P_s$ for some $s\in\iu\mathfrak{k}_0$ such that $\Phi\in\mathrm{H}^0(\mathbb{E}_\sigma[(\mathfrak{g}_1)_s]\otimes\KK_X)$ and
\[\deg\mathbb{E}(\sigma,s)=B(\mu(\EE)\cdot\operatorname{id}_{\mathbb{C}^r},s).\]Hence, from \prettyref{eq:degfiltration} we obtain that $\mu(\WW^k)=\mu(\EE)$ for every $k$. This gives the direct sum decomposition $(\EE,\Phi)=(\WW^k,\Phi|_{\WW^k})\oplus(\mathcal{R}^k,\Phi|_{\mathcal{R}^k})$ where $\mathcal{R}^k$'s are $\Phi$-invariant graded subbundles with $\mu(\mathcal{R}^k)=\mu(\EE)$. Then since $\WW^k\subset\WW^{k+1}$, we obtain that $\WW^{k+1}=\WW^k\oplus(\WW^{k+1}\cap\mathcal{R}^k)$. And $\mathcal{U}^{k+1}:=\WW^{k+1}\cap\mathcal{R}^k$ is a holomorphic $\Phi$-invariant graded subbundle with $\mu(\WW^{k+1}\cap\mathcal{R}^k)=\mu(\EE)$. This gives the direct sum decomposition 
\[(\WW^{k},\Phi|_{\WW^k})=\bigoplus_{l=1}^k(\mathcal{U}^{l},\Phi|_{\mathcal{U}^{l}})\]
which induces the Levi reduction $\sigma'\in\mathrm{H}^0(X,\mathbb{E}_\sigma[P_s/L_s])$ such that $\Phi\in\mathrm{H}^0(\mathbb{E}_{\sigma'}[(\mathfrak{g}_1)_s^0]\otimes\KK_X)$.
\end{proof}

\subsection{The moduli space and its local structure}

We consider the moduli space $\mathcal{M}(G_0,\mathfrak{g}_{\overline{1}})$ of $\mu$-polystable $(G_0,\mathfrak{g}_{\overline{1}})$-Higgs pairs and its stable part $\mathcal{M}^{\mathrm{s}}(G_0,\mathfrak{g}_{\overline{1}})$ . By \prettyref{prop:equivstability}, it can be also identified with the moduli space of polystable cyclic Higgs bundles of rank $(r_j)$. The construction of this moduli space of Higgs pairs given by Schmitt \cite{schmitt2005moduli,schmitt2008geometric} using Geometric Invariant Theory applies to this moduli space, hence it has the structure of a complex analytic variety.

To study the local structure of $\mathcal{M}(G_0,\mathfrak{g}_{\overline{1}})$ at a point, we recall some deformation theory for Higgs pairs, for more details see \cite{biswas1994infinitesimal} and \cite{garcia2009hitchin}. We consider the deformation complex of a $(G_0,\mathfrak{g}_{\overline{1}})$-Higgs pair $(\mathbb{E},\Phi)$, which is the following complex of sheaves:
\[C^\bullet(\mathbb{E},\Phi)\colon\mathbb{E}[\mathfrak{g}_0]\stackrel{\dd\iota(\Phi)}{\longrightarrow}\mathbb{E}[\mathfrak{g}_{\overline{1}}]\otimes\KK_X.\]
The deformation complex governs infinitesimal deformations of $(\mathbb{E},\Phi)$ and it induces a natural long exact sequence
\begin{equation}\label{eq:LES}    
\begin{aligned}
    0&\longrightarrow\mathbb{H}^0(C^\bullet(\mathbb{E},\Phi))\longrightarrow\mathrm{H}^0(\mathbb{E}[\mathfrak{g}_0])\stackrel{\dd\iota(\Phi)}{\longrightarrow}\mathrm{H}^0(\mathbb{E}[\mathfrak{g}_{\overline{1}}]\otimes\KK_X)\\
    &\longrightarrow\mathbb{H}^1(C^\bullet(\mathbb{E},\Phi))\longrightarrow\mathrm{H}^1(\mathbb{E}[\mathfrak{g}_0])\stackrel{\dd\iota(\Phi)}{\longrightarrow}\mathrm{H}^1(\mathbb{E}[\mathfrak{g}_{\overline{1}}]\otimes\KK_X)\longrightarrow\mathbb{H}^2(C^\bullet(\mathbb{E},\Phi)),
\end{aligned}
\end{equation}
where $\mathbb{H}^i$ denotes the $i$-th hypercohomology group. By definition, 
\[\mathbb{H}^0(C^\bullet(\mathbb{E},\Phi))=\{s\in\mathrm{H}^0(\mathbb{E}[\mathfrak{g}_0])\mid[s\wedge\Phi]=0\},\]
which is the Lie algebra of the automorphism group of $(\mathbb{E},\Phi)$. We call it the infinitesimal automorphism space $\operatorname{aut}(\mathbb{E},\Phi)$. We have the following standard Schur's lemma:
\begin{proposition}\label{prop:Schur}
    If two cyclic Higgs bundles $(\EE,\Phi)$ and $(\EE',\Phi')$ are both stable as cyclic Higgs bundles and $\mu(\EE)=\mu(\EE')$, then for any bundle map $f\colon\EE\to\EE'$ satisfyting that
    \begin{itemize}
        \item there exists $k$ such that $f(\EE_{\overline{j}})\subset \EE_{\overline{j+k}}'$ for every $j$;

        \item $\Phi'\circ f=(f\otimes\operatorname{id}_{\KK_X})\circ\Phi$,
    \end{itemize}
    $f$ is either $0$ or an isomorphism. Moreover, $\operatorname{aut}(\mathbb{E},\Phi)=\mathbb{C}\cdot\operatorname{id}_\EE$.
\end{proposition}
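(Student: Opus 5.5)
The proof follows the usual argument for Schur's lemma for stable Higgs bundles, adapted to graded subbundles. Let $f\colon\EE\to\EE'$ be a nonzero bundle map that shifts the grading by $k$ and intertwines the Higgs fields. First I would show that $\ker f$ and the saturation of $\operatorname{im} f$ are $\Phi$-invariant (resp.\ $\Phi'$-invariant) graded subsheaves. Since $f$ is $\mathcal O_X$-linear and maps each $\EE_{\overline{j}}$ into $\EE'_{\overline{j+k}}$, both sheaves split into graded pieces: $\ker f=\bigoplus_j\ker(f|_{\EE_{\overline{j}}})$, and similarly for the image with the index shifted by $k$. Invariance follows from the intertwining relation $\Phi'\circ f=(f\otimes\operatorname{id}_{\KK_X})\circ\Phi$. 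The kernel is saturated because $\EE'$ is torsion free. Taking the saturation $\overline{\operatorname{im} f}$ keeps the image graded and $\Phi'$-invariant, since saturation commutes with the direct sum splitting and $\Phi'$ preserves saturations of invariant subsheaves.

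Next comes the slope comparison. Suppose $f$ is neither injective nor generically surjective. Then $\ker f$ is a proper nonzero invariant graded subbundle of $\EE$, so stability gives $\mu(\ker f)<\mu(\EE)$, and therefore $\mu(\operatorname{im} f)=\mu(\EE/\ker f)>\mu(\EE)$. Since $\deg\operatorname{im} f\le\deg\overline{\operatorname{im} f}$, this gives $\mu(\overline{\operatorname{im} f})>\mu(\EE)=\mu(\EE')$. If $\overline{\operatorname{im} f}\neq\EE'$, this contradicts stability of $\EE'$. If instead $\overline{\operatorname{im} f}=\EE'$, the ranks satisfy $\operatorname{rk}\EE'<\operatorname{rk}\EE$, and $\mu(\EE/\ker f)\le\mu(\EE')=\mu(\EE)$ contradicts $\mu(\EE/\ker f)>\mu(\EE)$. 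The case where $f$ is injective but not of full rank is excluded the same way, since the image would be a proper subbundle of slope $\ge\mu(\EE)$. So $f$ is injective with equal ranks. Then $\deg\EE\le\deg\EE'$ with equality of slopes, which forces $\det f$ to be a nonvanishing section of $\det\EE^{\vee}\otimes\det\EE'$, a line bundle of degree $0$. Hence $f$ is an isomorphism.

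For $\operatorname{aut}(\mathbb E,\Phi)$, take $s\in\mathrm H^0(\mathbb E[\mathfrak g_0])$ with $[s,\Phi]=0$. This is a degree-$0$ graded endomorphism commuting with $\Phi$. Pick a point $x$ and an eigenvalue $\lambda$ of $s_x$. Then $s-\lambda\operatorname{id}$ is again such a map and is not an isomorphism, so by the first part it vanishes. Hence $s=\lambda\operatorname{id}_\EE$.

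The only delicate step is the gradedness and saturation bookkeeping, namely ensuring that the saturated image is both graded and invariant, so that graded stability (rather than ordinary Higgs stability) applies. I would handle it componentwise as above.
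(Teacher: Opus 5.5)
Your proof is correct and follows the same route as the paper: the kernel and the saturated image of $f$ are $\Phi$-invariant graded subbundles, stability plus a slope comparison rules out every non-trivial proper case, and subtracting an eigenvalue $\lambda$ gives $\operatorname{aut}(\mathbb{E},\Phi)=\mathbb{C}\cdot\operatorname{id}_{\EE}$. The paper picks $\lambda$ as a root of the characteristic polynomial, whose coefficients are constant, while you pick it pointwise, which works equally well; and your case analysis is more complete than the paper's, which only treats the case where both saturations are proper and nonzero, whereas you also handle the injective-but-not-full-rank case and the generically bijective case via the degree-zero line bundle $\det\EE^{\vee}\otimes\det\EE'$.
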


\begin{proof}

    We consider the following short exact sequence
    \[0\to\operatorname{ker}(f)\to\EE\to\operatorname{im}(f)\to 0.\] Since $\Phi'\circ f=(f\otimes\operatorname{id}_{\KK_X})\circ\Phi$, $\operatorname{ker}(f)$ and $\operatorname{im}(f)$ are $\Phi$-invariant subsheaves of $\EE$ and $\EE'$ respectively. Thus their saturation $\operatorname{Sat}(\operatorname{ker}(f))$ and $\operatorname{Sat}(\operatorname{im}(f))$ must be $\Phi$-invariant and graded subbundles of $\EE$ and $\EE'$ respectively since $f(\EE_{\overline{j}})\subset\EE_{\overline{j+k}}'$. Therefore, by stability we obtain that if both $\operatorname{Sat}(\operatorname{ker}(f))$ and $\operatorname{Sat}(\operatorname{im}(f))$ are neither $0$ nor $\EE$, $\EE'$, then
    \[\mu(\operatorname{ker}(f))\leqslant\mu(\operatorname{Sat}(\operatorname{ker}(f)))<\mu(\EE)\]
    and 
    \[\mu(\operatorname{im}(f))\leqslant\mu(\operatorname{Sat}(\operatorname{im}(f)))<\mu(\EE').\]
    This contradicts to the short exact sequence. Therefore, $f$ must be either $0$ or an isomorphism.

    Now let $s\in\operatorname{aut}(\mathbb{E},\Phi)$. The characteristic polynomial of $s$ has holomorphic coefficients on the compact Riemann surface $X$, hence constant coefficients. Choose an eigenvalue $\lambda\in\mathbb{C}$, then by the above discussion we obtain that $s-\lambda\cdot\operatorname{id}_\EE$ must be $0$.
\end{proof}

As corollaries, we obtain that

\begin{corollary}\label{coro:automorphism}
    If a cyclic Higgs bundle $(\EE,\Phi)$ is stable, then  $\operatorname{Aut}(\mathbb{E},\Phi)=\operatorname{Aut}(\EE,\Phi)=\mathbb{C}^*\cdot\operatorname{id}_\EE$.
\end{corollary}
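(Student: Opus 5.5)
The statement to prove is Corollary~\ref{coro:automorphism}: if $(\EE,\Phi)$ is stable as a cyclic Higgs bundle, then $\operatorname{Aut}(\mathbb{E},\Phi)=\operatorname{Aut}(\EE,\Phi)=\mathbb{C}^*\cdot\operatorname{id}_\EE$. I would deduce it directly from the Schur lemma, Proposition~\ref{prop:Schur}.

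First I identify the two automorphism groups. An automorphism of the $(G_0,\mathfrak{g}_{\overline{1}})$-Higgs pair is a gauge transformation of the principal $G_0$-bundle, that is, a tuple $(g_j\colon\EE_{\overline{j}}\to\EE_{\overline{j}})$ of holomorphic automorphisms preserving $\Phi$ under $\iota$. By the definition of isomorphism of cyclic Higgs bundles, this is exactly an automorphism of $(\EE,\Phi)$. So $\operatorname{Aut}(\mathbb{E},\Phi)=\operatorname{Aut}(\EE,\Phi)$, and each element is a graded bundle map of degree shift $k=0$ commuting with $\Phi$.

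Next, take $g\in\operatorname{Aut}(\EE,\Phi)$. I argue as in the second half of Proposition~\ref{prop:Schur}. The coefficients of the characteristic polynomial of $g$ are holomorphic functions on the compact surface $X$, hence constant. Choose an eigenvalue $\lambda$; it is nonzero because $g$ is invertible. Then $g-\lambda\operatorname{id}_\EE$ is a graded map with $k=0$ that commutes with $\Phi$. By Proposition~\ref{prop:Schur}, applied with $(\EE',\Phi')=(\EE,\Phi)$, this map is either $0$ or an isomorphism. It has nontrivial kernel at every point, so it is not an isomorphism, and therefore $g=\lambda\operatorname{id}_\EE$. Conversely, every nonzero scalar multiple of the identity preserves the grading and commutes with $\Phi$. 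Hence $\operatorname{Aut}(\EE,\Phi)=\mathbb{C}^*\cdot\operatorname{id}_\EE$.

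There is no real obstacle. The only point needing care is the identification of the principal-bundle automorphisms with graded automorphisms, and that follows from $G_0=\prod_j\mathrm{GL}(V_j)$.
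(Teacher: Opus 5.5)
Your argument is correct and is essentially the paper's: the corollary is stated as an immediate consequence of Proposition~\ref{prop:Schur}, and you run the eigenvalue argument from its proof on a group element $g$ instead of on $s\in\operatorname{aut}(\mathbb{E},\Phi)$. That argument goes: the characteristic polynomial is constant, so $g-\lambda\operatorname{id}_{\EE}$ is a grading-preserving map commuting with $\Phi$ that is nowhere invertible, hence zero. Working with $g$ directly is the right way to phrase it, since $\operatorname{aut}(\mathbb{E},\Phi)=\mathbb{C}\cdot\operatorname{id}_{\EE}$ alone would only determine the identity component of the automorphism group.
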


\begin{corollary}\label{coro:infautpoly}
    Let \((\EE,\Phi)\) be a polystable cyclic Higgs bundle and it decomposes as
\[
(\EE,\Phi)=\bigoplus_{\alpha} (\FF^\alpha,\Phi|_{\FF^\alpha})\otimes V_\alpha,
\]
where the $(\FF^\alpha,\Phi|_{\FF^\alpha})$ are pairwise non-isomorphic stable cyclic Higgs bundles of the same slope, and $V_\alpha\cong \mathbb C^{m_\alpha})$ are multiplicity spaces. Then
\[
\operatorname{aut}(\EE,\Phi)=\bigoplus_\alpha \operatorname{id}_{\FF^\alpha}\otimes\mathfrak{gl}(V_\alpha).
\]

\end{corollary}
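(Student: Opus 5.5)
The statement to prove is Corollary~\ref{coro:infautpoly}. The plan is to compute $\operatorname{aut}(\EE,\Phi)$ blockwise with respect to the given decomposition, applying Schur's lemma (Proposition~\ref{prop:Schur}) to each block.

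Let $s\in\operatorname{aut}(\EE,\Phi)$, that is, $s\in \mathrm{H}^0(\mathbb{E}[\mathfrak{g}_0])$ with $[s\wedge\Phi]=0$. Since $s$ lies in the degree-zero part, it preserves the grading. Write $s$ as a block matrix with respect to
\[
\EE=\bigoplus_\alpha \FF^\alpha\otimes V_\alpha .
\]
Fix bases of the multiplicity spaces $V_\alpha$. Each block is then a holomorphic bundle map $s_{\beta\alpha}^{ij}\colon \FF^\alpha\to\FF^\beta$, namely the component from the $i$-th copy of $\FF^\alpha$ to the $j$-th copy of $\FF^\beta$. Each such block has these properties:
\begin{itemize}
\item it preserves degrees, so $k=0$ in the first hypothesis of Proposition~\ref{prop:Schur};
\item it intertwines the Higgs fields, since $\Phi$ is block diagonal and $[s,\Phi]=0$ holds block by block.
\end{itemize}

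Apply Proposition~\ref{prop:Schur} to each block; its hypotheses hold because all summands are stable with the same slope.
\begin{itemize}
\item For $\alpha\neq\beta$, a nonzero block would be an isomorphism $\FF^\alpha\cong\FF^\beta$ of cyclic Higgs bundles. This contradicts pairwise non-isomorphism, so the block vanishes.
\item For $\alpha=\beta$, the block is an endomorphism of $(\FF^\alpha,\Phi|_{\FF^\alpha})$. By the last assertion of Proposition~\ref{prop:Schur}, $\operatorname{aut}(\FF^\alpha,\Phi|_{\FF^\alpha})=\mathbb C\cdot\operatorname{id}_{\FF^\alpha}$, so the block is $c_{ij}\operatorname{id}_{\FF^\alpha}$ for a constant $c_{ij}\in\mathbb C$.
\end{itemize}
Hence $s=\sum_\alpha \operatorname{id}_{\FF^\alpha}\otimes A_\alpha$ with $A_\alpha=(c_{ij})\in\mathfrak{gl}(V_\alpha)$.

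Conversely, every element of this form clearly commutes with $\Phi=\bigoplus_\alpha\Phi|_{\FF^\alpha}\otimes\operatorname{id}_{V_\alpha}$. It also lies in $\mathfrak g_0$, because $\operatorname{id}_{\FF^\alpha}$ preserves the grading. This gives the asserted equality.

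The only point needing care is applying Proposition~\ref{prop:Schur} to holomorphic sections of $\Hom(\FF^\alpha,\FF^\beta)$ between two different stable summands. This is exactly the first part of the proposition with $k=0$. The remaining argument is bookkeeping with the multiplicity bases.
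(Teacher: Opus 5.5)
Your proof is correct and is the argument the paper intends. The paper states this as an immediate corollary of Proposition~\ref{prop:Schur} without writing out a proof. Your blockwise application of that proposition supplies exactly that implicit argument: grading-preserving blocks with shift $k=0$, off-diagonal blocks vanishing by pairwise non-isomorphism, and diagonal blocks scalar by the last assertion of the proposition.
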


Note that the automorphism group $\operatorname{Aut}(\mathbb{E},\Phi)$ acts on $\mathbb{H}^1(C^\bullet(\mathbb{E},\Phi))$. Using standard slice methods of Kuranishi (see \cite[Chapter 7.3]{kobayashi2014differential} for details for the moduli space of holomorphic bundles and \cite{fan2021analytic} for that of Higgs bundles, which also works for that of general Higgs pairs), a neighborhood of the isomorphism class of a $\mu$-polystable $(G_0,\mathfrak{g}_1)$-Higgs pair $(\mathbb{E},\Phi)$ in the moduli space is given by 
\[\kappa^{-1}(0)\sslash\operatorname{Aut}(\mathbb{E},\Phi),\]
where 
$\kappa\colon \mathbb{H}^1(C^\bullet(\mathbb{E},\Phi))\to\mathbb{H}^2(C^\bullet(\mathbb{E},\Phi))$
is the so called Kuranishi map. When $\mathbb{H}^2(C^\bullet(\mathbb{E},\Phi))=0$, a neighborhood of the isomorphism class of $(\mathbb{E},\Phi)$ in the moduli space is isomorphic to
\[\mathbb{H}^1(C^\bullet(\mathbb{E},\Phi))\sslash\operatorname{Aut}(\mathbb{E},\Phi).\]

For the usual $G$-Higgs bundle for a complex reductive Lie group $G$, we have the duality between the second hypercohomology group and the zeroth one of its deformation complex, see e.g. \cite[Proposition 3.13]{garcia2009hitchin}. There is a slight difference for $(G_0,\mathfrak{g}_{\overline{1}})$-Higgs pairs. We give the following definition:

\begin{definition}
    The infinitesimal automorphism space of $(\mathbb{E},\Phi)$ of weight $\overline{j}$ is defined as
    \[\operatorname{aut}_{\overline{j}}(\EE,\Phi):=\{s\in\mathbb{E}[\mathfrak{g}_{\overline{j}}]\mid[s\wedge\Phi]=0\}.\]
\end{definition}

\begin{proposition}\label{prop:dual}
    There is an isomorphism between $\mathbb{H}^2(C^\bullet(\mathbb{E},\Phi))$ and $\operatorname{aut}_{-\overline{1}}(\EE,\Phi)$.
\end{proposition}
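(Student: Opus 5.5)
The isomorphism $\mathbb{H}^2(C^\bullet(\mathbb{E},\Phi))\cong\operatorname{aut}_{-\overline{1}}(\EE,\Phi)$ should come from Serre duality for two-term complexes of locally free sheaves. For a complex $C^\bullet\colon A\xrightarrow{d}B$ on $X$, there is a natural perfect pairing
\[
\mathbb{H}^i(C^\bullet)\cong \mathbb{H}^{2-i}\bigl((C^\bullet)^\vee\otimes\KK_X\bigr)^\vee,
\qquad
(C^\bullet)^\vee\otimes\KK_X\colon\ B^\vee\otimes\KK_X\xrightarrow{-d^\vee}A^\vee\otimes\KK_X,
\]
where $B^\vee\otimes\KK_X$ sits in degree $0$. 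Applied with $i=2$, this reduces the claim to identifying the $\mathbb{H}^0$ of the dual complex with $\operatorname{aut}_{-\overline{1}}(\EE,\Phi)$.

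First, I identify the duals using the trace pairing on $\mathfrak{g}=\mathfrak{gl}_r\mathbb{C}$. The pairing $\operatorname{tr}(XY)$ is nondegenerate and $G_0$-invariant. It pairs $\mathfrak{g}_{\overline{j}}$ perfectly with $\mathfrak{g}_{-\overline{j}}$, since $\operatorname{Hom}(V_j,V_{j+k})$ pairs with $\operatorname{Hom}(V_{j+k},V_j)$. Hence $\mathbb{E}[\mathfrak{g}_0]^\vee\cong\mathbb{E}[\mathfrak{g}_0]$ and $\mathbb{E}[\mathfrak{g}_{\overline{1}}]^\vee\cong\mathbb{E}[\mathfrak{g}_{-\overline{1}}]$. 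The dual complex twisted by $\KK_X$ therefore becomes
\[
\mathbb{E}[\mathfrak{g}_{-\overline{1}}]\xrightarrow{\ \delta\ }\mathbb{E}[\mathfrak{g}_0]\otimes\KK_X .
\]

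Next, I compute $\delta$. The map $d=\dd\iota(\Phi)$ is $s\mapsto[s,\Phi]$. For $t\in\mathbb{E}[\mathfrak{g}_{-\overline{1}}]$ and $s\in\mathbb{E}[\mathfrak{g}_0]$, invariance of the trace gives
\[
\operatorname{tr}(t[s,\Phi])=-\operatorname{tr}([t,\Phi]s).
\]
Up to sign, then, $\delta(t)=[t,\Phi]$, which lands in $\mathfrak{g}_0\otimes\KK_X$ because $[\mathfrak{g}_{-\overline{1}},\mathfrak{g}_{\overline{1}}]\subset\mathfrak{g}_0$.

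The degree-$0$ hypercohomology of a two-term complex is $\ker\bigl(\mathrm{H}^0(\text{first term})\to\mathrm{H}^0(\text{second term})\bigr)$. This gives
\[
\mathbb{H}^0=\{t\in\mathrm{H}^0(\mathbb{E}[\mathfrak{g}_{-\overline{1}}])\mid[t,\Phi]=0\}=\operatorname{aut}_{-\overline{1}}(\EE,\Phi).
\]
Taking the dual and using $\dim_{\mathbb{C}}<\infty$, I get $\mathbb{H}^2(C^\bullet)\cong\operatorname{aut}_{-\overline{1}}(\EE,\Phi)^\vee\cong\operatorname{aut}_{-\overline{1}}(\EE,\Phi)$ as vector spaces. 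Tracking the induced pairing makes this canonical up to dualization.

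The main obstacle is justifying the hypercohomology Serre duality for the two-term complex, including the correct shift and sign conventions. I would prove it by comparing the long exact sequence \prettyref{eq:LES} with the dual long exact sequence of the dual complex. Serre duality termwise gives $\mathrm{H}^1(A)\cong\mathrm{H}^0(A^\vee\otimes\KK_X)^\vee$, and the analogous identity holds for $B$. The five lemma then identifies $\mathbb{H}^2(C^\bullet)=\operatorname{coker}\bigl(\mathrm{H}^1(A)\to\mathrm{H}^1(B)\bigr)$ with $\ker\bigl(\mathrm{H}^0(B^\vee\otimes\KK_X)\to\mathrm{H}^0(A^\vee\otimes\KK_X)\bigr)^\vee$. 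This works directly because $\mathrm{H}^2$ of sheaves vanishes on a curve, and it avoids any derived-category machinery.
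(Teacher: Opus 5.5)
Your proposal is correct and follows the paper's route. You dualize the deformation complex using an invariant pairing between $\mathfrak{g}_{\overline{1}}$ and $\mathfrak{g}_{-\overline{1}}$, then apply Serre duality, so that $\mathbb{H}^2(C^\bullet(\mathbb{E},\Phi))^\vee$ becomes the degree-zero hypercohomology of the dual complex, which is $\operatorname{aut}_{-\overline{1}}(\EE,\Phi)$.

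Your version is more careful than the paper's sketch in two places. You use the nondegenerate trace form rather than the Killing form, which is degenerate on $\mathfrak{gl}_r$. You also keep the $\KK_X$-twist, so the relevant complex is $\mathbb{E}[\mathfrak{g}_{-\overline{1}}]\to\mathbb{E}[\mathfrak{g}_0]\otimes\KK_X$, which is what lands on untwisted sections; the paper's displayed complex omits this twist.

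A minor correction: the final identification does not use the five lemma. It only needs naturality of Serre duality and the fact that the cokernel of a dual map of finite-dimensional spaces is the dual of the kernel.
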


\begin{proof}
    Since the Killing form perfectly pairing $\mathfrak{g}_{\overline{1}}$ and $\mathfrak{g}_{-\overline{1}}$, the dual complex of $C^\bullet(\mathbb{E},\Phi)$ is:
\[C^\bullet(\mathbb{E},\Phi)^\vee\colon\mathbb{E}[\mathfrak{g}_{-\overline{1}}]\otimes\KK_X^{-1}\stackrel{\operatorname{ad}_\Phi}{\longrightarrow}\mathbb{E}[\mathfrak{g}_{0}].\]
By Serre duality, 
\[\mathbb{H}^2(C^\bullet(\mathbb{E},\Phi))^\vee\cong\mathbb{H}^0(C^\bullet(\mathbb{E},\Phi)^\vee)=\operatorname{aut}_{-\overline{1}}(\EE,\Phi).\]
\end{proof}

\section{Calculation of the set \texorpdfstring{$\mathscr D_\tau^{\mathrm{s}}$}{Dtaus}}
Recall the notations in \prettyref{sec:stability}. Let $N_{\mathrm{s},\tau}$ (resp. $N_\tau$) denote the number of the set of degree vectors in $\mathscr D_\tau^{\mathrm{s}}$ (resp. $\mathscr D_\tau$). Let $N_{\mathrm{s},\tau}(\mu)$ (resp. $N_\tau(\mu)$) denote the number of the subset of degree vectors in $\mathscr D_\tau^{\mathrm{s}}$ (resp. $\mathscr D_\tau$) satisfying $d_0+d_1+d_2+d_3=(q+3)\mu$.  By \prettyref{lem:DualDegree}, $N_{\mathrm{s},\upRoman{1}}(\mu)=N_{\mathrm{s},\upRoman{2}}(-\mu)$, $N_{\upRoman{1}}(\mu)=N_{\upRoman{2}}(-\mu)$, and $N_{\mathrm{s},\upRoman{1}}=N_{\mathrm{s},\upRoman{2}}$, $N_{\upRoman{1}}=N_{\upRoman{2}}$.

\begin{lemma}\label{lem:calculationDs}
For $q>1$ and for each $\mu=k/(q+3)$ for $k=0,1,\cdots, q+2$, we have
\[
N_{\mathrm{s},\upRoman{1}}(\mu)=8(g-1)^2+
\scalebox{0.85}{$\begin{cases}
-2(g-1),
&
\mu=0
\ \text{or}\
\mu=\dfrac12,
\\[2mm]
0,
&
0<\mu\leqslant\dfrac14
\ \text{or}\
\dfrac12<\mu\leqslant\dfrac34,
\\[3mm]
2(g-1),
&
\dfrac14<\mu<\dfrac12
\ \text{or}\
\dfrac34<\mu<1.
\end{cases}$}
\]
\[
N_{\upRoman{1}}(\mu)=8(g-1)^2+\scalebox{0.85}{$
\begin{cases}
4(g-1)+1,
&
\mu=0,
\\[2mm]
0,
&
0<\mu<\dfrac14
\ \text{or}\
\dfrac12<\mu<\dfrac34,
\\[2mm]
2(g-1),
&
\dfrac14\leqslant\mu<\dfrac12
\ \text{or}\
\dfrac34\leqslant\mu\leqslant1,
\\[3mm]
4(g-1),
&
\mu=\dfrac12.
\end{cases}$}
\]
The total number is
\[
N_{\mathrm{s},\tau}
=
8(q+3)(g-1)^2
+
(g-1)\scalebox{0.85}{$
\begin{cases}
q,
&
q\ \text{even},
\\
q-5,
&
q\equiv 1\pmod 4,
\\
q-3,
&
q\equiv 3\pmod 4.
\end{cases}$}
\]
\[
N_{\tau}
=
8(q+3)(g-1)^2+1
+
(g-1)\scalebox{0.85}{$
\begin{cases}
q+6,
&
q\ \text{even},
\\
q+11,
&
q\equiv 1\pmod 4,
\\
q+9,
&
q\equiv 3\pmod 4.
\end{cases}$}
\]
\end{lemma}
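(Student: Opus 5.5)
The plan is to turn the count into a lattice-point count in a fixed triangle, using the coordinates \((u_\upRoman{1},v_\upRoman{1})\). By \prettyref{lem:DualDegree} and the symmetries recorded just before the statement, it suffices to treat \(\tau=\upRoman{1}\). Degree vectors are counted modulo \(\mathbb Z\cdot(1,1,q,1)\), since tensoring by a line bundle shifts \(\mu\) by an integer. So we may normalize \(\mu=k/(q+3)\) with \(0\leqslant k\leqslant q+2\); this is how I read the statement's \(N(\mu)\).

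First, fix \(\mu\) and invert the change of variables. Since \(d_1=d_0-2(g-1)\), a degree vector with \(\sum d_i=(q+3)\mu\) is determined by \((u,v)=(u_\upRoman{1},v_\upRoman{1})\) via
\[
d_3=d_1-u,\qquad d_2=d_0+(q-1)\mu-v,\qquad 4d_0=4\mu+4(g-1)+u+v .
\]
Hence the admissible degree vectors of slope \(\mu\) correspond bijectively to pairs \((u,v)\) with \(u\in\mathbb Z\), \(w:=v+4\mu\in\mathbb Z\) and \(u+w\equiv0 \pmod 4\). The integrality of \(d_2\) is then automatic, because \((q+3)\mu\in\mathbb Z\).

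So \(N_\upRoman{1}(\mu)\) counts the points of the triangle \(-4(g-1)\leqslant u\leqslant w-4\mu\leqslant4(g-1)\) that lie on the index-\(4\) sublattice \(\{u+w\equiv0 \bmod 4\}\). The stable count \(N_{\mathrm s,\upRoman{1}}(\mu)\) is the same count with the two edges \(u=v\) and \(v=4(g-1)\) removed, but keeping the edge \(u=-4(g-1)\).

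Second, count row by row. Fix \(u\). Then \(w\) runs over integers in \(\bigl[u+4\mu,\ 4(g-1)+4\mu\bigr]\) with \(w\equiv -u \pmod 4\). This interval has length \(4(g-1)-u\), so the number of admissible \(w\) is \(\lfloor (4(g-1)-u)/4\rfloor\) or that value plus \(1\), depending on \(u \bmod 4\) and on \(\lceil 4\mu\rceil\), i.e. on which quarter of \([0,1)\) contains \(\mu\).

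Summing over the \(8(g-1)+1\) values of \(u\) gives the main term \(8(g-1)^2\), which is (area \(32(g-1)^2\))\(/4\). The remaining terms are periodic boundary corrections, which produce the case split at \(\mu\in\{0,\tfrac14,\tfrac12,\tfrac34\}\). The degenerate values \(\mu=0\) and \(\mu=\tfrac12\) need separate treatment: there the corners, and whole edges, of the triangle lie on the sublattice, which explains the extra \(+1\) and \(4(g-1)\) terms. For the stable count one subtracts the lattice points lying on the edges \(u=v\) and \(v=4(g-1)\); their number is again governed by \(4\mu \bmod 4\).

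Third, sum over \(k=0,\dots,q+2\). This requires counting how many \(k\) satisfy \(k/(q+3)=0\), \(k/(q+3)=\tfrac12\), \(k/(q+3)\in(0,\tfrac14]\), and so on. Whether \(\tfrac14,\tfrac12,\tfrac34\) occur as values \(k/(q+3)\) depends on \(q+3 \bmod 4\), which gives the three cases \(q\) even, \(q\equiv1\), \(q\equiv3 \pmod 4\).

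I expect the main obstacle to be bookkeeping rather than any idea: getting the floor corrections exactly right at the half-open endpoints \(\mu=\tfrac14,\tfrac34\), where the stable and polystable counts treat the boundary differently. As a safeguard I would check small cases, e.g. \(g=2\), \(q=2\), by direct enumeration.
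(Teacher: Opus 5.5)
Your plan is correct and is essentially the paper's argument. The paper's variables $h=d_0-(g-1)$ and $w=u_{\upRoman{1}}+4(g-1)$ are an affine change of your coordinates $(u,\,v+4\mu)$ that carries your index-$4$ sublattice onto $\mathbb Z^2$. Its count of integers $h$ for each $w=4r+s$ is exactly your row-by-row count at fixed $u$, after which one sums over $k$ according to $(q+3)\bmod 4$, a step the paper leaves implicit.

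Two small corrections for the bookkeeping. First, the row counts are not determined by $\lceil 4\mu\rceil$ alone: for example $N_{\upRoman{1}}(\tfrac14)=8(g-1)^2+2(g-1)$, while $N_{\upRoman{1}}(\mu)=8(g-1)^2$ for $0<\mu<\tfrac14$. Second, at $\mu=\tfrac12$ the corners are not on the sublattice. In fact the excess over $8(g-1)^2$ at $\mu=0$ and at $\mu=\tfrac12$ equals the number of sublattice points on the edge $u=v$: $4(g-1)+1$ points with even $u$ at $\mu=0$, and $4(g-1)$ points with odd $u$ at $\mu=\tfrac12$.
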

\begin{proof}
Since $d_1=d_0-2(g-1)$, we have
$d_3=d_0-2(g-1)-u_\upRoman{1}.$
The sum condition then gives
\[
d_2
=
(q+3)\mu-d_0-d_1-d_3
=
(q+3)\mu-3d_0+4(g-1)+u_\upRoman{1}.
\]
Consequently,
\begin{align*}
v_\upRoman{1}=d_0-d_2+(q-1)\mu
&=
4d_0-4(g-1)-u_\upRoman{1}-4\mu,
\end{align*}
Define
\[
h=d_0-(g-1),
\qquad
w=u_\upRoman{1}+4(g-1).
\]
The inequalities in defining $(d_i)\in \mathscr D_\upRoman{1}^{\mathrm{s}}$ or $\mathscr D_\upRoman{1}$ become either 
\[
-2(g-1)+\frac{w}{2}+\mu
<
h
<
\frac{w}{4}+\mu,\quad w\geqslant0
\] or
\[
-2(g-1)+\frac{w}{2}+\mu
\leqslant
h
\leqslant
\frac{w}{4}+\mu,\quad w\geqslant0
\]
respectively. Since \(h\in\mathbb Z\), such an integer can occur only when
\[
0\leqslant w<8(g-1)
\qquad\text{or}\qquad
0\leqslant w\leqslant 8(g-1),
\]
respectively.
respectively.
Writing
\[
w=4r+s,
\qquad
s\in\{0,1,2,3\},
\qquad
0\leqslant r\leqslant 2(g-1),
\]
and summing the number of possible integers \(h\). Note that with fixed $\mu$, the pair $(h,w)$ is one-to-one corresponding to the degree vector $(d_j)$, one obtains the values of $N_{\mathrm{s},\upRoman{1}}(\mu)$, $N_{\upRoman{1}}(\mu)$. 
\end{proof}


\bibliographystyle{alpha}
\bibliography{bib}
\end{document}